\theoremstyle{plain}
\newtheorem{theorem}{Theorem}[section]
\newtheorem{lemma}[theorem]{Lemma}
\newtheorem{proposition}[theorem]{Proposition}
\theoremstyle{definition}
\numberwithin{equation}{section}
\newcommand{\R}{\mathbb{R}}
\newcommand{\C}{\mathbb{C}}
\newcommand{\N}{\mathbb{N}}
\newcommand{\Z}{\mathbb{Z}}
\newcommand{\Q}{\mathbb{Q}}
\newcommand{\mx}{\mathrm{d}x}
\newcommand{\my}{\mathrm{d}y}
\newcommand{\mz}{\mathrm{d}z}
\newcommand{\mxi}{\mathrm{d}\xi}
\DeclareMathOperator{\Const}{C}           
\DeclareMathOperator{\supp}{supp}         
\DeclareMathOperator{\sgn}{sgn}           
\renewcommand{\Re}{\operatorname{Re}}
\DeclareMathOperator{\FT}{\mathcal{F}}           
\begin{document}

\title{Mixed norm Strichartz-type estimates for hypersurfaces in three dimensions}

\author{Ljudevit Palle
\thanks{The author has been supported by Deutsche Forschungsgemeinschaft,
project number 237750060, \emph{Fragen der Harmonischen Analysis im Zusammenhang mit Hyperfl\"{a}chen.}
}
}

\affil{Christian-Albrechts-Universit\"{a}t zu Kiel}

\date{\today}


\maketitle

\begin{abstract}
In their work \cite{IM16} I.A. Ikromov and D. M\"{u}ller proved the full range $L^p-L^2$
Fourier restriction estimates for a very general class of hypersurfaces in $\R^3$
which includes the class of real analytic hypersurfaces.
In this article we partly extend their results to the mixed norm case where the coordinates are split in two
directions, one tangential and the other normal to the surface at a fixed given point.
In particular, we resolve completely the adapted case and partly the non-adapted case.
In the non-adapted case the case when the linear height $h_\text{lin}(\phi)$ is below two is settled completely.
\end{abstract}

\tableofcontents




\section{Introduction}

For a given smooth hypersurface $S$ in $\R^n$, its surface measure $\mathrm{d}\sigma$,
and a smooth compactly supported function $\rho \geq 0$, $\rho \in C_0^\infty(S)$,
the associated Fourier restriction problem asks for which $p,q \in [1,\infty]$ the estimate
\begin{equation}
\label{Introduction_FRP_general}
\Bigg( \int |\widehat{f}|^q \rho \mathrm{d}\sigma \Bigg)^{1/q} \leq \Const_{p,q} \Vert f \Vert_{L^p(\R^n)}, \qquad f \in \mathcal{S}(\R^n),
\end{equation}
holds true.
This problem was first considered by E.M. Stein in the late 1960s. Soon thereafter
the problem was essentially solved for curves in two dimensions, see \cite{Fef70}, \cite{CS72}, \cite{Zyg74}.
The higher dimensional case in its most general form is still wide open.
The three dimensional case, as of yet, is far from being completely understood even when $S$ is the sphere,
and there has been a lot of deep work in the direction of understanding $L^p-L^q$ estimates
for surfaces with both vanishing and non-vanishing Gaussian curvature.
A small sample of such works are \cite{Bou91}, \cite{W95}, \cite{MVV96}, \cite{W01}, \cite{T03}, \cite{LV10}, \cite{BG11}, \cite{Gu16}.

The case when $q = 2$ has proven to be more tractable since one can use the ``$R^* R$ technique''.
This was exploited by P.A. Tomas and E.M. Stein (see \cite{To75}) to obtain the full range of $L^p-L^2$
estimates when the hypersurface in question is the unit sphere, and later further developed by A. Greenleaf
in \cite{Grl81} where the full range of $L^p-L^2$ estimates was obtained for surfaces with non-vanishing
Gaussian curvature. In fact, Greenleaf proved that if one has a decay estimate on the Fourier transform of $\rho \mathrm{d}\sigma$
(which can be interpreted as a uniform estimate for an oscillatory integral), i.e.,
\begin{equation*}
|\widehat{\rho \mathrm{d}\sigma} (\xi)| \lesssim (1 + |\xi|)^{-1/h}, \qquad \xi \in \R^n,
\end{equation*}
then the associated restriction estimate holds true for $p' \geq 2(h+1)$ and $q = 2$. However,
in general, this range is not optimal. Recently I.A. Ikromov and D. M\"{u}ller in their series of works
(see \cite{IM11a}, \cite{IM11b}, \cite{IM16}, and also their work with M. Kempe \cite{IKM10})
have developed techniques for proving the full range of $L^p-L^2$ estimates for a very general class of surfaces.
Their work builds upon the work of V.I. Arnold and his school (in particular, the work by Varchenko \cite{Var76}) which highlighted
the importance of the Newton polyhedron within problems involving oscillatory integrals,
and upon the work of D.H. Phong and E.M. Stein \cite{PS97} and D.H. Phong, E.M. Stein, and J.A. Sturm \cite{PSS97}
in the real analytic case where the authors in addition to the Newton polyhedron used the Puiseux series expansions
of roots to obtain results on oscillatory integral operators.
For further and more detailed references we refer the reader to \cite{IM16}.

In \cite{IM16} Ikromov and M\"{u}ller proved the following theorem.
\begin{theorem}
\label{thm_main_result_IM16}
Let $S$ be a smooth hypersurface in $\R^3$ and $\mathrm{d}\sigma$ its surface measure.
After localisation and a change of coordinates assume that $S$ is given as the graph of a smooth function $\phi : \Omega \to \R$ of finite
type with $\phi(0) = 0$ and $\nabla\phi(0) = 0$, where $\Omega \subseteq \R^2$ is an open
neighbourhood of $0$. Furthermore, assume that $\phi$ is linearly adapted in its
original coordinates.
Let $\rho \geq 0$, $\rho \in C_c^\infty(S)$, be a smooth compactly supported function.
Then the estimate \eqref{Introduction_FRP_general} holds true for all $\rho$ with support contained in a sufficiently small neighbourhood of $0$
when $q=2$ and when either
\begin{enumerate}[(a),itemsep=0mm]
\item $\phi$ is adapted in its original coordinates and $p \geq 2(h(\phi)+1)$, or
\item $\phi$ is not adapted in its original coordinates, satisfies the Condition (R), and $p \geq 2(h^{\text{res}}(\phi)+1)$.
\end{enumerate}
\end{theorem}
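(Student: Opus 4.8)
By duality and the $TT^*$ method (the ``$R^*R$ technique''), \eqref{Introduction_FRP_general} with $q=2$ is equivalent to the extension bound $\|\widehat{g\,\mathrm{d}\sigma}\|_{L^{p'}(\R^3)}\lesssim\|g\|_{L^2(\rho\,\mathrm{d}\sigma)}$, and in turn to the boundedness of $g\mapsto\widehat{\rho\,\mathrm{d}\sigma}\ast g$ from $L^p$ to $L^{p'}$. Splitting the kernel $\widehat{\rho\,\mathrm{d}\sigma}$ into pieces $K_j$ at physical scale $|x|\sim 2^j$, one has the decay bound $\|K_j\|_{L^\infty}\lesssim 2^{-j/h}$ together with $\|K_j\ast\cdot\|_{L^2\to L^2}\lesssim 2^{j}$ from the two-dimensionality of $S$; interpolating and summing merely reproduces Greenleaf's range $p'\ge 2(h+1)$, which is in general not optimal. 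The plan is therefore to replace this crude splitting by a dyadic decomposition of $\rho\,\mathrm{d}\sigma$ itself, calibrated to the geometry of $\phi$.

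In case (a) the given coordinates are already adapted, so the Newton distance there equals $h(\phi)$. Writing $\psi$ for the principal root jet, I would split $\rho\,\mathrm{d}\sigma$ into a central piece supported near the curve $\{x_2=\psi(x_1)\}$ and dyadic pieces $\rho_k\,\mathrm{d}\sigma$ supported where $|x_2-\psi(x_1)|\sim 2^{-k}$, each split further dyadically in $x_1$ according to the edges of the Newton polyhedron. On each non-central piece an anisotropic rescaling $\Dil$ normalises $\phi$ to a function of bounded finite type whose hypersurface has at least one non-vanishing principal curvature, or is modelled on a finite-type curve, and for which a \emph{uniform} restriction estimate is available --- of Stein--Tomas or Mockenhaupt--Seeger--Sogge type, or of strictly smaller Newton height so that an induction on the height closes. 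Undoing $\Dil$ yields a Jacobian gain $2^{-\alpha_k}$ against a constant growth $2^{\beta_k}$, with $\alpha_k,\beta_k$ read off from the polyhedron; summing the resulting $L^p\to L^{p'}$ bounds gives a geometric series converging for $p>2(h(\phi)+1)$, and the endpoint $p=2(h(\phi)+1)$ is recovered by a Stein-type analytic interpolation together with a Littlewood--Paley orthogonality in the frequency variable dual to the normal direction. This settles case (a).

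In case (b) the obstruction is that no adapted coordinate system is reachable by a linear --- hence surface-preserving --- change: the adapting substitution $x_2\mapsto x_2-\psi(x_1)$ has $\psi$ genuinely nonlinear and moves the hypersurface, so the analysis of case (a) cannot be imported directly. Instead I would keep the given (linearly adapted) coordinates and run the same decomposition; the pieces away from the root jet normalise and sum exactly as before, already for $p\ge 2(h_{\text{lin}}(\phi)+1)$, and it is the concentration along $\{x_2=\psi(x_1)\}$ that imposes the true threshold. There the rescaled hypersurface degenerates along a curve rather than at a point, so the governing estimate is no longer a two-dimensional restriction bound but an auxiliary, essentially one-dimensional, restriction estimate for the trace of $\phi$ transverse to the root jet. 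Condition~(R) is exactly the hypothesis guaranteeing that this companion estimate holds in the required range, and $h^{\text{res}}(\phi)$ is defined so as to balance the Jacobian against the constant on this piece; one always has $h^{\text{res}}(\phi)\ge h(\phi)$ --- in accordance with the Knapp-type necessary condition $p'\ge 2(h(\phi)+1)$ --- with equality, and hence the conjecturally sharp range, for instance when $h_{\text{lin}}(\phi)<2$.

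The step I expect to be the main obstacle is the analysis of the concentration along the root jet, and with it the two technical cores of the scheme: controlling the oscillatory integrals $\widehat{\rho_0\,\mathrm{d}\sigma}$ when the phase has a critical manifold of positive dimension --- Airy-type and more degenerate normal forms --- while keeping the rescalings uniform down to the finest scales; and, in case (b), pinning down the correct companion problem precisely enough that Condition~(R) can be invoked. A second delicate point is that the summation over the dyadic pieces must be genuinely lossless at the endpoint $p=2(h(\phi)+1)$, respectively $p=2(h^{\text{res}}(\phi)+1)$, which is where the analytic interpolation and the almost-orthogonality replace the naive triangle inequality.
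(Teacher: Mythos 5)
This theorem is not proved in the present paper --- it is quoted from Ikromov and M\"uller's book \cite{IM16}, and the paper's own contribution is to adapt that machinery to the mixed-norm problem (Propositions \ref{Adapted_general_considerations}, \ref{Adapted_proposition_adapted}, \ref{Adapted_proposition_reduction} and Sections \ref{Section_h_lin_less_than_2}--\ref{Section_Airy} sketch the same scheme in the mixed-norm setting). Judged against that scheme, your outline has the right top-level skeleton --- the $R^*R$ reduction, anisotropic dyadic decomposition driven by the Newton polyhedron, rescaling to a normalised model, summation, and analytic interpolation at the endpoint --- but two points need correcting.

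First, your description of case (a) is miscalibrated. When $\phi$ is adapted the principal root jet is $\psi\equiv 0$, so there is no curve $\{x_2=\psi(x_1)\}$ to concentrate on, and no ``central piece'' in the sense you describe. The actual decomposition is by $\kappa$-dilated annuli shrinking to the \emph{origin} (as in Proposition \ref{Adapted_proposition_adapted} here and \cite[Section 4]{IM11b}); after rescaling, each piece carries a uniform Fourier decay $(1+|\xi|)^{-1/h(\phi)}$ with no log factor, and Greenleaf-type Tomas--Stein plus a Littlewood--Paley square-function argument recovers the endpoint $p'=2(h(\phi)+1)$ even when $\nu(\phi)=1$ and the global decay estimate has a logarithmic loss. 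In fact when $\nu(\phi)=0$ the endpoint already follows directly from Theorem \ref{Oscillatory_auxiliary_decay} and Greenleaf's lemma; the annulus decomposition is needed only to remove the log loss when $\nu(\phi)=1$.

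Second, and more seriously, in case (b) you correctly identify the concentration near the root jet as the obstacle, but your account of what happens there --- ``an auxiliary, essentially one-dimensional, restriction estimate for the trace of $\phi$ transverse to the root jet'' --- is not what the argument does, and as written it is too vague to close. After splitting off $\mu^{1-\rho_1}$ (the part away from the jet, which the $\kappa$-dilation scheme handles at the exponent governed by $d(\phi)$, as in Proposition \ref{Adapted_proposition_reduction}), one passes to \emph{adapted} coordinates on $\mu^{\rho_1}$, performs a fresh dyadic/spectral decomposition calibrated to the Newton polyhedron of $\phi^a$, and then confronts Airy-type degeneracies: the rescaled phase has coalescing critical points, and the endpoint summation cannot be done by the triangle inequality. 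That is where the oscillatory sum lemmas (Lemmas \ref{Oscillatory_auxiliary_one_parameter}--\ref{Oscillatory_auxiliary_two_parameter}) and Stein's interpolation with the damping factors $\gamma(\zeta)$ are invoked --- not a one-dimensional restriction estimate. Condition (R), similarly, is not ``the hypothesis that the companion estimate holds''; it is a factorisation condition on the finite-type remainder $b_0$ in the normal form (automatic for real-analytic $\phi$), needed so that the fine decomposition near the jet stabilises after finitely many steps and the rescalings remain uniform. Your own flag that this is ``the step I expect to be the main obstacle'' is apt: at the level of detail given, the near-jet analysis --- which is the bulk of \cite{IM16} --- has not been addressed.
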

Since linear transformations respect the Fourier transform, one can always assume linear adaptedness.
The quantities $h(\phi)$ and $h^\text{res}(\phi)$ are respectively the height and the restriction height of the function $\phi$
(the precise definitions can be found in Subsections \ref{Notation_and_assumptions} and \ref{Knapp_ratio_fixed} below respectively;
also note that we use $h^{\text{res}}(\phi)$ to denote the restriction height of the function $\phi$ instead of $h^{\text{r}}(\phi)$ as in \cite{IM16}).
Condition (R) is a factorisation condition which is true for
real analytic functions, but not for general smooth functions, and it remains open whether
this condition can be removed in the above theorem.

In this paper we shall be interested in the mixed norm case with $L^p(\R^3)$ denoting from now on the space
$L^{p_3}_{x_3} (L^{p_2}_{x_2} (L^{p_1}_{x_1}))$
and $q = 2$ in $\eqref{Introduction_FRP_general}$.
We shall be interested in the particular case when $p_1=p_2$, i.e., we only differentiate between the tangential and the normal
direction to the surface $S$ at the point $0 \in S$.
This means we take $\Vert f \Vert_{L^p(\R^3)}$ to mean
\begin{equation*}
\Bigg( \int \Bigg( \int \int |f|^{p_1}(x_1,x_2,x_3) \mx_1 \mx_2 \Bigg)^{p_3/p_1} \mx_3 \Bigg)^{1/p_3}.
\end{equation*}
Henceforth we shall denote by $p$ the pair $(p_1, p_3)$.
Our task is to determine for which $(p_1,p_3)$ the inequality
\begin{equation}
\label{Introduction_FRP_mixed}
\Bigg( \int |\widehat{f}|^2 \rho \mathrm{d}\sigma \Bigg)^{1/2} \leq
\Const_{p} \Vert f \Vert_{L^p(\R^3)} = \Const_{p} \Vert f \Vert_{L^{p_3}_{x_3} (L^{p_1}_{(x_1,x_2)})}, \qquad f \in \mathcal{S}(\R^3),
\end{equation}
holds true for $\rho \geq 0$ supported in a sufficiently small neighbourhood of $0$.

This question is of great interest in the theory of PDEs, as was noticed by Strichartz in \cite{Str77}.
Namely, one can obtain mixed norm Strichartz estimates for a wide collection of symbols $\phi$ determining the surface $S$
since the estimate \eqref{Introduction_FRP_mixed} can be reinterpreted as an a priori estimate
\begin{align*}
\Vert u \Vert_{L^p_{(x,t)}(\R^3)} \leq C \Vert g\Vert_{L^2(\R^2)}
\end{align*}
for the Cauchy problem
\begin{align*}
\begin{cases}
\partial_t u(x,t) &= i \phi(D) u(x,t), \quad (x,t) \in \R^2 \times \R,\\
\,\,\,\, u(x,0) &= g(x), \qquad \qquad \qquad x \in \R^2,
\end{cases}
\end{align*}
where $g$ has its Fourier transform supported in a small neighbourhood of the origin
and $\phi(D)$ is the operator with symbol $\phi(\xi)$.

It turns out that we can use the same basic techniques and phase space decompositions
as in \cite{IM16} in proving the estimate \eqref{Introduction_FRP_mixed} in the cases we consider
(namely, the adapted case and the non-adapted case with $h_\text{lin}(\phi) < 2$).
The main additional ingredients we shall use are some basic ideas from \cite{GV92} (see also \cite{KT98}) for handling mixed norms.
In our case additional complications appear which were absent in the corresponding cases in \cite{IM16}
and some of which resemble problems appearing in some of the final chapters of \cite{IM16}.
For example, after making a phase space decomposition of the kernel of the convolution operator obtained by the ``$R^* R$ technique'',
a recurring theme will be that we will not be able to sum absolutely the operators associated to the kernel decomposition pieces
whose operators were absolutely summable \cite{IM16}.
A further interesting feature of the mixed norm case is that estimates for the mixed norm endpoint for operators of certain kernel pieces
become invariant under scalings considered in \cite{IM16}.

The structure of this article is as follows.
In the following Subsection \ref{Notation_and_assumptions} we review some fundamental concepts such as
the Newton polyhedron and adapted coordinates.
In Subsection \ref{Statement} we state the main results of this paper,
namely Theorem \ref{Introduction_thm_Knapp} which states the necessary conditions, and
Theorem \ref{Introduction_theorem_FRP} which gives us the mixed norm Fourier restriction estimates in the adapted case
and the case $h_{\text{lin}}(\phi) < 2$.
In Section \ref{Knapp} we derive the necessary conditions (by means of Knapp-type examples) for the exponents in \eqref{Introduction_FRP_mixed}.
See Proposition \ref{Knapp_main_proposition}.
In Subsection \ref{Knapp_h_lin_less_than_two} we also determine explicitly the Newton polyhedra of $\phi$ in its original and adapted coordinates
in the case when the linear height of $\phi$ is strictly less than $2$.
Section \ref{Technical_results} contains auxiliary results that we shall often refer to.
In Subsection \ref{Oscillatory_auxiliary} we list results related to oscillatory integrals,
such as the van der Corput lemma, and also some results on oscillatory sums from \cite{IM16} that are useful in conjunction with complex interpolation.
In Subsection \ref{Mixed_norm_auxiliary} we state results which we need for handling mixed norms.
In Section \ref{Adapted}, Proposition \ref{Adapted_proposition_adapted}, we deal with the adapted case, i.e., we prove that if $\phi$ is adapted in its original coordinates,
then the estimate \eqref{Introduction_FRP_mixed} holds for all $p$'s determined by the necessary conditions, except occasionally for a certain endpoint.
In the same section (see Proposition \ref{Adapted_proposition_reduction}) we also reduce the general non-adapted case to considering the part near the principal root jet of $\phi$.
In Sections \ref{Section_h_lin_less_than_2} and \ref{Section_Airy} we handle the case when the linear height of $\phi$ is strictly less than $2$
for a class of functions $\phi$ which includes all analytic functions (see Theorem \ref{Section_h_lin_less_than_2_main_theorem} for a precise formulation).

For reasons of consistency we use the same notational conventions as in \cite{IM16}.
We use the ``variable constant'' notation meaning that constants appearing in calculations
and in the course of our arguments may have different values on different lines.
Furthermore we use the symbols $\sim, \lesssim, \gtrsim, \ll, \gg$ in order to avoid writing down constants.
If we have two nonnegative quantities $A$ and $B$, then
by $A \ll B$ we mean that there is a sufficiently small positive constant $c$ such that $A \leq cB$,
by $A \lesssim B$ we mean that there is a (possibly large) positive constant $C$ such that $A \leq CB$,
and by $A \sim B$ we mean that there are positive constants $C_1 \leq C_2$ such that $C_1 A \leq B \leq C_2 A$.
One defines analogously $A \gg B$ and $A \gtrsim B$.
Often the constants $c$ and $C$ shall depend on certain parameters $p$
in which case we occasionally write $A \ll_p B$, $A \lesssim_p B$, etc., in order to emphasize this dependence.

A further notational convention adopted from \cite{IM16} is the use of symbols $\chi_0$ and $\chi_1$
in denoting certain nonnegative smooth compactly supported functions on $\R$.
Namely, we require $\chi_0$ to be supported in a neighbourhood of the origin and identically $1$ near the origin,
and $\chi_1$ to be supported away from the origin and identically $1$ on some open neighbourhood of $1 \in \R$.
These cutoff functions $\chi_0$ and $\chi_1$ may vary from line to line,
and sometimes, when several $\chi_0$ and $\chi_1$ appear within the same formula, they may even designate different functions.
\bigskip

{\bf Acknowledgement.}
I would like to thank my supervisor Prof.\,\,Dr.\,\,Detlef M\"{u}ller for numerous useful discussions we had
and for his valuable comments on how to improve this paper.


\subsection{Fundamental concepts and basic assumptions}
\label{Notation_and_assumptions}

Let the surface $S$ be given as the graph
$S = S_\phi \coloneqq \{ (x_1,x_2,\phi(x_1,x_2)) : x = (x_1,x_2) \in \Omega \subset \R^2 \}$
of a smooth and real-valued function $\phi$ defined on an open neighbourhood $\Omega$ of the origin.
We can assume without loss of generality that $\phi(0) = 0$ and we take $\Omega$ to be a sufficiently small neighbourhood of the origin in $\R^2$.
In the mixed norm case we cannot use the rotational invariance of the Fourier transform in order to reduce to the case $\nabla \phi(0) = 0$.
Instead we use a different linear transformation (for details see Subsection \ref{Basic_assumptions_phi}), and so we may and shall assume $\nabla \phi(0) = 0$.

Next, we impose on $\phi$ to be a function of \emph{finite type} at $0$.
This means that there exists a multi-index $\alpha \in \N_0^2$ such that $\partial^\alpha \phi(0) \neq 0$.
By continuity, $\phi$ is of finite type on a neighbourhood of $0$.
We may therefore assume that $\phi$ is of finite type in each point of $\Omega$.
We define the \emph{Taylor support} of $\phi$ as the set
\begin{align*}
\mathcal{T}(\phi) \coloneqq \Big\{ \alpha \in \N_0^2 : \partial^\alpha \phi(0) \neq 0 \Big\}.
\end{align*}
The \emph{Newton polyhedron} $\mathcal{N}(\phi)$ of $\phi$ is the convex hull of the set
\begin{align*}
\bigcup_\alpha \Big\{(t_1,t_2) \in \R^2 : t_1 \geq \alpha_1, t_2 \geq \alpha_2 \Big\},
\end{align*}
where the union is over all $\alpha$ such that $\partial^\alpha \phi(0) \neq 0$ (and so $|\alpha| \geq 2$).
See Figure \ref{Knapp_figure_Newton}.
Both edges and vertices are called \emph{faces} of $\mathcal{N}(\phi)$.
We define the \emph{Newton diagram} $\mathcal{N}_d(\phi)$ of $\phi$ to be the union of all \emph{compact} faces of $\mathcal{N}(\phi)$.

If we are given a face $e_0$ of $\mathcal{N}(\phi)$, we can define its associated (formal) series
\begin{align}
\label{Introduction_hom_poly}
\phi_{e_0}(x_1, x_2) \coloneqq \sum_{\alpha \in e_0 \cap \mathcal{T}(\phi)} \frac{\partial^\alpha \phi(0)}{\alpha!} x^\alpha.
\end{align}
If $e_0$ is a compact face, then $\phi_{e_0}(x_1, x_2)$ is a mixed homogeneous polynomial.
This means that there exists a weight $\kappa^{e_0} = (\kappa^{e_0}_1,\kappa^{e_0}_2) \in [0,\infty)^2$ such that for any $r > 0$ we have
\begin{align*}
\phi_{e_0}(r^{\kappa^{e_0}_1} x_1, r^{\kappa^{e_0}_2} x_2) = r \phi_{e_0}(x_1, x_2),
\end{align*}
and we call $\phi_{e_0}$ a $\kappa^{e_0}$-homogeneous polynomial.
$\kappa^{e_0}$ is uniquely determined if and only if $e_0$ is not a vertex.
In fact, in the case when $e_0$ is an edge, we define $L_{\kappa^{e_0}}$ to be the unique line containing $e_0$:
\begin{align}
\label{Introduction_hom_poly_line}
e_0 \subseteq L_{\kappa^{e_0}}.
\end{align}
Then the weight $\kappa^{e_0}$ is uniquely determined by the relation
\begin{align}
\label{Introduction_def_line}
L_{\kappa^{e_0}} = \Big\{(t_1,t_2) \in \R^2 : \kappa^{e_0}_1 t_1 + \kappa^{e_0}_2 t_2 = 1 \Big\}.
\end{align}
When $e_0$ is an unbounded face, $\phi_{e_0}(x_1, x_2)$ is to be taken only as a formal power series.
Note that then $e_0$ is either a vertical or horizontal edge of $\mathcal{N}(\phi)$, and we can
also find unique $\kappa^{e_0}_1$ and $\kappa^{e_0}_2$ (one of them being $0$ in this case) such that \eqref{Introduction_hom_poly_line} holds.

Of particular interest is the \emph{principal face} $\pi(\phi)$ defined as the face of minimal dimension
of $\mathcal{N}(\phi)$ which intersects the bisectrix $\{(t_1,t_2) \in \R^2 : t_1 = t_2\}$.
Its associated series (or homogeneous polynomial) we call the \emph{principal part} of $\phi$ and denote by $\phi_{\text{pr}} \coloneqq \phi_{\pi(\phi)}$.
Let $\kappa = (\kappa_1,\kappa_2)$ determine the line $L_{\kappa}$ as in \eqref{Introduction_def_line} containing the principal face if it is an edge,
or when it is a vertex, let $\kappa$ determine the edge of $\mathcal{N}(\phi)$ having the principal face as its left endpoint.
Interchanging the $x_1$ and $x_2$ coordinates, if necessary, we may always assume that
\begin{align*}
\kappa_2 \geq \kappa_1.
\end{align*}
We shall denote the ratio $\kappa_2/\kappa_1$ by $m$, and so $m \geq 1$.

The \emph{Newton distance} $d(\phi)$ of $\phi$ is defined to be the coordinate $d$ of the point $(d,d)$
which is the intersection of the bisectrix and the principal face of $\mathcal{N}(\phi)$.
One can easily see that if $\kappa = (\kappa_1,\kappa_2)$ determines the line containing the principal face
(or any of the supporting lines to $\mathcal{N}(\phi)$ in case $\pi(\phi) = \{(d,d)\}$), then we have
\begin{align*}
d(\phi) = \frac{1}{\kappa_1+\kappa_2}.
\end{align*}

The \emph{Newton height} $h(\phi)$ of $\phi$ is defined as
\begin{align*}
h(\phi) = \sup \{d(\phi \circ \varphi) : \varphi \,\, \text{a smooth local coordinate change} \}.
\end{align*}
By a smooth local coordinate change we mean a function $\varphi$ which is smooth and invertible in a neighbourhood of the origin,
and $\varphi(0) = 0$. We also define the \emph{linear height} as
\begin{align*}
h_{\text{lin}}(\phi) = \sup \{d(\phi \circ \varphi) : \varphi \,\, \text{a linear coordinate change} \}.
\end{align*}
For a coordinate change $\varphi$ we shall denote the new cooridnates by $y = \varphi(x)$.
In this case we also denote $d_y = d(\phi \circ \varphi)$.
We say that $\phi$ is \emph{adapted} in the $y$ coordinates if $d_y = h(\phi)$.
Analogously, we say that $\phi$ is \emph{linearly adapted} in coordinates $y$ if $d_y = h_{\text{lin}}(\phi)$.
When $\phi$ is adapted in its original coordinates $x$ we say that $\phi$ is adapted,
and if $\phi$ is not adapted in its original coordinates, then we say that $\phi$ is non-adapted.
Analogous expressions we shall use for linear adaptedness.
We obviously always have
\begin{align*}
d_x = d(\phi) \leq h_{\text{lin}}(\phi) \leq h(\phi).
\end{align*}
The existence of an adapted coordinate system for real analytic functions on $\R^2$ was first proven by Varchenko in \cite{Var76}.
He gave an explicit algorithm on how to construct an adapted coordinate system.
His result was generalised in \cite{IM11a} where it was shown that an adapted coordinate system exists for general smooth functions.
It turns out that in the smooth case one can also essentially use Varchenko's algorithm.
In this article when we refer to Varchenko's algorithm we shall always mean the variant used in \cite{IM11a}.
In this variant one constructs an adapted coordinate system in the form of a non-linear shear transformation
\begin{align*}
y_1 = x_1, \quad y_2 = x_2 - \psi(x_1).
\end{align*}
The smooth real-valued function $\psi$ can be taken in the real-analytic case to be
the \emph{principal root jet} of $\phi$ as defined in \cite{IM16}.
We denote the function $\phi$ in the new (adapted) coordinates by $\phi^a$. Then we have
\begin{align*}
\phi^a(y) = \phi(y_1, y_2 + \psi(y_1)).
\end{align*}
We remark that when $\phi$ is not adapted, then $m = \kappa_2/\kappa_1$ is a positive integer and
$\psi(x_1) - b_1 x_1^m = \mathcal{O}(x_1^{m+1})$ for some nonzero real constant $b_1$.

We introduce next \emph{Varchenko's exponent} $\nu(\phi) \in \{0,1\}$.
If $h(\phi) \geq 2$ and there exists an adapted coordinate system $y$ such that in these coordinates the principal face of $\phi^a(y)$
is a vertex, we define $\nu(\phi) \coloneqq 1$.
In all other cases we take $\nu(\phi) \coloneqq 0$. In particular $\nu(\phi) = 0$ whenever $h(\phi) < 2$.
A concrete characterisation for determining when an adapted coordinate system having the principal face as a vertex exists
can be found in \cite[Lemma 1.5]{IM11b}.

Let us discuss next linear adaptedness.
We assume that $h_{\text{lin}}(\phi) < h(\phi)$, i.e., that we cannot achieve adapted coordinates with a linear coordinate change.
In \cite[Section 1.3]{IM16} it was shown that in this case we can always find a linearly adapted coordinate system,
and \cite[Proposition 1.7]{IM16} gives an explicit characterisation of when a coordinate system is linearly adapted.
It was shown in particular that if the coordinate system $x$ is not already linearly adapted,
then one just needs to apply the first step of Varchenko's algorithm in order to obtain it.

Since in our mixed norm case we consider only $p_1=p_2$, we can freely use linear coordinate changes
in ``tangential'' variables $(x_1,x_2)$ in the expression \eqref{Introduction_FRP_mixed}.
Thus we may assume without loss of generality that either the original coordinate system $x$ is already adapted,
or that it is at least linearly adapted. In particular, we may assume $d(\phi) = h_{\text{lin}}(\phi)$.

The final important concept we introduce is the \emph{augmented Newton polyhedron}
$\mathcal{N}^{\text{res}}(\phi^a)$ of a non-adapted $\phi$
(note the slight change in notation compared to \cite{IM16}, where $\mathcal{N}^{\text{r}}(\phi^a)$ is used instead).
$\mathcal{N}^{\text{res}}(\phi^a)$ is defined as the convex hull of the set
\begin{align*}
\mathcal{N}(\phi^a) \cup L^+,
\end{align*}
where $L^+$ is defined as follows.
Let $L_{\kappa}$ be the line containing the principal face $\pi(\phi)$ of $\mathcal{N}(\phi)$ and let $P = (t^P_1, t^P_2)$
be the point on $L_{\kappa} \cap \mathcal{N}(\phi^a)$ with the smallest $t_2$ coordinate.
Such a point always exists.
Then $L^+$ is the ray
\begin{align*}
\Big\{ (t_1,t_2) \in L_{\kappa} : t_2 \geq t^P_2 \Big\}.
\end{align*}
(See Figure \ref{Knapp_figure_Newton}).



\begin{figure}[!h]
\centering

\begin{tikzpicture}
\begin{axis}[
xlabel=$t_1$, ylabel=$t_2$,
xmin= -1, xmax=15,
ymin= 0, ymax=15,
axis lines=middle,
xtick=\empty, ytick=\empty,
scale = 2.2,
extra x ticks={7.02},
extra x tick labels={
    $1/\kappa_1$
},
extra y ticks={11.02},
extra y tick labels={
    $1/\kappa_2$
}
]

\addplot+ [
mark size=1pt,
solid,
thick,
gray,
mark options={color = gray, fill = gray},
mark = *,
] coordinates {
(1,16)
(1,12)
(1.5,10)
};

\addplot+ [
no marks,
dashed,
gray,
mark options={color = gray, fill = gray},
mark = *,
] coordinates {
(1.5,10)
(1.65,9.4)
};

\addplot+ [
no marks,
dashed,
gray,
mark options={color = gray, fill = gray},
mark = *,
] coordinates {
(2,8.2)
(2.25,7.5)
};

\addplot+ [
mark size=1pt,
solid,
thick,
gray,
mark options={color = gray, fill = gray},
mark = *,
] coordinates {
(2.25,7.5)
(5,5)
(7,3.7)
};

\addplot+ [
no marks,
dashed,
gray,
mark options={color = gray, fill = gray},
mark = *,
] coordinates {
(7,3.7)
(7.5,3.45)
};

\addplot+ [
no marks,
dashed,
gray,
mark options={color = gray, fill = gray},
mark = *,
] coordinates {
(10,2.85)
(11,2.7)
};

\addplot+ [
mark size=1pt,
solid,
thick,
gray,
mark options={color = gray, fill = gray},
mark = *,
] coordinates {
(11,2.7)
(13,2.5)
(16,2.5)
};

\addplot+ [
no marks,
solid,
gray,
mark options={color = gray, fill = gray},
mark = *,
] coordinates {
(0,11.02)
(7.02,0)
};

\node at (axis cs:  5.3,  2) {$L_{\kappa}$};


\node at (axis cs:  1+0.8,12) {$(A_{0},B_{0})$};
\node at (axis cs:  1.5+0.8,10) {$(A_{1},B_{1})$};
\node at (axis cs:  2.25+1.2,7.5+0.3) {$(A_{l_0-1},B_{l_0-1})$};
\node at (axis cs:  5+0.7,5+0.3) {$(A_{l_0},B_{l_0})$};
\node at (axis cs:  7+1.2,3.7+0.3) {$(A_{l_0+1},B_{l_0+1})$};
\node at (axis cs:  11,2.7+0.5) {$(A_{n-1},B_{n-1})$};
\node at (axis cs:  13+0.3,2.5+0.4) {$(A_{n},B_{n})$};

\node at (axis cs:  10,  10) {$\mathcal{N}(\phi^a)$};


\addplot+[
draw=none,
mark=none,
gray,
solid,
pattern=north east lines,
pattern color=gray]
coordinates {
(-1,12.6)
(0,11.02)
(2.25,7.5)
(2,8.2)
(1.65,9.4)
(1.5,10)
(1,12)
(1,15)
(-1,15)
}
\closedcycle;

\addplot+[
draw=none,
mark=none,
gray,
solid,
fill=gray, 
fill opacity=0.1]
coordinates {
(1,15)
(1,12)
(1.5,10)
(1.65,9.3)
(2,8.1)
(2.25,7.5)
(5,5)
(7,3.7)
(7.5,3.45)
(10,2.85)
(11,2.7)
(13,2.5)
(15,2.5)
(15,15)
};


\end{axis}
\end{tikzpicture}

\caption{The (augmented) Newton polyhedron associated to $\phi^a$.}
\label{Knapp_figure_Newton} 

\end{figure}
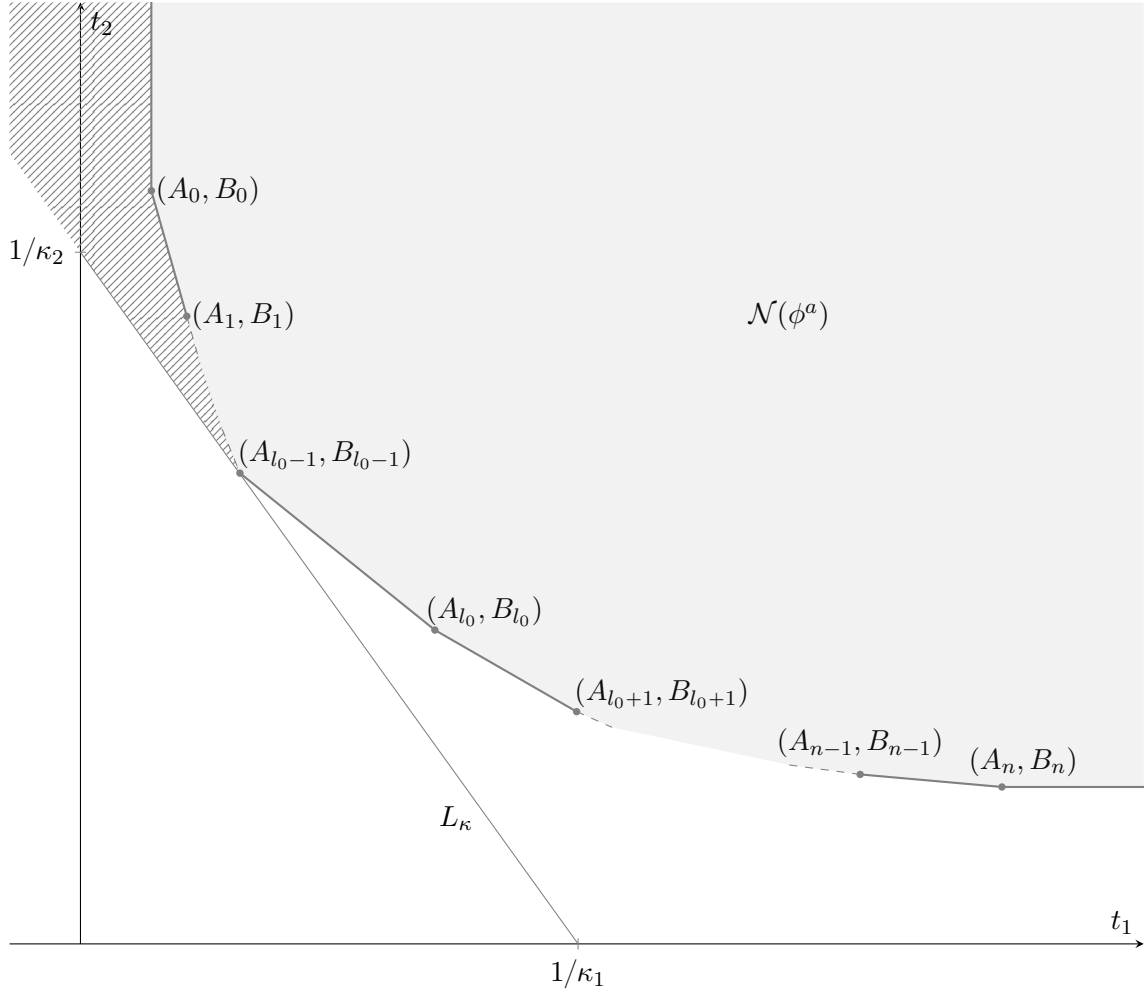



\subsection{The main results}
\label{Statement}

Let us briefly review all the conditions on the function $\phi$
which we may assume without loss of generality when considering the mixed norm restriction problem:
\begin{itemize}
\item
$\phi(0)=0$ and $\nabla \phi(0) = 0$,
\item
$\phi$ is of finite type on $\Omega$,
\item
the weight $\kappa$ determined by the principal face of $\mathcal{N}(\phi)$ (or by the edge containing the principal face as its left endpoint)
satisfies $m = \kappa_2 / \kappa_1 \geq 1$, and
\item
the original coordinate system $x$ is either adapted, or linearly adapted but not adapted.
In both cases we have $d(\phi) = h_{\text{lin}}(\phi)$.
\end{itemize}
Recall that $S$ denotes the surface given as the graph of $\phi$ and $\mathrm{d}\sigma$ its surface measure.
We are considering the mixed norm Fourier restriction problem \eqref{Introduction_FRP_mixed}
when $\rho$ is supported in a sufficiently small neighbourhood of the origin.

We begin by stating necessary conditions which will be obtained by means of Knapp-type examples.
When $\phi$ is not adapted we denote by
\begin{align*}
K : [o, \kappa_1] \to [0,+\infty]
\end{align*}
the function defined in the following way.
Consider all lines of the form
\begin{align}
\label{Statement_supporting_line}
L_{\tilde{\kappa}} = \Big\{ (t_1,t_2) \in \R^2 : \tilde{\kappa}_1 t_1 + \tilde{\kappa}_2 t_2 = 1 \Big\},
\end{align}
where $\tilde{\kappa} \in [0,\infty)^2$ is a weight.
For each $0 \leq \tilde{\kappa}_1 \leq \kappa_1$ there is a unique $\tilde{\kappa}_2$ so that \eqref{Statement_supporting_line}
determines a supporting line $L_{\tilde{\kappa}}$ to $\mathcal{N}^{\text{res}}(\phi^a)$.
We then define $K(\tilde{\kappa}_1)$ to be $\tilde{\kappa}_2$ for $\tilde{\kappa}_1 \in [0,\kappa_1]$
(see Figure \ref{Knapp_figure_Legendre}).
Note that then the weight $(0, K(0))$ determines line containing the horizontal edge of the augmented Newton polyhedron,
i.e., the right most edge of $\mathcal{N}^{\text{res}}(\phi^a)$.
The weight $(\kappa_1, K(\kappa_1)) = \kappa$ determines the line containing the edge associated to the principal face of $\mathcal{N}(\phi)$
which is the left most edge of $\mathcal{N}^{\text{res}}(\phi^a)$.



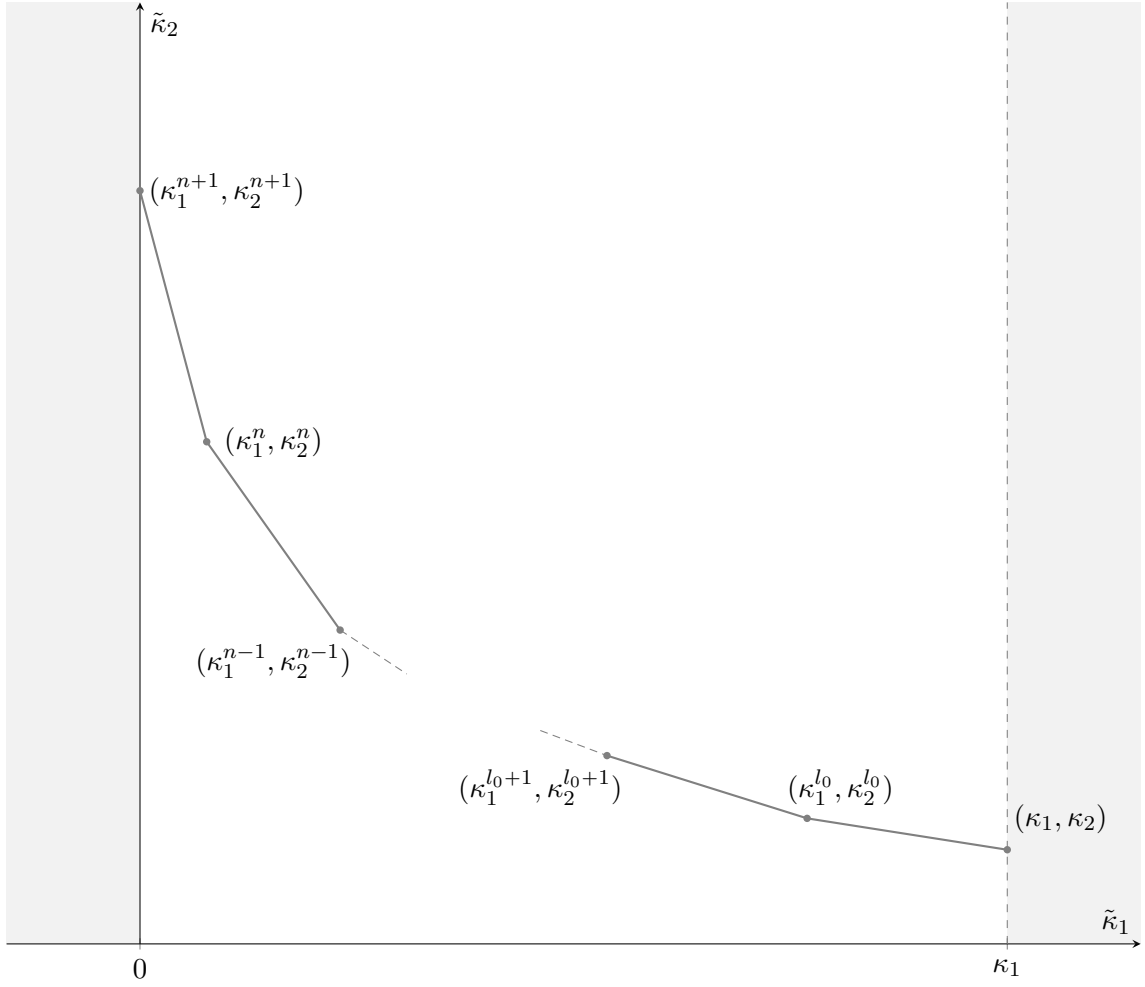
\begin{figure}[!h]
\centering

\begin{tikzpicture}
\begin{axis}[
xlabel=$\tilde{\kappa}_1$, ylabel=$\tilde{\kappa}_2$,
xmin=-2, xmax=15,
ymin=-0, ymax=15,
axis lines=middle,
xtick=\empty, ytick=\empty,
scale = 2.2,
extra x ticks={0, 13},
extra x tick labels={$0$, $\kappa_1$}
]

\addplot+ [
mark size=1pt,
solid,
thick,
gray,
mark options={color = gray, fill = gray},
mark = *,
] coordinates {
(0,12)
(1,8)
(3,5)
};

\addplot+ [
no marks,
densely dashed,
gray,
] coordinates {
(3,5)
(4,4.3)
};

\addplot+ [
no marks,
densely dashed,
gray,
] coordinates {
(6,3.4)
(7,3)
};

\addplot+ [
mark size=1pt,
solid,
thick,
gray,
mark options={color = gray, fill = gray},
mark = *,
] coordinates {
(7,3)
(10,2)
(13,1.5)
};

\path[name path=left_up_fill_axis] (axis cs:-2,0) -- (axis cs:0,0);
\path[name path=left_down_fill_axis] (axis cs:-2,15) -- (axis cs:0,15);

\addplot [
thick,
color=gray,
fill=gray, 
fill opacity=0.1
]
fill between[
of=left_up_fill_axis and left_down_fill_axis,
soft clip={domain=-2:0},
];


\path[name path=right_up_fill_axis] (axis cs:13,0) -- (axis cs:15,0);
\path[name path=right_down_fill_axis] (axis cs:13,15) -- (axis cs:15,15);

\addplot [
thick,
color=gray,
fill=gray, 
fill opacity=0.1
]
fill between[
of=right_up_fill_axis and right_down_fill_axis,
soft clip={domain=13:15},
];


\addplot+ [
no markers,
dashed,
gray,
] coordinates {
(13,0)
(13,1)
(13,15)
};

\node at (axis cs:  0+1.3, 12) { $(\kappa_1^{n+1},\kappa_2^{n+1})$};
\node at (axis cs:  1+1, 8) { $(\kappa_1^{n},\kappa_2^{n})$};
\node at (axis cs:  3-1, 5-0.5) { $(\kappa_1^{n-1},\kappa_2^{n-1})$};
\node at (axis cs:  7-1, 3-0.5) { $(\kappa_1^{l_0+1},\kappa_2^{l_0+1})$};
\node at (axis cs:  10+0.5, 2+0.5) { $(\kappa_1^{l_0},\kappa_2^{l_0})$};
\node at (axis cs:  13+0.8, 1.5+0.5) { $(\kappa_1,\kappa_2)$};


\end{axis}
\end{tikzpicture}

\caption{The typical form of the graph of the function $K : \tilde{\kappa}_1 \mapsto \tilde{\kappa}_2$.}
\label{Knapp_figure_Legendre} 

\end{figure}


Denote by $\mathcal{L}$ the \emph{Legendre transformation} for a real-valued convex function $K$:
\begin{align*}
\mathcal{L}(K) [w] \coloneqq \sup_{u \in [0, \kappa_1]} (wu-K(u)).
\end{align*}
Then we may state the necessary conditions in the following way:
\begin{theorem}
\label{Introduction_thm_Knapp}
Let $\phi$ be as above and let us assume that the estimate \eqref{Introduction_FRP_mixed} holds true with $\rho(0) \neq 0$.
If $\phi$ is adapted, then we have the necessary condition
\begin{align*}
    \frac{1}{d(\phi) p_1'} + \frac{1}{p_3'} \leq \frac{1}{2 d(\phi)}.
\end{align*}
If $K$ is as above and $\phi$ is linearly adapted, but not adapted, then we necessarily have
\begin{align*}
\frac{1}{p_3'} \leq -\frac{1}{2} \mathcal{L}(K) \Big[\frac{2+2m}{p_1'} - 1\Big].
\end{align*}
\end{theorem}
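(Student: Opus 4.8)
The plan is to derive the necessary conditions by testing the restriction estimate \eqref{Introduction_FRP_mixed} against carefully chosen \emph{Knapp-type examples}, i.e., functions $f$ whose Fourier transforms are concentrated on small boxes (or curved slabs) naturally adapted to the geometry of $S$ near $0$. The left-hand side of \eqref{Introduction_FRP_mixed} is bounded below by testing against the portion of $S$ where $|\widehat f|$ is large, while the right-hand side is computed by the mixed-norm $L^{p_3}_{x_3}(L^{p_1}_{(x_1,x_2)})$ norm of $f$; comparing the two as the scaling parameter degenerates yields an inequality between $p_1', p_3'$ and the geometric invariants.

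\textbf{The adapted case.} First I would treat the adapted case, which is the model computation. Here one uses the dyadic anisotropic scaling associated with the weight $\kappa = (\kappa_1,\kappa_2)$ determining the principal face: for $\delta \to 0$ one considers the box where $|\xi_1| \lesssim \delta^{\kappa_1}$, $|\xi_2| \lesssim \delta^{\kappa_2}$, and on which $\phi(\xi_1,\xi_2) = O(\delta)$ by $\kappa$-homogeneity of the principal part, so the third (normal) variable is localized at scale $\delta$. Take $\widehat f$ to be a bump on this region; then $\big(\int |\widehat f|^2 \rho\,\mathrm d\sigma\big)^{1/2} \sim (\delta^{\kappa_1+\kappa_2})^{1/2} = \delta^{1/(2d(\phi))}$ since $\kappa_1+\kappa_2 = 1/d(\phi)$. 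On the physical side, $f$ is essentially a modulated bump adapted to the dual box: it has size $\sim \delta^{-(\kappa_1+\kappa_2)}\cdot\delta^{-1}$ (to make $\widehat f$ order one) — actually one normalizes differently — and its spatial support is a box of dimensions $\delta^{-\kappa_1}\times\delta^{-\kappa_2}$ in $(x_1,x_2)$ and $\delta^{-1}$ in $x_3$. Computing $\|f\|_{L^{p_3}_{x_3}(L^{p_1}_{(x_1,x_2)})}$ on this box and letting $\delta\to 0$, the exponent-matching forces $\frac{\kappa_1+\kappa_2}{p_1'} + \frac{1}{p_3'} \le \frac{\kappa_1+\kappa_2}{2}$, i.e. $\frac{1}{d(\phi)p_1'} + \frac{1}{p_3'} \le \frac{1}{2d(\phi)}$, as claimed.

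\textbf{The non-adapted, linearly adapted case.} Here the single scaling above is no longer the only obstruction: the augmented Newton polyhedron $\mathcal N^{\mathrm{res}}(\phi^a)$ encodes a whole family of critical weights $\tilde\kappa \in [0,\kappa_1]\times\{K(\tilde\kappa_1)\}$, each giving a supporting line and hence a potential Knapp example. For each such weight one runs the analogous computation, but now in the \emph{adapted} coordinates $y_2 = x_2 - \psi(x_1)$, where the shear $\psi$ (with leading term $b_1 x_1^m$, $m = \kappa_2/\kappa_1 \in \Z_{\ge 1}$) must be accounted for: the physical-space box gets sheared, which affects the $L^{p_1}_{(x_1,x_2)}$ norm because $p_1 = p_2$ makes this norm invariant under the shear in the tangential variables (this is precisely why we may work in adapted coordinates). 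Testing against the box associated with weight $(\tilde\kappa_1, K(\tilde\kappa_1))$ and tracking the dependence on $\tilde\kappa_1$, one obtains a family of linear constraints on $(1/p_1', 1/p_3')$ parametrized by $u = \tilde\kappa_1 \in [0,\kappa_1]$; the sharp constraint is the envelope of this family, which is exactly a Legendre transform. Concretely, setting $w = \frac{2+2m}{p_1'} - 1$ (the combination that arises from the shear exponent $m$ and the tangential integration in two variables), the family of inequalities $\frac{1}{p_3'} \le \frac{1}{2}(wu - K(u))$ must hold for the relevant $u$, and taking the supremum over $u \in [0,\kappa_1]$ — note the sign — gives $\frac{1}{p_3'} \le -\frac12 \mathcal L(K)[w]$. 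I would present the computation for a generic weight and then observe that the optimization over all admissible weights is the Legendre transform by definition.

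\textbf{Main obstacle.} The technical heart — and the step I expect to require the most care — is the non-adapted computation: correctly setting up the sheared Knapp box in adapted coordinates, verifying that on it $\phi^a$ (not merely its principal part) is of size $\sim\delta$ by controlling the error terms beyond the leading homogeneous piece (using finite type and the structure of $\psi$), and then faithfully converting the box dimensions in $(y_1,y_2,x_3)$ back to $(x_1,x_2,x_3)$ to compute the mixed norm — keeping straight that the $x_3$-direction is the outer $L^{p_3}$ norm while $(x_1,x_2)$ is the inner $L^{p_1}$ norm, and that the shear does \emph{not} distort the inner norm because $p_1=p_2$. A secondary point is to check that the endpoints $u=0$ and $u=\kappa_1$ of the Legendre transform reproduce, respectively, the horizontal-edge ("$h^{\mathrm{res}}$-type") constraint and the principal-face constraint, which serves as a consistency check that the stated inequality is the correct sharp form. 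These verifications are bookkeeping rather than conceptual, but they are where sign errors and factor-of-two slips would hide, so I would do them explicitly.
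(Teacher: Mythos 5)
Your adapted case is correct and matches the paper's own computation. Your non-adapted case arrives at the right Legendre-transform form, but the mechanism you describe has a gap that would block the derivation of the crucial factor $1+m$.

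The issue is your claim that ``$p_1=p_2$ makes this norm invariant under the shear in the tangential variables (this is precisely why we may work in adapted coordinates).'' Invariance of $L^{p_1}_{(x_1,x_2)}$ under measure-preserving coordinate changes is a statement on the \emph{physical} side; but the nonlinear shear $\xi_2 \mapsto \xi_2 - \psi(\xi_1)$ that produces adapted coordinates lives on the \emph{Fourier} side, and for nonlinear $\psi$ it has no dual action on $f$ (one picks up the factor $e^{-ix_2\psi(\xi_1)}$, which mixes the variables). So one cannot ``work in adapted coordinates'' in the sense of applying a shear to $f$ and then reading off a rectangular dual box; and a test function whose Fourier transform is a bump on the \emph{sheared} set $D_\varepsilon^a$ does not concentrate on a box in $x$-space because of the dispersion coming from the phase $x_1\xi_1 + x_2\psi(\xi_1)$. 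The paper avoids this entirely: it observes that, since $|\psi(\xi_1)| \lesssim |\xi_1|^m$ and $\tilde\kappa_2 \ge m\tilde\kappa_1$, the sheared region $D_\varepsilon$ is \emph{contained in} the rectangular box $\{|\xi_1| \le C_1\varepsilon^{\tilde\kappa_1},\ |\xi_2| \le C_2\varepsilon^{m\tilde\kappa_1}\}$ in the \emph{original} coordinates. It then takes $\widehat{\varphi_\varepsilon}$ to be a tensor-product bump equal to $1$ on this box times $\chi_0(x_3/(C\varepsilon))$, so that $\widehat{\varphi_\varepsilon}\ge 1$ on $D_\varepsilon$ still gives the lower bound $|D_\varepsilon|^{1/2}=\varepsilon^{(\tilde\kappa_1+\tilde\kappa_2)/2}$ on the left-hand side, while the mixed norm of $\varphi_\varepsilon$ is computed trivially from the product structure, yielding the exponent $(1+m)\tilde\kappa_1/p_1' + 1/p_3'$ on the right. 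It is precisely this containment in an \emph{unsheared} box of $\xi_2$-width $\varepsilon^{m\tilde\kappa_1}$ (not $\varepsilon^{\tilde\kappa_2}$) that produces the $(1+m)$ you want in $w = \tfrac{2+2m}{p_1'}-1$; your proposal as written does not supply a mechanism that yields it.

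Two smaller points: (i) the intermediate inequality should read $\frac{1}{p_3'} \le -\tfrac12(wu - K(u))$, not $+\tfrac12(wu-K(u))$; you then take the \emph{infimum} over $u$ of the right-hand side, which is $-\tfrac12\sup_u(wu-K(u))=-\tfrac12\mathcal L(K)[w]$, so the conclusion is right but the stated middle step and the phrase ``taking the supremum'' do not match. (ii) In the horizontal case $\tilde\kappa_1=0$ (the unbounded edge of $\mathcal{N}^{\mathrm{res}}(\phi^a)$) the paper introduces an auxiliary small parameter $\delta>0$ and takes a double limit $\varepsilon\to 0$ then $\delta\to 0$; this detail would also be needed in a complete write-up.
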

Recall that $d(\phi) = h(\phi)$ when $\phi$ is adapted.
The above theorem is a direct consequence of Proposition \ref{Knapp_main_proposition} in Section \ref{Knapp} below
and the discussion in Subsection \ref{Knapp_Legendre}.
The necessary conditions are depicted in Figure \ref{Knapp_figure_Knapp}.

The main result of this paper is:
\begin{theorem}
\label{Introduction_theorem_FRP}
Let $\phi$ be as above and $\rho$ supported in a sufficiently small neighbourhood of $0$.
If either
\begin{enumerate}
\item[(a)] $\phi$ is adapted in its original coordinates, or
\item[(b)]$\phi$ is non-adapted, $h_{\text{lin}}(\phi) < 2$, and $\phi$ is real analytic,
\end{enumerate}
then the estimate \eqref{Introduction_FRP_mixed} holds true for all $(1/p_1',1/p_3')$
as determined by Theorem \ref{Introduction_thm_Knapp}, except for the point $(1/p_1',1/p_3') = (0,1/(2h(\phi)))$
where it is false if $\rho(0) \neq 0$ and either $h(\phi) = 1$ or $\nu(\phi) = 1$.
\end{theorem}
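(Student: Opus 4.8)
The strategy is to follow the architecture of \cite{IM16} for the $L^p$–$L^2$ restriction theorem, replacing every application of a scalar $L^p$ estimate with its mixed-norm counterpart and tracking how the Knapp region deforms. We reduce \eqref{Introduction_FRP_mixed} by the standard $R^*R$ (duality) argument to a convolution estimate: it suffices to bound the operator $T_\lambda$ with kernel $\widehat{\rho\,\mathrm{d}\sigma}$ from $L^{p'}_{x_3}(L^{p_1'}_{(x_1,x_2)})$ to its dual mixed-norm space, or rather (after the usual reduction) to establish, for the Littlewood–Paley–type pieces of $\widehat{\rho\,\mathrm{d}\sigma}$ localized to $|\xi_3|\sim\lambda$ and suitable dyadic localizations in the tangential frequencies, a gain in $\lambda$ matched against the exponents $(1/p_1',1/p_3')$ dictated by Theorem \ref{Introduction_thm_Knapp}. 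The point of the mixed norm is that the $x_3$-integration (the direction normal to $S$) is handled by a $TT^*$/fractional-integration argument in the single variable $x_3$, as in \cite{GV92,KT98}: one writes $\|u\|_{L^{p_3}_{x_3}L^{p_1}_{(x_1,x_2)}}$, interpolates an $L^2_{x_3}L^2$ energy bound against an $L^\infty_{x_3}L^{p_1}$ dispersive bound, and the dispersive decay in $x_3$ comes from stationary phase / van der Corput applied to the oscillatory integral defining $\widehat{\rho\,\mathrm{d}\sigma}$ after the tangential variables have been frozen.

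Concretely I would proceed as follows. \emph{Step 1 (adapted case, part (a)).} After the resolution-of-singularities / dyadic decomposition adapted to $\mathcal{N}(\phi)$ from \cite{IM16}, each piece is a rescaled version of a model with non-vanishing curvature in a controlled sense; apply the mixed-norm Tomas–Stein argument on each piece, using the decay rate $|\widehat{\rho\,\mathrm{d}\sigma}|\lesssim(1+|\xi|)^{-1/h(\phi)}$ together with the one-dimensional fractional integration in $x_3$, and sum the pieces. Summability over the dyadic scales is exactly where the mixed-norm case diverges from \cite{IM16}: one does \emph{not} get absolute summability at the corner exponent, and instead one sums with a loss that is reabsorbed by complex interpolation using the oscillatory-sum lemmas cited in Subsection \ref{Oscillatory_auxiliary}. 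This yields \eqref{Introduction_FRP_mixed} for every $(1/p_1',1/p_3')$ in the closed region of Theorem \ref{Introduction_thm_Knapp} with the possible loss of the single vertex $(0,1/(2h(\phi)))$. \emph{Step 2 (reduction in the non-adapted case).} Invoke Proposition \ref{Adapted_proposition_reduction} to localize the problem to the region near the principal root jet of $\phi$; away from that region $\phi$ is ``adapted enough'' and Step 1's method applies.

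\emph{Step 3 (non-adapted case with $h_{\mathrm{lin}}(\phi)<2$, part (b)).} Here one uses the explicit description of $\mathcal{N}(\phi)$ and $\mathcal{N}^{\mathrm{res}}(\phi^a)$ from Subsection \ref{Knapp_h_lin_less_than_two}: since $h_{\mathrm{lin}}(\phi)<2$, the geometry of the augmented Newton polyhedron is simple enough that after Varchenko's shear $y_2=x_2-\psi(x_1)$ and a dyadic decomposition in $x_1$ adapted to the weight $\kappa$, each piece is, up to rescaling, either a nondegenerate (curvature-type) model or an Airy-type model; the Airy case is precisely what Section \ref{Section_Airy} is for, and one applies the refined one-dimensional van der Corput bound with an Airy-type $(1+\lambda)^{-1/3}$ decay. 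Real-analyticity enters to guarantee, via Puiseux expansions of the roots (à la \cite{PS97,PSS97}), that $\psi$ is the genuine principal root jet and that the root structure has only finitely many branches, each with a definite leading exponent; this is what Condition (R) was standing in for in Theorem \ref{thm_main_result_IM16}, and for $h_{\mathrm{lin}}(\phi)<2$ it holds automatically for analytic $\phi$. Summing the pieces against the Legendre-transform exponent $\mathcal{L}(K)$ — again with the non-absolute summation at the mixed-norm endpoint, repaired by interpolation — closes part (b) for all admissible $(1/p_1',1/p_3')$ except the vertex $(0,1/(2h(\phi)))$.

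\emph{Step 4 (the excluded vertex).} At $(1/p_1',1/p_3')=(0,1/(2h(\phi)))$, i.e.\ $p_1'=\infty$, $p_3'=2h(\phi)$, the necessary condition of Theorem \ref{Introduction_thm_Knapp} is sharp and, when $h(\phi)=1$ or $\nu(\phi)=1$, a refined Knapp example (or the known failure of the scalar endpoint in those degenerate configurations) shows the estimate fails when $\rho(0)\neq0$; this is established in Section \ref{Knapp}. Otherwise the vertex is either inside the open region or not claimed.

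The main obstacle, as the introduction itself flags, is the loss of absolute summability of the dyadic pieces at the mixed-norm endpoint: in \cite{IM16} the corresponding operators summed absolutely, whereas here the $x_3$-mixed norm makes the relevant operator norms only \emph{mildly} decaying (indeed scaling-invariant for certain kernel pieces), so one must run a complex-interpolation argument between a slightly-lossy $\ell^1$ bound and an $\ell^2$ orthogonality bound, using the oscillatory-sum estimates of Subsection \ref{Oscillatory_auxiliary}, to recover the sharp exponent up to the single excluded vertex. A secondary difficulty is the bookkeeping needed to show that the region swept out by interpolating the mixed-norm estimates on all the rescaled pieces coincides exactly with the polygon cut out by $\mathcal{L}(K)$ in Theorem \ref{Introduction_thm_Knapp}.
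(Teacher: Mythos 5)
Your high-level architecture—$R^*R$ duality, dyadic decompositions adapted to $\mathcal{N}(\phi)$, the Ginibre--Velo/Keel--Tao mechanism of interpolating an $L^2$ energy bound against a dispersive bound in $x_3$, reduction to the principal root jet, and Airy-type analysis in the worst case—matches the paper's plan. However, there is a genuine mislocation of the key difficulty, and it concerns part (a).

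You claim that already in the adapted case ``one does not get absolute summability at the corner exponent, and instead one sums with a loss that is reabsorbed by complex interpolation using the oscillatory-sum lemmas.'' This is not what happens. The paper's Proposition~\ref{Adapted_proposition_adapted} handles the adapted case with \emph{no} complex interpolation and no oscillatory-sum lemmas. For $h(\phi)=1$ it is classical; for $h(\phi)>1$, $\nu(\phi)=0$ it is immediate from Proposition~\ref{Adapted_general_considerations} and bilinear interpolation (since the decay $(1+|\xi|)^{-1/h(\phi)}$ has no log); and for $h(\phi)>1$, $\nu(\phi)=1$ the $\kappa$-dyadic decomposition is introduced solely to remove the logarithmic factor in the Fourier decay of the surface measure, after which each rescaled piece satisfies a uniform mixed-norm bound whose $2^{k(\cdot)}$ exponent vanishes exactly on the boundary line of \eqref{Knapp_adapted_mixed}. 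The resulting sum is indeed not absolutely convergent, but it is closed by the Littlewood--Paley theorem plus Minkowski's inequality (using $p_1=p_2\le 2$, $p_3\le 2$, and requiring $p_1>1$), exactly as in the $p_1=p_3$ case of \cite{IM16,IM11b}. So this part is \emph{not} where the mixed-norm problem diverges from \cite{IM16}.

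The divergence you are describing—loss of absolute summability that must be repaired by Stein's complex interpolation against the oscillatory-sum Lemmas~\ref{Oscillatory_auxiliary_one_parameter}--\ref{Oscillatory_auxiliary_two_parameter}—occurs only in the non-adapted case (part (b)), and only at specific parameter regimes (notably $\theta=1/4$ and $\theta=1/3$ in Sections~\ref{Section_h_lin_less_than_2}--\ref{Section_Airy}), where after the phase-space decomposition into pieces $\nu_j^\lambda$ the operator norms are merely scale-invariant. By placing that machinery in Step~1 you invert the logic of the proof: the adapted case is the easy one, and the whole point of the paper's reduction (Proposition~\ref{Adapted_proposition_reduction}) is to isolate the hard analysis to the region near the principal root jet. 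Two smaller points: at the excluded vertex $(0,1/(2h(\phi)))$ the failure is not via a Knapp example but via the sharpness Lemma~\ref{Mixed_norm_auxiliary_lemma_sharpness} combined with the precise asymptotic of Theorem~\ref{Oscillatory_auxiliary_decay_sharpness}; and the role of analyticity in part (b) is not really the Puiseux structure per se but the weaker consequence that a flat $b_0$ in the Siersma normal form must vanish identically, which is what licenses setting $\delta_3=0$ in the ``$n=\infty$'' cases.
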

In case (b) we shall actually prove the claim for a more general class of functions than is stated here.

The part (a) of the above theorem follows from Proposition \ref{Adapted_proposition_adapted},
and the part (b) follows from Theorem \ref{Section_h_lin_less_than_2_main_theorem}
Let us mention that in the case $h_{\text{lin}}(\phi) < 2$ it turns out that we always have $\nu(\phi) = 0$,
which will be important for the boundary point $(1/p_1',1/p_3') = (0,1/(2h(\phi)))$.

In this article we do not deal with the non-adapted case when $h_\text{lin}(\phi) \geq 2$ in its full generality.
Let us briefly comment how one can easily get some preliminary Fourier restriction estimates.
Namely, the abstract result from \cite{KT98} by Keel and Tao implies that we automatically have the Fourier restriction estimate
for the region labeled by $KT$ in Figure \ref{Knapp_figure_Knapp} below.
For details we refer to Proposition \ref{Adapted_general_considerations}.

One can combine this result with the case $p_1=p_3$ from Theorem \ref{thm_main_result_IM16}
and get by interpolation the region labeled by $IM$ in Figure \ref{Knapp_figure_Knapp}.



\begin{figure}[!h]
\centering

\begin{tikzpicture}
\begin{axis}[
xlabel=$1/p_1'$, ylabel=$1/p_3'$,
xmin= 0, xmax=15,
ymin= 0, ymax=15,
axis lines=middle,
xtick=\empty, ytick=\empty,
scale = 2.2,
extra x ticks={13.5},
extra x tick labels={
    $1/2$
},
extra y ticks={0, 6, 7.62, 10.8, 13.5},
extra y tick labels={
    $O$,
    $1/(2 h(\phi))$,
    $1/(2 h^\text{res}(\phi))$,
    $1/(2 d(\phi))$,
    $1/2$
}
]

\addplot+ [
mark size=1pt,
gray,
dashed,
mark options={color = gray, fill = gray},
mark = *,
] coordinates {
(0,0)
(4.87, 4.87)
(13, 13)
};

\addplot+ [
no marks,
gray,
dashed,
] coordinates {
(13.5,0)
(0,10.8)
};

\addplot+ [
no marks,
gray,
dashed,
] coordinates {
(13.5,0)
(0,7.62)
};

\addplot+ [
mark size=1pt,
solid,
thick,
gray,
mark options={color = gray, fill = gray},
mark = *,
] coordinates {
(0,6)
(3,5.5)
(6,4.5)
};

\addplot+ [
no marks,
densely dashed,
gray,
] coordinates {
(6,4.5)
(7,4)
};

\addplot+ [
no marks,
densely dashed,
gray,
] coordinates {
(9,3)
(10,2.5)
};

\addplot+ [
mark size=1pt,
solid,
thick,
gray,
mark options={color = gray, fill = gray},
mark = *,
] coordinates {
(10,2.5)
(12,1.2)
(13.5,0)
};

\path[name path=along_axis] (axis cs:0,0) -- (axis cs:13.5,0);

\addplot+ [
name path = along_KT,
no marks,
solid,
thick,
color = blue,
] coordinates {
(0,6)
(13.5,0)
};

\addplot+ [
name path = along_IM,
no marks,
solid,
thick,
color = red,
] coordinates {
(0,6)
(4.87, 4.87)
(13.5,0)
};

\path[name path=along_boundary] (axis cs:0,6) -- (axis cs:3,5.5) -- (axis cs:6,4.5) -- (axis cs:10,2.5) -- (axis cs:12,1.2) -- (axis cs:13.5,0);

\addplot
fill between[
of=along_axis and along_KT,
split,
soft clip={domain=0:13.5},
every segment no 0/.style={
pattern=north west lines,
pattern color=blue,
solid,
opacity = 0.3},
];

\addplot
fill between[
of=along_KT and along_IM,
split,
soft clip={domain=0:13.5},
every segment no 1/.style={
pattern=north east lines,
pattern color=red,
solid,
opacity = 0.6},
];

\addplot
fill between[
of=along_IM and along_boundary,
split,
soft clip={domain=0:13.5},
every segment no 1/.style={pattern=horizontal lines,
                        pattern color=gray,solid},
];

\node at (axis cs:  9,  8) {$p_1 = p_3$};

\node at (axis cs:  13.5, 0.5) {$P$};
\node at (axis cs:  12.3, 1.5) {$P_{l_0}$};
\node at (axis cs:  10.6, 2.8) {$P_{l_0+1}$};
\node at (axis cs:  6.5, 4.8) {$P_{l^a-1}$};
\node at (axis cs:  2.6, 5.9) {$P_{l^a}$};
\node at (axis cs:  0.3, 6.3) {$\tilde{P}$};

\node at (axis cs:  1.5, 3) {$\mathcal{P}$};

\node at (axis cs:  4, 1.5) {$KT$};
\node at (axis cs:  6, 3.8) {$IM$};

\end{axis}
\end{tikzpicture}

\caption{Necessary conditions in the $(1/p_1',1/p_3')$-plane.}
\label{Knapp_figure_Knapp} 

\end{figure}
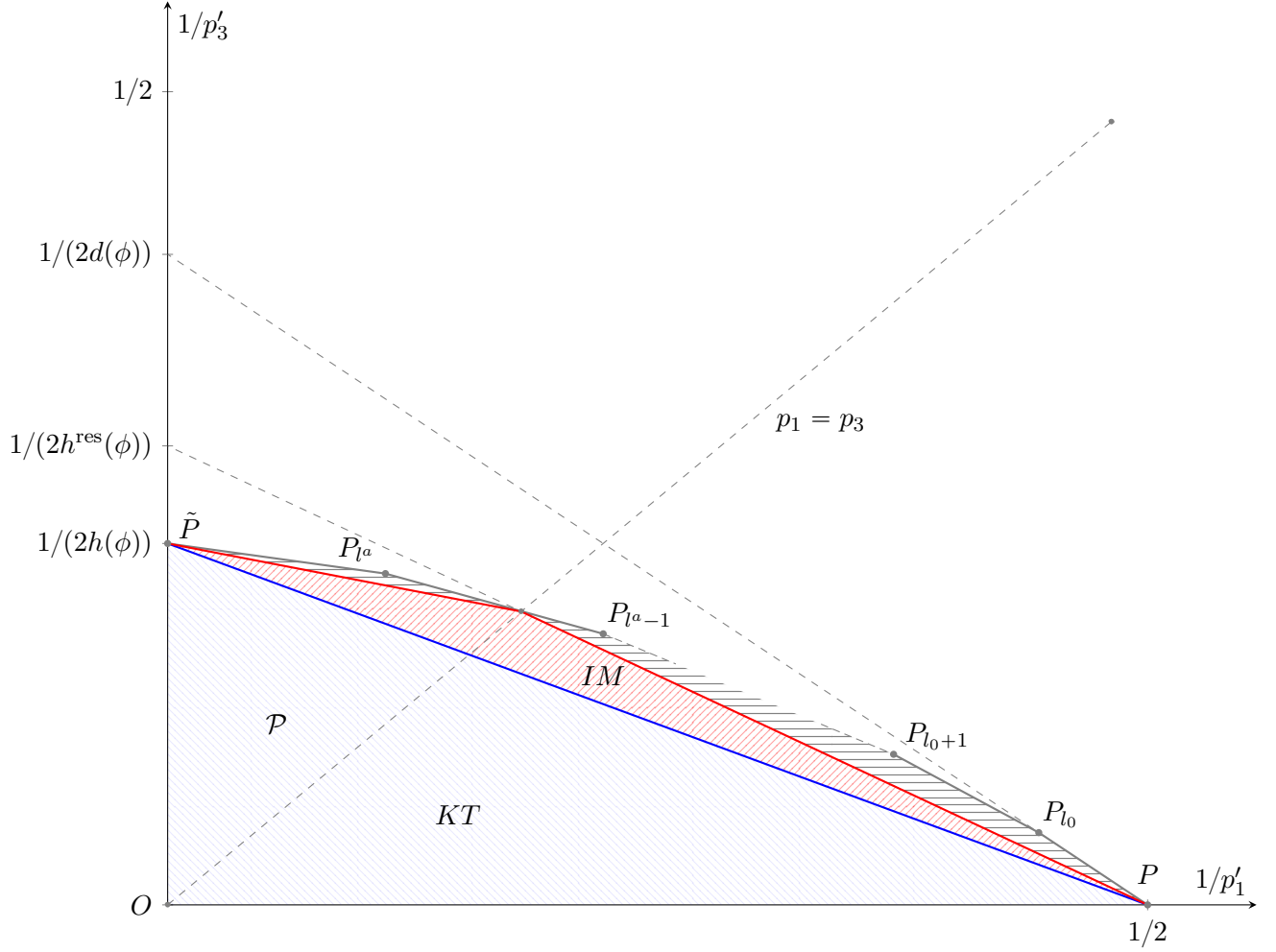



\section{Necessary conditions}
\label{Knapp}

In this section our assumptions on $\phi$ are as explained in Subsection \ref{Statement}.
Our goal is to find a complete set of necessary conditions on $p = (p_1,p_3) \in [1,\infty]^2$
for \eqref{Introduction_FRP_mixed} to hold true whenever $\rho(0) \neq 0$.
We shall reframe the conditions in several ways:
an ``explicit'' form in Subsection \ref{Knapp_explicit}, a form as in Theorem \ref{Introduction_thm_Knapp}
using the Legendre transformation of $K$ in Subsection \ref{Knapp_Legendre},
and a form when we fix the ratio $p_1'/p_3'$ in Subsection \ref{Knapp_ratio_fixed}.
In Subsection \ref{Knapp_h_lin_less_than_two} we discuss the normal forms of $\phi$ when $h_{\text{lin}}(\phi) < 2$
and determine explicitly the necessary conditions in this case.

\subsection{The explicit form}
\label{Knapp_explicit}

Let us first introduce some further notation.
If $\phi$ is linearly adapted but not adapted, then the adapted coordinate system is obtained through
\begin{align*}
y_1 = x_1, \quad y_2 = x_2 - \psi(x_1),
\end{align*}
where $\psi$ is the principal root jet.
The function $\phi$ is in the new coordinates $y$
\begin{align*}
\phi^a(y_1,y_2) \coloneqq \phi(y_1, y_2 + \psi(y_1)),
\end{align*}
i.e., $\phi^a$ represents the function $\phi$ in adapted coordinates.
We denote the vertices of $\mathcal{N}(\phi^a)$ by
\begin{align*}
(A_l, B_l) \in \N_0^2, \quad l = 0,1,2,\ldots,n,
\end{align*}
where $n \geq 0$ and we assume that the points are ordered from left to right, i.e., $A_{l-1} < A_l$ for $l = 1,2,\ldots,n$.
Next, we denote the compact edges of $\mathcal{N}(\phi^a)$ by
\begin{align*}
\gamma_l \coloneqq [(A_{l-1}, B_{l-1}), (A_l, B_l)], \quad l = 1,2,\ldots,n,
\end{align*}
and also the unbounded edges by
\begin{align*}
\gamma_{0} \coloneqq \{ (t_1,t_2) \in \R^2 : t_1 = A_{0}, t_2 \geq B_{0} \}, \\
\gamma_{n+1} \coloneqq \{ (t_1,t_2) \in \R^2 : t_1 \geq A_{n}, t_2 = B_{n} \},
\end{align*}
see Figure \ref{Knapp_figure_Newton}.
Let us denote by $L_l, \, l = 0, \ldots, n+1$, the associated lines on which these edges lie.
Each line $L_l$ is given by the equation
\begin{align*}
\kappa_1^l t_1 + \kappa_2^l t_2 = 1,
\end{align*}
where $(\kappa_1^l, \kappa_2^l) \in [0,\infty)^2$ is its associated weight.
We also introduce the quantity
\begin{align*}
a_l \coloneqq \frac{\kappa_2^l}{\kappa_1^l},
\end{align*}
which is related to the slope of $L_l$, namely, its slope is then equal to $-1/a_l$.
We obviously have $a_0 = 0$ and $a_{n+1} = \infty$.

Let us denote by $0 < m < \infty$ the leading exponent in the Taylor expansion of $\psi$.
We define $L_{\kappa}$ to be the unique line
\begin{align*}
\kappa_1 t_1 + \kappa_2 t_2 = 1
\end{align*}
satisfying $\kappa_2 = m \kappa_1$ and which is a supporting line to the Newton polyhedron $\mathcal{N}(\phi^a)$.
This line coincides with the line containing the principal face of $\mathcal{N}(\phi)$.
This follows from Varchenko's algorithm.
Next, let $l_0$ be such that
\begin{align*}
a_{l_0} > m \geq a_{l_0-1}.
\end{align*}
Note that the point $(A_{l_0-1},B_{l_0-1})$ is the right endpoint of the intersection
of $L_{\kappa}$ and $\mathcal{N}(\phi^a)$.
Varchenko's algorithm also shows that $B_{l_0-1} \geq A_{l_0-1}$.
We denote by $l^a$ the index such that $\kappa^{l^a}$ is associated to the principal face of $\mathcal{N}(\phi^a)$.
If $\pi(\phi^a)$ is a vertex, we take $l^a$ to be associated to the edge to the left of $\pi(\phi^a)$.
Note $l^a \geq l_0$.

We may now define the \emph{augmented Newton polyhedron} $\mathcal{N}^{\text{res}}(\phi^a)$ as the convex hull of the set
\begin{align*}
\mathcal{N}(\phi^a) \cup L_\kappa^+,
\end{align*}
where $L_\kappa^+$ denotes the ray
\begin{align*}
\Big\{ (t_1,t_2) \in L_{\kappa} : t_2 \geq B_{l_0-1} \Big\}.
\end{align*}

Before stating the necessary conditions analogous to \cite[Proposition 1.16]{IM16},
let us recall that in the case of the principal face being a vertex, we take $\kappa$ to determine
the line containing the edge of $\mathcal{N}(\phi)$ which has $\pi(\phi)$ as its left endpoint.
Furthermore recall that $m = \kappa_2 / \kappa_1 \geq 1$ and that
$\phi$ is linearly adapted in its original coordinates.

\begin{proposition}
\label{Knapp_main_proposition}
Let $\phi$ be as above.
Let $\rho \geq 0$, $\rho \in C_0^\infty(S)$, be a smooth compactly supported function with $\rho(0) \neq 0$,
and assume that the estimate \eqref{Introduction_FRP_mixed} holds true.
If $\phi$ is non-adapted, let us consider the nonlinear shear transformation
\begin{equation*}
y_1 \coloneqq x_1, \quad y_2 \coloneqq x_2 - \psi(x_1),
\end{equation*}
and let $\phi^a(y) \coloneqq \phi(y_1,y_2+\psi(y_1))$ be the function $\phi$ expressed in the adapted coordinates.
Then it necessarily follows that for all weights $(\tilde{\kappa}_1,\tilde{\kappa}_2)$
such that $L_{\tilde{\kappa}}$ is a supporting line to $\mathcal{N}^\text{res}(\phi^a)$ we have
\begin{align}
\label{Knapp_conditions_mixed_raw}
\frac{(1+m) \tilde{\kappa}_1}{p_1'} + \frac{1}{p_3'} \leq \frac{\tilde{\kappa}_1+\tilde{\kappa}_2}{2}.
\end{align}
This is equivalent to
\begin{align}
\label{Knapp_conditions_mixed}
\begin{split}
&\frac{(1+m) \kappa_1^l}{p_1'} + \frac{1}{p_3'} \leq \frac{\kappa_1^l+\kappa_2^l}{2},
 \qquad l = l_0,\ldots,n+1, \\
&\frac{(1+m) \kappa_1}{p_1'} + \frac{1}{p_3'} \leq \frac{\kappa_1+\kappa_2}{2}.
\end{split}
\end{align}
Furthermore, when $\phi$ is either adapted or non-adapted we have the conditions
\begin{align}
\label{Knapp_conditions_mixed_additional}
    \frac{1}{d(\phi) p_1'} + \frac{1}{p_3'} \leq \frac{1}{2 d(\phi)},
    \qquad \frac{1}{p_3'} \leq \frac{1}{2h(\phi)}.
\end{align}
In particular when $\phi$ is non-adapted
the first condition in \eqref{Knapp_conditions_mixed_additional} then coincides with the one in the second line of \eqref{Knapp_conditions_mixed}.
Moreover in this case the conditions in \eqref{Knapp_conditions_mixed} for $l > l^a$ are redundant, and
if we fix $p_3' = \infty$ (resp. $p_1' = \infty$) then all the conditions reduce to $p_1' \geq 2$ (resp. $p_3' \geq 2h(\phi)$).
\end{proposition}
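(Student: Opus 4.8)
The plan is to derive all the listed inequalities from Knapp-type examples, working with the dual (extension) formulation of \eqref{Introduction_FRP_mixed}. Since the dual of $L^{p_3}_{x_3}(L^{p_1}_{(x_1,x_2)})$ is $L^{p_3'}_{x_3}(L^{p_1'}_{(x_1,x_2)})$, the estimate \eqref{Introduction_FRP_mixed} implies the extension estimate
\[ \Bigl\| \int_{\R^2} e^{i(x_1\xi_1+x_2\xi_2+x_3\phi(\xi))}\,g(\xi)\,\rho(\xi)\,\mathrm{d}\xi \Bigr\|_{L^{p_3'}_{x_3}(L^{p_1'}_{(x_1,x_2)})} \lesssim \|g\|_{L^2(\rho\,\mathrm{d}\sigma)}, \]
where we have written $S$ as the graph of $\phi$ and used $\mathrm{d}\sigma\sim\mathrm{d}\xi_1\mathrm{d}\xi_2$ near $0$; because $\rho(0)\neq0$ we may treat $\rho$ as comparable to a positive constant on the small caps below. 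When $\phi$ is non-adapted I would first apply the Jacobian-one shear $\eta_1=\xi_1$, $\eta_2=\xi_2-\psi(\xi_1)$ in the integral, turning the phase into $x_1\eta_1+x_2\eta_2+x_2\psi(\eta_1)+x_3\phi^a(\eta)$.

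Next, for any weight $\tilde\kappa$ with $L_{\tilde\kappa}$ a supporting line of $\mathcal{N}^{\text{res}}(\phi^a)$, I would test the extension estimate with $g$ that in the $\eta$-variables is a smooth bump $\sim 1$ on the box $R_\delta=\{|\eta_1|\le c\delta^{\tilde\kappa_1},\ |\eta_2|\le c\delta^{\tilde\kappa_2}\}$, with $c$ and $\delta$ small. On $R_\delta$ one has $|\phi^a|\lesssim\delta$ (the standard estimate, valid because $\mathcal{N}(\phi^a)\subseteq\mathcal{N}^{\text{res}}(\phi^a)$ lies in the half-plane $\tilde\kappa\cdot\alpha\ge1$) and $|\psi(\eta_1)|\lesssim\delta^{m\tilde\kappa_1}$. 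The crux is identifying the dual box on which the phase is $O(1)$, so that the oscillatory integral is $\gtrsim|R_\delta|\sim\delta^{\tilde\kappa_1+\tilde\kappa_2}$ there: the constraints are $|x_1|\lesssim\delta^{-\tilde\kappa_1}$, $|x_3|\lesssim\delta^{-1}$, and $|x_2|\lesssim\min(\delta^{-\tilde\kappa_2},\delta^{-m\tilde\kappa_1})$, the last bound stemming precisely from the shear term $x_2\psi(\eta_1)$. Since every supporting weight of $\mathcal{N}^{\text{res}}(\phi^a)$ satisfies $\tilde\kappa_2\ge m\tilde\kappa_1$ — the steepest such supporting line being $L_\kappa$ (the line of the left-most edge of $\mathcal{N}^{\text{res}}(\phi^a)$), for which $\kappa_2=m\kappa_1$ — the binding bound is $|x_2|\lesssim\delta^{-m\tilde\kappa_1}$, so the $(x_1,x_2)$-box has measure $\sim\delta^{-(1+m)\tilde\kappa_1}$. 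Feeding the characteristic function of this box into the extension estimate gives $\delta^{\tilde\kappa_1+\tilde\kappa_2}\delta^{-(1+m)\tilde\kappa_1/p_1'}\delta^{-1/p_3'}\lesssim\delta^{(\tilde\kappa_1+\tilde\kappa_2)/2}$, and letting $\delta\to0^+$ yields \eqref{Knapp_conditions_mixed_raw}. In the adapted case ($\psi\equiv0$, $\phi^a=\phi$) the same computation with $\tilde\kappa=\kappa$ the principal weight, so $\kappa_1+\kappa_2=1/d(\phi)$, produces the first inequality of \eqref{Knapp_conditions_mixed_additional}; the second inequality of \eqref{Knapp_conditions_mixed_additional} then follows by discarding the nonnegative $1/p_1'$-term, using $d(\phi)=h(\phi)$ in the adapted case and $\kappa_1^{l^a}+\kappa_2^{l^a}=1/h(\phi)$ (the $l^a$-instance of \eqref{Knapp_conditions_mixed}) in the non-adapted case.

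It remains to translate \eqref{Knapp_conditions_mixed_raw} into \eqref{Knapp_conditions_mixed} and to verify the ``in particular'' assertions, all of which are convex-geometry bookkeeping on the augmented Newton polyhedron. The admissible weights are exactly the graph $\{(\tilde\kappa_1,K(\tilde\kappa_1)):\tilde\kappa_1\in[0,\kappa_1]\}$, a convex piecewise-linear curve with corners at the edge weights $\kappa^{l_0},\dots,\kappa^{n+1}$ and top endpoint $\kappa$; since the functional in \eqref{Knapp_conditions_mixed_raw} respects convex combinations of weights (the coefficients of a convex combination sum to $1$, so the $1/p_3'$-term is carried through), validity along the whole curve is equivalent to validity at these corners, i.e. to \eqref{Knapp_conditions_mixed}. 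For the non-adapted ``in particular'' claims: $\kappa_2=m\kappa_1$ gives $(1+m)\kappa_1=\kappa_1+\kappa_2=1/d(\phi)$, so the $\kappa$-line of \eqref{Knapp_conditions_mixed} is literally the first inequality of \eqref{Knapp_conditions_mixed_additional}; the conditions for $l>l^a$ are redundant because $\kappa_1^l\le\kappa_1^{l^a}$ (the $t_1$-intercepts $1/\kappa_1^l$ of the lines $L_l$ increase with $l$) and $\kappa_1^l+\kappa_2^l\ge1/h(\phi)=\kappa_1^{l^a}+\kappa_2^{l^a}$ (since $\phi^a$ is adapted, $(h(\phi),h(\phi))$ lies on $\partial\mathcal{N}(\phi^a)\subseteq\mathcal{N}^{\text{res}}(\phi^a)$, hence on the $\ge1$ side of every supporting line), whence the $l$-th inequality of \eqref{Knapp_conditions_mixed} follows from the $l^a$-th together with $1/p_1'\ge0$; and fixing $p_3'=\infty$, resp. $p_1'=\infty$, collapses all the conditions, via $\tilde\kappa_2\ge m\tilde\kappa_1$ resp. $\kappa_1^l+\kappa_2^l\ge1/h(\phi)$, to $p_1'\ge2$ resp. $p_3'\ge2h(\phi)$.

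The step I expect to be most delicate is the dual-box computation in the second paragraph: one must verify $|\phi^a|\lesssim\delta$ on $R_\delta$ uniformly — a routine but careful estimate on the Taylor remainder of the non-homogeneous $\phi^a$, with $c$ chosen small — and, above all, track correctly how the shear term $x_2\psi(\eta_1)$ caps the $x_2$-extent of the dual box. This is exactly the mechanism replacing the naive exponent $\tilde\kappa_1+\tilde\kappa_2$ by $(1+m)\tilde\kappa_1$, and it has no counterpart in the $p_1=p_3$ theory of \cite{IM16}. A further point requiring care is that the Knapp caps attached to the added ray $L_\kappa^+$, rather than to $\mathcal{N}(\phi^a)$ itself, are legitimate, which rests on $L_\kappa$ being a genuine supporting line of $\mathcal{N}(\phi^a)$ so that $|\phi^a|\lesssim\delta$ persists on those caps.
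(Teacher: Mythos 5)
Your proposal is correct and takes essentially the same approach as the paper: a Knapp-example argument in which a box of dimensions $\delta^{\tilde\kappa_1}\times\delta^{\tilde\kappa_2}$ in the sheared coordinates produces a dual box of dimensions $\delta^{-\tilde\kappa_1}\times\delta^{-m\tilde\kappa_1}\times\delta^{-1}$, with the shear term capping the $x_2$-extent and thus producing the exponent $(1+m)\tilde\kappa_1$ rather than $\tilde\kappa_1+\tilde\kappa_2$. You work on the extension ($R^*$) side while the paper plugs a Schwartz function whose Fourier transform is a bump on that dual box directly into the restriction estimate, but these are the same construction seen from opposite ends of the $R$/$R^*$ duality; the convex-combination reduction of \eqref{Knapp_conditions_mixed_raw} to \eqref{Knapp_conditions_mixed} and the monotonicity argument disposing of $l>l^a$ likewise coincide with the paper's (the paper phrases the latter via intercepts and slopes in the $(1/p_1',1/p_3')$-plane, you via the inequalities $\kappa_1^l\le\kappa_1^{l^a}$ and $\kappa_1^l+\kappa_2^l\ge 1/h(\phi)$, which is the same fact). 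The only step the paper handles separately that you do not mention explicitly is the horizontal edge $\tilde\kappa_1=0$, for which the paper replaces $\delta^{\tilde\kappa_1}$ by $\delta^\theta$ with $\theta>0$ small and lets $\theta\to 0$ after $\delta\to0$; this is a minor technical point and your direct computation in fact also yields the correct inequality there.
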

\begin{proof}
We give only a sketch of the proof since it follows the same lines as in \cite{IM16}.
Let us consider any supporting line $L_{\tilde{\kappa}}$
to the augmented Newton polyhedron $\mathcal{N}^\text{res}(\phi^a)$ for some weight $(\tilde{\kappa}_1,\tilde{\kappa}_2)$.
This particularly implies by the definition of the augmented Newton diagram that $\tilde{\kappa}_2 \geq m \tilde{\kappa}_1$.

We first consider the case when $\tilde{\kappa}_1 > 0$, i.e., when the associated line $L_{\tilde{\kappa}}$ is not horizontal.
In this case for each sufficiently small $\varepsilon > 0$ we define the region
\begin{align*}
D_\varepsilon^a \coloneqq \Big\{ y \in \R^2 : |y_1| \leq \varepsilon^{\tilde{\kappa}_1}, |y_2| \leq \varepsilon^{\tilde{\kappa}_2} \Big\},
\end{align*}
which in the original coordinate system has the form
\begin{align*}
D_\varepsilon \coloneqq \Big\{ x \in \R^2 : |x_1| \leq \varepsilon^{\tilde{\kappa}_1}, |x_2-\psi(x_1)| \leq \varepsilon^{\tilde{\kappa}_2} \Big\}.
\end{align*}
Using the $\phi^a_{\tilde{\kappa}}$ part of the Taylor approximation of $\phi^a$
one easily gets that for each $y \in D_\varepsilon^a$ we have $|\phi^a(y)| \leq C \varepsilon$.
Returning to the $x$ coordinates we obtain
\begin{align*}
|\phi(x)| \leq C \varepsilon, \quad x \in D_\varepsilon.
\end{align*}
But for $x \in D_\varepsilon$ one has
\begin{align*}
|x_2| \leq \varepsilon^{\tilde{\kappa}_2} + |\psi(x_1)| \lesssim \varepsilon^{\tilde{\kappa}_2} + \varepsilon^{m \tilde{\kappa}_1}
      \lesssim \varepsilon^{m \tilde{\kappa}_1},
\end{align*}
since $|\psi(x_1)| \lesssim |x_1|^{m}$ and $\tilde{\kappa}_2 \geq m \tilde{\kappa}_1$.
Therefore the region $D_\varepsilon$ is contained in the set where
$|x_1| \leq C_1 \varepsilon^{\tilde{\kappa}_1}$ and $|x_2| \leq C_2 \varepsilon^{m \tilde{\kappa}_1}$.
Thus we choose a Schwartz function $\varphi_\varepsilon$ which has its Fourier transform of the form
\begin{align*}
\widehat{\varphi_\varepsilon}(x_1,x_2,x_3) =
   \chi_0\Big(\frac{x_1}{C_1 \varepsilon^{\tilde{\kappa}_1}}\Big) \, \chi_0\Big(\frac{x_2}{C_2 \varepsilon^{m \tilde{\kappa}_1}}\Big)
   \, \chi_0\Big(\frac{x_3}{C \varepsilon}\Big),
\end{align*}
for some smooth compactly supported function $\chi_0$ which is identically $1$ on the interval $[-1,1]$.
Then in particular we have $\widehat{\varphi_\varepsilon}(x_1,x_2,\phi(x_1,x_2)) \geq 1$ on $D_\varepsilon$.

Now on the one hand, since $\rho(0) \neq 0$, we have
\begin{align*}
\Bigg( \int_S |\widehat{\varphi_\varepsilon}|^2 \rho \mathrm{d}\sigma \Bigg)^{1/2} \gtrsim |D_\varepsilon|^{1/2} = \varepsilon^{(\tilde{\kappa}_1+\tilde{\kappa}_2)/2},
\end{align*}
and on the other
\begin{equation*}
\Vert \varphi_\varepsilon \Vert_{L^{p_3}_{x_3} (L^{p_1}_{(x_1,x_2)})} \sim \varepsilon^{\frac{(1+m) \tilde{\kappa}_1}{p_1'} + \frac{1}{p_3'}}.
\end{equation*}
Plugging these into \eqref{Introduction_FRP_mixed} and letting $\varepsilon \to 0$ one obtains
\eqref{Knapp_conditions_mixed_raw} for the non-horizontal edges.

In the horizontal case $\tilde{\kappa}_1 = 0$ one only slightly changes the argument. Namely, one defines for a sufficiently small $\delta > 0$
\begin{align*}
D_\varepsilon^a \coloneqq \Big\{ y \in \R^2 : |y_1| \leq \varepsilon^{\delta}, |y_2| \leq \varepsilon^{\tilde{\kappa}_2} \Big\}.
\end{align*}
The associated set in the $x$ coordinates $D_\varepsilon$ is then contained in the box
determined by $|x_1| \leq \varepsilon^\delta$ and $|x_2| \leq \varepsilon^{m \delta}$.
Furthermore, using a Taylor series expansion, one can easily show that for $x \in D_\varepsilon$
we have again $|\phi(x)| \leq C \varepsilon$.
Now one proceeds as in the non-horizontal case, the only difference is that after taking the limit $\varepsilon \to 0$,
one also needs to take the limit $\delta \to 0$.

Let us now briefly explain why \eqref{Knapp_conditions_mixed_raw} and \eqref{Knapp_conditions_mixed} are equivalent.
We obviously have that \eqref{Knapp_conditions_mixed_raw} implies \eqref{Knapp_conditions_mixed}.
For the reverse implication we note that the $\tilde{\kappa}$'s considered in \eqref{Knapp_conditions_mixed}
are by definition precisely those for which the lines $L_{\tilde{\kappa}}$ contain the edges of the augmented Newton diagram.
This means that all the other supporting lines touch the augmented Newton diagram at only one point.
Now one just uses the fact that the associated weight $\tilde{\kappa}$ of such a supporting line $L_{\tilde{\kappa}}$
is obtained by a convex combination of weights associated to the edges which intersect at the point
through which $L_{\tilde{\kappa}}$ passes.
Thus, all the conditions in \eqref{Knapp_conditions_mixed_raw} can be obtained as
convex combinations of conditions in \eqref{Knapp_conditions_mixed}.

The proof of \eqref{Knapp_conditions_mixed_additional} is similar to the one for \eqref{Knapp_conditions_mixed_raw}.
One considers the set $D_\varepsilon$ defined by $\{ x \in \R^2 : |x_1| \leq \varepsilon^{\kappa_1}, |x_2| \leq \varepsilon^{\kappa_2} \}$
in the case when the principal face of $\mathcal{N}(\phi)$ is compact.
If it is not compact, then one uses $\{ x \in \R^2 : |x_1| \leq \varepsilon^{\delta}, |x_2| \leq \varepsilon^{\kappa_2} \}$.
Using the Taylor approximation of $\phi(x)$ one gets that for $x \in D_\varepsilon$ we have $|\phi(x)| \lesssim \varepsilon$.
The first condition in \eqref{Knapp_conditions_mixed_additional} is then obtained by plugging
\begin{align*}
\widehat{\varphi_\varepsilon}(x_1,x_2,x_3) =
   \chi_0\Big(\frac{x_1}{\varepsilon^{\kappa_1}}\Big) \, \chi_0\Big(\frac{x_2}{\varepsilon^{\kappa_2}}\Big)
   \, \chi_0\Big(\frac{x_3}{C \varepsilon}\Big),
\end{align*}
into the estimate \eqref{Introduction_FRP_mixed} in the compact case.
In the non-compact case we just change $\varepsilon^{\kappa_1}$ to $\varepsilon^\delta$.

In the adapted case, when $d(\phi) = h(\phi)$, we also get automatically the second condition from the first one.
Finally, as was mentioned at the beginning of this section, if $\phi$ is non-adapted
and if we take $l$ such that $\kappa^l$ is associated to the principal face of $\mathcal{N}(\phi^a)$,
then we have $h(\phi) = 1/(\kappa_1^l + \kappa_2^l)$.
Therefore the associated condition to this $l$ in \eqref{Knapp_conditions_mixed}
implies the second condition in \eqref{Knapp_conditions_mixed_additional}.

Let us now prove the remaining claims.
When $p_1' = \infty$, then all the conditions indeed reduce to
\begin{align*}
    \qquad \frac{1}{p_3'} \leq \frac{1}{2h(\phi)}
\end{align*}
since $\kappa_1^l+\kappa_2^l$ is minimal precisely for the edge $\gamma_{l^a}$
which intersects the bisectrix of $\mathcal{N}(\phi^a)$.
This is a direct consequence of the fact that the augmented Newton polyhedron is obtained by the intersection of
upper half-planes which have $L_{\kappa}$ and $L_l$'s with $\kappa_2^l/\kappa_1^l > m$ (i.e., for $l \geq l_0$) as boundaries,
and of the fact that the bisectrix intersects $L_l$ at $(1/(\kappa_1^l+\kappa_2^l), 1/(\kappa_1^l+\kappa_2^l))$.

When $p_3' = \infty$, then the condition
\begin{align*}
    \qquad \frac{1}{p_1'} \leq \frac{1}{2}
\end{align*}
is the strongest one; this is a direct consequence of $\kappa_2^l/\kappa_1^l > m = \kappa_2/\kappa_1$.

We finally prove that one does not need to consider all the conditions in the first row of \eqref{Knapp_conditions_mixed},
but only for $l = l_0,\ldots,l^a$ where $l^a$ is such that $\gamma_{l^a}$ is the principal face of $\mathcal{N}(\phi^a)$.
This follows from the following two facts.
Namely, we first note that the line in the $(1/p_1',1/p_3')$-plane given by
\begin{align}
\label{Knapp_conditions_mixed_lines}
\frac{\kappa_1^l+\kappa_2^l}{2} = \frac{(1+m) \kappa_1^l}{p_1'} + \frac{1}{p_3'}
\end{align}
intersects the axis $1/p_1' = 0$ at the point which has the $1/p_3'$ coordinate equal to $(\kappa_1^l+\kappa_2^l)/2$,
which is greater than $1/(2h(\phi))$ if $l \neq l^a$, by the previous discussion in the case $p_1' = \infty$.
And secondly, as $\kappa_1^l$ decreases when $l$ increases,
the slope of the line \eqref{Knapp_conditions_mixed_lines} in the $(1/p_1',1/p_3')$-plane increases with $l$ too.
Therefore, in the $(1/p_1',1/p_3')$-plane the lines given by \eqref{Knapp_conditions_mixed_lines} and corresponding to necessary conditions
associated to any $l$ with $l > l^a$ are lying above the line associated to $l^a$ in the area where $1/p_1' \geq 0$.
\end{proof}

The necessary conditions from Proposition \ref{Knapp_main_proposition} determine a polyhedron
in the $(1/p_1',1/p_3')$-plane which we denote by $\mathcal{P}$ (see Figure \ref{Knapp_figure_Knapp}).
Let us define the lines
\begin{align*}
\begin{split}
&\tilde{L}_{l} \coloneqq \Bigg\{ \Big(\frac{1}{p_1'}, \frac{1}{p_3'}\Big) : \frac{(1+m) \kappa_1^l}{p_1'} + \frac{1}{p_3'} = \frac{\kappa_1^l+\kappa_2^l}{2} \Bigg\},
 \qquad l = l_0,\ldots,n+1, \\
&\tilde{L} \coloneqq \Bigg\{ \Big(\frac{1}{p_1'}, \frac{1}{p_3'}\Big) : \frac{(1+m) \kappa_1}{p_1'} + \frac{1}{p_3'} = \frac{\kappa_1+\kappa_2}{2} \Bigg\},
\end{split}
\end{align*}
associated to the necessary conditions.
Using arguments similar as in the proof of Proposition \ref{Knapp_main_proposition},
or the Legendre transformation from the following Subsection \ref{Knapp_Legendre}, one can show that
the polyhedron $\mathcal{P}$ is of the form
\begin{align*}
\mathcal{P} = OPP_{l_0}P_{l_0+1}\ldots P_{l^a-1}P_{l^a}\tilde{P},
\end{align*}
i.e., the polyhedron with vertices $O, P, P_{l_0}, P_{l_0+1}, \ldots, P_{l^a-1}, P_{l^a}, \tilde{P}$,
where the point $O$ is the origin and the other points are as follows.
The point $P$ is $(1/2,0)$ and the point $\tilde{P}$ is $(0,1/(2h(\phi)))$.
The point $P_{l_0}$ is the intersection of $\tilde{L}$ and $\tilde{L}_{l_0}$, and
all the other points $P_{l}$ are given as intersections of the lines $\tilde{L}_{l}$ and $\tilde{L}_{l-1}$ for $l = l_0+1, \ldots, l^a$.
Hence, an easy calculation shows
\begin{align*}
P_{l_0} &= \frac{1}{2} \Bigg(\frac{1}{m+1}\Big( 1 + \frac{\kappa_2^{l_0}-\kappa_2}{\kappa_1^{l_0}-\kappa_1} \Big),
                             \kappa_2 - \kappa_1 \frac{\kappa_2^{l_0}-\kappa_2}{\kappa_1^{l_0}-\kappa_1} \Bigg),\\
P_{l  } &= \frac{1}{2} \Bigg(\frac{1}{m+1}\Big( 1 + \frac{\kappa_2^{l}-\kappa_2^{l-1}}{\kappa_1^{l}-\kappa_1^{l-1}} \Big),
                             \kappa_2^{l-1} - \kappa_1^{l-1} \frac{\kappa_2^{l}-\kappa_2^{l-1}}{\kappa_1^{l}-\kappa_1^{l-1}} \Bigg),
           \quad l = l_0+1, \ldots, l^a.
\end{align*}

As in the $p_1=p_3$ case considered in \cite{IM16}, we expect that the conditions from Proposition \ref{Knapp_main_proposition} are sharp.
This will of course follow if we prove that the Fourier restriction estimate is true within the range they determine.
In the adapted case, when $d(\phi) = h(\phi)$, the only condition we obtained was
\begin{equation}
\label{Knapp_adapted_mixed}
\frac{1}{h(\phi)} \frac{1}{p_1'} + \frac{1}{p_3'} \leq \frac{1}{2 h(\phi)}.
\end{equation}
This condition is sharp as will be shown in Section \ref{Adapted},
though sometimes the endpoint estimate on the $1/p_3'$ axis will not hold.

\subsection{The form using the Legendre transformation}
\label{Knapp_Legendre}

As already noted, the necessary conditions can be stated as
\begin{align*}
\frac{(1+m) \tilde{\kappa}_1}{p_1'} + \frac{1}{p_3'} \leq \frac{\tilde{\kappa}_1+\tilde{\kappa}_2}{2},
\end{align*}
for all $(\tilde{\kappa}_1,\tilde{\kappa}_2)$ such that $L_{\tilde{\kappa}}$ is a supporting line to the augmented Newton polyhedron of $\phi^a$.
This can be rewritten as
\begin{align*}
\frac{1}{p_3'} \leq -\frac{1}{2} \Bigg( \Big(\frac{2+2m}{p_1'} - 1 \Big)\tilde{\kappa}_1 - \tilde{\kappa}_2\Bigg).
\end{align*}
As in Subsection \ref{Statement} we denote by $K$ the function associating to each $\tilde{\kappa}_1 \in [0,\kappa_1]$
the $\tilde{\kappa}_2$ such that $L_{\tilde{\kappa}}$ is a supporting line to the augmented Newton polyhedron of $\phi^a$,
i.e., we have $\tilde{\kappa} = (\tilde{\kappa}_1, K_f(\tilde{\kappa}_1))$.
The Legendre transformation of $K$ is given by
\begin{align*}
\mathcal{L}(K) [w] \coloneqq \sup_{u \in [0, \kappa_1]} (wu-K(u)),
\end{align*}
and thus we have
\begin{align*}
\frac{1}{p_3'} \leq -\frac{1}{2} \mathcal{L}(K) \Big[\frac{2+2m}{p_1'} - 1\Big].
\end{align*}
We have depicted the graph of $K$ in Figure \ref{Knapp_figure_Legendre}.



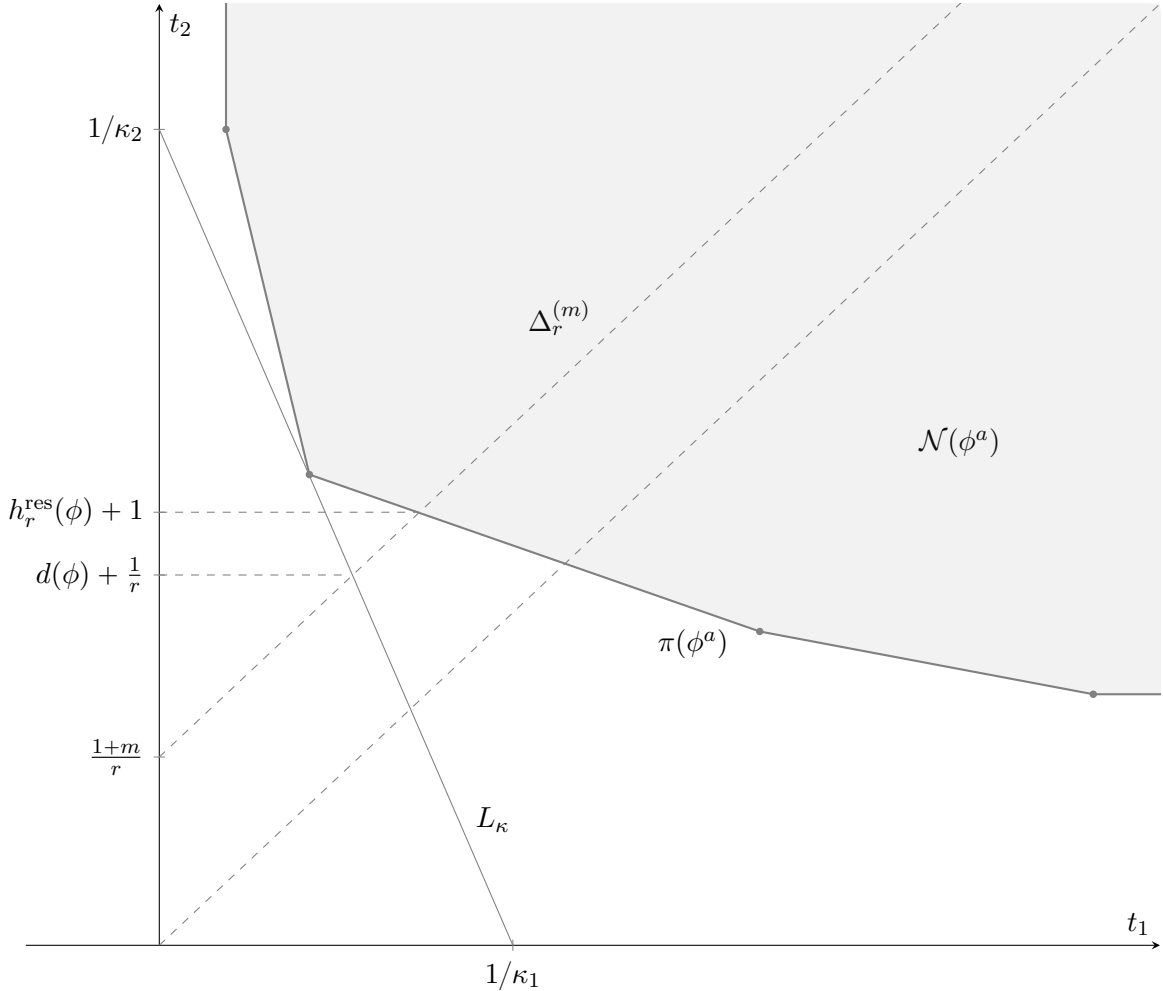
\begin{figure}[!h]
\centering

\begin{tikzpicture}
\begin{axis}[
xlabel=$t_1$, ylabel=$t_2$,
xmin= -2, xmax=15,
ymin= 0, ymax=15,
axis lines=middle,
xtick=\empty, ytick=\empty,
scale = 2.2,
extra x ticks={5.3},
extra x tick labels={
    $1/\kappa_1$
},
extra y ticks={3, 5.9, 6.9, 13},
extra y tick labels={
    $\frac{1+m}{r}$,
    $d(\phi)+\frac{1}{r}$,
    $h^{\text{res}}_r(\phi)+1$,
    $1/\kappa_2$
}
]

\addplot+ [
mark size=1pt,
solid,
thick,
gray,
mark options={color = gray, fill = gray},
mark = *,
] coordinates {
(1,16)
(1,13)
(2.25,7.5)
(9,5)
(14,4)
(16,4)
};

\addplot+ [
no marks,
solid,
gray,
mark options={color = gray, fill = gray},
mark = *,
] coordinates {
(0,13)
(5.3,0)
};

\node at (axis cs:  5, 2) {$L_{\kappa}$};

\addplot+ [
no marks,
dashed,
gray,
mark options={color = gray, fill = gray},
mark = *,
] coordinates {
(0,0)
(16,16)
};

\addplot+ [
no marks,
dashed,
gray,
mark options={color = gray, fill = gray},
mark = *,
] coordinates {
(0,3)
(13,16)
};

\node at (axis cs:  6, 10) {$\Delta^{(m)}_r$};

\addplot+ [
no marks,
dashed,
gray,
mark options={color = gray, fill = gray},
mark = *,
] coordinates {
(0,6.9)
(3.9,6.9)
};

\addplot+ [
no marks,
dashed,
gray,
mark options={color = gray, fill = gray},
mark = *,
] coordinates {
(0,5.9)
(2.8,5.9)
};

\node at (axis cs:  8, 4.8) {$\pi(\phi^a)$};

\node at (axis cs:  12,  8) {$\mathcal{N}(\phi^a)$};


\addplot+[
draw=none,
mark=none,
gray,
solid,
fill=gray, 
fill opacity=0.1]
coordinates {
(1,16)
(1,13)
(2.25,7.5)
(9,5)
(14,4)
(16,4)
(15,15)
};


\end{axis}
\end{tikzpicture}

\caption{The restriction height.}
\label{Knapp_figure_res_height} 

\end{figure}


\subsection{Conditions when the ratio is fixed}
\label{Knapp_ratio_fixed}

If we fix a ratio $r = p_1'/p_3' \in [0,\infty]$, then we are able to introduce a quantity slight more general
than the restriction height $h^{\text{res}}(\phi)$ introduced in \cite{IM16}.
We shall not use this quantity in this article, but it may prove useful when considering the mixed norm Fourier restriction
for functions $\phi$ with $h_{\text{lin}}(\phi) \geq 2$.
The cases $r \in \{0,\infty\}$ are not interesting since we shall prove the associated results in
Section \ref{Adapted} easily, so we assume that $r \in (0,\infty)$ is fixed. In this case the
conditions \eqref{Knapp_conditions_mixed} can be restated as
\begin{align*}
\frac{(1+m) \tilde{\kappa}_1}{r p_3'} + \frac{1}{p_3'} \leq \frac{\tilde{\kappa}_1+\tilde{\kappa}_2}{2},
\end{align*}
i.e.,
\begin{align*}
p_3' \geq 2 \, \frac{(1+m)\tilde{\kappa}_1+r}{r(\tilde{\kappa}_1+\tilde{\kappa}_2)},
\end{align*}
where again $\tilde{\kappa}$ is such that
$L_{\tilde{\kappa}}$ is a supporting line to the augmented Newton polyhedron $\mathcal{N}^\text{res}(\phi^f)$.
But now we notice that the number
\begin{align*}
\frac{(1+m)\tilde{\kappa}_1+r}{r(\tilde{\kappa}_1+\tilde{\kappa}_2)}
\end{align*}
is actually the $t_2$-coordinate of the intersection of the line $L_{\tilde{\kappa}}$ with the parametrised line
\begin{align*}
t \mapsto \Big( t - \frac{1+m}{r},t \Big)
\end{align*}
which we shall denote by $\Delta^{(m)}_r$.
This motivates us to define
\begin{align*}
h^l_r \coloneqq \frac{(1+m)\kappa_1^l+r}{r(\kappa_1^l+\kappa_2^l)}-1
\end{align*}
when $\kappa_2^l/\kappa_1^l > m$ (i.e., for $l \geq l_0$). Then if we define
\begin{align}
\label{Knapp_height_mixed_height}
h^{\text{res}}_r(\phi) \coloneqq \max\Bigg\{ d(\phi)+\frac{1}{r}-1, h^{l_0}_r, \ldots, h^{n+1}_r \Bigg\},
\end{align}
the conditions \eqref{Knapp_conditions_mixed} can be restated as the requirement that the inequalities
\begin{align}
\label{Knapp_conditions_r_height}
\begin{split}
p_1' &\geq 2r(1+h^{\text{res}}_r(\phi)),\\
p_3' &\geq 2(1+h^{\text{res}}_r(\phi)),
\end{split}
\end{align}
must hold necessarily true for all $r \in (0,\infty)$, along with the inequalities $p_1' \geq 2$ and $p_3' \geq 2h(\phi)$,
representing the respective cases $r = 0$ and $r = \infty$.

By definition, the restriction height $h^{\text{res}}(\phi)$ from \cite{IM16} coincides with $h^{\text{res}}_r(\phi)$ when $r = 1$,
and in the same way as in \cite{IM16} we see from \eqref{Knapp_height_mixed_height} that $h^{\text{res}}_r(\phi)+1$ can be read off
as the $t_2$-coordinate of the point where the line $\Delta^{(m)}_r$ intersects the augmented Newton diagram of $\phi^a$
(see Figure \ref{Knapp_figure_res_height}).



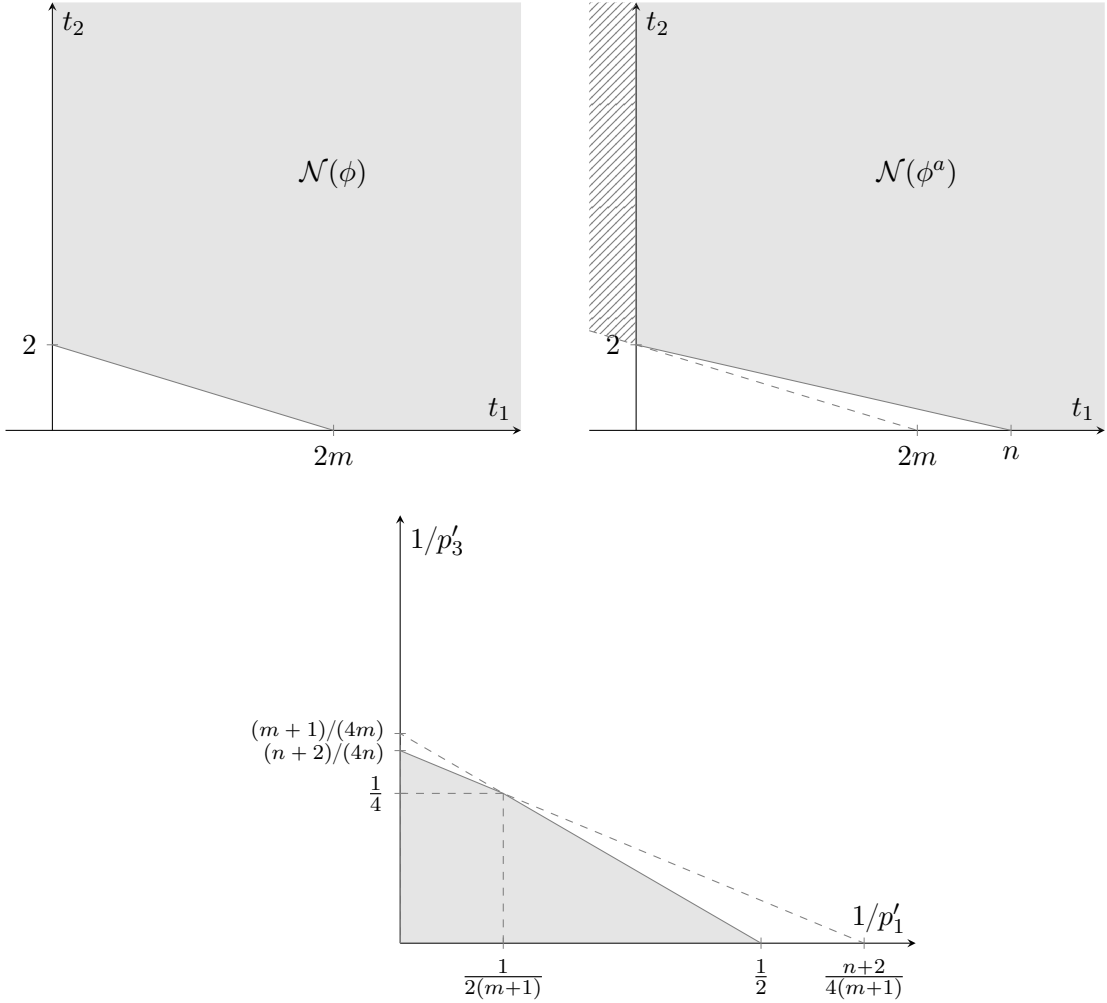
\begin{figure}[h]
\centering

\begin{tikzpicture}
\begin{axis}[
xlabel=$t_1$, ylabel=$t_2$,
xmin= -1, xmax=10,
ymin= 0, ymax=10,
axis lines=middle,
xtick=\empty, ytick=\empty,
extra x ticks={6},
extra x tick labels={
    $2m$
},
extra y ticks={2},
extra y tick labels={
    $2$
}
]

\addplot+[
mark=none,
gray,
solid,
fill=gray, 
fill opacity=0.2]
coordinates {
(0,2)
(6,0)
};

\addplot+[
draw=none,
mark=none,
gray,
solid,
fill=gray, 
fill opacity=0.2]
coordinates {
(0,10)
(0,2)
(6,0)
(10,0)
(10,10)
};

\node at (axis cs:  6,  6) {$\mathcal{N}(\phi)$};

\end{axis}
\end{tikzpicture}
\qquad
\begin{tikzpicture}
\begin{axis}[
xlabel=$t_1$, ylabel=$t_2$,
xmin= -1, xmax=10,
ymin= 0, ymax=10,
axis lines=middle,
xtick=\empty, ytick=\empty,
extra x ticks={6, 8},
extra x tick labels={
    $2m$,
    $n$
},
extra y ticks={2},
extra y tick labels={
    $2$
}
]

\addplot+[
mark=none,
gray,
solid,
fill=gray, 
fill opacity=0.2]
coordinates {
(0,2)
(8,0)
};

\addplot+[
draw=none,
mark=none,
gray,
solid,
fill=gray, 
fill opacity=0.2]
coordinates {
(0,10)
(0,2)
(8,0)
(10,0)
(10,10)
};

\addplot+[
mark=none,
gray,
dashed,
fill=gray, 
fill opacity=0.2]
coordinates {
(-1,2.33)
(6,0)
};

\addplot+[
draw=none,
mark=none,
gray,
solid,
pattern=north east lines,
pattern color=gray]
coordinates {
(-1,10)
(-1,2.33)
(0,2)
(0,10)
};

\node at (axis cs:  6,  6) {$\mathcal{N}(\phi^a)$};

\end{axis}
\end{tikzpicture}
\newline
\newline
\begin{tikzpicture}
\begin{axis}[
xlabel=$1/p'_1$, ylabel=$1/p'_3$,
xmin= 0, xmax=10,
ymin= 0, ymax=10,
axis lines=middle,
xtick=\empty, ytick=\empty,
extra x ticks={2, 7, 9},
extra x tick labels={
    $\frac{1}{2(m+1)}$,
    $\frac{1}{2}$,
    $\frac{n+2}{4(m+1)}$
},
extra y ticks={3.5, 4.5, 4.9},
extra y tick labels={
    $\frac{1}{4}$,
    {\scriptsize $(n+2)/(4n)$},
    {\scriptsize $(m+1)/(4m)$}
},
y tick label style={yshift={(\ticknum==1)*-0.1em}},
y tick label style={yshift={(\ticknum==2)*0.1em}}
]

\addplot+[
mark=none,
gray,
solid,
fill=gray, 
fill opacity=0.2]
coordinates {
(0,0)
(0,4.5)
(2,3.5)
(7,0)
};

\addplot+[
mark=none,
gray,
dashed,
] coordinates {
(0,0)
(0,4.9)
(2,3.5)
(9,0)
};

\addplot+ [
no marks,
dashed,
gray,
mark options={color = gray, fill = gray},
mark = *,
] coordinates {
(0,3.5)
(2,3.5)
};

\addplot+ [
no marks,
dashed,
gray,
mark options={color = gray, fill = gray},
mark = *,
] coordinates {
(2,0)
(2,3.5)
};

\end{axis}
\end{tikzpicture}

\caption{The Newton polyhedra associated to $A_{n-1}$ type singularity in the
(linearly adapted) original and adapted coordinates respectively, and the associated necessary conditions.}
\label{Knapp_figure_A} 

\end{figure}


\subsection{Necessary conditions when $h_\text{lin}(\phi)$ < 2}
\label{Knapp_h_lin_less_than_two}

In the case when $\phi$ is non-adapted and the linear height of $\phi$ is strictly less than $2$
it turns out that there are only two necessary conditions from Proposition \ref{Knapp_main_proposition}.
Namely, in this case we shall show that $l_0 = l^a$, and therefore the only conditions are
\begin{align*}
\begin{split}
\frac{(1+m) \kappa_1^{l^a}}{p_1'} + \frac{1}{p_3'} &\leq \frac{\kappa_1^{l^a}+\kappa_2^{l^a}}{2}, \\
\frac{(1+m) \kappa_1}{p_1'} + \frac{1}{p_3'} &\leq \frac{\kappa_1+\kappa_2}{2}.
\end{split}
\end{align*}
If we replace above the inequality signs with equality signs, we get two linear equations in $(1/p_1',1/p_3')$.
Let $(1/\mathfrak{p}_1',1/\mathfrak{p}_3')$ be the solution of this system.
We shall call $\mathfrak{p} = (\mathfrak{p}_1,\mathfrak{p}_3)$ \emph{the critical exponent}.
Then, by interpolation, it is sufficient to prove the Fourier restriction estimate \eqref{Introduction_FRP_mixed}
for the exponent $\mathfrak{p}$ and the endpoint exponents associated to the points lying on the axes, i.e., $(0,1/2)$ and $(1/(2h(\phi)),0)$.

In order to obtain what precisely the critical exponent $\mathfrak{p}$ is, we recall \cite[Proposition 2.11]{IM16}
which gives us explicit normal forms of $\phi$ in the case when $h_\text{lin}(\phi)$ < 2.
In the real analytic case these normal forms were derived in \cite{Si74} by D. Siersma.
\cite[Proposition 2.11]{IM16} states that there are two type of singularities, $A$ and $D$.

In the case of $A$ type singularity the form of the function $\phi$ is
\begin{equation}
\label{Knapp_A_form}
\phi(x_1,x_2) = b(x_1,x_2)(x_2-\psi(x_1))^2 + b_0(x_1).
\end{equation}
Here $\psi$, $b$, and $b_0$ are smooth functions such that $\psi(x_1) = cx_1^m + \mathcal{O}(x_1^{m+1})$
(with $c \neq 0$ and $m \geq 2$), $b(0,0) \neq 0$, and $b_0(x_1) = x_1^n \beta(x_1)$
(with either $\beta(0) \neq 0$ and $n \geq 2m+1$, or $b_0$ is flat, i.e., ``$n = \infty$'').
The function $\psi$ is the principal root jet of $\phi$. If $b_0$ is flat, this is $A_\infty$ type
singularity, and otherwise it is $A_{n-1}$ type singularity.
In adapted coordinates, the formula \eqref{Knapp_A_form} turns into
\begin{equation}
\label{Knapp_A_form_adapted}
\phi^a(y_1,y_2) = b^a(y_1,y_2) y_2^2 + b_0(y_1),
\end{equation}
where $b^a(y_1,y_2) = b(y_1,y_2+\psi(y_1))$, i.e., the function $b$ in $(y_1,y_2)$ coordinates.
From the formulas \eqref{Knapp_A_form} and \eqref{Knapp_A_form_adapted}
one can now determine the form of the Newton polyhedron of $\phi$ and $\phi^a$ (see Figure \ref{Knapp_figure_A}).
Reading off the Newton polyhedra we have
\begin{align*}
(\kappa_1, \kappa_2)             &= \Big( \frac{1}{2m}, \frac{1}{2} \Big), \quad\, d(\phi) = \frac{2m}{m+1}, \\
(\kappa_1^{l^a}, \kappa_2^{l^a}) &= \Big( \frac{1}{n},  \frac{1}{2} \Big), \qquad h(\phi) = \frac{2n}{n+2},
\end{align*}
and so the necessary conditions \eqref{Knapp_conditions_mixed} can be written as
\begin{align*}
\begin{split}
\frac{2}{p_1'} + \frac{4m}{m+1} \frac{1}{p_3'} &\leq 1, \\
\frac{4(m+1)}{n+2} \frac{1}{p_1'} + \frac{4n}{n+2} \frac{1}{p_3'} &\leq 1.
\end{split}
\end{align*}
Now an easy calculation shows that $(1/\mathfrak{p}_1',1/\mathfrak{p}_3') = (1/(2m+2),1/4)$,
i.e., we have determined the critical exponent.



\begin{figure}[h]
\centering

\begin{tikzpicture}
\begin{axis}[
xlabel=$t_1$, ylabel=$t_2$,
xmin= -1, xmax=10,
ymin= 0, ymax=10,
axis lines=middle,
xtick=\empty, ytick=\empty,
extra x ticks={1.3,5},
extra x tick labels={
    $1$,
    $2m+1$
},
extra y ticks={2.6,3.5,7},
extra y tick labels={
    $2$,
    $\frac{2m+1}{m}$,
    $4+k$
}
]

\addplot+[
mark=none,
gray,
solid,
]
coordinates {
(0,7)
(1.3,2.6)
(5,0)
};

\addplot+[
mark=none,
gray,
dashed,
]
coordinates {
(0,3.5)
(1.3,2.6)
};

\addplot+[
draw=none,
mark=none,
gray,
solid,
fill=gray, 
fill opacity=0.2]
coordinates {
(0,10)
(0,7)
(1.3,2.6)
(5,0)
(10,0)
(10,10)
};

\addplot+ [
no marks,
dashed,
gray,
mark options={color = gray, fill = gray},
mark = *,
] coordinates {
(0,2.6)
(1.3,2.6)
};

\addplot+ [
no marks,
dashed,
gray,
mark options={color = gray, fill = gray},
mark = *,
] coordinates {
(1.3,0)
(1.3,2.6)
};

\node at (axis cs:  6,  6) {$\mathcal{N}(\phi)$};

\end{axis}
\end{tikzpicture}
\qquad
\begin{tikzpicture}
\begin{axis}[
xlabel=$t_1$, ylabel=$t_2$,
xmin= -1, xmax=10,
ymin= 0, ymax=10,
axis lines=middle,
xtick=\empty, ytick=\empty,
extra x ticks={1.3,5,8},
extra x tick labels={
    $1$,
    $2m+1$,
    $n$
},
extra y ticks={2.6,3.5,7},
extra y tick labels={
    $2$,
    $\frac{2m+1}{m}$,
    $4+k$
}
]

\addplot+[
mark=none,
gray,
solid,
]
coordinates {
(0,7)
(1.3,2.6)
(8,0)
};

\addplot+[
mark=none,
gray,
dashed,
]
coordinates {
(0,3.5)
(1.3,2.6)
(5,0)
};

\addplot+[
draw=none,
mark=none,
gray,
solid,
fill=gray, 
fill opacity=0.2]
coordinates {
(0,10)
(0,7)
(1.3,2.6)
(8,0)
(10,0)
(10,10)
};

\addplot+[
draw=none,
mark=none,
gray,
solid,
pattern=north east lines,
pattern color=gray
] coordinates {
(-1,10)
(0,10)
(0,7)
(1.3,2.6)
(0,3.5)
(-1,4.25)
};

\addplot+ [
no marks,
dashed,
gray,
mark options={color = gray, fill = gray},
mark = *,
] coordinates {
(0,2.6)
(1.3,2.6)
};

\addplot+ [
no marks,
dashed,
gray,
mark options={color = gray, fill = gray},
mark = *,
] coordinates {
(1.3,0)
(1.3,2.6)
};

\node at (axis cs:  6,  6) {$\mathcal{N}(\phi^a)$};

\end{axis}
\end{tikzpicture}
\newline
\newline
\begin{tikzpicture}
\begin{axis}[
xlabel=$1/p'_1$, ylabel=$1/p'_3$,
xmin= 0, xmax=10,
ymin= 0, ymax=10,
axis lines=middle,
xtick=\empty, ytick=\empty,
extra x ticks={2, 7, 9},
extra x tick labels={
    $\frac{1}{4(m+1)}$,
    $\frac{1}{2}$,
    $\frac{n+1}{4(m+1)}$
},
extra y ticks={3.5, 4.5, 4.9},
extra y tick labels={
    $\frac{1}{4}$,
    {\scriptsize $(n+1)/(4n)$},
    {\scriptsize $(m+1)/(2(2m+1))$}
},
y tick label style={yshift={(\ticknum==1)*-0.1em}},
y tick label style={yshift={(\ticknum==2)*0.1em}}
]

\addplot+[
mark=none,
gray,
solid,
fill=gray, 
fill opacity=0.2]
coordinates {
(0,0)
(0,4.5)
(2,3.5)
(7,0)
};

\addplot+[
mark=none,
gray,
dashed,
] coordinates {
(0,0)
(0,4.9)
(2,3.5)
(9,0)
};

\addplot+ [
no marks,
dashed,
gray,
mark options={color = gray, fill = gray},
mark = *,
] coordinates {
(0,3.5)
(2,3.5)
};

\addplot+ [
no marks,
dashed,
gray,
mark options={color = gray, fill = gray},
mark = *,
] coordinates {
(2,0)
(2,3.5)
};

\end{axis}
\end{tikzpicture}

\caption{The Newton polyhedra associated to $D_{n+1}$ type singularity in the
(linearly adapted) original and adapted coordinates respectively, and the associated necessary conditions.}
\label{Knapp_figure_D} 

\end{figure}
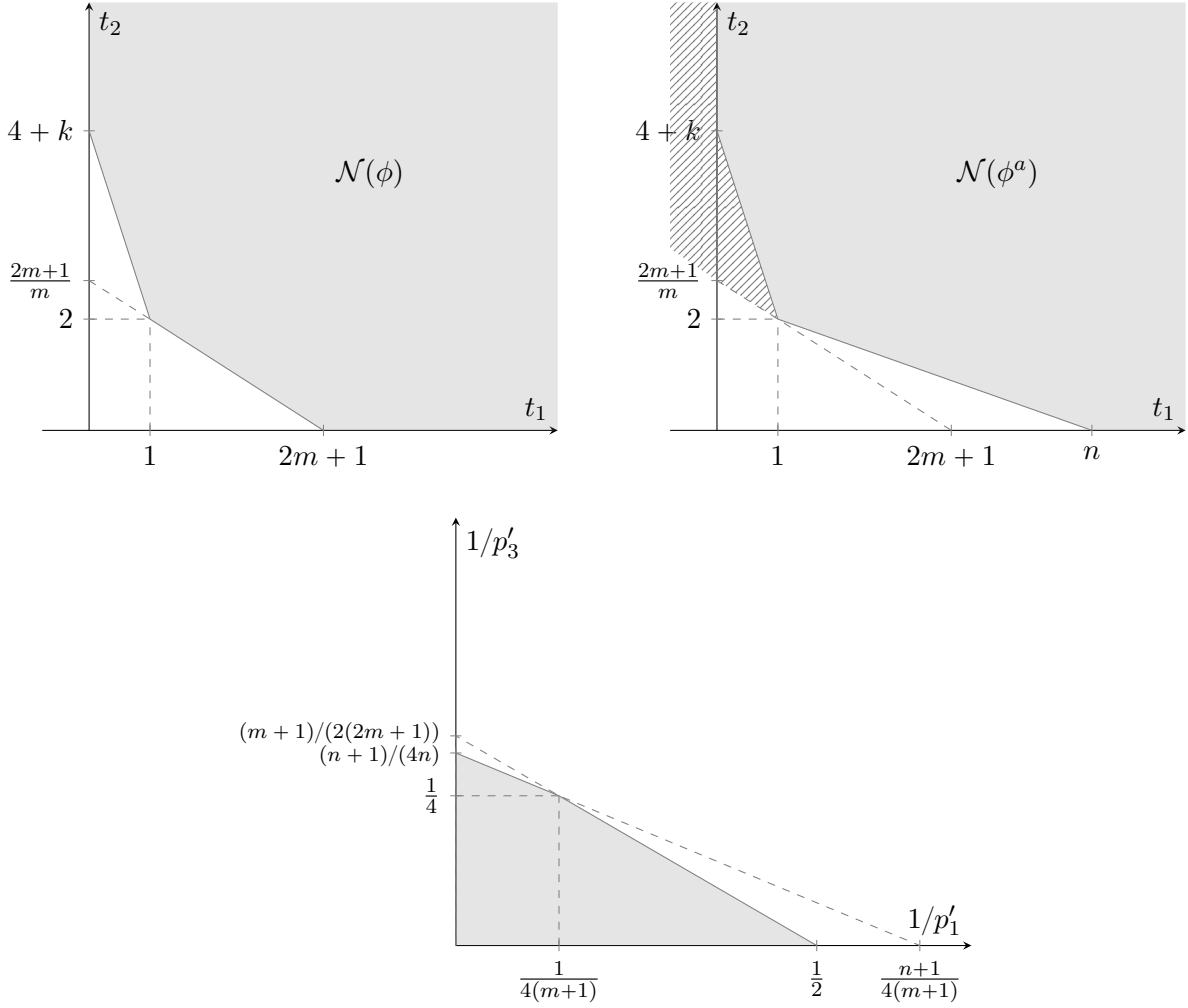


In the case of $D$ type singularity \cite[Proposition 2.11]{IM16} tells us that
\begin{align}
\label{Knapp_D_form}
\begin{split}
\phi(x_1,x_2)   &= (x_1 b_1(x_1,x_2) + x_2^2 b_2(x_2))(x_2-\psi(x_1))^2 + b_0(x_1),\\
\phi^a(y_1,y_2) &= \Big(y_1 b_1^a(y_1,y_2) + (y_2+\psi(y_1))^2 b_2(y_2+\psi(y_1))\Big) y_2^2 + b_0(y_1),
\end{split}
\end{align}
i.e., the function $b$ from $\eqref{Knapp_A_form}$ is now to be written as $b(x_1,x_2) = x_1 b_1(x_1,x_2) + x_2^2 b_2(x_2)$.
In this case we have the conditions $b_1(0,0) \neq 0$ and $b_2(x_2) = c_2 x_2^k + \mathcal{O}(x_2^{k+1})$.
Again $\psi(x_1) = cx_1^m + \mathcal{O}(x_1^{m+1})$ ($c \neq 0$, $m \geq 2$) and $b_0(x_1) = x_1^n \beta(x_1)$,
but now either $\beta(0) \neq 0$ and $n \geq 2m+2$, or $b_0$ is flat.
If $b_0$ is flat, this is $D_\infty$ type singularity, and otherwise it is $D_{n+1}$ type singularity.
The function $b_1^a$ is the function $b_1$ in $(y_1,y_2)$ coordinates.
Now one determines the form of the Newton polyhedra (see Figure \ref{Knapp_figure_D}) and reads off that
\begin{align*}
(\kappa_1, \kappa_2)             &= \Big( \frac{1}{2m+1}, \frac{m}{2m+1} \Big), \quad d(\phi) = \frac{2m+1}{m+1}, \\
(\kappa_1^{l^a}, \kappa_2^{l^a}) &= \Big( \frac{1}{n},  \frac{n-1}{2n} \Big), \,\qquad\qquad h(\phi) = \frac{2n}{n+1},
\end{align*}
Therefore, the necessary conditions can be written as
\begin{align*}
\begin{split}
\frac{2}{p_1'} + \frac{2(2m+1)}{m+1}\frac{1}{p_3'} &\leq 1, \\
\frac{4(m+1)}{n+1}\frac{1}{p_1'} + \frac{4n}{n+1}\frac{1}{p_3'} &\leq 1.
\end{split}
\end{align*}
Again, a simple calculation shows that $(1/\mathfrak{p}_1',1/\mathfrak{p}_3') = (1/(4m+4),1/4)$.

Note that in the $A_\infty$ and $D_\infty$ cases the necessary conditions form a right-angled trapezium
in the $(1/p_1',1/p_3')$-plane (easily seen by taking $n \to \infty$; one can also do a direct calculation).
As the critical exponents in the cases $A_{n-1}$ and $D_{n+1}$ do not depend on $n$,
one is easily convinced that the critical exponents of $A_\infty$ and $D_\infty$ cases are equal to
the respective critical exponents of $A_{n-1}$ and $D_{n+1}$.


\section{Auxiliary results}
\label{Technical_results}


\subsection{Reduction to the case $\nabla \phi(0) = 0$}
\label{Basic_assumptions_phi}

In Subsection \ref{Notation_and_assumptions} we mentioned that one can always reduce
the mixed normed Fourier restriction problem to the case when $\nabla \phi(0) = 0$,
despite rotational invariance not being at one's disposal.
Let us justify this. Consider the linear transformation
\begin{align*}
L(x_1,x_2,x_3) \coloneqq (x_1,x_2,x_3 + \partial_1 \phi(0) x_1 + \partial_2 \phi(0) x_2)
\end{align*}
whose inverse and transpose are
\begin{align*}
L^{-1}(x_1,x_2,x_3) = (x_1,x_2,x_3 - \partial_1 \phi(0) x_1 - \partial_2 \phi(0) x_2), \\
L^t(x_1,x_2,x_3)    = (x_1 + \partial_1 \phi(0) x_3, x_2 + \partial_2 \phi(0) x_3, x_3).
\end{align*}
Plugging in the function $f \circ L^t$ into the expression of
the mixed norm Fourier restriction estimate \eqref{Introduction_FRP_mixed} we obtain
\begin{equation*}
\Bigg( \int |\FT (f \circ L^t)|^2 (\xi,\phi(\xi)) \rho(\xi) \sqrt{1+|\nabla \phi(\xi)|^2} \mathrm{d} \xi \Bigg)^{1/2} \leq
\Const_{p} \Vert f \circ L^t \Vert_{L^{p_3}_{x_3} (L^{p_1}_{(x_1,x_2)})}.
\end{equation*}
Now one just notices that $\Vert f \circ L^t \Vert_{L^{p_3}_{x_3} (L^{p_1}_{(x_1,x_2)})} = \Vert f \Vert_{L^{p_3}_{x_3} (L^{p_1}_{(x_1,x_2)})}$,
and that
\begin{align*}
|\FT (f \circ L^t)|^2 (\xi,\phi(\xi))
   &= |\FT f|^2 (L^{-1}(\xi,\phi(\xi))) \\
   &= |\FT f|^2 (\xi,\phi(\xi) - \xi \cdot \nabla \phi(0)),
\end{align*}
since the determinant of $L$ is $1$.
Thus the estimate \eqref{Introduction_FRP_mixed} with the function $\phi$ is equivalent
(up to a slight change in amplitude due to the Jacobian factor $\sqrt{1+|\nabla \phi(\xi)|^2}$)
to the same estimate with the function $\phi$ replaced by the function $\xi \mapsto \phi(\xi) - \xi \cdot \nabla\phi(0)$,
which has gradient $0$ at the origin.


\subsection{Auxiliary results related to oscillatory sums and integrals}
\label{Oscillatory_auxiliary}

We shall often need the following two one-dimensional oscillatory integral results.
The first one is a van der Corput-type estimate used in \cite{IM16} and originating in the works of
van der Corput \cite{vdC21}, G.I. Arhipov \cite{AKC79}, and J.E. Bj\"ork (as noted in \cite{Do77}).
\begin{lemma}
\label{Oscillatory_auxiliary_van_der_corput}
Let $M \geq 2$ be an integer and let $f \in C^M(I)$ be a real-valued function on the interval $I \subset \R$.
Let us assume that either
\begin{enumerate}
\item[(i)]
$|f^{(M)}(s)| \geq 1$ for every $s \in I$, or
\item[(ii)]
f is of polynomial type $M \geq 2$, that is, $I$ is compact and there are positive constants $c_1, c_2$ such that
\begin{align*}
c_1 \leq \sum_{j=1}^M |f^{(j)}(s)| \leq c_2, \quad \text{for every} \,\, s \in I.
\end{align*}
\end{enumerate}
Then there exists a constant $C$ which depends only on $M$ in case (i), and on $M$, $c_1$, $c_2$, and $I$ in case (ii),
such that for every $\lambda \in \R$ we have

\begin{align*}
\Big| \int_I e^{i\lambda f(s)} g(s) \mathrm{d}s \Big| \leq C(\Vert g\Vert_{L^\infty(I)} + \Vert g'\Vert_{L^1(I)}) |\lambda|^{-1/M},
\end{align*}
for any $L^\infty(I)$ function $g$ with an integrable derivative on $I$.
Furthermore, if $G \in L^1(\R)$ is a nonnegative function
which is majorized by a function $H \in L^1(\R)$ such that $\widehat{H} \in L^1(\R)$,
then for the same constant $C$ as above we have
\begin{align*}
\int_I G(\lambda f(s)) \mathrm{d}s \leq C (\Vert H\Vert_{L^1(\R)}+\Vert \widehat{H}\Vert_{L^1(\R)}) |\lambda|^{-1/M}.
\end{align*}
\end{lemma}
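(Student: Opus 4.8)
The plan is to treat the three assertions in turn. The oscillatory estimate with an amplitude under hypothesis (i) reduces to the classical van der Corput lemma by an integration by parts; hypothesis (ii) reduces to (i) by a subdivision of $I$ into boundedly many pieces; and the averaging estimate is deduced from the oscillatory one by Fourier inversion. First I would recall the classical statement: for $M\ge 2$, if $|f^{(M)}(s)|\ge 1$ for all $s\in I$, then for every subinterval $J\subseteq I$ one has $\big|\int_J e^{i\lambda f(s)}\,\mathrm{d}s\big|\le C_M|\lambda|^{-1/M}$ with $C_M$ depending only on $M$ --- proved by the usual induction on $M$, using that $f^{(M-1)}$ is strictly monotone (hence has at most one zero), splitting $J$ at that zero, excising a $\delta$-neighbourhood of it, applying the order $M-1$ estimate to $f/\delta$ on the remainder, and optimising $\delta\sim|\lambda|^{-1/M}$. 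To insert the amplitude, write $I=[a,b]$ (approximating by compact subintervals if $I$ is not closed or is unbounded), put $F(t):=\int_a^t e^{i\lambda f(s)}\,\mathrm{d}s$, so that $\Vert F\Vert_{L^\infty(I)}\le C_M|\lambda|^{-1/M}$ by the above applied to each $[a,t]$. Since $g'\in L^1(I)$ the function $g$ is (a.e. equal to) an absolutely continuous function and $F(a)=0$, so integration by parts gives
\[
\int_I e^{i\lambda f(s)} g(s)\,\mathrm{d}s \;=\; F(b)g(b) - \int_a^b F(t)\,g'(t)\,\mathrm{d}t,
\]
whose modulus is at most $\Vert F\Vert_{L^\infty(I)}\big(|g(b)|+\Vert g'\Vert_{L^1(I)}\big)\le C_M|\lambda|^{-1/M}\big(\Vert g\Vert_{L^\infty(I)}+\Vert g'\Vert_{L^1(I)}\big)$.

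For case (ii), since $\sum_{j=1}^M|f^{(j)}|\ge c_1$ on the compact interval $I$ with each $|f^{(j)}|\le c_2$, a subdivision argument due to Arhipov (see \cite{AKC79}, as used in \cite{IM16}) partitions $I$ into at most $N=N(M)$ closed subintervals $I_1,\dots,I_N$ on each of which some fixed derivative $f^{(k_\nu)}$, $1\le k_\nu\le M$, satisfies $|f^{(k_\nu)}|\ge c$ throughout for a constant $c=c(M,c_1,c_2)>0$, with $f'$ in addition monotone on $I_\nu$ whenever $k_\nu=1$. On such an $I_\nu$ I would apply, if $k_\nu\ge 2$, the estimate from case (i) of order $k_\nu$ to $f/c$ (so $|\lambda|$ is replaced by $c|\lambda|$), and if $k_\nu=1$ the classical first-order van der Corput estimate with amplitude (valid because $f'$ is monotone of modulus $\ge c$), together with $\Vert g\Vert_{L^\infty(I_\nu)}\le\Vert g\Vert_{L^\infty(I)}$ and $\Vert g'\Vert_{L^1(I_\nu)}\le\Vert g'\Vert_{L^1(I)}$; this yields a bound $C|\lambda|^{-1/k_\nu}\big(\Vert g\Vert_{L^\infty(I)}+\Vert g'\Vert_{L^1(I)}\big)$. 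For $|\lambda|\ge 1$ this is at most $C|\lambda|^{-1/M}\big(\Vert g\Vert_{L^\infty(I)}+\Vert g'\Vert_{L^1(I)}\big)$ since $k_\nu\le M$, while for $|\lambda|\le 1$ the integral over $I_\nu$ is trivially at most $|I_\nu|\,\Vert g\Vert_{L^\infty(I)}\le |I|\,|\lambda|^{-1/M}\big(\Vert g\Vert_{L^\infty(I)}+\Vert g'\Vert_{L^1(I)}\big)$. Summing the $N$ pieces gives the claim, with a constant depending on $M$, $c_1$, $c_2$ and $I$.

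For the averaging estimate I would use Fourier inversion: writing $H(y)=c\int_\R \widehat H(\tau)e^{iy\tau}\,\mathrm{d}\tau$ (with $c$ the normalisation constant) and using $0\le G\le H$ together with Fubini --- justified by first integrating over $I\cap[-R,R]$ and letting $R\to\infty$ by monotone convergence --- one gets
\[
\int_I G(\lambda f(s))\,\mathrm{d}s \;\le\; \int_I H(\lambda f(s))\,\mathrm{d}s \;=\; c\int_\R \widehat H(\tau)\Big(\int_I e^{i\lambda\tau f(s)}\,\mathrm{d}s\Big)\,\mathrm{d}\tau.
\]
Applying the oscillatory estimate just proved with $g\equiv 1$ (so $\Vert g\Vert_{L^\infty(I)}+\Vert g'\Vert_{L^1(I)}=1$) and with $\lambda$ replaced by $\lambda\tau$, the inner integral is at most $C|\lambda\tau|^{-1/M}$, so the whole expression is bounded by
\[
C|\lambda|^{-1/M}\int_\R |\widehat H(\tau)|\,|\tau|^{-1/M}\,\mathrm{d}\tau \;\lesssim\; |\lambda|^{-1/M}\big(\Vert H\Vert_{L^1(\R)}+\Vert\widehat H\Vert_{L^1(\R)}\big),
\]
where the last bound splits the $\tau$-integral at $|\tau|=1$: for $|\tau|\ge 1$ one uses $|\tau|^{-1/M}\le 1$ and $\widehat H\in L^1(\R)$, and for $|\tau|<1$ one uses $\Vert\widehat H\Vert_{L^\infty(\R)}\lesssim\Vert H\Vert_{L^1(\R)}$ together with the integrability of $|\tau|^{-1/M}$ near $0$ (since $1/M<1$).

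The main obstacle is the combinatorial subdivision lemma used in case (ii): showing that $I$ splits into a number of subintervals bounded solely in terms of $M$, on each of which a single derivative $f^{(k)}$ with $k\le M$ is uniformly bounded away from zero (and $f'$ monotone when $k=1$). This is the genuine content of the statement, whereas the integration-by-parts and Fourier-inversion steps are routine; I would therefore carry out this part by following Arhipov's inductive argument on $M$ rather than reproving it from scratch.
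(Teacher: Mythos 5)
The paper does not prove this lemma: it is stated as \cite[Lemma 2.1]{IM16} (with a typo corrected in a footnote), and both here and in \cite{IM16} the reader is referred to van der Corput, Arhipov--Karacuba--\v{C}ubarikov, and Bj\"ork (via Domar) for the proof. So there is no proof in the present paper to compare against. On its own merits, your reduction of the amplitude estimate in case~(i) to the amplitude-free classical van der Corput estimate via $F(t)=\int_a^t e^{i\lambda f}$ and integration by parts is correct, as is the deduction of the averaging estimate from the oscillatory one by Fourier inversion of $H$ plus Fubini, with the $\tau$-integral split at $|\tau|=1$.

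The gap is in the subdivision claim of case~(ii). You assert a partition of $I$ into $N=N(M)$ subintervals on each of which some $|f^{(k_\nu)}|\ge c(M,c_1,c_2)$, with $f'$ monotone whenever $k_\nu=1$, and you repeat at the end that $N$ should be ``bounded solely in terms of $M$''. Even the weaker statement fails: for $M=2$, Rolle's theorem together with $|f''|\le c_2$ forces the zeros of $f'$ to be at least $2c_1/c_2$ apart, so their number is of order $|I|c_2/c_1$ and grows with $|I|$ and $c_2/c_1$; since $C$ may depend on $I,c_1,c_2$ this is only a misstatement. The monotonicity requirement, however, cannot be met with \emph{any} bound $N(M,c_1,c_2,|I|)$. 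Take $M=2$, $I=[0,1]$, and
\begin{equation*}
f''(s)=\tfrac{1}{10}\sin(ks),\qquad f'(s)=1-\tfrac{1}{10k}\cos(ks),
\end{equation*}
so that $|f'(s)|+|f''(s)|\in[0.9,\,1.2]$ uniformly in $k$ while $f''$ changes sign roughly $k/\pi$ times; any partition of $[0,1]$ into intervals on which $f'$ is monotone therefore has $\gtrsim k$ pieces, and $k$ is not controlled by $(M,c_1,c_2,|I|)$.

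The fix is to drop the monotonicity requirement: on a piece $I_\nu$ where $|f'|\ge c$, a direct integration by parts and the bound $\int_{I_\nu}|f''|\le c_2|I|$ give
\begin{equation*}
\Big|\int_{I_\nu}e^{i\lambda f(s)}\,\mathrm{d}s\Big|\le\frac{1}{|\lambda|}\Big(\frac{2}{c}+\int_{I_\nu}\frac{|f''(s)|}{|f'(s)|^2}\,\mathrm{d}s\Big)\le\frac{1}{|\lambda|}\Big(\frac{2}{c}+\frac{c_2|I|}{c^2}\Big),
\end{equation*}
which is admissible in case~(ii) since $C$ may depend on $c_1,c_2,I$, and dominates $|\lambda|^{-1/M}$ for $|\lambda|\ge 1$. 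With this modification and the corrected count $N(M,c_1,c_2,|I|)$, the structure of your argument can be repaired, but the subdivision lemma itself — existence of such a partition with that count — still needs a careful proof; it is not combinatorial in $M$ alone.
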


We note that in the above lemma in case $(ii)$ we can use in both expressions $(1+|\lambda|)^{-1/M}$
instead of $|\lambda|^{-1/M}$ since the constant $C$ depends on $I$ anyway.\footnote{
There is a typo in \cite[Lemma 2.1]{IM16}:
in the estimate in part (a) the expression $(1+|\lambda|)^{-1/M}$ should be changed to $|\lambda|^{-1/M}$.
}

We also remark that we can always use $G = |\varphi|$ for a Schwartz function $\varphi$ since
the Fourier transform of $|\varphi|$ is integrable.
The proof of this (known) fact is almost straightforward.
Namely, the derivative of $|\varphi|$ can have jumps only at the points $s$ where $\varphi(s) = 0$ and $\varphi'(s) \neq 0$.
Denote the set of such points $N$ and note that it is a discrete set.
In order to estimate the Fourier transform of $|\varphi|$ at $\xi$, one integrates by parts the expression
\begin{align*}
(\FT |\varphi|)(\xi) = \int e^{- i x \xi} |\varphi|(x) \mathrm{d}x
\end{align*}
twice and gets the additional boundary terms which can be estimated by $|\xi|^{-2} \sum_{s \in N} |\varphi'(s)|$.
Using the fact that between any two neighbouring points $s_1, s_2 \in N$ there is a point $s$ inbetween such that $\varphi'(s) = 0$
one easily gets $\sum_{s \in N} |\varphi'(s)| \leq \int |\varphi''(s)| \mathrm{d}s < +\infty$ and the claim follows.

The second lemma (less general, but with a stronger implication than the one in \cite[Section 2.2]{IM16}) we need
gives us an asymptotic of an oscillatory integral of Airy type.
We shall also need some variants, but these we shall state and prove along the way when they are needed.
\begin{lemma}
\label{Oscillatory_auxiliary_airy}
For $\lambda \geq 1$ and $u \in \R$, $|u| \lesssim 1$, let us consider the integral
\begin{align*}
J(\lambda,u,s) \coloneqq \int_\R e^{i \lambda (b(t,s)t^3 -ut)} a(t,s) \mathrm{d}t,
\end{align*}
where $a, b$ are smooth and real-valued functions on an open neighbourhood of $I \times K$
for $I$ a compact neighbourhood of the origin in $\R$ and $K$ a compact subset of $\R^m$.
Let us assume that $b(t,s) \neq 0$ on $I \times K$ and that $|t| \leq \varepsilon$ on the support of $a$.
If $\varepsilon > 0$ is chosen sufficiently small and $\lambda$ sufficiently large,
then the following holds true:
\begin{enumerate}
\item[(a)]
If $\lambda^{2/3} |u| \lesssim 1$, then we can write
\begin{align*}
J(\lambda,u,s) = \lambda^{-1/3} g(\lambda^{2/3}u, \lambda^{-1/3}, s),
\end{align*}
where $g(v, \mu, s)$ is a smooth function of $(v,\mu,s)$ on its natural domain.

\item[(b)]
If $\lambda^{2/3} |u| \gg 1$, then we can write
\begin{align*}
J(\lambda,u,s) =
   &\lambda^{-1/2} |u|^{-1/4} \chi_0(u/\varepsilon) \, \sum_{\tau \in \{+,-\}} a_\tau(|u|^{1/2},s; \lambda |u|^{3/2}) e^{i \lambda |u|^{3/2} q_\tau(|u|^{1/2},s)} \\
   &+ (\lambda |u|)^{-1} E(\lambda |u|^{3/2}, |u|^{1/2}, s),
\end{align*}
where $a_{\pm}$ are smooth functions in $(|u|^{1/2},s)$ and
classical symbols\footnote{
There is a slight error in \cite[Lemma 2.2]{IM16}.
Namely, there the functions $a_{\pm}$ should also be classical symbols of order $0$ in the same variable as stated here.
}
of order $0$ in $\lambda |u|^{3/2}$,
and where $q_\pm$ are smooth functions such that $|q_\pm| \sim 1$.
The function $E$ is a smooth function satisfying
\begin{align*}
|\partial^\alpha_\mu \partial^\beta_v \partial^\gamma_s E(\mu, v, s)| \leq C_{N,\alpha,\beta,\gamma} |\mu|^{-N},
\end{align*}
for all $N, \alpha, \beta, \gamma \in \N_0$.

\end{enumerate}
\end{lemma}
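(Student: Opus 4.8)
The plan is to reduce $J(\lambda,u,s)$ in each regime, by an appropriate rescaling of the $t$-variable, to a parameter-dependent oscillatory integral of standard type — an Airy-type integral in part (a) and a non-degenerate stationary-phase integral in part (b) — and then to extract the stated asymptotics while carefully tracking the smooth dependence on all parameters. For \textbf{part (a)} I would substitute $t=\lambda^{-1/3}\sigma$. Since $\lambda\,b(t,s)t^{3}=b(\lambda^{-1/3}\sigma,s)\sigma^{3}$, $\lambda u t=\lambda^{2/3}u\,\sigma$, and $\mathrm{d}t=\lambda^{-1/3}\mathrm{d}\sigma$, this gives $J(\lambda,u,s)=\lambda^{-1/3}g(\lambda^{2/3}u,\lambda^{-1/3},s)$ with
\[
g(v,\mu,s)\coloneqq\int_{\R}e^{i(b(\mu\sigma,s)\sigma^{3}-v\sigma)}\,a(\mu\sigma,s)\,\mathrm{d}\sigma .
\]
For $\mu>0$ the integrand is supported in $|\sigma|\le\varepsilon/\mu$, so $g$ is well defined; the real content is smoothness in $(v,\mu,s)$ uniformly down to $\mu=0$ (for $|v|\lesssim1$, $s\in K$). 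I would fix a large $R$, split $g=g_{\le R}+g_{>R}$ according to $|\sigma|\le R$, and note $g_{\le R}$ is manifestly smooth. On $g_{>R}$ the phase $\Phi(\sigma)=b(\mu\sigma,s)\sigma^{3}-v\sigma$ satisfies $|\Phi'(\sigma)|\gtrsim\sigma^{2}$ once $R$ is large (using $b\neq0$ and $|v|\lesssim1$), while $\partial^{\alpha}_{(v,\mu,s)}$ applied to the integrand grows at most polynomially in $\sigma$; differentiating under the integral and integrating by parts in $\sigma$ sufficiently often then yields an absolutely convergent integral with bounds uniform on the domain, so $g\in C^{\infty}$. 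At $\mu=0$ one recognizes $g(v,0,s)=a(0,s)\int_{\R}e^{i(b(0,s)\sigma^{3}-v\sigma)}\,\mathrm{d}\sigma$, consistent with the Airy scaling.

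For \textbf{part (b)} I would instead rescale $t=|u|^{1/2}\tau$, writing $\Lambda\coloneqq\lambda|u|^{3/2}\gg1$ and $\Psi_{w,s}(\tau)\coloneqq b(w\tau,s)\tau^{3}-\sgn(u)\,\tau$ with $w\coloneqq|u|^{1/2}$, so that $J=|u|^{1/2}\int_{\R}e^{i\Lambda\Psi_{w,s}(\tau)}\,a(w\tau,s)\,\mathrm{d}\tau$. Since $\Psi'_{w,s}(\tau)=3b(w\tau,s)\tau^{2}+w(\partial_{1}b)(w\tau,s)\tau^{3}-\sgn(u)$, for $\varepsilon$ (hence $w$) small there are, precisely when $\sgn(u)=\sgn(b(0,s))$, exactly two non-degenerate critical points $\tau_{\pm}(w,s)$ with $\tau_{\pm}^{2}\approx\sgn(u)/(3b(0,s))$, depending smoothly on $(w,s)$ by the implicit function theorem; they fall inside $\supp a$ only for $|u|$ small, which is the regime the factor $\chi_{0}(u/\varepsilon)$ localizes to. I would split the $\tau$-integral into a fixed neighbourhood of $\{\tau_{+},\tau_{-}\}$ and its complement. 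On the complement $|\Psi'_{w,s}|\gtrsim1$ (and $\gtrsim\tau^{2}$ for large $\tau$), so repeated integration by parts — or Lemma \ref{Oscillatory_auxiliary_van_der_corput} — bounds this piece and all its parameter derivatives by $C_{N}\Lambda^{-N}$; combining a factor $(\lambda|u|)^{-1}$ out front, this (together with all of $J$ when $\sgn(u)\neq\sgn(b(0,s))$ or $|u|$ is not small, where there is no real critical point in $\supp a$ and the phase derivative is $\gtrsim\lambda|u|\ge\Lambda$) is absorbed into the error term $(\lambda|u|)^{-1}E(\lambda|u|^{3/2},|u|^{1/2},s)$ with $E$ rapidly decaying in its first slot.

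On the neighbourhood of the critical points I would apply the stationary-phase expansion: it produces $|u|^{1/2}\Lambda^{-1/2}\sum_{\tau\in\{+,-\}}e^{i\Lambda q_{\tau}(w,s)}a_{\tau}(w,s;\Lambda)$, where $q_{\pm}(w,s)\coloneqq\Psi_{w,s}(\tau_{\pm}(w,s))$ is smooth with $|q_{\pm}|\sim1$ (computing $q_{\pm}\approx-\tfrac23\sgn(u)\tau_{\pm}$ from $\tau_{\pm}^{2}\approx\sgn(u)/(3b(0,s))$), and $a_{\pm}$ is a classical symbol of order $0$ in $\Lambda$ whose asymptotic coefficients are the universal polynomial expressions in $\Psi_{w,s}$, $a$ and their $\tau$-derivatives evaluated at $\tau_{\pm}$, hence smooth in $(w,s)=(|u|^{1/2},s)$ — this is the point where one should emphasize, correcting the misprint in \cite[Lemma 2.2]{IM16}, that $a_{\pm}$ is genuinely a classical symbol and not merely bounded. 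Since $|u|^{1/2}\Lambda^{-1/2}=\lambda^{-1/2}|u|^{-1/4}$, inserting $\chi_{0}(u/\varepsilon)$ yields exactly the asserted form, the tail remainders matching the prescribed shape of $E$ after the $(\lambda|u|)^{-1}$ prefactor is pulled out.

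The routine ingredients are the two rescalings and the textbook non-stationary- and stationary-phase estimates. The main obstacle is \emph{uniformity}: in part (a) one must show that $g$ extends smoothly to $\mu=0$ with all bounds uniform even though the support of the integrand grows like $\varepsilon/\mu$; in part (b) one must check that $q_{\pm}$, the amplitudes $a_{\pm}$ as classical symbols, and the error $E$ all depend smoothly on $(|u|^{1/2},s)$ — not on $u$ itself — uniformly as $u$ changes sign and throughout the transition regime where $\lambda^{2/3}|u|$ is only a large constant. The appearance of $|u|^{1/2}$ as the natural smooth variable is forced by the square-root location $t\sim|u|^{1/2}$ of the critical points of the original phase, and verifying that the stationary-phase symbol coefficients really are smooth functions of that variable (and not just continuous, or smooth only away from $u=0$) is the delicate step.
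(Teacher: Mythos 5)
Your proposal is correct and follows essentially the same route as the paper's proof: in part (a) the same rescaling $t=\lambda^{-1/3}\sigma$ followed by a split into a compact piece and a tail controlled by $|\Phi'(\sigma)|\gtrsim\sigma^2$, and in part (b) the same rescaling $t=|u|^{1/2}\tau$, the same identification of the two non-degenerate critical points $\tau_\pm$ existing precisely when $\sgn u = \sgn b(0,s)$, with stationary phase near them and integration by parts elsewhere producing the $E$-term. Your computation $q_\pm\approx -\tfrac{2}{3}\sgn(u)\tau_\pm$ and your emphasis on smoothness in $|u|^{1/2}$ (rather than $u$) as the delicate point match the paper's treatment.
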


\begin{proof}
For the part (a) we only sketch the proof since it is a straightforward modification of \cite[Lemma 2.2., (a)]{IM16}.
In the integral defining $J$ we substitute $t \mapsto \lambda^{-1/3} t$.
Then we can write
\begin{align*}
J(\lambda,u,s) = \lambda^{-1/3} \int_\R e^{i (b(\lambda^{-1/3}t,s)t^3 - \lambda^{2/3}ut)}
                 a(\lambda^{-1/3} t,s) \chi_0(\lambda^{-1/3}t/\varepsilon) \mathrm{d}t.
\end{align*}
We added the smooth cutoff function $\chi_0$ localised near $0$ in order to emphasize that domain of integration.
If we denote
\begin{align*}
v &= \lambda^{2/3} u, \\
\mu &= \lambda^{-1/3},
\end{align*}
then the integral can be written as
\begin{align*}
\int_\R e^{i (b(\mu t,s)t^3 - vt)}
a(\mu t,s) \chi_0(\mu t/\varepsilon) \mathrm{d}t.
\end{align*}
We split the integral into two parts, depending on whether
the integration domain is contained in $|t| \lesssim C$ or $|t| > C$ for some fixed large $C$,
by using a smooth cutoff function.
The part where $|t| \lesssim C$ is obviously smooth in all the (bounded) parameters $(v,\mu,s)$
and hence it satisfies the conclusion of the lemma.
If $C$ is sufficiently large, $\varepsilon$ sufficiently small, and $|t| > C$, then
\begin{align*}
|\partial_{t} (b(\mu t,s)t^3 - vt)| &\sim |t|^2, \\
|\partial^N_{t} \partial^\alpha (b(\mu t,s)t^3 - vt)| &\lesssim_{N,\alpha} |t|^{3+|\alpha|-N},
\end{align*}
where $\partial^\alpha$ is any derivative in the $(v,\mu,s)$ variables.
Therefore by taking derivatives of the integral in $(v,\mu,s)$, factors of polynomial growth in $t$ appear.
This can be controlled by using integration by parts a sufficient number of times since the phase
derivative is $\sim |t|^2$, and so we get the uniform estimate in this case too.

The part (b) is also a straightforward modification of \cite[Lemma 2.2, (b)]{IM16},
and so we sketch the proof.
Here we get a stronger result for the function $E$ compared to \cite[Lemma 2.2, (b)]{IM16} since we assume that there are no $t^2$ terms in the phase.
We start by substituting $t \mapsto |u|^{1/2}t$.
Then one gets
\begin{align*}
J(\lambda,u,s) = v \int_\R e^{i \mu(b(v t,s)t^3 - (\sgn u) t)}
                 a(v t,s) \chi_0(v t/\varepsilon) \mathrm{d}t,
\end{align*}
where $\mu$ denotes $\lambda |u|^{3/2}$ and $v$ denotes $|u|^{1/2}$.
If $|u| \gtrsim \varepsilon$ and if $\varepsilon$ is sufficiently small, then
the integration domain is $|t| \ll 1$, and so we may use integration by parts
and get an estimate as is required for the $E$ term in the conclusion.

Let us now assume $|u| \ll \varepsilon$, and so in particular $|v| \ll 1$. The derivative of the phase is
\begin{align*}
\partial_{t} (b(v t,s)t^3 - (\sgn u)t) = t^2 (3b(vt,s) + vt (\partial_1 b)(vt,s)) - \sgn(u).
\end{align*}
If $t$ is away from the critical points (which only exist if $u$ and $b$ are of the same sign),
then we can argue similarly as in the (a) part of the proof by using integration by parts
and get an estimate as is required for the $E$ term in the conclusion.
If $u$ and $b$ have the same sign, then there are two critical points $|t_{\pm}(v,s)| \sim 1$.
One now applies the stationary phase method at each of the critical points
and obtains the form as in the conclusion of the theorem.
\end{proof}

\medskip

Next, we state results relating the Newton polyhedron and its associated quantities with asymptotics of oscillatory integrals.
\begin{theorem}
\label{Oscillatory_auxiliary_decay}
Let $\phi : \Omega \to \R$ be a smooth function of finite type defined on an open set $\Omega \subset \R^2$ containing the origin.
If $\Omega$ is a sufficiently small neighbourhood of the origin and $\eta \in C_c^\infty(\Omega)$, then
\begin{align*}
\Bigg| \int e^{i(\xi_1 x_1 + \xi_2 x_2 + \xi_3 \phi(x))} \eta(x) \mathrm{d}x \Bigg| \leq C_\eta (1+ |\xi|)^{-1/h(\phi)} \, (\log(2+|\xi|))^{\nu(\phi)},
\end{align*}
for all $\xi \in \R^3$.
\end{theorem}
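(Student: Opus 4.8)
The plan is to reduce the estimate to the \emph{uniform} oscillatory-integral bound for the family of phases $\phi_\sigma(x)\coloneqq\phi(x)+\sigma_1x_1+\sigma_2x_2$ with $\sigma$ small, which is Varchenko's theorem \cite{Var76} in the real analytic case and the main uniform estimate of Ikromov, Kempe and M\"uller \cite{IKM10} and of Ikromov and M\"uller \cite{IM11a},\cite{IM16} in the smooth case, and to dispose of the remaining frequency regime by non-stationary phase.

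First I would split $\R^3\setminus\{0\}$ according to whether $|\xi_3|$ dominates. Fix a small $c>0$ and shrink $\Omega$ so that $|\nabla\phi|\le 1/(2c)$ on $\supp\eta$ (possible since $\nabla\phi(0)=0$). If $|\xi_3|\le c(|\xi_1|+|\xi_2|)$, then on $\supp\eta$
\begin{equation*}
|\nabla_x(\xi_1x_1+\xi_2x_2+\xi_3\phi(x))|\ge |\xi_1|+|\xi_2|-|\xi_3|\,|\nabla\phi(x)|\ge\tfrac12(|\xi_1|+|\xi_2|)\gtrsim|\xi|,
\end{equation*}
so repeated integration by parts gives the bound $C_{\eta,N}(1+|\xi|)^{-N}$ for every $N$, which is far stronger than required. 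Hence it remains to treat the cone $|\xi_3|\ge c(|\xi_1|+|\xi_2|)$, where $|\xi_3|\sim|\xi|$.

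In that cone set $\lambda\coloneqq|\xi_3|$ and $\sigma\coloneqq(\xi_1,\xi_2)/\xi_3$, which ranges over a fixed bounded neighbourhood of $0$; the integral becomes $\int e^{i\lambda\phi_\sigma(x)}\eta(x)\,\mx$, so it suffices to prove
\begin{equation*}
\sup_\sigma\Big|\int e^{i\lambda\phi_\sigma(x)}\eta(x)\,\mx\Big|\le C(1+\lambda)^{-1/h(\phi)}\,(\log(2+\lambda))^{\nu(\phi)}.
\end{equation*}
This uniform bound is exactly the oscillatory-integral estimate underlying Theorem \ref{thm_main_result_IM16}, and I would invoke it from \cite{IM16}; let me indicate the mechanism. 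If $\sigma\ne 0$ the perturbed phase has no critical point at $0$ but possibly one at a nearby $x_c(\sigma)$ with $\nabla\phi(x_c(\sigma))=-\sigma$. Using a partition of unity, the part of $\eta$ where $|\nabla\phi_\sigma|$ is bounded below contributes $O(\lambda^{-N})$ by non-stationary phase, while near $x_c(\sigma)$ one translates $x_c(\sigma)$ to the origin; the resulting smooth finite-type function has Newton height $\le h(\phi)$, because adding the already-linear monomials $x_1,x_2$ does not raise the height and the local height is upper semicontinuous in the base point over the small set $\Omega$. One then runs the resolution-of-singularities and dyadic-decomposition scheme of \cite{IKM10},\cite{IM11a}: pass to adapted coordinates, decompose into the homogeneous central region and the ``horn'' neighbourhoods of the real roots of $\phi_{\text{pr}}$, rescale each horn, decompose dyadically in each, apply the van der Corput estimate of Lemma \ref{Oscillatory_auxiliary_van_der_corput} on each dyadic piece, and sum. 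All constants are uniform in $\sigma$ since they depend only on finitely many derivatives of $\phi$ at $0$ and on the combinatorics of $\mathcal N(\phi)$; the logarithmic factor $(\log(2+\lambda))^{\nu(\phi)}$ appears precisely when an adapted coordinate system has principal face a vertex, i.e. $\nu(\phi)=1$, where the dyadic sum over scales produces $\sim\log\lambda$ comparable contributions.

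The main obstacle is this last step --- the uniform resolution of singularities: controlling the root ``horns'' of $\phi_\sigma$ and their associated heights uniformly as $\sigma\to 0$, and in particular checking that the linear perturbation never pushes the height above $h(\phi)$. Since all of this is carried out in \cite{IKM10},\cite{IM11a},\cite{IM16}, I would cite it rather than reproduce it, and give full details only for the two elementary reductions above (the non-stationary cone and the rescaling $\lambda=|\xi_3|$).
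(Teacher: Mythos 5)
Your proposal is correct and takes essentially the same route as the paper, which does not prove Theorem~\ref{Oscillatory_auxiliary_decay} but simply cites it from \cite{IM11b} (explicitly describing it as ``a uniform estimate with respect to a linear perturbation of the phase''). Your two elementary reductions --- the non-stationary phase bound on the cone $|\xi_3|\lesssim|(\xi_1,\xi_2)|$ and the rescaling $\lambda=|\xi_3|$, $\sigma=(\xi_1,\xi_2)/\xi_3$ on the complementary cone --- are precisely what turns the displayed estimate into the statement $\sup_\sigma|\int e^{i\lambda\phi_\sigma}\eta|\lesssim\lambda^{-1/h(\phi)}(\log\lambda)^{\nu(\phi)}$, and you then correctly attribute the genuinely hard content (uniform resolution of singularities, control of the root horns of $\phi_\sigma$, and the upper semicontinuity of the height under small linear perturbation of the base point) to \cite{IKM10}, \cite{IM11a}, \cite{IM11b}, exactly as the paper does. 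One cosmetic remark: with your choice of a \emph{small} $c$, the constraint $|\nabla\phi|\le 1/(2c)$ is vacuous and $\sigma$ ranges over a large (though fixed) ball $|\sigma|<1/c$; the regime $|\sigma|\gtrsim\sup_{\supp\eta}|\nabla\phi|$ is then handled by another round of non-stationary phase (which you do mention when you invoke the partition of unity around $x_c(\sigma)$), so no gap results, but taking $c\sim 1$ and shrinking $\Omega$ so that $|\nabla\phi|<1/2$ would make the split cleaner and keep $\sigma$ genuinely small, which is the regime the cited uniform estimate is really designed for.
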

This result was proven in \cite{IM11b} and can be interpreted as a uniform estimate with respect to a linear pertubation of the phase.
The case when $h(\phi) < 2$ was considered earlier in \cite{Dui74}.
The case when $\phi$ is real analytic and there is no pertubation (i.e., $\xi_1=\xi_2=0$)
the above result goes back to Varchenko \cite{Var76}.
In the case of a real analytic function $\phi$ one actually has a uniform estimate with respect to analytic pertubations
(this was proved by Karpushkin in \cite{Kar84}).

We also have the following result from \cite{IM11b} which gives us sharpness of Theorem \ref{Oscillatory_auxiliary_decay}
in the case when $\xi_1=\xi_2=0$.
\begin{theorem}
\label{Oscillatory_auxiliary_decay_sharpness}
Let $\phi$ be as in Theorem \ref{Oscillatory_auxiliary_decay} and let us define for $\lambda > 0$ the function
\begin{align*}
J_{\pm}(\lambda) \coloneqq
   \int e^{\pm i \lambda \phi(x)} \eta(x) \mathrm{d}x
\end{align*}
for an $\eta \in C_c^\infty(\Omega)$.
If the principal face $\pi(\phi^a)$ of $\mathcal{N}(\phi^a)$ is a compact face,
and if $\Omega$ is a sufficiently small neighbourhood of the origin, then
\begin{align*}
\lim_{\lambda \to +\infty} \frac{\lambda^{1/h(\phi)}}{(\log \lambda)^{\nu(\phi)}} J_{\pm}(\lambda) = c_{\pm} \eta(0),
\end{align*}
where $c_{\pm}$ are nonzero constants depending on the phase $\phi$ only.
\end{theorem}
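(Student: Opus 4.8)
The plan is to adapt Varchenko's Newton-polyhedron argument, as carried out in \cite{IM11b}, extracting the leading term from the region clustered around the principal face in adapted coordinates, and then using the already available upper bound of Theorem \ref{Oscillatory_auxiliary_decay} to absorb all error terms. First I would pass to adapted coordinates via the shear $y_1 = x_1$, $y_2 = x_2 - \psi(x_1)$; since it fixes the origin and has Jacobian identically $1$, one has $J_\pm(\lambda) = \int e^{\pm i\lambda\phi^a(y)}\eta^a(y)\,\mathrm{d}y$ with $\eta^a(0) = \eta(0)$ and $d(\phi^a) = h(\phi)$. Because $\pi(\phi^a)$ is by hypothesis a compact face, $\phi^a_{\mathrm{pr}} = \phi^a_{\pi(\phi^a)}$ is a genuine mixed-homogeneous polynomial, $\kappa$-homogeneous of degree one for the weight $\kappa = (\kappa_1,\kappa_2)$ determined by $\pi(\phi^a)$ (the weight of the line through $\pi(\phi^a)$ when it is an edge, and the bisectrix weight $(\tfrac{1}{2d},\tfrac{1}{2d})$, $d = d(\phi^a)$, when it is a vertex), and $h(\phi) = 1/(\kappa_1+\kappa_2)$. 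After shrinking $\Omega$ we may assume $\eta$ is supported in an arbitrarily small neighbourhood of $0$.

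Next I would decompose $\eta^a$ with a partition of unity subordinate to the anisotropic dyadic annuli $\Theta_j = \{|y_1|\sim 2^{-j\kappa_1},\ |y_2|\sim 2^{-j\kappa_2}\}$ clustering at the principal face, together with the two coordinate-axis regions (which are handled separately and contribute either strictly faster-decaying terms or, when $\pi(\phi^a)$ is the corner vertex, an explicit monomial model). On $\Theta_j$ I would rescale by the $\kappa$-dilation $y = (2^{-j\kappa_1}z_1,\,2^{-j\kappa_2}z_2)$: the Jacobian gives $2^{-j(\kappa_1+\kappa_2)} = 2^{-j/h(\phi)}$, while a Taylor expansion on the principal face turns the phase into $2^{-j}\big(\phi^a_{\mathrm{pr}}(z) + R_j(z)\big)$ with $\|R_j\|_{C^k}\lesssim 2^{-j\varepsilon}$ for some $\varepsilon>0$ depending only on $\mathcal{N}(\phi^a)$. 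Thus the $j$-th contribution is $2^{-j/h(\phi)}\big(\eta(0)+O(2^{-j\delta})\big)F_\pm(\lambda 2^{-j})$ plus errors, where $F_\pm(\mu) = \int e^{\pm i\mu\phi^a_{\mathrm{pr}}(z)}\chi(z)\,\mathrm{d}z$ is a fixed model integral. Summing in $j$ produces the leading term: when $\pi(\phi^a)$ is an edge one gets $\nu(\phi)=0$ and, up to negligible errors, $\sum_j 2^{-j/h(\phi)}F_\pm(\lambda 2^{-j}) = c_\pm\,\lambda^{-1/h(\phi)}$ with $c_\pm$ of Mellin type; when $\pi(\phi^a)$ is a vertex with $h(\phi)\ge 2$ --- the only configuration with $\nu(\phi)=1$, characterised by \cite[Lemma 1.5]{IM11b} --- the principal part is a single monomial $c\,y_1^{d}y_2^{d}$, $d = d(\phi^a)\ge 2$, and an elementary computation of $\int e^{\pm i\mu c\,z_1^d z_2^d}\chi$ yields an extra $\log\mu$, which propagates through the scale sum to give the factor $(\log\lambda)^{\nu(\phi)}$.

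The main obstacle is to show $c_\pm\neq 0$, i.e. that the leading contributions of the various dyadic scales do not cancel. This reduces to a non-vanishing statement for the relevant Mellin transform of $F_\pm$ --- equivalently, for the residue at $s = -1/h(\phi)$ of the local zeta function $s\mapsto\int|\phi^a_{\mathrm{pr}}(z)|^{s}\chi(z)\,\mathrm{d}z$ (a double pole, accounting for the $\log\lambda$, in the vertex case) --- and one proves it by an explicit evaluation: precisely because $\pi(\phi^a)$ is compact, $\phi^a_{\mathrm{pr}}$ is a non-trivial mixed-homogeneous polynomial whose Newton distance equals $h(\phi)$, so this pole is the rightmost one and has nonzero residue, the associated oscillatory constant being nonzero and depending only on $\phi$ (separating the $+$ and $-$ phases is needed to see that neither $c_+$ nor $c_-$ vanishes). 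It remains to verify that all the error terms --- those from $R_j$, from replacing $\eta^a(2^{-j\kappa}z)$ by $\eta(0)$, from a further decomposition near the roots of $\phi^a_{\mathrm{pr}}$ inside each rescaled annulus, and from the coordinate-axis regions --- are $o\big(\lambda^{-1/h(\phi)}(\log\lambda)^{\nu(\phi)}\big)$; this is routine given Theorem \ref{Oscillatory_auxiliary_decay} and the van der Corput estimates of Lemma \ref{Oscillatory_auxiliary_van_der_corput}.
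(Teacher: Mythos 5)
The paper does not prove this theorem; it is quoted from \cite{IM11b} (see also \cite{Grb09} for the real-analytic precursor), so there is no proof in the paper to compare against. Your sketch follows the Varchenko-type outline underlying those references, but your case distinction for the logarithm contains a genuine error: you assert that if $\pi(\phi^a)$ is a compact edge then $\nu(\phi)=0$, so that $\sum_j 2^{-j/h(\phi)}F_\pm(\lambda 2^{-j})$ converges and yields $c_\pm\lambda^{-1/h(\phi)}$ with no logarithm. This is false. Take $\phi(y)=(y_2^2-y_1^2)^d$ with an integer $d\ge 2$: this is already adapted, with $d(\phi)=h(\phi)=d$ (the real roots $y_2=\pm y_1$ of the principal part off the axes have multiplicity exactly $d=d(\phi)$), so $\phi^a=\phi$ and $\pi(\phi^a)$ is the compact edge through $(0,2d)$ and $(2d,0)$; nevertheless the shear $y_2'=y_2-y_1$ is another adapted system in which $\phi=(y_2')^d(y_2'+2y_1)^d$ has principal face equal to the vertex $(d,d)$, so $\nu(\phi)=1$. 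In this situation your annular model $F_\pm(\mu)=\int e^{\pm i\mu\,\phi^a_{\text{pr}}(z)}\chi(z)\,\mathrm{d}z$ decays exactly like $\mu^{-1/h}$ because of the multiplicity-$h$ root, the scale sum over $j$ is logarithmically divergent, and the factor $(\log\lambda)^{\nu(\phi)}$ must in fact emerge from that divergence --- your sketch instead outputs $c_\pm\lambda^{-1/h}$ with a finite ``Mellin-type'' $c_\pm$, which is wrong in this example.

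A clean repair is to exploit that $\nu(\phi)$ and the asserted limit are both shear-invariant (shears fix $0$ and have Jacobian $1$), so that whenever $\nu(\phi)=1$ you may first pass to an adapted coordinate system in which the principal face is a vertex; the edge case should then be argued under the additional, now valid, hypothesis that the real roots of $\phi^a_{\text{pr}}$ off the axes have multiplicity strictly less than $h(\phi)$, which is what makes $F_\pm(\mu)=o(\mu^{-1/h})$ and the scale sum geometrically convergent. Relatedly, you attribute the vertex-case logarithm to the annular model $\int_{|z|\sim 1}e^{\pm i\mu c\,z_1^dz_2^d}\chi$, but that integral in fact has rapid decay (the phase has no critical points on the annulus); there the logarithm is produced by the near-axis pieces of the decomposition, and that sentence should be reworked accordingly.
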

An analogous result was proved earlier by Greenblatt in \cite{Grb09} for real analytic phase functions $\phi$.
When the principal face is not compact, Theorem \ref{Oscillatory_auxiliary_decay_sharpness} may fail in general
(for an example of this see \cite{IS97}).

\medskip

Finally, we state three lemmas which we shall often use in conjunction with Stein's complex interpolation theorem.
The proofs of the first and third lemma can be found in \cite[Section 2.5]{IM16},
while we only give a brief note on the proof of the second lemma since it is a direct modification of the first one.
The proof of all of them are elementary, though the proof of the third one is quite technical.
\begin{lemma}
\label{Oscillatory_auxiliary_one_parameter}
Let $Q = \prod_{k=1}^n[-R_k,R_k]$ be a compact cube in $\R^n$ for some real numbers $R_k > 0$, $k = 1,\ldots,n$,
and let $\alpha,\beta^1,\ldots,\beta^n$ be some fixed nonzero real numbers.
For a $C^1$ function $H$ defined on an open neighbourhood of $Q$, nonzero real numbers $a_1, \ldots, a_n$, and
$M$ a positive integer we define
\begin{align*}
F(t) \coloneqq \sum_{l=0}^M 2^{i \alpha l t} (H \chi_Q) (2^{\beta^1 l} a_1, \ldots, 2^{\beta^n l} a_n)
\end{align*}
for $t \in \R$.
Then there is a constant $C$ which depends only on $Q$ and the numbers $\alpha$ and $\beta^k$'s,
but not on $H$, $a_k$'s, $M$, and $t$, such that
\begin{align*}
|F(t)| \leq C \frac{\Vert H \Vert_{C^1(Q)}}{|2^{i \alpha t}-1|}
\end{align*}
for all $t \in \R$.
\end{lemma}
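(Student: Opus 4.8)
The plan is to prove the bound $|F(t)| \leq C \|H\|_{C^1(Q)} / |2^{i\alpha t} - 1|$ by summing the geometric-type series explicitly and then applying an Abel summation (summation by parts) to transfer the decay from the oscillatory factor $2^{i\alpha lt}$ onto the increments of the amplitude. First I would abbreviate $c_l \coloneqq (H\chi_Q)(2^{\beta^1 l} a_1, \ldots, 2^{\beta^n l} a_n)$, so that $F(t) = \sum_{l=0}^{M} z^l c_l$ with $z = 2^{i\alpha t}$. The key point is that $F$ is essentially a partial sum of a power series with bounded-variation coefficients, so the natural tool is Abel summation: writing $Z_k = \sum_{l=0}^{k} z^l = (z^{k+1}-1)/(z-1)$, one has $|Z_k| \leq 2/|z-1|$ uniformly in $k$, and
\begin{align*}
F(t) = \sum_{l=0}^{M} (Z_l - Z_{l-1}) c_l = Z_M c_M - \sum_{l=0}^{M-1} Z_l (c_{l+1} - c_l),
\end{align*}
with the convention $Z_{-1} = 0$. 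Taking absolute values gives $|F(t)| \leq \frac{2}{|z-1|}\bigl( |c_M| + \sum_{l=0}^{M-1} |c_{l+1} - c_l| \bigr)$, so everything reduces to bounding $|c_M|$ and the total variation $\sum_l |c_{l+1}-c_l|$ independently of $M$.

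The second step is to control $\sum_{l=0}^{M-1} |c_{l+1} - c_l|$. Clearly $|c_M| \leq \|H\|_{C^0(Q)}$. For the increments, I would use the mean value theorem applied to the smooth extension of $H\chi_Q$ along the ray $l \mapsto (2^{\beta^1 l} a_1, \ldots, 2^{\beta^n l} a_n)$; since the product $H\chi_Q$ has $C^1$ norm controlled by $\|H\|_{C^1(Q)}$ (the cutoff $\chi_Q$ being a fixed function depending only on $Q$), we get $|c_{l+1} - c_l| \lesssim \|H\|_{C^1(Q)} \sum_{k=1}^{n} |2^{\beta^k(l+1)} a_k - 2^{\beta^k l} a_k| = \|H\|_{C^1(Q)} \sum_{k=1}^{n} |2^{\beta^k l} a_k| \,|2^{\beta^k} - 1|$, provided the segment stays in a neighbourhood of $Q$. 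The crucial geometric fact is that the points $2^{\beta^k l} a_k$ that actually contribute (i.e. lie in $[-R_k, R_k]$ where $\chi_Q$ is supported) satisfy $|2^{\beta^k l} a_k| \leq R_k$, and the dyadic scales $|2^{\beta^k l} a_k|$ across the contributing indices $l$ form (after splitting into the $k$ separate coordinates) a lacunary sequence bounded by $R_k$; hence $\sum_{l : \text{contributing}} |2^{\beta^k l} a_k| \lesssim R_k \cdot \frac{1}{|2^{\beta^k}-1|}$ — wait, more carefully: the sum of a geometric progression with ratio $2^{|\beta^k|}$ or $2^{-|\beta^k|}$ whose largest term is $\leq R_k$ is $\lesssim_{\beta^k} R_k$. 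So $\sum_{l} |c_{l+1} - c_l| \lesssim_{Q,\beta} \|H\|_{C^1(Q)}$, uniformly in the $a_k$'s and in $M$.

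I expect the main technical obstacle to be the bookkeeping at the boundary of the support: the cutoff $\chi_Q$ means an increment $|c_{l+1}-c_l|$ can be nonzero even when only one of the two points $2^{\beta^k l}a_k$, $2^{\beta^k(l+1)}a_k$ lies in the support, so one must be careful that the mean value estimate still applies (it does, because $\chi_Q$ is a fixed smooth function defined on all of $\R^n$, so the composition is genuinely $C^1$ and one simply estimates along the whole segment, not just its intersection with $Q$) and that the geometric summation is taken over the slightly enlarged set of indices for which the segment meets $\supp \chi_Q$ — a lacunary set of the same type, so the bound is unchanged up to constants depending only on $Q$ and the $\beta^k$. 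Combining the two steps yields $|F(t)| \leq \frac{2}{|z-1|}\bigl(\|H\|_{C^0(Q)} + C_{Q,\beta}\|H\|_{C^1(Q)}\bigr) \leq \frac{C}{|2^{i\alpha t}-1|}\|H\|_{C^1(Q)}$, which is the claim.
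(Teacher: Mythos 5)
Your approach via Abel summation is the right one and is essentially the argument in \cite[Section 2.5]{IM16}: writing $c_l=(H\chi_Q)(2^{\beta^1 l}a_1,\ldots,2^{\beta^n l}a_n)$, $z=2^{i\alpha t}$, $Z_k=\sum_{j=0}^k z^j$, the bound $|Z_k|\leq 2/|z-1|$ and summation by parts reduce everything to the uniform total-variation estimate $|c_M|+\sum_l|c_{l+1}-c_l|\lesssim_{Q,\beta}\Vert H\Vert_{C^1(Q)}$, and the geometric-series count $\sum_{l:\,|2^{\beta^k l}a_k|\leq R_k}|2^{\beta^k l}a_k|\lesssim_{\beta^k} R_k$ is correct.

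However, the way you dispose of the boundary terms is wrong. You assert that $\chi_Q$ is a fixed smooth cutoff, so that $H\chi_Q$ is genuinely $C^1$ on all of $\R^n$ and the mean value theorem applies across the boundary. In this lemma $\chi_Q$ is the \emph{indicator} function of the cube $Q$ --- the paper says so explicitly in Subsection \ref{Section_h_lin_less_than_2_First_decompositions}, where it refers to the characteristic function $\chi_Q$ in the definition of $F(t)$, and the whole point of keeping $\chi_Q$ sharp is to encode exact lower summation bounds. Consequently $H\chi_Q$ is discontinuous at $\partial Q$, and for an index $l$ for which exactly one of the two points $(2^{\beta^k l}a_k)_k$, $(2^{\beta^k(l+1)}a_k)_k$ lies in $Q$ your mean value estimate for $|c_{l+1}-c_l|$ has no content: there is a jump of size up to $\Vert H\Vert_{C^0(Q)}$ that is not controlled by the segment length.

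The correct way to close the argument --- and this is the missing observation --- is that the set $\{l\in\Z : (2^{\beta^k l}a_k)_k\in Q\}$ is an \emph{interval} of integers. Indeed, for each $k$ the quantity $|2^{\beta^k l}a_k|$ is strictly monotone in $l$ because $\beta^k\neq 0$ and $a_k\neq 0$, so $\{l:|2^{\beta^k l}a_k|\leq R_k\}$ is a ray in $l$, and the intersection of rays over $k=1,\ldots,n$ is an interval. Thus $l\mapsto c_l$ is supported on an interval $[L,U]\cap[0,M]$, and in the Abel summation there are at most two ``boundary'' increments, each trivially bounded by $\Vert H\Vert_{C^0(Q)}\leq\Vert H\Vert_{C^1(Q)}$; all remaining increments have both endpoints in the convex set $Q$, so the segment lies in $Q$ and the mean value theorem for $H$ alone (no cutoff involved) gives the estimate you wanted. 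With this replacement for your handling of the boundary, the proof is complete.
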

We shall often use this lemma in combination with the holomorphic function
\begin{align}
\label{Oscillatory_auxiliary_one_parameter_gamma}
\gamma(\zeta) \coloneqq \frac{2^{\alpha (\zeta-1)}-1}{2^{\alpha (\theta-1)}-1}
\end{align}
when applying complex interpolation. This function has the property that
\begin{align*}
\Big| \gamma(1+it) F(t) \Big| \leq C_\theta
\end{align*}
for a positive constant $C_\theta < +\infty$, and $\gamma(\theta) = 1$.

The following lemma is a slight variation of what was written in \cite[Remark 2.8]{IM16}.
\begin{lemma}
\label{Oscillatory_auxiliary_one_parameter_hoelder}
Let $Q = \prod_{k=1}^n[-R_k,R_k]$ be a compact cube in $\R^n$ for some real numbers $R_k > 0$, $k = 1,\ldots,n$,
let $\alpha,\beta^1,\ldots,\beta^n$ be some fixed nonzero real numbers, and let $0 < \epsilon < 1$.
For a $C^1$ function $H$ on a neighbourhood of $Q$, nonzero real numbers $a_1, \ldots, a_n$, and
$M$ a positive integer we define
\begin{align*}
F(t) \coloneqq \sum_{l=0}^M 2^{i \alpha l t} (H \chi_Q) (2^{\beta^1 l} a_1, \ldots, 2^{\beta^n l} a_n)
\end{align*}
for $t \in \R$.
Then there is a constant $C$ which depends only on $Q$ and the numbers $\alpha$, $\beta^k$'s, and $\epsilon$,
but not on $H$, $a_k$'s, $M$, and $t$, such that
\begin{align*}
|F(t)| \leq C \frac{|H(0)| + \sum_{k=1}^n C_k}{|2^{i \alpha t}-1|}
\end{align*}
for all $t \in \R$.
The constants $C_k$ are given as
\begin{align*}
C_1 &\coloneqq \sup_{y_1 \in R_1} |y_1|^{1-\epsilon} \int_0^1 |(\partial_1 H)(sy_1,0,\ldots,0)| \mathrm{d}s,\\
C_k &\coloneqq \sup_{y_1, \ldots, y_k} |y_k|^{1-\epsilon} \int_0^1 |(\partial_k H)(y_1,\ldots,y_{k-1},sy_k,0,\ldots,0)| \mathrm{d}s, \quad k > 1,
\end{align*}
where the supremum goes over the set $\prod_{j=1}^k[-R_j,R_j]$.
\end{lemma}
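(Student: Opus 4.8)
The plan is to deduce the estimate from the already-proven Lemma~\ref{Oscillatory_auxiliary_one_parameter} by first splitting off the value $H(0)$ and then telescoping $H$ through the coordinate hyperplanes $\{x_{k+1}=\cdots=x_n=0\}$. Abbreviate $z:=2^{i\alpha t}$ and $P_l:=(2^{\beta^1 l}a_1,\ldots,2^{\beta^n l}a_n)$, and for $k=1,\ldots,n$ put
\[
G_k(x):=\int_0^{x_k}(\partial_k H)(x_1,\ldots,x_{k-1},u,0,\ldots,0)\,\mathrm{d}u,
\]
so that the fundamental theorem of calculus gives $H=H(0)+\sum_{k=1}^n G_k$, with each $G_k$ depending only on $x_1,\ldots,x_k$. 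Note that $G_k$ is continuous but in general not $C^1$ (we assume merely $H\in C^1$); this will not matter, since the argument below never differentiates $G_k$. Correspondingly write $F=F_0+\sum_{k=1}^n F_k$ with $F_0(t):=H(0)\sum_{l=0}^M z^l\chi_Q(P_l)$ and $F_k(t):=\sum_{l=0}^M z^l\chi_Q(P_l)G_k(P_l)$. For $F_0$ one simply applies Lemma~\ref{Oscillatory_auxiliary_one_parameter} to the constant function $H(0)$ (whose $C^1(Q)$ norm equals $|H(0)|$), obtaining $|F_0(t)|\le C|H(0)|/|z-1|$.

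For each $F_k$ I would run the same summation-by-parts step as in Lemma~\ref{Oscillatory_auxiliary_one_parameter}: with $c_l:=\chi_Q(P_l)G_k(P_l)$ the bound $|\sum_{l=0}^L z^l|\le 2/|z-1|$ for $z\ne 1$ yields $|F_k(t)|\le\frac{2}{|z-1|}\big(|c_M|+\sum_{l=0}^{M-1}|c_{l+1}-c_l|\big)$. The key point — and the reason the hypothesis carries an exponent $0<\epsilon<1$ — is that instead of estimating the variation through a derivative of $G_k$ (which would force second derivatives of $H$ into the picture), one bounds it crudely by $2\sum_l|c_l|$ and exploits the pointwise inequality
\[
|G_k(P_l)|=\Big|2^{\beta^k l}a_k\int_0^1(\partial_k H)\big(2^{\beta^1 l}a_1,\ldots,2^{\beta^{k-1}l}a_{k-1},\,s\,2^{\beta^k l}a_k,0,\ldots,0\big)\,\mathrm{d}s\Big|\le|2^{\beta^k l}a_k|^{\epsilon}\,C_k,
\]
which holds whenever $P_l\in Q$, and hence in particular on the support of $\chi_Q$; for the remaining $l$ the term $c_l$ vanishes. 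Since $|2^{\beta^k l}a_k|\le R_k$ there and $l\mapsto 2^{\beta^k l}|a_k|$ is strictly monotone, the contributing indices form an interval of integers on which $\sum_l|2^{\beta^k l}a_k|^{\epsilon}$ is a convergent geometric series of ratio $2^{-\epsilon|\beta^k|}<1$, bounded by a constant depending only on $\epsilon$, $\beta^k$ and $R_k$ — in particular independent of $a_k$, $M$ and $t$. Consequently $|c_M|\le\|\chi_Q\|_\infty R_k^{\epsilon}C_k$ and $\sum_l|c_l|\le\|\chi_Q\|_\infty C_k\sum_l|2^{\beta^k l}a_k|^{\epsilon}\le C\,C_k$, giving $|F_k(t)|\le C\,C_k/|z-1|$. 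Summing the estimates for $F_0,F_1,\ldots,F_n$ produces the asserted bound, with a constant depending only on $Q$, $\alpha$, the $\beta^k$'s and $\epsilon$.

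The only genuinely delicate point is the one just isolated. Copying the proof of Lemma~\ref{Oscillatory_auxiliary_one_parameter} verbatim would control $\sum_l|c_{l+1}-c_l|$ by a mean value estimate and thereby reproduce a $\|H\|_{C^1}$-type quantity rather than the finer $C_k$'s, while a naive telescoping then stalls because the pieces $G_k$ are no smoother than $H$ itself. The way out is to discard all cancellation in the variation, replacing it by $2\sum_l|c_l|$, and to absorb the resulting loss into the geometric decay supplied by the weight $|2^{\beta^k l}a_k|^{\epsilon}$ — which is precisely what the exponent $\epsilon$ in the definition of the $C_k$ is designed to provide. The rest is routine bookkeeping: verifying the telescoping identity for merely $C^1$ functions, observing that $\chi_Q$ enters only through the harmless factor $\|\chi_Q\|_\infty$ (no derivative of $\chi_Q$ is needed for the $F_k$'s, unlike for $F_0$), and tracking the dependence of the constants.
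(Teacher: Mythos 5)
Your argument is correct and coincides with the paper's intended proof: you implement exactly the telescoping decomposition of $H$ along the coordinate flags, observe that $|G_k(y)|\le |y_k|^\epsilon C_k$ on $Q$, and then exploit the resulting geometric decay of $|2^{\beta^k l}a_k|^\epsilon$ over the contributing indices to make the $k\ge 1$ pieces absolutely summable; this is precisely the modification to the proof of \cite[Lemma 2.7]{IM16} that the paper indicates (the Abel-summation step you perform for the $F_k$ with $k\ge 1$ is redundant, since $|F_k|\le\sum_l|c_l|$ already and $|2^{i\alpha t}-1|\le 2$ supplies the harmless factor $2/|2^{i\alpha t}-1|$, but this does not affect the conclusion). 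One incidental inaccuracy in a side remark: $G_k$ is in fact $C^1$, being the difference $H(x_1,\ldots,x_k,0,\ldots,0)-H(x_1,\ldots,x_{k-1},0,\ldots,0)$ of two $C^1$ restrictions of $H$ (this is not seen from the integral formula, which would suggest a need for second derivatives, but the difference formula makes it clear), though as you note your argument never differentiates $G_k$, so nothing is lost.
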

The only difference compared to the proof of \cite[Lemma 2.7]{IM16} is that one now writes
\begin{align}
\label{Oscillatory_auxiliary_one_parameter_hoelder_proof_note}
H(y) &= H(0) + |y_1|^\epsilon \frac{H(y_1,0,\ldots,0)-H(0)}{|y_1|^\epsilon} \nonumber \\
     &+ \sum_{k=1}^n |y_k|^\epsilon \frac{H(y_1,\ldots,y_{k-1},y_k,0,\ldots,0) - H(y_1,\ldots,y_{k-1},0,0,\ldots,0)}{|y_k|^\epsilon},
\end{align}
and notes that the fractions are bounded by their respective $C_k$'s.

In the above lemma we could have directly defined $C_k$'s as the H\"older quotients appearing in
\eqref{Oscillatory_auxiliary_one_parameter_hoelder_proof_note},
but the formulas used in Lemma \ref{Oscillatory_auxiliary_one_parameter_hoelder} turn out to be more practical.
One can easily construct an example though where using the H\"older quotients is more appropriate.
One example is when one has an oscillatory factor such as in $H(y_1) = y_1^\epsilon \, e^{i y_1^{-1}}$ , $0 < y_1 < 1$
(cf. the Riemann singularity as in \cite[Chapter VIII, Subsection 1.4.2]{Ste93}).
This function is $\epsilon$-H\"older continuous at $0$ and satisfies the conclusion of Lemma \ref{Oscillatory_auxiliary_one_parameter_hoelder}
in the sense that $|F(t)| \leq C/|2^{i\alpha t} - 1|$,
but one can show without too much effort that the integral defining $C_1$ in Lemma \ref{Oscillatory_auxiliary_one_parameter_hoelder} is infinite.

The third lemma is a two parameter version of the first one.
\begin{lemma}
\label{Oscillatory_auxiliary_two_parameter}
Let $Q = \prod_{k=1}^n[-R_k,R_k]$ be a compact cube in $\R^n$ for some real numbers $R_k > 0$, $k = 1,\ldots,n$,
and let $\alpha_1,\alpha_2 \in \Q^{\times}$, and $\beta^k_1, \beta^k_2 \in \Q$, $k = 1,\ldots,n$, be fixed numbers
such that
\begin{align*}
\alpha_1 \beta_2^k - \alpha_2 \beta_1^k \neq 0,
\end{align*}
for all $k$ (i.e., the vector $(\alpha_1, \alpha_2)$ is linearly independent from $(\beta_1^k, \beta_2^k)$).
For a $C^2$ function $H$ defined on an open neighbourhood of $Q$, nonzero real numbers $a_1, \ldots, a_n$, and
$M_1, M_2$ positive integers we define
\begin{align*}
F(t) \coloneqq \sum_{l_1=0}^{M_1} \sum_{l_2=0}^{M_2} 2^{i (\alpha_1 l_1 + \alpha_2 l_2) t}
   (H \chi_Q) (2^{(\beta^1_1 l_1+ \beta^1_2 l_2)} a_1, \ldots, 2^{(\beta^n_1 l_1 + \beta^n_2 l_2)} a_n)
\end{align*}
for $t \in \R$.
Then there is a constant $C$ which depends only on $Q$ and the numbers $\alpha_1, \alpha_2$, $\beta_1^k$'s, $\beta_2^k$'s,
but not on $H$, $a_k$'s, $M_1$, $M_2$, and $t$, such that
\begin{align*}
|F(t)| \leq C \frac{\Vert H \Vert_{C^2(Q)}}{|\rho(t)|}
\end{align*}
for all $t \in \R$.
The function $\rho$ is defined by $\rho(t) \coloneqq \prod_{\nu = 1}^N \tilde{\rho}(\nu t) \tilde{\rho}(-\nu t)$, where
\begin{align*}
\tilde{\rho}(t) \coloneqq (2^{i \alpha_1 t} - 1)(2^{i \alpha_2 t} - 1) \prod_{k=1}^n (2^{i(\alpha_1 \beta_2^k - \alpha_2 \beta_1^k)t}-1),
\end{align*}
and $N$ is a positive integer depending on the $\beta_1^k$'s and $\beta_2^k$'s.
\end{lemma}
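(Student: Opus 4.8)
\emph{Proposed proof.}
Since $\alpha_1,\alpha_2,\beta^k_1,\beta^k_2\in\Q$, after replacing $t$ by $t/q$ and the base $2$ of all the exponentials by $2^{1/q}$ for a common denominator $q$ (the lemma being insensitive to the base $>1$, up to constants), we may assume $\alpha_1,\alpha_2,\beta^k_1,\beta^k_2\in\Z$, with $\tilde\rho,\rho$ rescaled accordingly. The plan is to run the proof of Lemma~\ref{Oscillatory_auxiliary_one_parameter} twice, once in each lattice variable. Two features of that proof are used repeatedly: a geometric sum $\sum_{l\le m}2^{i\theta l t}$ has modulus $\le 2/|2^{i\theta t}-1|$, and a $C^1$ function sampled along an exponentially spaced lattice ray has total variation $\lesssim\|H\|_{C^1(Q)}$, because each coordinate $2^{\beta^k_1 l_1+\beta^k_2 l_2}a_k$ is monotone along the ray and crosses each face of $Q$ at most once, so the sampled path has length $\lesssim\sum_kR_k$ inside $Q$. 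We also record that the summand of $F$ is supported on the convex lattice polytope $P$ cut out by $0\le l_1\le M_1$, $0\le l_2\le M_2$, and $|2^{\beta^k_1 l_1+\beta^k_2 l_2}a_k|\le R_k$; its edges run perpendicular to $(1,0)$, $(0,1)$ and the vectors $(\beta^k_1,\beta^k_2)$, so a character $2^{i(\alpha_1 l_1+\alpha_2 l_2)t}$ summed over $P$ (or a slice of it) is a finite sum of ratios whose denominators are products of the edge factors $2^{i\alpha_1 t}-1$, $2^{i\alpha_2 t}-1$, $2^{i\gamma_k t}-1$ with $\gamma_k=\alpha_1\beta^k_2-\alpha_2\beta^k_1$ (up to dividing $\gamma_k$ by $\gcd(\beta^k_1,\beta^k_2)$).

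One then sums first in $l_1$: for each fixed $l_2$ the sum $T_{l_2}(t)\coloneqq\sum_{l_1}2^{i\alpha_1 l_1 t}(H\chi_Q)(\dots,2^{\beta^k_1 l_1}(2^{\beta^k_2 l_2}a_k),\dots)$ is precisely of the form treated by Lemma~\ref{Oscillatory_auxiliary_one_parameter} (with $k$-th base point $2^{\beta^k_2 l_2}a_k$, and those $k$ with $\beta^k_1=0$ kept as parameters), so $|T_{l_2}(t)|\le C\|H\|_{C^1(Q)}/|2^{i\alpha_1 t}-1|$ uniformly in $l_2$. Since $F(t)=\sum_{l_2}2^{i\alpha_2 l_2 t}T_{l_2}(t)$, a second summation by parts reduces us to estimating $\sum_{l_2}|T_{l_2+1}(t)-T_{l_2}(t)|$ (the accompanying partial sums being character sums over slices of $P$, handled as above). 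Away from the at most $2n$ values of $l_1$ for which the argument crosses $\partial Q$ between $l_2$ and $l_2+1$, the bracket in $T_{l_2+1}-T_{l_2}$ is, by the fundamental theorem of calculus, a finite linear combination of terms $\int_0^1(\partial_k H)(\cdots)\,ds$ each carrying the factor $2^{\beta^k_1 l_1+\beta^k_2 l_2}a_k$; hence $T_{l_2+1}-T_{l_2}$ again has the shape of $T_{l_2}$ with $H$ replaced by one of the $C^1$ functions $G_k(x)=x_k(\partial_k H)(x)$. Applying Lemma~\ref{Oscillatory_auxiliary_one_parameter} to each $G_k$ and using the extra coordinate factor to sum the resulting geometric series in $l_2$ yields $\sum_{l_2}|T_{l_2+1}-T_{l_2}|\le C\|H\|_{C^2(Q)}/|2^{i\alpha_1 t}-1|$, the boundary terms being absorbed similarly as lower-dimensional oscillatory sums in $l_1$. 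Thus $|F(t)|\le C\|H\|_{C^2(Q)}\prod_j 1/|2^{i\theta_j t}-1|$ for finitely many $\theta_j$ that are integer combinations of $\alpha_1,\alpha_2$ and the $\gamma_k$; since $|2^{i\theta t}-1|\le 2$ and $|2^{i\nu\theta t}-1|\le\nu|2^{i\theta t}-1|$, and each $\theta_j$ with the needed multiplicity is matched by a factor of $\rho(t)=\prod_{\nu=1}^N\tilde\rho(\nu t)\tilde\rho(-\nu t)$ once $N$ is large enough in terms of the $\beta$'s, this is $\le C\|H\|_{C^2(Q)}/|\rho(t)|$.

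The step I expect to be the crux is making $\sum_{l_2}|T_{l_2+1}-T_{l_2}|$ converge with a bound independent of $M_1,M_2$: bounding the differences termwise only gives a bound uniform in $l_2$, hence a spurious factor $\sim M_2$, so one must set up the expansion so that genuine geometric decay in $l_2$ is present — exactly what the extra coordinate factor in $G_k=x_k\partial_k H$ supplies, provided one sums along directions in which every coordinate is exponential with nonzero rate. Pinning down precisely which directions $\theta_j$ and which multiplicities arise from an arbitrary lattice slice of $P$, and verifying that two iterations (hence only $C^2$) and a finite $N$ suffice, is where the argument becomes, as in \cite{IM16}, genuinely technical; the rest is the one-parameter Abel-summation-and-total-variation scheme carried out twice.
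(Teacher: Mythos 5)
The paper itself does not prove this lemma but cites \cite[Section 2.5]{IM16} for it, so there is no in-paper proof to compare against; I assess your sketch on its own merits. Your high-level plan (apply Lemma~\ref{Oscillatory_auxiliary_one_parameter} in $l_1$ to form $T_{l_2}(t)$, then Abel-sum in $l_2$) hinges on the bound $\sum_{l_2}|T_{l_2+1}(t)-T_{l_2}(t)|\lesssim\|H\|_{C^2(Q)}$, and you correctly flag this as the crux. But the mechanism you propose for obtaining it — that the coordinate factor in $G_k(x)=x_k\partial_kH(x)$ ``supplies geometric decay in $l_2$'' — does not work. When you pass from $T_{l_2+1}-T_{l_2}$ to an $l_1$-sum with amplitude $G_k$, the factor $x_k=2^{\beta_1^kl_1+\beta_2^kl_2}a_k$ depends on $l_1$ (unless $\beta_1^k=0$), so it cannot be pulled outside the sum; it is absorbed into $G_k$, and Lemma~\ref{Oscillatory_auxiliary_one_parameter} then returns the bound $C\|G_k\|_{C^1(Q)}/|2^{i\alpha_1t}-1|\lesssim R_k\|H\|_{C^2(Q)}/|2^{i\alpha_1t}-1|$, which is uniform in $l_2$. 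Only when $\beta_1^k=0$ can $x_k=2^{\beta_2^kl_2}a_k$ be factored out and summed geometrically in $l_2$; in general you get no decay.

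A clean counterexample: take $n=1$, $\beta^1=(1,1)$, $\alpha_1\neq\alpha_2$ (so $\gamma_1=\alpha_1-\alpha_2\neq0$), and $H\equiv1$ on $Q=[-R_1,R_1]$, so $\|H\|_{C^2(Q)}=1$. With $L:=\lfloor\log_2(R_1/|a_1|)\rfloor$, a direct computation gives $T_{l_2}(t)-T_{l_2+1}(t)=2^{i\alpha_1(L-l_2)t}$ for every $l_2$ with $\max(0,L-M_1)\le l_2\le L$, so $\sum_{l_2}|T_{l_2}-T_{l_2+1}|=\min(L,M_1)+1$, which is unbounded as $|a_1|\to0$ while the claimed right-hand side is $O(1)$. (One also sees directly here how the factor $2^{i\gamma_1t}-1$ really does enter: $F(t)$ is an explicit double geometric sum over the triangle $\{l_1+l_2\le L\}$, whose evaluation involves $(2^{i(\alpha_1-\alpha_2)t}-1)^{-1}$.) Your hedge ``provided one sums along directions in which every coordinate is exponential with nonzero rate'' does not rescue this case, because $u_1$ is constant along the direction $(1,-1)$, so no choice of coordinate axes makes all coordinates exponential with nonzero rate. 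Beyond this gap, the sketch also does not show how the $\gamma_k$-denominators actually arise from the Abel scheme, why two iterations (hence $C^2$) suffice, or why a finite $N$ is enough; these are precisely the technical parts that the proof in \cite[Section 2.5]{IM16} has to work for, and they cannot be waved away by the one-parameter lemma applied twice.
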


For future reference, we also note the following construction from \cite[Remark 2.10]{IM16} of a complex function $\gamma$ on the strip
$\Sigma \coloneqq \{\zeta \in \C : 0 \leq \Re \zeta \leq 1\}$
which shall be used in the context of complex interpolation together with the above two parameter lemma.
If we are given $0 < \theta < 1$ and the exponents $\alpha_1, \alpha_2$, and $\beta_1^k$'s, $\beta_2^k$'s as above,
we define
\begin{align}
\label{Oscillatory_auxiliary_two_parameter_gamma}
\gamma(\zeta) \coloneqq \prod_{\nu=1}^N \frac{\tilde{\gamma}(\nu(\zeta-1)) \tilde{\gamma}(-\nu(\zeta-1))}{\tilde{\gamma}(\nu(\theta-1)) \tilde{\gamma}(-\nu(\theta-1))},
\end{align}
where
\begin{align*}
\tilde{\gamma}(\zeta) \coloneqq (2^{\alpha_1 \zeta}-1)(2^{\alpha_2 \zeta}-1) \prod_{k=1}^n (2^{(\alpha_1 \beta_2^k - \alpha_2 \beta_1^k)\zeta}-1).
\end{align*}
The function $\gamma$ has the following two key properties.
It is an entire analytic function uniformly bounded on the strip $\Sigma$,
and for the function $F$ as in Lemma \ref{Oscillatory_auxiliary_two_parameter} there is a positive constant $C_\theta < +\infty$
such that for all $t \in \R$
\begin{align*}
\Big| \gamma(1+it) F(t) \Big| \leq C_\theta.
\end{align*}
It also has the property that $\gamma(\theta) = 1$.


\subsection{Auxiliary results related to mixed $L^p$-norms}
\label{Mixed_norm_auxiliary}

In this subsection $R$ shall denote the Fourier restriction operator $L^p(\R^3) \to L^2(\mathrm{d}\mu)$ for a positive finite Radon measure $\mu$,
and all functions and measures will have $\R^3$ as their domain, unless stated otherwise.
Recall that we assume $p = (p_1,p_3)$.

We first recall what happens in the simple case when $p=(2,1)$ and $\mu$ has the form
\begin{align*}
\langle \mu, f \rangle = \int_{\Omega} f(x,\phi(x)) \, \eta(x) \mathrm{d}x,
\end{align*}
where $\phi$ is any measurable function on an open set $\Omega$ and $\eta \in C_c^\infty(\Omega)$ is a nonnegative function.
In this case the form of the adjoint of $R$ is
\begin{align*}
(R^* f)(x_1,x_2,x_3) = \int_{\R^2} e^{i (x_1 \xi_1 + x_2 \xi_2 + x_3 \phi(\xi ))} f(\xi) \eta(\xi) \mxi,
\end{align*}
and it is called the extension operator.
Using Plancherel for each fixed $x_3$, we easily get boundedness of $R^* : L^2(\mathrm{d}\mu) \to L^\infty_{x_3}(L^2_{(x_1,x_2)})$.
Note that the operator bound depends only on the $L^\infty$ norm of $\eta$.
In particular we know that $R : L^1_{x_3}(L^2_{(x_1,x_2)}) \to L^2(\mathrm{d}\mu)$ is bounded.

When considering the $L^p-L^2$ Fourier restriction problem for other $p$'s, it is advantageous to reframe the problem
using the so called ``$R^* R$'' method.
The boundedness of the restriction operator $R : L^p \to L^2(\mathrm{d}\mu)$ is equivalent to the boundedness of the operator $T = R^* R$, which can be written as
\begin{equation}
\label{Auxiliary_RstarR}
Tf (y) \coloneqq \int_{\R^3} \int_{\R^3} f(y-x) e^{i \xi \cdot x} \, \mathrm{d}\mu(\xi) \, \mathrm{d}x
 = f * \widecheck{\mu} (y), \qquad f \in \mathcal{S}(\R^3),
\end{equation}
in the pair of spaces $L^p \to L^{p'}$, where $p'$ denotes the Young conjugate exponents $(p_1',p_3')$.
Note that the operator $T$ is linear in $\mu$ and it even makes sense for a complex $\mu$ (unlike the restriction operator $R$).
This enables us to decompose the measure $\mu$ into a sum of complex measures,
each having an associated operator of the same form as in \eqref{Auxiliary_RstarR}.

The following few lemmas give us information on the boundedness of convolution operators such as in \eqref{Auxiliary_RstarR}.
\begin{lemma}
\label{Mixed_norm_auxiliary_lemma}
Let us consider the convolution operator $T : f \mapsto f * \widehat{\mu}$ for a tempered Radon measure $\mu$
(i.e., a Radon measure which is a tempered distribution).
\begin{enumerate}
\item[(i)]
If $\widehat{\mu}$ is a measurable function which satisfies
\begin{align}
\label{Auxiliary_mu_decay}
|\widehat{\mu}(x_1,x_2,x_3)| \lesssim A (1+|x_3|)^{-\tilde{\sigma}}
\end{align}
for some $\tilde{\sigma} \in [0,1)$, then the operator norm of $T : L^p \to L^{p'}$ for $(1/p_1',1/p_3') = (0,\tilde{\sigma}/2)$
is bounded (up to a multiplicative constant) by $A$.
\item[(ii)]
If $\mu$ is a bounded function such that $\Vert \mu \Vert_{L^\infty} \lesssim B$, then
the operator norm of $T : L^2 \to L^{2}$ is bounded (up to a multiplicative constant) by $B$.
\end{enumerate}
\end{lemma}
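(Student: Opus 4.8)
The plan is to prove the two parts independently; each one reduces to a classical one-dimensional convolution inequality once we separate the behaviour of the kernel $\widehat{\mu}$ in the tangential variables $(x_1,x_2)$ from its behaviour in the normal variable $x_3$.

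\emph{Part (i).} First I would record that the pair $(1/p_1',1/p_3') = (0,\tilde{\sigma}/2)$ corresponds to $p = (1, 2/(2-\tilde{\sigma}))$ and $p' = (\infty, 2/\tilde{\sigma})$, so the claim asks for boundedness of $T : L^{p_3}_{x_3}(L^1_{(x_1,x_2)}) \to L^{p_3'}_{x_3}(L^\infty_{(x_1,x_2)})$ with $p_3 = 2/(2-\tilde{\sigma})$. Writing points of $\R^3$ as $(x', x_3)$ with $x' = (x_1,x_2)$, I would start from the pointwise bound $|Tf(y',y_3)| \leq \int_\R \int_{\R^2} |\widehat{\mu}(x',x_3)|\,|f(y'-x', y_3-x_3)| \,\mathrm{d}x' \,\mathrm{d}x_3$ coming from \eqref{Auxiliary_RstarR}, insert the hypothesis \eqref{Auxiliary_mu_decay} to pull the scalar $A(1+|x_3|)^{-\tilde{\sigma}}$ out of the inner integral, and evaluate the remaining tangential integral as $\|f(\cdot,y_3-x_3)\|_{L^1_{x'}}$. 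This yields
\[
\sup_{y' \in \R^2} |Tf(y',y_3)| \lesssim A \int_\R (1+|x_3|)^{-\tilde{\sigma}}\, g(y_3-x_3) \,\mathrm{d}x_3, \qquad g(t) \coloneqq \|f(\cdot,t)\|_{L^1_{x'}}.
\]
Since $t \mapsto (1+|t|)^{-\tilde{\sigma}}$ lies in $L^{1/\tilde{\sigma},\infty}(\R)$ when $0 < \tilde{\sigma} < 1$, I would then invoke the one-dimensional weak Young (Hardy--Littlewood--Sobolev) inequality to obtain $\|(1+|\cdot|)^{-\tilde{\sigma}} * g\|_{L^{p_3'}(\R)} \lesssim \|g\|_{L^{p_3}(\R)}$, the exponents being compatible because $\frac{1}{p_3} + \tilde{\sigma} - 1 = \frac{\tilde\sigma}{2} = \frac{1}{p_3'}$; taking the $L^{p_3'}_{y_3}$-norm of the displayed bound and using $\|g\|_{L^{p_3}_{y_3}} = \|f\|_{L^p}$ would finish the case $\tilde\sigma\in(0,1)$. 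The endpoint $\tilde{\sigma}=0$ I would handle separately, but there it is trivial: $(1/p_1',1/p_3') = (0,0)$ and $|Tf| \leq \|\widehat{\mu}\|_{L^\infty}\|f\|_{L^1} \lesssim A\|f\|_{L^1}$ by ordinary Young.

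\emph{Part (ii).} Here I would simply take Fourier transforms: by \eqref{Auxiliary_RstarR} one has $\widehat{Tf} = \widehat{f}\cdot\widehat{\widehat{\mu}}$, and $\widehat{\widehat{\mu}}$ is $\mu$ precomposed with the reflection $\xi\mapsto-\xi$ up to a fixed constant depending only on the normalisation of the Fourier transform, so Plancherel gives
\[
\|Tf\|_{L^2(\R^3)} \lesssim \big\|\widehat{f}(\cdot)\,\mu(-\cdot)\big\|_{L^2(\R^3)} \leq \|\mu\|_{L^\infty}\,\|\widehat{f}\|_{L^2(\R^3)} = \|\mu\|_{L^\infty}\,\|f\|_{L^2(\R^3)} \lesssim B\,\|f\|_{L^2(\R^3)}.
\]

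I do not expect a genuine obstacle: part (ii) is one line of Plancherel, and the only things in part (i) that need any attention are the bookkeeping of the mixed-norm exponents, the verification that the one-dimensional profile $(1+|t|)^{-\tilde{\sigma}}$ has exactly the Lorentz regularity required to feed into Hardy--Littlewood--Sobolev, and the trivial separate treatment of the endpoint $\tilde{\sigma}=0$.
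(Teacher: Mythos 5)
Your proof is correct and follows essentially the same route as the paper: reduce part (i) to a one-dimensional convolution by integrating out the tangential variables and then apply Hardy--Littlewood--Sobolev (weak Young), and obtain part (ii) from Plancherel. Your separate treatment of the endpoint $\tilde{\sigma}=0$ via ordinary Young is a sensible clarification that the paper leaves implicit.
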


\begin{proof}
One can easily show by integrating \eqref{Auxiliary_RstarR} in $(x_1,x_2)$ variables that
\begin{align*}
\Vert Tf (\cdot,y_3) \Vert_{L^\infty_{(y_1,y_2)}} \lesssim
  A \int_\R \Vert f(\cdot, y_3-x_3) \Vert_{L^1_{(x_1,x_2)}} (1+|x_3|)^{-\tilde{\sigma}} \mx_3,
\end{align*}
and therefore we can now apply the (one-dimensional) Hardy-Littlewood-Sobolev inequality and obtain the claim in the first case.
The second case when $p_1=p_3=2$ is a well known classical result for multipliers.
\end{proof}
For a more abstract approach to the above lemma see \cite{GV92} and \cite{KT98}.
There one also obtains an appropriate result for $\tilde{\sigma} = 1$ when $1/p_1' > 0$,
but shall not need this.

A particular useful application of the above lemma is the following.
\begin{lemma}
\label{Mixed_norm_auxiliary_lemma_local}
Let us consider $T : f \mapsto f * \widehat{\mu}$ for a tempered Radon measure $\mu$ which is now localised in the frequency space:
\begin{align*}
\supp \widehat{\mu} \subset \R^2 \times [-\lambda_3,\lambda_3]
\end{align*}
for a $\lambda_3 \gtrsim 1$.
Let us assume that $\mu$ and $\widehat{\mu}$ are measurable functions satisfying
\begin{align}
\label{Auxiliary_AB_estimates}
\begin{split}
\Vert \widehat{\mu} \Vert_{L^\infty} &\lesssim A,\\
\Vert \mu \Vert_{L^\infty} &\lesssim B.
\end{split}
\end{align}
Then $T$ is a bounded operator for $(\frac{1}{p_1'},\frac{1}{p_3'}) = (0,\frac{\tilde{\sigma}}{2})$ for all $\tilde{\sigma} \in [0,1)$,
with the associated operator norm being at most (up to a multiplicative constant) $A \, \lambda_3^{\tilde{\sigma}}$.
The operator norm of $T : L^2 \to L^{2}$ is bounded (up to a multiplicative constant) by $B$.
\end{lemma}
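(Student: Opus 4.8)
The plan is to obtain both assertions as immediate consequences of Lemma~\ref{Mixed_norm_auxiliary_lemma}; the point of Lemma~\ref{Mixed_norm_auxiliary_lemma_local} is only the elementary observation that the support restriction in the $x_3$-variable upgrades the mere boundedness of $\widehat\mu$ to a weighted decay estimate of the type \eqref{Auxiliary_mu_decay}, at the cost of a factor $\lambda_3^{\tilde\sigma}$.

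First I would treat the $L^2\to L^2$ statement, which does not use the frequency localisation at all: by \eqref{Auxiliary_AB_estimates} we have $\|\mu\|_{L^\infty}\lesssim B$, so part~(ii) of Lemma~\ref{Mixed_norm_auxiliary_lemma} applies directly and gives that $T:L^2\to L^2$ is bounded with operator norm $\lesssim B$.

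Next, fix $\tilde\sigma\in[0,1)$ and consider the endpoint $(1/p_1',1/p_3')=(0,\tilde\sigma/2)$. Here $\widehat\mu$ is a measurable function, and on the slab $|x_3|\le\lambda_3$, using $\|\widehat\mu\|_{L^\infty}\lesssim A$ together with $1\le 1+|x_3|\le 1+\lambda_3\lesssim\lambda_3$ (this is where $\lambda_3\gtrsim1$ is used), we get
\begin{align*}
|\widehat\mu(x_1,x_2,x_3)| \lesssim A = A\,(1+|x_3|)^{\tilde\sigma}(1+|x_3|)^{-\tilde\sigma} \le A\,(1+\lambda_3)^{\tilde\sigma}(1+|x_3|)^{-\tilde\sigma} \lesssim A\,\lambda_3^{\tilde\sigma}\,(1+|x_3|)^{-\tilde\sigma},
\end{align*}
whereas for $|x_3|>\lambda_3$ the support hypothesis $\supp\widehat\mu\subset\R^2\times[-\lambda_3,\lambda_3]$ forces $\widehat\mu(x)=0$, so the same bound holds trivially. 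Thus $\widehat\mu$ satisfies \eqref{Auxiliary_mu_decay} with $A$ replaced by $A\lambda_3^{\tilde\sigma}$, and part~(i) of Lemma~\ref{Mixed_norm_auxiliary_lemma} yields that $T$, for $(1/p_1',1/p_3')=(0,\tilde\sigma/2)$, has operator norm $\lesssim A\lambda_3^{\tilde\sigma}$.

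There is no serious obstacle here; the only thing that needs a moment's care is the bookkeeping of the $\lambda_3$-power and checking that the constants furnished by Lemma~\ref{Mixed_norm_auxiliary_lemma} do not depend on $\mu$ or on $\lambda_3$ — which they do not, since there they depend only on the ambient dimension. If the full range of intermediate exponents between these two endpoints were wanted, one would simply interpolate (e.g.\ by Stein's complex interpolation theorem), but the statement as phrased only asks for the two endpoints.
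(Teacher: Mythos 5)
Your proof is correct and takes essentially the same route as the paper: both reduce to the decay estimate \eqref{Auxiliary_mu_decay} by using the support condition $|x_3|\le\lambda_3$ to trade a factor $\lambda_3^{\tilde\sigma}$ for the weight $(1+|x_3|)^{-\tilde\sigma}$, and then invoke Lemma \ref{Mixed_norm_auxiliary_lemma}. The paper phrases the inequality via $(1+\lambda_3^{-1}|x_3|)^{-\tilde\sigma}$, but this is the same elementary manipulation you carry out.
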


\begin{proof}
We only need to obtain the decay estimate \eqref{Auxiliary_mu_decay}.
We note that since $\widehat{\mu}$ has $x_3$ support bounded by $\lambda_3$, it follows
\begin{align*}
|\widehat{\mu} (x_1,x_2,x_3)| &\lesssim A \, (1+ \lambda_3^{-1} |x_3|)^{-\tilde{\sigma}}\\
   &\lesssim A \, \lambda_3^{\tilde{\sigma}} \, (1+ |x_3|)^{-\tilde{\sigma}}
\end{align*}
for all $\tilde{\sigma} \in [0,1)$.
\end{proof}

At the end of this subsection we note the following simple result which tells us
that the conclusion of Lemma \ref{Mixed_norm_auxiliary_lemma} is in a sense quite sharp.
We remark that the last conclusion in the lemma below is consistent with the condition $\tilde{\sigma}<1$ in \eqref{Auxiliary_mu_decay}.
\begin{lemma}
\label{Mixed_norm_auxiliary_lemma_sharpness}
Consider the convolution operator $T : f \mapsto f * \widehat{\mu}$ for a tempered Radon measure $\mu$
whose Fourier transform $\widehat{\mu}$ is continuous.
Let $\varphi : [0,+\infty) \to (0,+\infty)$ be an increasing and unbounded continuous function
and assume that at least one of the limits
\begin{align*}
\lim_{x_3 \to - \infty} \widehat{\mu}(0,0,x_3) \frac{(1+|x_3|)^{\tilde{\sigma}}}{\varphi(|x_3|)}
\qquad \text{or} \qquad
\lim_{x_3 \to + \infty} \widehat{\mu}(0,0,x_3) \frac{(1+|x_3|)^{\tilde{\sigma}}}{\varphi(|x_3|)}
\end{align*}
exists for some $\tilde{\sigma} \in (0,1)$, with the limiting value being a nonzero number.
Then $T : L^p \to L^{p'}$ is not a bounded operator for $(1/p_1',1/p_3') = (0,\tilde{\sigma}/2)$.
The conclusion also holds in the case when $\varphi$ is the constant function $1$, $\tilde{\sigma} = 1$,
and if we additionally assume that $\widehat{\mu}$ is an $L^\infty(\R^3)$ function and that both of the above limits exist and are equal,
with the limiting value being a nonzero number.
\end{lemma}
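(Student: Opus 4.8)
\emph{Proof idea.} First I would record that the exponent pair is completely pinned down: $(1/p_1',1/p_3')=(0,\tilde\sigma/2)$ forces $p_1=1$, $p_1'=\infty$, $p_3=2/(2-\tilde\sigma)$ and $p_3'=2/\tilde\sigma$, and in particular $p_3=p_3'=2$ in the borderline case $\tilde\sigma=1$. After replacing $\mu$ by $e^{-i\arg L}\mu$ (which only multiplies $T$ by a unit scalar, hence does not affect its $L^p\to L^{p'}$ boundedness) I may assume $L>0$, and after replacing $\mu$ by its push-forward under $x_3\mapsto-x_3$ if necessary (which conjugates $T$ by a reflection in the $x_3$ variable, again preserving boundedness because the mixed norm is reflection-invariant in $x_3$) I may assume that the limit which exists is the one at $+\infty$. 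The goal is then to exhibit a one-parameter family of test functions whose $L^p\to L^{p'}$ Rayleigh quotients blow up.

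The test functions will be of tensor form $f(x_1,x_2,x_3)=\phi(x_1/\delta)\,\phi(x_2/\delta)\,g(x_3)$, where $\phi\ge0$ is a fixed even bump with $\phi(0)>0$ and $\supp\phi\subset[-1,1]$, $g\in C_c^\infty(\R)$ is chosen differently in the two cases, and $\delta>0$ is an auxiliary parameter. Setting $K_\delta(x_3)\coloneqq\int_{\R^2}\phi(x_1/\delta)\phi(x_2/\delta)\,\widehat\mu(x_1,x_2,x_3)\,dx_1dx_2$, for such $f$ one has
\[
Tf(0,0,y_3)=\int_\R g(y_3-x_3)\,K_\delta(x_3)\,dx_3=(g*K_\delta)(y_3),
\]
so that $\|Tf\|_{p'}\ge\|g*K_\delta\|_{L^{p_3'}_{x_3}}$ while $\|f\|_{p}=\delta^2\|\phi\|_{L^1}^2\,\|g\|_{L^{p_3}_{x_3}}$. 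The point of the narrow factors in $(x_1,x_2)$ is that, by continuity of $\widehat\mu$ (and using $\widehat\mu\in L^\infty$ when $\tilde\sigma=1$), on any prescribed compact $x_3$-interval one may choose $\delta$ small enough that $K_\delta(x_3)=\delta^2\|\phi\|_{L^1}^2\big(\widehat\mu(0,0,x_3)+E(x_3)\big)$ with $\|E\|_{L^\infty}$ there as small as desired; and since $\delta$ cancels from the quotient, its (possibly tiny) value is harmless. In the case $\tilde\sigma<1$, because $\varphi$ is continuous, increasing and unbounded I choose $x_M\uparrow\infty$ with $\varphi(x_M)=M\uparrow\infty$; monotonicity of $\varphi$ and the hypothesis then give, for $M$ large, the \emph{one-sided} lower bound $\Re\widehat\mu(0,0,x_3)\gtrsim Mx_M^{-\tilde\sigma}$ for all $x_3\in[x_M,2x_M]$ (only a lower bound is needed, which is exactly what makes the argument immune to fast growth of $\varphi$). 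I take $g=g_M$ a nonnegative bump centred at $x_3=-\tfrac32x_M$ of width $\rho_M\coloneqq x_M/4$, and $\delta=\delta_M$ small enough that also $\Re\widehat\mu(x_1,x_2,x_3)\gtrsim Mx_M^{-\tilde\sigma}$ for $|x_1|,|x_2|\le\delta_M$ and $x_3\in[x_M,2x_M]$. Then for $|y_3|\lesssim\rho_M$ the convolution window lands inside $[x_M,2x_M]$, so $\Re\big(g_M*K_{\delta_M}\big)(y_3)\gtrsim Mx_M^{-\tilde\sigma}\,\delta_M^2\,\rho_M$; this yields $\|Tf\|_{p'}\gtrsim Mx_M^{-\tilde\sigma}\delta_M^2\rho_M^{\,1+1/p_3'}$ and $\|f\|_p\sim\delta_M^2\rho_M^{\,1/p_3}$. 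Using $1+1/p_3'-1/p_3=\tilde\sigma$ and $\rho_M=x_M/4$, the quotient is $\gtrsim Mx_M^{-\tilde\sigma}\rho_M^{\,\tilde\sigma}=4^{-\tilde\sigma}M\to\infty$.

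In the case $\tilde\sigma=1$ we have $p_3=p_3'=2$, and since both axis limits exist and equal $L>0$ I get $\Re\widehat\mu(0,0,x_3)\gtrsim(1+|x_3|)^{-1}$ for $|x_3|\ge A$ \emph{on both sides}; it is precisely the equality of the two limits that prevents cancellation between the two tails. I take $g=g_N$ with $g_N(x_3)=\psi(x_3/N)$, $\psi\ge0$, $\psi\equiv1$ on $[-1,1]$, and $\delta=\delta_N$ small. For $|y_3|\le N/2$ the contribution of $\{|x_3|<A\}$ to $(g_N*K_{\delta_N})(y_3)$ is $O(\delta_N^2)$ (here I use $\widehat\mu\in L^\infty$), the contribution of $\{x_3<-A\}$ is nonnegative, and the contribution of $\{A\le x_3\le N/2\}$ is $\gtrsim\delta_N^2\int_A^{N/2}x_3^{-1}\,dx_3\sim\delta_N^2\log N$; hence $\|Tf\|_{p'}\gtrsim\delta_N^2(\log N)N^{1/2}$ while $\|f\|_p\sim\delta_N^2N^{1/2}$, and the quotient is $\gtrsim\log N\to\infty$.

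\emph{The main obstacle.} The one real subtlety is that the hypothesis only controls $\widehat\mu$ along the line $x_1=x_2=0$, so the passage to the one-dimensional kernel $K_\delta$ via a shrinking $(x_1,x_2)$-factor is forced, and one must check that mere continuity of $\widehat\mu$ transfers the axis behaviour to $K_\delta$ uniformly over the $x_3$-range actually used — a range that grows as $M,N\to\infty$. This works because for each \emph{fixed} $M$ (resp.\ $N$) only uniform continuity on a fixed compact box is needed, so a suitable $\delta$ exists, and $\delta$ then disappears from the final quotient. The second, milder, point is that $\varphi$ may grow so fast that $\widehat\mu(0,0,\cdot)$ cannot be treated as roughly constant over any fixed ratio of scales; this is dodged in the case $\tilde\sigma<1$ by using only the monotonicity lower bound on $[x_M,2x_M]$ and localising at the scale $\rho_M\sim x_M$, so that the Hardy--Littlewood--Sobolev-type gain $\rho_M^{\tilde\sigma}$ exactly absorbs the decay $x_M^{-\tilde\sigma}$.
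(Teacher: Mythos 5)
Your proof is correct and follows essentially the same route as the paper: test functions of tensor form, narrow in $(x_1,x_2)$ so that continuity of $\widehat\mu$ lets one read off the kernel on the $x_3$-axis, and a bump in $x_3$ placed so that the convolution window sits in a region where the monotone lower bound $\varphi(x_3)\geq\varphi(x_M)$ absorbs the power decay, yielding a divergent Rayleigh quotient. The only differences are cosmetic but welcome: you are more explicit than the paper about the reduction to $L>0$ via a unimodular factor, the passage to real parts, the reflection symmetry, and the $\delta$-dependence on the growing $x_3$-range, all of which the paper glosses over with a "without loss of generality."
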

\begin{proof}
Let us begin the proof by assuming that the operator
\begin{align*}
T : L^{\frac{2}{2-\tilde{\sigma}}}_{x_3} (L^1_{(x_1,x_2)}) &\to L^{2/\tilde{\sigma}}_{x_3} (L^\infty_{(x_1,x_2)})
\end{align*}
is bounded.
Since $\widehat{\mu}$ is continuous, without loss of generality we can assume that
\begin{align}
\label{Mixed_norm_auxiliary_lemma_sharpness_limit_approx}
\widehat{\mu}(x) \sim  |x_3|^{-\tilde{\sigma}} \, \varphi(|x_3|)
\end{align}
for all $x$ in the open set $U$ of the form
\begin{align*}
\{ x \in \R^3 : x_3 > K, |(x_1,x_2)| < \epsilon_U(x_3)\},
\end{align*}
where $K > 0$ and $\epsilon_U$ is a continuous and strictly positive function on $\R$.

Now consider the function
$$f(x) = \varepsilon^{-2} \chi_0 \Big(\frac{x_1}{\varepsilon}\Big) \chi_0 \Big(\frac{x_2}{\varepsilon}\Big) \chi_0 \Big(\frac{x_3}{M} \Big),$$
where
$\chi_0$ is smooth, identically $1$ in the interval $[-1,1]$, and supported within the interval $[-2,2]$.
Then
$$
\Vert f\Vert_{L^{\frac{2}{2-\tilde{\sigma}}}_{x_3} (L^1_{(x_1,x_2)})} \sim M^{1-\frac{\tilde{\sigma}}{2}},
$$
and if we assume $\varepsilon$ to be sufficiently small and $M$ sufficiently large, one obtains by a simple calculation that
$$
Tf(0,0,x_3) \sim \Bigg( \chi_0 \Big(\frac{ \cdot }{M}\Big) * \Big( |\cdot|^{-\tilde{\sigma}} \, \varphi(|\cdot|) \Big) \Bigg)(x_3)
$$
for all $x_3$ such that $4M < x_3 < C(M, \varepsilon)$, where $C(M, \varepsilon) \to \infty$ when $\varepsilon \to 0$ and $M$ is fixed.
If in addition we know say $x_3 \leq 5M < C(M, \varepsilon)$, then 
$$
Tf(0,0,x_3) \gtrsim M^{1-\tilde{\sigma}} \, \varphi(|M|),
$$
and the lower bound on the norm is
\begin{align*}
\Vert Tf \Vert_{L^{2/\tilde{\sigma}}_{x_3} (L^\infty_{(x_1,x_2)})}
  &\gtrsim \Big( M^{1-\tilde{\sigma}} \, \varphi(|M|) \Big) \, M^{\tilde{\sigma}/2}\\
  &= M^{1-\tilde{\sigma}/2} \, \varphi(|M|).
\end{align*}
But now by the boundedness assumption we obtain
\begin{align*}
M^{1-\tilde{\sigma}/2} \, \varphi(|M|) \lesssim M^{1-\tilde{\sigma}/2} \sim \Vert f\Vert_{L^{\frac{2}{2-\tilde{\sigma}}}_{x_3} (L^1_{(x_1,x_2)})},
\end{align*}
i.e., $\varphi(|M|) \lesssim 1$. This is impossible in general since we can take $M \to \infty$.

In the case when the limits are equal, $\tilde{\sigma} = 1$, and $\varphi$ is the constant function $1$,
we can take \eqref{Mixed_norm_auxiliary_lemma_sharpness_limit_approx} to be true for $x \in U$ too.
If we use the same $f$ as above, then for any $x_3 \in [-M/2,M/2]$ we easily obtain from the definition of $T$ that
$$
Tf(0,0,x_3) \gtrsim \int_K^{M/2} |t|^{-1} \mathrm{d}t - K \Vert \widehat{\mu} \Vert_{L^\infty} \gtrsim \ln M
$$
for an $M$ sufficiently large and $\varepsilon$ sufficiently small.
Thus the norm $\Vert Tf \Vert_{L^{2}_{x_3} (L^\infty_{(x_1,x_2)})}$ is bounded below by $M^{1/2} \ln M$,
while $\Vert f\Vert_{L^{2}_{x_3} (L^1_{(x_1,x_2)})}$ is of size $M^{1/2}$.
This is impossible if $T$ is bounded.
\end{proof}

In the case $\tilde{\sigma} = 1$ and when $\varphi$ is identically equal to a nonzero constant the above proof
does not work if the limits have the same absolute value but opposite signs.
This is related to the fact that an operator given as a convolution against $x \mapsto x/(1+x^2)$ is bounded $L^2(\R) \to L^2(\R)$
since the Fourier transform of $x \mapsto x/(1+x^2)$ is up to a constant $\xi \mapsto e^{-|\xi|} \sgn \xi$.


\section{The adapted case and reduction to restriction estimates near the principal root jet}
\label{Adapted}

Here we mimic \cite[Chapter 3]{IM16} and the last section of \cite{IM11b}, where the adapted case for $p_1=p_3$ was considered.
In this section we shall be concerned with measures of the form
\begin{equation}
\label{Adapted_measure_form}
\langle \mu, f\rangle = \int f(x,\phi(x)) \eta(x) \mx,
\end{equation}
where $\phi(0) = 0$, $\nabla \phi(0) = 0$, and $\eta$ is a smooth nonnegative function
with support contained in a sufficiently small neighbourhood of $0$.
We assume that $\phi$ is of finite type on the support of $\eta$.
The associated Fourier restriction problem is
\begin{equation}
\label{FRP_mixed}
\Bigg( \int |\widehat{f}|^2 \mathrm{d}\mu \Bigg)^{1/2} \leq
\Const \Vert f \Vert_{L^{p_3}_{x_3} (L^{p_1}_{(x_1,x_2)})}, \qquad f \in \mathcal{S}(\R^3),
\end{equation}
for any $\eta$ with support contained in a sufficiently small neighbourhood of $0$.

The following proposition will be useful in this section.
\begin{proposition}
\label{Adapted_general_considerations}
Let $\mu$, $\phi$, and $\eta$ be as above.
Then the mixed norm Fourier restriction estimate \eqref{FRP_mixed} holds true for the point $(1/p_1',1/p_3') = (1/2,0)$.
Furthermore we have the following two cases.
\begin{enumerate}
\item[(i)]
If either $h(\phi) = 1$ or $\nu(\phi) = 1$, then the estimate \eqref{FRP_mixed} holds true
for $1/p_1' = 0$ and $1/p_3' <1/(2h(\phi))$.
In this case the estimate for $(1/p_1',1/p_3') = (0,1/(2h(\phi)))$ does not hold if $\eta(0) \neq 0$.
\item[(ii)]
If $h(\phi)>1$ and $\nu(\phi)=0$, then the estimate \eqref{FRP_mixed} holds true for $(1/p_1',1/p_3') = (0,1/(2h(\phi)))$.
\end{enumerate}
\end{proposition}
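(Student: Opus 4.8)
The plan is to establish the three claimed points of the proposition by exploiting the two endpoint mechanisms already isolated in Subsection~\ref{Mixed_norm_auxiliary}, namely Lemma~\ref{Mixed_norm_auxiliary_lemma}(ii) for the $L^2\to L^2$ vertex $(1/p_1',1/p_3') = (1/2,0)$, and Lemma~\ref{Mixed_norm_auxiliary_lemma}(i) (together with the decay bound of Theorem~\ref{Oscillatory_auxiliary_decay}) for the vertical-axis endpoints $(0,\tilde\sigma/2)$. Recall that by the $R^*R$ reformulation \eqref{Auxiliary_RstarR} it suffices to bound the convolution operator $Tf = f*\widecheck{\mu}$ from $L^p$ to $L^{p'}$. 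For the point $(1/p_1',1/p_3') = (1/2,0)$, one simply notes that $\mu$ as in \eqref{Adapted_measure_form} is (the pushforward under the graph map of) a bounded density $\eta$, so $\widecheck{\mu}$ is a bounded \emph{function} on $\mathbb{R}^3$ — more precisely $\Vert\mu\Vert_{L^\infty}$ is finite, and after a harmless reinterpretation $T: L^2\to L^2$ is bounded by Lemma~\ref{Mixed_norm_auxiliary_lemma}(ii); equivalently, the extension operator $R^*$ maps $L^2(\mathrm{d}\mu)\to L^\infty_{x_3}(L^2_{(x_1,x_2)})$ by Plancherel in the tangential variables for each fixed $x_3$, as recorded at the start of Subsection~\ref{Mixed_norm_auxiliary}. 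This gives the first assertion immediately.

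For the vertical-axis endpoint, the key input is the uniform oscillatory-integral decay estimate of Theorem~\ref{Oscillatory_auxiliary_decay}: since $\widecheck{\mu}(x) = \int e^{i(x_1\xi_1 + x_2\xi_2 + x_3\phi(\xi))}\eta(\xi)\,\mathrm{d}\xi$ (up to the harmless Jacobian amplitude), we get
\begin{align*}
|\widecheck{\mu}(x_1,x_2,x_3)| \lesssim (1+|x|)^{-1/h(\phi)}\,(\log(2+|x|))^{\nu(\phi)}.
\end{align*}
The subtlety is that we want decay in $x_3$ alone, as in \eqref{Auxiliary_mu_decay}. One must therefore argue that, because $\nabla\phi(0)=0$ and $\eta$ is supported near $0$, the phase has no stationary behaviour forcing slow decay in the $(x_1,x_2)$ directions once $|x_3|$ is small relative to $|(x_1,x_2)|$; concretely, a non-stationary-phase / integration-by-parts argument in the tangential frequencies shows that on the region $|x_3|\lesssim|(x_1,x_2)|$ the integral enjoys rapid decay, so effectively $|\widecheck{\mu}(x)|\lesssim (1+|x_3|)^{-1/h(\phi)}(\log(2+|x_3|))^{\nu(\phi)}$. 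In case (ii), $h(\phi)>1$ and $\nu(\phi)=0$, so this reads $|\widecheck{\mu}(x)|\lesssim (1+|x_3|)^{-1/h(\phi)}$ with $1/h(\phi)\in(0,1)$; Lemma~\ref{Mixed_norm_auxiliary_lemma}(i) with $\tilde\sigma = 1/h(\phi)$ then yields boundedness of $T$ at $(1/p_1',1/p_3') = (0,1/(2h(\phi)))$, which is exactly assertion (ii). In case (i), either $h(\phi)=1$ (so $1/h(\phi)=1$ is not admissible in \eqref{Auxiliary_mu_decay}) or $\nu(\phi)=1$ (so the logarithm prevents the clean power decay); in both situations we only obtain $|\widecheck{\mu}(x)|\lesssim (1+|x_3|)^{-1/h(\phi)+\delta}$ for every $\delta>0$, hence Lemma~\ref{Mixed_norm_auxiliary_lemma}(i) gives boundedness for all $1/p_3' < 1/(2h(\phi))$ with $1/p_1'=0$, which is the positive half of assertion (i).

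The negative half of (i) — failure at $(1/p_1',1/p_3') = (0,1/(2h(\phi)))$ when $\eta(0)\neq0$ — follows from Lemma~\ref{Mixed_norm_auxiliary_lemma_sharpness} combined with the \emph{sharpness} of the decay estimate provided by Theorem~\ref{Oscillatory_auxiliary_decay_sharpness}. Indeed, when the principal face $\pi(\phi^a)$ is compact, Theorem~\ref{Oscillatory_auxiliary_decay_sharpness} gives $J_{\pm}(\lambda) \sim \lambda^{-1/h(\phi)}(\log\lambda)^{\nu(\phi)}\eta(0)$, i.e.\ along the $x_3$-axis $\widecheck{\mu}(0,0,x_3)$ behaves like $|x_3|^{-1/h(\phi)}(\log|x_3|)^{\nu(\phi)}$, which is precisely the borderline behaviour (with $\varphi=1,\tilde\sigma=1$ in the $h(\phi)=1$ subcase, or with the extra logarithm $\varphi=\log$ in the $\nu(\phi)=1$ subcase) excluded by Lemma~\ref{Mixed_norm_auxiliary_lemma_sharpness}; one has to check that the hypotheses of that lemma (existence of the relevant limit, continuity of $\widecheck{\mu}$, the sign/equality conditions in the $\tilde\sigma=1$ case) are met, using that $c_+ = \overline{c_-}$ or the known structure of the leading asymptotic coefficients. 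When $\pi(\phi^a)$ is not compact one falls back on a direct Knapp-type test function adapted to the (non-compact) principal face, as in the analogous argument in \cite{IM11b}. \textbf{The main obstacle} I anticipate is the reduction from the isotropic decay of Theorem~\ref{Oscillatory_auxiliary_decay} to the purely-$x_3$ decay \eqref{Auxiliary_mu_decay}: one must be careful that the $(x_1,x_2)$-directions genuinely contribute rapid decay off the critical region, which requires a van der Corput / integration-by-parts argument exploiting $\nabla\phi(0)=0$ and the small support of $\eta$, rather than any deep new input; and secondly, in the negative part of (i), verifying that the leading-coefficient structure in Theorem~\ref{Oscillatory_auxiliary_decay_sharpness} is compatible with the sign hypotheses in Lemma~\ref{Mixed_norm_auxiliary_lemma_sharpness}.
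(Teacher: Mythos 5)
Your overall strategy -- combine the decay estimate of Theorem~\ref{Oscillatory_auxiliary_decay} with Lemma~\ref{Mixed_norm_auxiliary_lemma} for the positive results, and the sharpness of Theorem~\ref{Oscillatory_auxiliary_decay_sharpness} with Lemma~\ref{Mixed_norm_auxiliary_lemma_sharpness} for the negative result -- is exactly the paper's. But the passage you single out as the ``main obstacle'' is not an obstacle at all, and the non-stationary-phase argument you sketch there is unnecessary. Theorem~\ref{Oscillatory_auxiliary_decay} gives $|\widecheck{\mu}(x)|\lesssim (1+|x|)^{-1/h(\phi)}(\log(2+|x|))^{\nu(\phi)}$, and since $|x_3|\le|x|$ and $t\mapsto(1+t)^{-\tilde\sigma}$ is decreasing, any bound $|\widecheck{\mu}(x)|\lesssim (1+|x|)^{-\tilde\sigma}$ immediately implies $|\widecheck{\mu}(x)|\lesssim (1+|x_3|)^{-\tilde\sigma}$, with no integration by parts and no reliance on $\nabla\phi(0)=0$: going from decay in $|x|$ to decay in $|x_3|$ only \emph{weakens} the estimate. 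For case (ii) ($\nu(\phi)=0$, $h(\phi)>1$) this gives $\tilde\sigma=1/h(\phi)\in(0,1)$ on the nose. For case (i) one absorbs the log (when $\nu(\phi)=1$) or trades down from the non-admissible endpoint $\tilde\sigma=1$ (when $h(\phi)=1$) by accepting any $\tilde\sigma<1/h(\phi)$, as you do. The worry you raise about ``slow decay in the $(x_1,x_2)$ directions'' is backwards: if $|x_3|$ is small and $|(x_1,x_2)|$ is large, the target bound $(1+|x_3|)^{-\tilde\sigma}$ is close to $1$ and asks for essentially no decay.

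Two small additional remarks. First, at $(1/p_1',1/p_3')=(1/2,0)$ you call this the ``$L^2\to L^2$ vertex'' and write that ``$\Vert\mu\Vert_{L^\infty}$ is finite''; neither is right -- $\mu$ is a singular measure and $(1/2,0)$ corresponds to $T:L^1_{x_3}(L^2_{(x_1,x_2)})\to L^\infty_{x_3}(L^2_{(x_1,x_2)})$, not $L^2\to L^2$ -- though you then correctly fall back on the Plancherel argument recorded at the start of Subsection~\ref{Mixed_norm_auxiliary}, which is all that is needed. Second, your worry about a non-compact $\pi(\phi^a)$ in the negative part of (i) does not arise: if $\nu(\phi)=1$ then by definition there is an adapted coordinate system in which $\pi(\phi^a)$ is a vertex, hence compact, so Theorem~\ref{Oscillatory_auxiliary_decay_sharpness} applies directly; and if $h(\phi)=1$ the sharpness at $(0,1/2)$ follows from two-dimensional stationary phase as the paper notes. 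The Knapp-type fallback you propose for a non-compact face is never needed here.
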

\begin{proof}
The claim for $(1/p_1',1/p_3') = (1/2,0)$ follows from considerations at the beginning of Subsection \ref{Mixed_norm_auxiliary}.

Let us now recall what happens in the non-degenerate case, i.e., when the determinant of the Hessian $\det \mathcal{H}_\phi(0,0) \neq 0$.
This is equivalent to $h(\phi) = 1$ and in this case $\phi$ is adapted in any coordinate system.
Here we have the bound \eqref{FRP_mixed} for all of the $(1/p_1',1/p_3')$ given in the necessary condition \eqref{Knapp_adapted_mixed},
except for the point $(0,1/2)$, for which it does not hold.
This fact is actually true globally, i.e., the Strichartz estimates hold (see \cite{GV92, KT98} and references therein) in the same range,
and one can easily convince oneself that the same proof as in say \cite{KT98} goes through in our local case.
For the negative results at the point $(0,1/2)$ in the case of Strichartz estimates see \cite{KM93} and \cite{Mo98}.
We can also get a negative result at the point $(0,1/2)$ directly in our case
by applying Lemma \ref{Mixed_norm_auxiliary_lemma_sharpness} for the case $\tilde{\sigma} = 1$ and $\varphi$ is identically equal to $1$.
The limits in Lemma \ref{Mixed_norm_auxiliary_lemma_sharpness} are obtained by a simple application of the two dimensional stationary phase method.
Furthermore, since the Hessian does not change its sign when changing the phase $\phi \mapsto -\phi$, the limits in both directions are equal.

The claims for the case when $h(\phi) > 1$ follow easily by applying Theorems \ref{Oscillatory_auxiliary_decay} and \ref{Oscillatory_auxiliary_decay_sharpness}
to Lemmas \ref{Mixed_norm_auxiliary_lemma} and \ref{Mixed_norm_auxiliary_lemma_sharpness} respectively.
In Lemma \ref{Mixed_norm_auxiliary_lemma_sharpness} we take $\varphi$ to be the logarithmic function $x \mapsto \log(2+x)$.
\end{proof}

\subsection{The adapted case}
\label{Adapted_adapted_case}

The following proposition tells us precisely when the Fourier restriction estimate holds in the adapted case.
\begin{proposition}
\label{Adapted_proposition_adapted}
Let us assume that $\mu$, $\phi$, and $\eta$ are as explained at the beginning of this section, and
let us assume that $\phi$ is adapted.
\begin{itemize}
\item[(i)]
If $h(\phi)=1$ or $\nu(\phi)=1$, then the full range Fourier restriction estimate given by the necessary condition
\eqref{Knapp_adapted_mixed} holds true, except for the point $(1/p_1',1/p_3') = (0,1/(2h(\phi)))$ where it is false if $\eta(0) \neq 0$.
\item[(ii)]
If $h(\phi)>1$ and $\nu(\phi)=0$, then the full range Fourier restriction estimate given by the necessary condition \eqref{Knapp_adapted_mixed} holds true,
including the point $(1/p_1',1/p_3') = (0,1/(2h(\phi)))$.
\end{itemize}
\end{proposition}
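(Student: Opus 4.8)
The plan is to deduce the proposition from Proposition~\ref{Adapted_general_considerations} together with interpolation and one new family of endpoint estimates. Recall that \eqref{FRP_mixed} is equivalent to the $L^p\to L^{p'}$ boundedness of the convolution operator $T=R^*R$ of Subsection~\ref{Mixed_norm_auxiliary}, which is linear in $\mu$, so by complex (Riesz--Thorin) interpolation for mixed-norm Lebesgue spaces it suffices to establish \eqref{FRP_mixed} at a collection of points of the triangle $OP\tilde{P}$ cut out by \eqref{Knapp_adapted_mixed}, with $O=(0,0)$, $P=(1/2,0)$, $\tilde{P}=(0,1/(2h(\phi)))$, whose admissible interpolates cover this triangle. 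At $O$ the estimate is trivial since $\widehat{\mu}\in L^\infty$; at $P$ it is the Plancherel estimate recalled at the beginning of Subsection~\ref{Mixed_norm_auxiliary}; and along the left edge $\{(0,s):0\le s<1/(2h(\phi))\}$, as well as at $\tilde{P}$ itself in case~(ii), it is exactly Proposition~\ref{Adapted_general_considerations} --- which rests on Theorem~\ref{Oscillatory_auxiliary_decay}, the elementary bound $|x_3|\le|x|$ and monotonicity (yielding $|\widehat{\mu}(x)|\lesssim(1+|x_3|)^{-1/h(\phi)}(\log(2+|x_3|))^{\nu(\phi)}$) together with Lemma~\ref{Mixed_norm_auxiliary_lemma}(i); that proposition also supplies the failure at $\tilde{P}$ in case~(i) via Lemma~\ref{Mixed_norm_auxiliary_lemma_sharpness} and Theorem~\ref{Oscillatory_auxiliary_decay_sharpness}. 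A short convexity check shows these estimates interpolate to all of the triangle $OP\tilde{P}$ in case~(ii) and to all of it except $\tilde{P}$ in case~(i), \emph{provided} one also has \eqref{FRP_mixed} at every point of the open critical edge $(P,\tilde{P})$, that is, for $(1/p_1',1/p_3')$ with $\frac{1}{h(\phi)p_1'}+\frac{1}{p_3'}=\frac{1}{2h(\phi)}$ and $p_1',p_3'\in(2,\infty)$: the ray from $O$ through any interior point of the triangle hits this open edge and not $\tilde{P}$, the bottom edge $OP$ comes from interpolating $O$ with $P$, and the left edge has just been recorded.

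The heart of the matter is therefore the open critical edge, and here I would follow \cite[Chapter~3]{IM16} and the last section of \cite{IM11b}, now reorganised around the abstract mixed-norm Strichartz machinery of \cite{KT98} (see also \cite{GV92}). Writing $U(x_3)g(x_1,x_2)=\int e^{i(x_1\xi_1+x_2\xi_2+x_3\phi(\xi))}g(\xi)\eta(\xi)\,\mathrm{d}\xi$ for the extension flow, \eqref{FRP_mixed} becomes the Strichartz bound $\Vert U(\cdot)g\Vert_{L^{p_3'}_{x_3}(L^{p_1'}_{(x_1,x_2)})}\lesssim\Vert g\Vert_{L^2(\mathrm{d}\mu)}$, and one has the energy estimate $\Vert U(x_3)\Vert_{L^2(\mathrm{d}\mu)\to L^2_{(x_1,x_2)}}\lesssim1$ (Plancherel) and the dispersive estimate $\Vert U(x_3)U(y_3)^*\Vert_{L^1_{(x_1,x_2)}\to L^\infty_{(x_1,x_2)}}\lesssim(1+|x_3-y_3|)^{-1/h(\phi)}(\log(2+|x_3-y_3|))^{\nu(\phi)}$. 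For a pair on the open critical edge we have $p_1',p_3'\in(2,\infty)$, so the $TT^*$ argument applies: decomposing the time-convolution kernel dyadically in $|x_3-y_3|\sim2^j$, the resulting bilinear form is, precisely on the critical edge, invariant under the scalings considered in \cite{IM16} (the powers of $2^j$ cancel, the phenomenon announced in the introduction), so absolute summation in $j$ fails and one must invoke the bilinear real-interpolation (Lorentz-space) refinement of Keel and Tao to regain summability from the off-diagonal pieces. Because $p_1'<\infty$, those off-diagonal pieces still carry genuine geometric decay $2^{-\epsilon j}$, so the logarithmic factor contributes at most a harmless $(j+1)^{\nu(\phi)}$ --- this is precisely why $\nu(\phi)=1$ costs nothing on the open edge, whereas at $\tilde{P}$, where $p_1'=\infty$ and the only route is the direct Hardy--Littlewood--Sobolev one (Lemma~\ref{Mixed_norm_auxiliary_lemma}(i)), which requires the clean decay exponent $1/h(\phi)$, the same logarithm obstructs the estimate, consistently with its genuine failure recorded above. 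When $h(\phi)=1$ the open edge, and the failure of its $\tilde{P}$-endpoint (the classical $(q,r)=(2,\infty)$ Keel--Tao endpoint), are already classical; see \cite{GV92,KT98,KM93,Mo98}.

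With \eqref{FRP_mixed} established on the open critical edge, interpolation then fills the interior of the triangle and the remaining edges as indicated, giving the full range of \eqref{Knapp_adapted_mixed} in case~(ii) and its complement of $\tilde{P}$ in case~(i), which is the assertion. The step I expect to be the main obstacle is the careful execution of the Keel--Tao bilinear/Lorentz-space argument in the presence of the logarithmic loss: one must verify that the loss is confined to the single endpoint $\tilde{P}$ and does not propagate along the critical edge, and that the necessarily local (compactly supported) nature of $\mu$ causes no difficulty in the dyadic scaling of the kernel.
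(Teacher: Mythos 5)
Your proposal takes a genuinely different route from the paper's, and the divergence occurs precisely at the one genuinely difficult case, $h(\phi)>1$ with $\nu(\phi)=1$, where I believe your argument has a gap.

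You correctly identify that cases $h(\phi)=1$ (classical) and $h(\phi)>1$, $\nu(\phi)=0$ (interpolation between $P$ and $\tilde P$, both supplied by Proposition~\ref{Adapted_general_considerations}) are routine, and that the substance of the matter for $\nu(\phi)=1$ is the open critical edge, which cannot be reached by interpolating $P$ against interior points of the segment $O\tilde P$. But the route you propose for that edge — dyadic decomposition in $|x_3-y_3|\sim 2^j$ of the $R^*R$ kernel followed by the Keel--Tao bilinear/Lorentz refinement — is not the paper's, and the key step is not justified. Your assertion that ``because $p_1'<\infty$, those off-diagonal pieces still carry genuine geometric decay $2^{-\epsilon j}$'' is incorrect as a statement about the time-dyadic pieces: writing $q=p_3'$, $r=p_1'$, the single-piece bilinear bound has the form $|T_j(F,G)|\lesssim 2^{j(2/q-\sigma(1-2/r))}(1+j)^{\nu(1-2/r)}\|F\|\|G\|$ with $\sigma=1/h(\phi)$, and on the critical line $\frac{1}{q}+\frac{\sigma}{r}=\frac{\sigma}{2}$ the exponent $2/q-\sigma(1-2/r)$ vanishes identically, \emph{independently} of whether $r<\infty$. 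So the raw $j$-sum is $\sum_j (1+j)^{\nu(1-2/r)}$, which diverges for $\nu=1$ and $r<\infty$. The Lorentz-atomic refinement of Keel and Tao is built to replace a uniform (non-decaying but log-free) bound by a summable one at the $q=2$ endpoint; whether it can additionally absorb a polynomially-in-$j$ growing factor at points with $q>2$ is a nontrivial claim that you do not prove, and the paper explicitly singles out the logarithmic factor in the decay estimate as the obstacle that makes the naive interpolation argument fail.

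The paper's own treatment is quite different in spirit: it avoids any Strichartz-with-log-loss question. Following \cite[Section 4]{IM11b}, one decomposes $\mu=\sum_k\mu_k$ dyadically in the \emph{tangential} variables using the $\kappa$-dilations $\delta_{2^k}$, and rescales each $\mu_k$ to a measure $\mu_{0,(k)}$ of the same form, supported away from the origin, with uniformly bounded total variation and with the \emph{clean} decay $|\widehat{\mu_{0,(k)}}(\xi)|\lesssim(1+|\xi|)^{-1/h(\phi)}$ — no logarithm, because the rescaled pieces live away from the degenerate point. Proposition~\ref{Adapted_general_considerations} (now in its clean variant) plus interpolation then gives the restriction estimate for each $\mu_{0,(k)}$ on the full closed triangle, one undoes the rescaling, and the exponent of $2^k$ that appears is $2k|\kappa|\bigl(-\tfrac12+\tfrac{1}{p_1'}+\tfrac{1}{|\kappa|p_3'}\bigr)\le 0$ precisely under \eqref{Knapp_adapted_mixed}. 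Finally the sum in $k$ is carried out with the Littlewood--Paley theorem in the tangential variables and Minkowski's inequality, which is where the restriction $p_1>1$ enters and hence why $\tilde P$ itself is excluded. Your Keel--Tao route, if it can be completed, would buy a more PDE-flavoured derivation and perhaps a global formulation, but you would first have to justify the summation step in the presence of the logarithm; as written, that step is the gap.
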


\begin{proof}
The case when $h(\phi) = 1$ is the classical known case and it was already discussed in the proof of Proposition \ref{Adapted_general_considerations}.
The case when $h(\phi) > 1$ and $\nu(\phi) = 0$ follows from Proposition \ref{Adapted_general_considerations} by interpolation.

Let us now consider the remaining case when $h(\phi) > 1$ and $\nu(\phi) = 1$.
Then if we would use Proposition \ref{Adapted_general_considerations} and interpolation as in the previous case,
we would miss all the boundary points determined by the line of the necessary condition \eqref{Knapp_adapted_mixed}
\begin{align*}
\frac{1}{h(\phi)} \frac{1}{p_1'} + \frac{1}{p_3'} = \frac{1}{2 h(\phi)},
\end{align*}
except the point $(1/2,0)$ where we know that the estimate always holds.
Recall that this is essentially because we have the logarithmic factor in the decay of the Fourier transform of $\mu$.
Instead, one can use the strategy from \cite[Section 4]{IM11b} to avoid this problem.
We only briefly sketch the argument. One decomposes
\begin{equation*}
\mu = \sum_{k\geq k_0} \mu_k,
\end{equation*}
where $\mu_k$ are supported within ellipsoid annuli centered at $0$ and closing in to $0$.
This is done by considering the partition of unity
\begin{align*}
\eta(x) = \sum_{k \geq k_0} \eta_k(x) = \sum_{k \geq k_0} \eta(x) \chi \circ \delta_{2^k} (x),
\end{align*}
where $\chi$ is an appropriate $C_c^\infty(\R^2)$ function supported away from the origin and
\begin{align*}
\delta_{r}(x) = (r^{\kappa_1} x_1, r^{\kappa_2} x_2),
\end{align*}
where $\kappa = (\kappa_1, \kappa_2)$ is the weight associated to the principal face of $\mathcal{N}(\phi)$.
Next, one rescales the measures $\mu_k$ and obtains measures $\mu_{0,(k)}$ having the form \eqref{Adapted_measure_form}.
These new measures have uniformly bounded total variation and Fourier decay estimate with constants uniform in $k$:
\begin{align*}
\big|\widehat{\mu_{0,(k)}}(\xi)\big| \lesssim (1+|\xi|)^{-1/h(\phi)}.
\end{align*}
Note that there is no logarithmic factor anymore.
Now we can use Proposition \ref{Adapted_general_considerations} and interpolation to obtain
the mixed norm Fourier restriction estimate within the range \eqref{Knapp_adapted_mixed} for each $\mu_{0,(k)}$.
As in \cite[Section 4]{IM11b}, one now easily obtains the bound\footnote{In the equation right above \cite[Equation (4.7)]{IM11b} there is a typo.
Instead of $(|\kappa|/2+1)k$ in the exponent, it should be $(|\kappa|+2)k$.}
\begin{equation*}
\int |\widehat{f}|^2 \mathrm{d}\mu_k \lesssim
2^{(|\kappa|+2)k} \Vert f \circ \delta^e_{2^k} \Vert^2_{L^{p_3}_{x_3} (L^{p_1}_{(x_1,x_2)})}, \qquad f \in \mathcal{S}(\R^3),
\end{equation*}
where $\delta^e_{r}(x_1,x_2,x_3) = (r^{\kappa_1}x_1,r^{\kappa_2}x_2,rx_3)$.
The scaling in our mixed norm case is
\begin{equation*}
\Vert f \circ \delta^e_{2^k} \Vert_{L^{p_3}_{x_3} (L^{p_1}_{(x_1,x_2)})} =
2^{-k(\frac{\kappa_1+\kappa_2}{p_1} + \frac{1}{p_3})} \Vert f \Vert_{L^{p_3}_{x_3} (L^{p_1}_{(x_1,x_2)})},
\end{equation*}
and therefore
\begin{align*}
\int |\widehat{f}|^2 \mathrm{d}\mu_k
 &\lesssim 2^{k(|\kappa|+2)-2k(\frac{\kappa_1+\kappa_2}{p_1} + \frac{1}{p_3})} \Vert f \Vert^2_{L^{p_3}_{x_3} (L^{p_1}_{(x_1,x_2)})}\\
 &= 2^{k(|\kappa|+2 -\frac{2|\kappa|}{p_1} - \frac{2}{p_3})} \Vert f \Vert^2_{L^{p_3}_{x_3} (L^{p_1}_{(x_1,x_2)})}\\
 &= 2^{2k|\kappa|(-\frac{1}{2}+\frac{1}{p'_1} + \frac{1}{|\kappa|p'_3})} \Vert f \Vert^2_{L^{p_3}_{x_3} (L^{p_1}_{(x_1,x_2)})}\\
 &\leq \Vert f \Vert^2_{L^{p_3}_{x_3} (L^{p_1}_{(x_1,x_2)})},
\end{align*}
by the necessary condition
\begin{equation*}
\frac{1}{p'_1} + \frac{1}{|\kappa|p'_3} \leq \frac{1}{2},
\end{equation*}
and the equalities $d(\phi) |\kappa| = h(\phi) |\kappa| = 1$. The rest of the proof is the same as in \cite{IM11b}
if we assume $p_1 > 1$, since then one can use the Littlewood-Paley theorem\footnote{
Here we don't need a mixed norm Littlewood-Paley theorem since the decomposition is only in the tangential direction where $p_1=p_2$.
Note that the ordering of the mixed norm is important, namely that the outer norm is associated to the normal direction.}
and the Minkowski inequality (which we can apply since $p_1=p_2\leq2$ and $p_3\leq2$)
to sum the above inequality in $k$.
The proof of Proposition \ref{Adapted_proposition_adapted} is done.
\end{proof}

\subsection{Reduction to the principal root jet}
\label{Adapted_reduction_principal_root}

In this subsection we make some preliminary reductions for the case when $\phi$ is not adapted.
Recall that we may assume that $\phi$ is linearly adapted and that
we denote by $\psi$ the principal root of $\phi$.
Then we can obtain the adapted coordinates $y$ (after possibly interchanging the coordinates $x_1$ and $x_2$) through
\begin{align*}
y_1 &= x_1,\\
y_2 &= x_2-\psi(x_1).
\end{align*}
Before stating the last proposition of this section (analogous to \cite[Proposition 3.1]{IM16})
let us recall some notation from \cite{IM16}.
We write
\begin{align*}
\psi(x_1) = b_1 x_1^m + \mathcal{O}(x_1^{m+1}),
\end{align*}
where $b_1 \neq 0$ and $m \geq 2$ by linear adaptedness (see \cite[Proposition 1.7]{IM16}).
If $F$ is an integrable function on the domain of $\eta$, say $\Omega \subseteq \R^2$, then we denote
$$ \mu^F \coloneqq (F \otimes 1) \mu.$$
If $\chi_0$ denotes a $C_c^\infty(\R)$ function equal to $1$ in a neighbourhood of the origin,
we may define
$$ \rho_1(x_1,x_2) = \chi_0\Big( \frac{x_2-b_1x_1^m}{\varepsilon x_1^m} \Big),$$
where $\varepsilon$ is an arbitrarily small parameter.
The domain of $\rho_1$ is a $\kappa$-homogeneous subset of $\Omega$ which contains
the principal root jet $x_2 = \psi(x_1)$ of $\phi$ when $\Omega$ is contained in a sufficiently small neighbourhood of $0$.

\begin{proposition}
\label{Adapted_proposition_reduction}
Assume $\phi$ is of finite type on $\Omega$, non-adapted, and linearly adapted (i.e., $d(\phi) = h_{\text{lin}}(\phi)$).
Let $\varepsilon > 0$ be sufficiently small and
let $\mu^{1-\rho_1}$ have support contained in a sufficiently small neighbourhood of $0$.
Then the mixed norm Fourier restriction estimate \eqref{FRP_mixed} with respect to the measure $\mu^{1-\rho_1}$ holds true
for all $(1/p_1',1/p_3')$ which satisfy
\begin{align*}
\frac{1}{d(\phi)} \frac{1}{p_1'} + \frac{1}{p_3'} &\leq \frac{1}{2 d(\phi)},\\
p_1 &> 1,
\end{align*}
i.e., within the range determined by the necessary condition associated to the principal face of $\mathcal{N}(\phi)$,
except maybe the boundary points of the form $(0,1/p_3')$. In particular, it also holds true within the narrower range
determined by all of the necessary conditions, excluding maybe the boundary points of the form $(0,1/p_3')$.
\end{proposition}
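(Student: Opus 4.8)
The plan is to mimic the proof of \cite[Proposition 3.1]{IM16} (and of the last section of \cite{IM11b}), replacing the fixed-exponent Fourier restriction input by the mixed-norm Lemmas \ref{Mixed_norm_auxiliary_lemma} and \ref{Mixed_norm_auxiliary_lemma_local}, and carrying out the summation over the dyadic pieces exactly as in the proof of Proposition \ref{Adapted_proposition_adapted}. First I would decompose $\mu^{1-\rho_1}$ into a $\kappa$-homogeneous dyadic sum
\[
\mu^{1-\rho_1} = \sum_{k \geq k_0} \mu_k,
\]
where $\kappa = (\kappa_1,\kappa_2)$ is the weight associated to the principal face of $\mathcal{N}(\phi)$ and $\mu_k$ is obtained by inserting a cutoff $\chi \circ \delta_{2^k}$, $\delta_r(x) = (r^{\kappa_1}x_1, r^{\kappa_2}x_2)$, into the amplitude $\eta(1-\rho_1)$. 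Each $\mu_k$ is supported in an ellipsoidal annulus shrinking to $0$, and --- crucially, because the factor $1-\rho_1$ has been retained --- at a $\kappa$-homogeneous distance $\gtrsim \varepsilon$ from the principal root jet $x_2 = \psi(x_1)$.

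Next I would rescale: applying the extended dilation $\delta^e_{2^k}(x_1,x_2,x_3) = (2^{k\kappa_1}x_1, 2^{k\kappa_2}x_2, 2^k x_3)$ transforms $\mu_k$ into a measure $\mu_{0,(k)}$ of the form \eqref{Adapted_measure_form}, associated to functions $\phi^{(k)}$ which are supported in a fixed compact set, stay away from the curve $x_2 = b_1 x_1^m$, and converge in $C^\infty_{\mathrm{loc}}$, as $k \to \infty$, to the principal part $\phi_{\mathrm{pr}}$ restricted to that region. Since the sole obstruction to adaptedness of $\phi$ --- equivalently, the sole source of a logarithmic loss in Theorem \ref{Oscillatory_auxiliary_decay} --- is the degeneracy of $\phi$ along the principal root jet, the restriction of $\phi_{\mathrm{pr}}$ away from it is adapted with height $d(\phi)$ and vanishing Varchenko exponent. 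Hence, as in \cite[Section 3]{IM16}, the measures $\mu_{0,(k)}$ have uniformly bounded total variation and obey the logarithm-free decay estimate
\[
\bigl| \widehat{\mu_{0,(k)}}(\xi) \bigr| \lesssim (1+|\xi|)^{-1/d(\phi)}, \qquad \xi \in \R^3,
\]
with constants independent of $k$. (A further finite dyadic decomposition in the remaining tangential variable, reaching the other --- benign --- zeros of $\phi_{\mathrm{pr}}$, may be needed to make this completely rigorous, but it does not affect the exponent.)

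With these uniform bounds at hand I would run the mixed-norm endpoint analysis for each $\mu_{0,(k)}$ separately, passing through the $R^*R$ identity. At the vertex $(1/p_1',1/p_3') = (1/2,0)$ the estimate $R : L^1_{x_3}(L^2_{(x_1,x_2)}) \to L^2(\mu_{0,(k)})$ holds with a constant depending only on $\Vert \eta \Vert_{L^\infty}$, by the discussion at the start of Subsection \ref{Mixed_norm_auxiliary}. For the other relevant endpoint I would use that $d(\phi) > 1$ --- indeed $d(\phi) = h_{\text{lin}}(\phi)$, and $d(\phi) = 1$ would force the principal part of $\phi$ to be a non-degenerate quadratic form, hence $\phi$ adapted, contrary to assumption --- so that $1/d(\phi) < 1$ and, since $(1+|\xi|)^{-1/d(\phi)} \leq (1+|x_3|)^{-1/d(\phi)}$, Lemma \ref{Mixed_norm_auxiliary_lemma}(i) applied with $\tilde\sigma = 1/d(\phi)$ gives the estimate at $(0, 1/(2d(\phi)))$, again uniformly in $k$. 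Interpolating between these two endpoints yields the estimate for every $(1/p_1',1/p_3')$ on the segment $\tfrac{1}{d(\phi)p_1'} + \tfrac{1}{p_3'} = \tfrac{1}{2d(\phi)}$, with $k$-uniform constant.

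Finally I would undo the scaling and sum in $k$. As in the computation in the proof of Proposition \ref{Adapted_proposition_adapted}, for $(1/p_1',1/p_3')$ on the critical segment one gets
\[
\int |\widehat{f}|^2 \, \mathrm{d}\mu_k \lesssim 2^{\,2k|\kappa|\left(-\frac12 + \frac{1}{p_1'} + \frac{1}{|\kappa| p_3'}\right)} \Vert f \Vert_{L^{p_3}_{x_3}(L^{p_1}_{(x_1,x_2)})}^2 = \Vert f \Vert_{L^{p_3}_{x_3}(L^{p_1}_{(x_1,x_2)})}^2,
\]
using $d(\phi)|\kappa| = 1$ and the defining equation of the segment, so the summands are uniformly but not absolutely bounded. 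Along this segment, with $1/p_1' > 0$, one checks $p_1 \in (1,2]$ and $p_3 \in [1,2)$; hence, exactly as in the case $\nu(\phi) = 1$ of Proposition \ref{Adapted_proposition_adapted}, the Littlewood--Paley theorem in the tangential variables (legitimate since $p_1 > 1$) together with Minkowski's inequality (legitimate since $p_1 \leq 2$, $p_3 \leq 2$, in the correct order with the normal variable outermost) let us pass from the per-$k$ bound to the estimate for $\mu^{1-\rho_1}$ at such points. Points strictly below the critical segment follow by interpolation with $(1/2,0)$, where the scaling exponent is negative and the series converges trivially; the points with $1/p_1' = 0$, where Littlewood--Paley is unavailable, are the only ones left out. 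I expect the main obstacle to be the $k$-uniform, logarithm-free decay estimate for $\widehat{\mu_{0,(k)}}$: this is precisely where one must exploit that $\mu^{1-\rho_1}$ is supported away from the principal root jet and carry over the relevant part of the analysis of \cite[Section 3]{IM16}; the mixed-norm aspects are by comparison soft, entering only through Lemma \ref{Mixed_norm_auxiliary_lemma} and the summation step.
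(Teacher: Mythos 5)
Your proposal reproduces, in somewhat more detail, the paper's own (very brief) sketch: $\kappa$-homogeneous dyadic decomposition of $\mu^{1-\rho_1}$, rescaling to normalised measures of the form \eqref{Adapted_measure_form} with $k$-uniform total variation and logarithm-free Fourier decay (borrowed from \cite[Chapter 3]{IM16}), the two endpoint estimates via Plancherel and Lemma~\ref{Mixed_norm_auxiliary_lemma}(i), and summation in $k$ by Littlewood--Paley plus Minkowski, using $p_1>1$. You correctly identify $d(\phi)>1$ as the fact making $\tilde\sigma=1/d(\phi)<1$ admissible in Lemma~\ref{Mixed_norm_auxiliary_lemma}(i), and you even have the decay exponent in the form it should appear (the paper's displayed $(1+|\xi|)^{-d(\phi)}$ is evidently a typo for $(1+|\xi|)^{-1/d(\phi)}$).

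One step is stated imprecisely. You write that points strictly inside the region ``follow by interpolation with $(1/2,0)$'', but interpolating the hypotenuse against its own endpoint $(1/2,0)$ recovers only the hypotenuse. What is actually meant by the paper's ``interpolation, Minkowski, Littlewood--Paley'' is: the uniform decay $|\widehat{\mu_{0,(k)}}(x)|\lesssim(1+|x_3|)^{-1/d(\phi)}$ dominates $(1+|x_3|)^{-\tilde\sigma}$ for every $\tilde\sigma\in[0,1/d(\phi)]$, so Lemma~\ref{Mixed_norm_auxiliary_lemma}(i) yields the per-$k$ bound on the entire left edge $\{(0,\tilde\sigma/2):\tilde\sigma\in[0,1/d(\phi)]\}$; interpolating this one-parameter family against the single point $(1/2,0)$ covers the whole closed triangle. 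Since the scaling exponent $2k|\kappa|\bigl(-\tfrac12+\tfrac{1}{p_1'}+\tfrac{1}{|\kappa|p_3'}\bigr)$ is strictly negative at interior points, the $k$-sum converges absolutely there, and the Littlewood--Paley/Minkowski argument (hence the restriction $p_1>1$) is needed only on the critical segment itself. This is a small repair and does not change your overall plan, which is the paper's.
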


We just briefly mention that the proof of the Proposition \ref{Adapted_proposition_reduction} is trivial
as soon as one uses the results from \cite[Chapter 3]{IM16}. Analogously to the previous subsection,
one decomposes the measure $\mu^{1-\rho_1}$ by using the $\kappa$ dilations associated to the principal face of $\mathcal{N}(\phi)$.
The measures $\nu_k$ obtained by rescaling are of the form \eqref{Adapted_measure_form}, have uniformly bounded total variation, and 
have the Fourier transform decay (with constants uniform in $k$)
$$ |\widehat{\nu}_k(\xi)| \lesssim (1+|\xi|)^{-d(\phi)}. $$
All of this was proven in \cite[Chapter 3]{IM16}.
Therefore we have the Fourier restriction estimate for each $\nu_k$ for the points $(1/p_1',1/p_3') = (1/2,0)$ and $(1/p_1',1/p_3') = (0,1/(2d(\phi)))$.
Now one uses again interpolation, the Minkowski inequality, and the Littlewood-Paley theorem, to obtain the claim.

Note that the estimates for the boundary points of the form $(0,1/p_3')$
can be directly solved for the original measure $\mu$ through Proposition \ref{Adapted_general_considerations}.



\section{The case $h_{\text{lin}}(\phi)$ < 2}
\label{Section_h_lin_less_than_2}

In the remainder of this article we shall we concerned with the proof of:
\begin{theorem}
\label{Section_h_lin_less_than_2_main_theorem}
Let $\phi : \R^2 \to \R$ be a smooth function of finite type defined on a sufficiently small neighbourhood $\Omega$ of the origin,
satisfying $\phi(0) = 0$ and $\nabla \phi(0) = 0$.
Let us assume that $\phi$ is linearly adapted, but not adapted, and that $h_{\text{lin}}(\phi) < 2$.
We additionally assume that the following holds:
Whenever the function $b_0$ appearing in \eqref{Knapp_A_form}, \eqref{Knapp_A_form_adapted}, \eqref{Knapp_D_form} is flat
(i.e., when $\phi$ is $A_\infty$ or $D_\infty$ type singularity),
then it is necessarily identically equal to $0$.
In this case, for all smooth $\eta \geq 0$ with support in a sufficiently small neighbourhood of the origin the Fourier restriction estimate
\eqref{FRP_mixed} holds for all $p$ given by the necessary conditions determined in Subsection \ref{Knapp_h_lin_less_than_two}.
\end{theorem}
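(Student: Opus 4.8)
The plan is to strip off the part of $\mu$ away from the principal root jet by means of Section \ref{Adapted}, reduce what remains to a single critical exponent by interpolation, and then estimate that part by the $R^*R$ method combined with a phase space decomposition built on the explicit normal forms of Subsection \ref{Knapp_h_lin_less_than_two}. Concretely, write $\mu = \mu^{\rho_1} + \mu^{1-\rho_1}$ with $\rho_1$ as in Subsection \ref{Adapted_reduction_principal_root}. Since $h_{\text{lin}}(\phi) < 2$ forces $h(\phi) > 1$ and $\nu(\phi) = 0$, Proposition \ref{Adapted_general_considerations} gives \eqref{FRP_mixed} for $\mu$ — hence, by domination, also for $\mu^{\rho_1}$ and $\mu^{1-\rho_1}$ — at the two vertices $P = (1/2,0)$ and $\tilde P = (0, 1/(2h(\phi)))$ of the necessary-conditions polyhedron $\mathcal P$. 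Proposition \ref{Adapted_proposition_reduction}, together with the trivial $L^1 \to L^\infty$ bound at the origin $O$ and the domination just mentioned along the edge $\{1/p_1' = 0\}$, gives \eqref{FRP_mixed} for $\mu^{1-\rho_1}$ on all of $\mathcal P$. Because $l_0 = l^a$ in this regime (Subsection \ref{Knapp_h_lin_less_than_two}), $\mathcal P$ is the quadrilateral with vertices $O$, $P$, $\mathfrak p$, $\tilde P$, where $\mathfrak p$ is the critical exponent of Subsection \ref{Knapp_h_lin_less_than_two}; the bound for $\mu^{\rho_1}$ at $O$ is immediate since $\widehat{\mu^{\rho_1}}$ is bounded. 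Hence, by bilinear complex interpolation of the convolution operator $f \mapsto f * \widehat{\mu^{\rho_1}}$ in the mixed norm spaces $L^{p_3}_{x_3}(L^{p_1}_{(x_1,x_2)})$, it suffices to prove \eqref{FRP_mixed} for $\mu^{\rho_1}$ at the single exponent $\mathfrak p$.

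\textbf{The normalized pieces.} In adapted coordinates $\phi^a$ has the form \eqref{Knapp_A_form_adapted} ($A$ type) or the second line of \eqref{Knapp_D_form} ($D$ type). Using \eqref{Auxiliary_RstarR} and Lemma \ref{Mixed_norm_auxiliary_lemma}, decompose the kernel $\widehat{\mu^{\rho_1}}$ by a dyadic partition of unity adapted to the anisotropic dilations attached to the principal root jet: a one-parameter family $(\mu^{\rho_1}_k)_k$ in the scale $|y_1| \sim 2^{-k}$ in the $A$ case, and in the $D$ case additionally the splitting according to whether the coefficient $y_1 b_1^a + (y_2 + \psi(y_1))^2 b_2$ of $y_2^2$ is comparable to $|y_1|$ or degenerates — the latter region producing an Airy-type integral, which is the subject of Section \ref{Section_Airy}. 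Rescaling $\mu^{\rho_1}_k$ produces measures $\nu_k$ of the form \eqref{Adapted_measure_form} whose phases run over a fixed compact family; this is precisely where the hypothesis that a flat $b_0$ vanish identically enters, for when $n < \infty$ the rescaled $b_0$ tends in $C^\infty$ to a monomial, whereas a genuinely flat $b_0$ has no scaling limit. One then computes, uniformly in $k$, the decay of $\widehat{\nu_k}$ in the $x_3$ variable alone and in all variables — using van der Corput's estimate (Lemma \ref{Oscillatory_auxiliary_van_der_corput}) for the $y_2$ integration and the Airy asymptotics (Lemma \ref{Oscillatory_auxiliary_airy}) in the degenerate $D$ regime, controlling the error term $E$ of Lemma \ref{Oscillatory_auxiliary_airy}(b) — feeds these into Lemmas \ref{Mixed_norm_auxiliary_lemma} and \ref{Mixed_norm_auxiliary_lemma_local} to bound the $R^*R$ operator of $\nu_k$ at the endpoint exponents, and invokes Stein complex interpolation with the auxiliary holomorphic factors of Lemmas \ref{Oscillatory_auxiliary_one_parameter}--\ref{Oscillatory_auxiliary_two_parameter} to reach the bound at the rescaled critical exponent.

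\textbf{Summation, and the main obstacle.} Undoing the dilation, $\int |\widehat f|^2\, d\mu^{\rho_1}_k \lesssim 2^{ck} \, \|f\|^2_{L^{\mathfrak p_3}_{x_3}(L^{\mathfrak p_1}_{(x_1,x_2)})}$, and the defining property of $\mathfrak p$ — that it lies on both extremal lines of $\mathcal P$ — forces the exponent $c$ to vanish: the per-piece estimate at $\mathfrak p$ is scale-invariant, so, in contrast with \cite{IM16}, the pieces cannot be summed absolutely. This is the main difficulty. It is resolved exactly as in the proof of Proposition \ref{Adapted_proposition_adapted}: one sums using the Littlewood-Paley theorem in the tangential variables $(x_1,x_2)$ together with Minkowski's inequality, which is legitimate because $\mathfrak p_1, \mathfrak p_3 \le 2$ and the outer norm is the one in the normal variable $x_3$; the finitely many extra Airy pieces in the $D$ case are handled the same way, and in the $A_\infty$ and $D_\infty$ cases, where $b_0 \equiv 0$, the rescaled pieces are literally the same, so the summation is immediate. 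Beyond this loss of absolute summability, the delicate points I expect are the $k$-uniformity of the endpoint bounds for the operators of $\nu_k$ — which rests on the compactness of the family of rescaled phases, hence again on the flatness hypothesis, and on uniform control of the Airy error — and the matching of the Morse-in-$y_2$ and Airy regimes in the $D$ case.
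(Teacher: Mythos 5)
Your high-level architecture matches the paper's: reduce to $\mu^{\rho_1}$ via Proposition \ref{Adapted_proposition_reduction}, reduce by interpolation to the critical exponent $\mathfrak{p}$ from Subsection \ref{Knapp_h_lin_less_than_two}, decompose via anisotropic dilations and rescale, estimate via van der Corput and Airy asymptotics together with Lemmas \ref{Mixed_norm_auxiliary_lemma}--\ref{Mixed_norm_auxiliary_lemma_local}, and sum with Littlewood--Paley and complex interpolation. But there is a genuine error in the case structure. You attribute the Airy-type analysis of Section \ref{Section_Airy} to a degeneration of the $y_2^2$-coefficient $y_1 b_1^a + (y_2+\psi(y_1))^2 b_2$ in the $D$ normal form. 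This cannot happen on the relevant support: after the $\kappa$-dilation and localization near the principal root jet one has $x_1 \sim 1$, so this coefficient (which rescales to $x_1 b_1 + \delta_1^{2m-1} x_2^2 b_2 \to b_1(0,0)x_1$) is of size $\sim 1$, never degenerate. Moreover the interpolation exponent in the $D$ case satisfies $\theta = 1/(2(m+1)) \le 1/6 < 1/4$, which is precisely the regime where absolute summation of the $\lambda$-pieces works, so no complex interpolation and no Airy analysis is needed for $D$ at all. Section \ref{Section_Airy} is instead devoted to the $A_{n-1}$ singularity with $m=2$, giving $\theta = 1/3$: there, in the hardest regime $2^{2j}\delta_3 \sim 1$ and $\lambda_1 \sim \lambda_2 \sim \lambda_3$, the phase after stationary phase in $y_2$ has a cubic degeneration in $y_1$, and that is where Lemma \ref{Oscillatory_auxiliary_airy} enters. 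So the case split you introduce for $D$ is both wrong and unnecessary, while the genuinely hard case ($A$ with $m=2$) is left unidentified.

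A second, structural issue: you single out the scale-invariance of the per-piece bound $\int |\widehat f|^2\,d\mu^{\rho_1}_k \lesssim 2^{ck}\|f\|^2$ (i.e.\ $c=0$ at $\mathfrak{p}$) as ``the main difficulty,'' resolved by Littlewood--Paley and presented as a contrast with \cite{IM16}. But that $\kappa$-dilation plus Littlewood--Paley argument at the boundary exponent is identical to what already happens in \cite{IM16} (and in Propositions \ref{Adapted_proposition_adapted} and \ref{Adapted_proposition_reduction} here); it is not new to the mixed-norm setting. The contrast with \cite{IM16} highlighted in the paper concerns the \emph{spectral} decomposition in $\lambda = (\lambda_1,\lambda_2,\lambda_3)$: pieces whose operators summed absolutely in \cite{IM16} no longer do at the mixed-norm endpoint, forcing complex interpolation via Lemmas \ref{Oscillatory_auxiliary_one_parameter}--\ref{Oscillatory_auxiliary_two_parameter} case by case. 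Your sketch mentions this only in passing, and compresses the paper's multi-level decomposition -- the $\kappa$-dilation in $k$, the localization near a point $v$ on the root jet, the Littlewood--Paley decomposition in $x_3$ producing $j$, the trichotomy $2^{2j}\delta_3 \ll/\sim/\gg 1$, and finally the $\lambda$-decomposition -- into a single dyadic plus phase-space step. The substance of the proof lies precisely in handling the $\lambda$-summation across that trichotomy, and the proposal does not engage with it at the level needed to certify correctness.
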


The above condition on the function $b_0$ is implied by the Condition (R) from \cite{IM16} (see \cite[Remark 2.12. (c)]{IM16}).

We begin with some preliminaries.
As one can see from the Newton diagrams in Subsection \ref{Knapp_h_lin_less_than_two}, the assumption in our case
$h_{\text{lin}}(\phi)$ < 2 implies that $h(\phi) \leq 2$.
Additionally, we see that $h(\phi) = 2$ implies that we either have $A_\infty$ or $D_\infty$ type singularity.
As mentioned in Subsection \ref{Notation_and_assumptions},
the Varchenko exponent is $0$, i.e., $\nu(\phi) = 0$, if $h(\phi) < 2$.
When $h(\phi) = 2$ the equality $\nu(\phi) = 0$ also holds true in our case since the principal faces are non-compact.
We conclude that if $h_{\text{lin}}(\phi)$ < 2, then by Proposition \ref{Adapted_general_considerations}
we have the mixed norm Fourier restriction estimate \eqref{FRP_mixed}
for both of the points $(1/p_1',1/p_3') = (1/2,0)$ and $(1/p_1',1/p_3') = (0,1/(2h(\phi)))$.
Therefore, according to Subsection \ref{Knapp_h_lin_less_than_two}, by interpolation it remains
to prove the estimate \eqref{FRP_mixed} for the respective critical exponents given by
\begin{align}
\label{LowerThanTwo_points}
\begin{split}
\Big( \frac{1}{\mathfrak{p}_1'}, \frac{1}{\mathfrak{p}_3'} \Big)
  &= \Big( \frac{1}{2(m+1)}, \frac{1}{4} \Big) \qquad \text{in case of $A$ type singularity,}\\
\Big( \frac{1}{\mathfrak{p}_1'}, \frac{1}{\mathfrak{p}_3'} \Big)
  &= \Big( \frac{1}{4(m+1)}, \frac{1}{4} \Big) \qquad \text{in case of $D$ type singularity,}
\end{split}
\end{align}
where $m \geq 2$ is the principal exponent of $\psi$ from Subsection \ref{Knapp_h_lin_less_than_two}.

Recall that according to Proposition \ref{Adapted_proposition_reduction} we may concentrate on the piece of the measure $\mu$
located near the principal root jet:
\begin{align*}
\langle \mu^{\rho_1}, f \rangle = \int_{x_1 \geq 0} f(x,\phi(x)) \, \eta(x) \, \rho_1(x) \mathrm{d}x,
\end{align*}
where
\begin{align}
\label{LowerThanTwo_rho_1_def}
\rho_1(x) = \chi_0  \Big(\frac{x_2- \omega(0) x_1^m}{\varepsilon x_1^m}\Big)
\end{align}
for an arbitrarily small $\varepsilon$ and $\omega(0) x_1^m$ the first term in the Taylor expansion of
\begin{align*}
\psi(x_1) = x_1^m \omega(x_1),
\end{align*}
where $\omega$ is a smooth function such that $\omega(0) \neq 0$.

As we use the same decompositions of the measure $\mu^{\rho_1}$ as in \cite{IM16}, we shall only briefly outline the decomposition procedure.

\subsection{Basic estimates}
\label{Section_h_lin_less_than_2_Basic_estimates}

Before we outline the further decompositions and rescalings of $\mu^{\rho_1}$,
we first describe here the general strategy for proving the Fourier restriction estimates for the pieces obtained through these decompositions.
All of the pieces $\nu$ of the measure $\mu^{\rho_1}$ will essentially be of the form
\begin{align*}
\langle \nu, f \rangle = \int f \circ \Phi(x) \, a(x) \mathrm{d}x,
\end{align*}
where $\Phi$ is a phase function and $a \geq 0$ an amplitude.
The amplitude will usually be compactly supported with support away from the origin.
Both $\Phi$ and $a$ will depend on various decomposition related parameters.
We shall need to prove the Fourier restriction estimate with respect to these measures
with estimates being uniform in a certain sense with respect to the appearing decomposition parameters.

At this point one uses the ``$R^* R$'' method applied to the measure $\nu$.
The resulting operator is $T_\nu$ which acts by convolution against the Fourier transform of $\nu$.
Now one considers the spectral decomposition $(\nu^\lambda)_\lambda$ of the measure $\nu$ so that
each functions $\nu^\lambda$ is localised in the frequency space at $\lambda = (\lambda_1, \lambda_2, \lambda_3)$,
where $\lambda_i \geq 1$ are dyadic numbers for $i=1, 2, 3$.
For such functions $\nu_\lambda$ we shall obtain bounds of the form \eqref{Auxiliary_AB_estimates}.
By Lemma \ref{Mixed_norm_auxiliary_lemma_local} then we have the bounds
on their associated convolution operators $T^\lambda_\nu$:
\begin{align}
\label{LowerThanTwo_ABestimates}
\begin{split}
\Vert T_\nu^\lambda \Vert_{ L^{2/(2-\tilde{\sigma})}_{x_3} (L^1_{(x_1,x_2)}) \to L^{2/\tilde{\sigma}}_{x_3} (L^\infty_{(x_1,x_2)}) }
   &\lesssim A \lambda_3^{\tilde{\sigma}},\\
\Vert T_\nu^\lambda \Vert_{L^2\to L^2}
   &\lesssim B,
\end{split}
\end{align}
for all $\tilde{\sigma} \in [0,1)$. $A$ and $B$ shall again depend on various decomposition related parameters.
If we now define
\begin{align}
\label{LowerThanTwo_theta_and_sigma}
(\theta,\tilde{\sigma}) &\coloneqq \Big(\frac{1}{m+1},\frac{m-1}{2m}\Big), \qquad\qquad \text{in case of $A$ type singularity,}\\
(\theta,\tilde{\sigma}) &\coloneqq \Big(\frac{1}{2(m+1)},\frac{m}{2m+1}\Big), \,\qquad \text{in case of $D$ type singularity,}
\end{align}
then interpolating \eqref{LowerThanTwo_ABestimates} ($\theta$ being the interpolation coefficient) we get
precisely the estimate for the critical exponent in \eqref{LowerThanTwo_points} with the bound
\begin{align}
\label{LowerThanTwo_basic_estimate}
\Vert T_\nu^\lambda \Vert_{ L^{\mathfrak{p}_3}_{x_3} (L^{\mathfrak{p}_1}_{(x_1,x_2)}) \to L^{\mathfrak{p}_3'}_{x_3} (L^{\mathfrak{p}_1'}_{(x_1,x_2)}) }
   &\lesssim A^{1-\theta} B^\theta \lambda_3^{\frac{1}{2}-\theta}.
\end{align}
Now it remains to sum over $\lambda$.

When $\theta < 1/4$, we shall be able to always sum absolutely.
In the cases when $\theta = 1/4$ and particularly $\theta = 1/3$ (note that both appear only in $A$ type singularity with $m = 3$
and $m = 2$ respectively) we shall need the complex interpolation method developed in \cite{IM16}.

\subsection{First decompositions and rescalings of $\mu^{\rho_1}$}
\label{Section_h_lin_less_than_2_First_decompositions}

As in Section \ref{Adapted}, we use the $\kappa$ dilatations associated to the principal face of $\mathcal{N}(\phi)$,
and subsequently a Littlewood-Paley argument.
Then it remains to prove the Fourier restriction estimate for the renormalised measures $\nu_k$ of the form
\begin{align*}
\langle \nu_k, f \rangle = \int f(x,\phi(x,\delta)) \, a(x,\delta) \, \mathrm{d}x,
\end{align*}
uniformly in $k$.
As was shown in \cite[Section 4.1]{IM16}, the function $\phi(x,\delta)$ has the form
\begin{align*}
\phi(x,\delta) \coloneqq \tilde{b}(x_1,x_2,\delta_1,\delta_2) (x_2 - x_1^m \omega(\delta_1 x_1))^2 + \delta_3 x_1^n \beta(\delta_1 x_1),
\end{align*}
where
\begin{align*}
\delta = (\delta_1,\delta_2,\delta_3) \coloneqq (2^{-\kappa_1 k}, 2^{-\kappa_2 k}, 2^{-(n\kappa_1-1)k}),
\end{align*}
and
\begin{align*}
  \tilde{b}(x_1,x_2,\delta_1,\delta_2) =
  \begin{cases}
    b(\delta_1 x_1, \delta_2 x_2), & \text{in case of A type singularity}, \\
    x_1 b_1(\delta_1 x_1, \delta_2 x_2) + \delta_1^{2m-1} x_2^2 b_2(\delta_2 x_2), & \text{in case of D type singularity}.
  \end{cases}
\end{align*}
Above the functions $b$, $b_1$, $b_2$, $\beta$, and the quantity $n$ are as in Subsection \ref{Knapp_h_lin_less_than_two}.
Recall that $m = \kappa_2/\kappa_1 \geq 2$ and so $\delta_2 = \delta_1^m$.
The amplitude $a(x,\delta) \geq 0$ is a smooth function of $(x,\delta)$ supported at
\begin{align*}
x_1 \sim 1 \sim |x_2|.
\end{align*}
Furthermore, due to the $\rho_1$ cutoff function which has a $\kappa$-homogeneous domain, we may assume $|x_2 - x_1^m \omega(0)| \ll 1$.

Since we can take $k$ arbitrarily large, the parameter $\delta$ approaches $0$.
This implies that on the domain of integration of $a$ we have that $\tilde{b}(x_1,x_2,\delta_1,\delta_2)$
converges as a function of $(x_1,x_2)$ to $b(0,0)$ (resp. $b_1(0,0)x_1$) in $C^\infty$
when $k \to \infty$ and $\phi$ has A type singularity (resp. D type singularity).
The amplitude $a(x,\delta)$ converges in $C_c^\infty$ to $a(x,0)$.
We also recall that according to the assumption in Theorem \ref{Section_h_lin_less_than_2_main_theorem},
we may assume that $\delta_3 = 0$ if ``$n = \infty$'', i.e., if $b_0$ is flat in the normal form of $\phi$.

The next step is to decompose the (compactly) supported amplitude $a$ into finitely many parts,
each localised near a point $v=(v_1,v_2)$ for which we may assume that it satisfies $v_2 = v_1^m \omega(0)$
(by compactness and since in \eqref{LowerThanTwo_rho_1_def} we can take $\varepsilon$ arbitrarily small).
The newly obtained measures we denote by $\nu_\delta$ and their new amplitudes by the same symbol $a(x,\delta) \geq 0$:
\begin{align*}
\langle \nu_\delta, f \rangle = \int f(x,\phi(x,\delta)) \, a(x,\delta) \, \mathrm{d}x,
\end{align*}
where now the support of $a(\cdot,\delta)$ is contained in the set $|x-v| \ll 1$.

Since we can use Littlewood-Paley decompositions in the mixed norm case (see \cite[Theorem 2]{Liz70}, and also \cite{BCP62,Fer87}),
we can now decompose the measure $\nu_\delta$ in the $x_3$ direction in the same way as in \cite[Section 4.1]{IM16}.
This is achieved by using the cutoff functions $\chi_1(2^{2j}\phi(x,\delta))$
in order to localise near the part where $|\phi(x,\delta)| \sim 2^{-2j}$.
Then it remains to prove the mixed norm estimate \eqref{FRP_mixed} for measures $\nu_{\delta,j}$
with bounds uniform in paramteres $j \in \N$ and $\delta = (\delta_1,\delta_2,\delta_3) \in \R^3$, $\delta_i \geq 0$, $i = 1,2,3$,
where the measures $\nu_{\delta,j}$ are defined through
\begin{align}
\label{LowerThanTwo_TheFirstDecomposition_original_measure}
\langle \nu_{\delta, j}, f \rangle \coloneqq
   \int_{x_1 \geq 0} f(x, \phi(x, \delta)) \chi_1(2^{2j}\phi(x,\delta)) a(x,\delta) \mx,
\end{align}
where $j$ can be taken sufficiently large and $\delta$ sufficiently small.
The function $2^{2j}\phi(x,\delta)$ can be written as
\begin{align*}
2^{2j} \phi(x,\delta) =
  2^{2j} \tilde{b}(x_1,x_2,\delta_1,\delta_2)\Big(x_2 - x_1^m \omega(\delta_1 x_1)\Big)^2 + 2^{2j} \delta_3 x_1^n \beta(\delta_1 x_1).
\end{align*}
Following \cite{IM16}, we distinguish three cases: $2^{2j} \delta_3 \ll 1$, $2^{2j} \delta_3 \gg 1$, and the most involed $2^{2j} \delta_3 \sim 1$.

\subsection{The case  $2^{2j} \delta_3 \gg 1$}

As was done in \cite[Subsection 4.1.1]{IM16},
we change coordinates from $(x_1,x_2)$ to $(x_1, 2^{2j} \phi(x,\delta))$
and subsequently perform a rescaling (which we adjust to our mixed norm case).
Then one obtains that the mixed norm Fourier restriction for $\nu_{\delta, j}$ is equivalent to the estimate
\begin{equation*}
\int |\widehat{f}|^2 \mathrm{d}\tilde{\nu}_{\delta, j} \leq
  \Const \sqrt{\delta_3} 2^{2j(1-2/p_3')} \Vert f \Vert^2_{L^{\mathfrak{p}}(\R^3)}, \qquad f \in \mathcal{S}(\R^3),
\end{equation*}
that is, since $\mathfrak{p}_3' = 4$,
\begin{equation}
\label{LowerThanTwo_TheFirstDecomposition_Estimate}
\int |\widehat{f}|^2 \mathrm{d}\tilde{\nu}_{\delta, j} \leq
  \Const \delta^{\frac{1}{2}}_3 2^{j} \Vert f \Vert^2_{L^{\mathfrak{p}}(\R^3)}, \qquad f \in \mathcal{S}(\R^3),
\end{equation}
where $\tilde{\nu}_{\delta,j}$ is the rescaled measure
\begin{align*}
\langle \tilde{\nu}_{\delta, j}, f \rangle \coloneqq
   \int f(x_1, \phi(x, \delta, j), x_2) a(x, \delta, j) \chi_1(x_1) \chi_1(x_2) \mx.
\end{align*}
The function $a(x, \delta, j)$ has in $\delta$ and $j$ uniformly bounded $C^l$ norms for an arbitrarily large $l \geq 0$,
and the phase function is given by
\begin{align}
\label{LowerThanTwo_TheFirstDecomposition_Phase}
\begin{split}
\phi(x, \delta, j) \coloneqq
   &\tilde{b}_1 \Big( x_1, \sqrt{2^{-2j}x_2 + \delta_3 x_1^n \tilde{\beta}(\delta_1 x_1)}, \delta_1, \delta_2\Big) \\
   &\times \sqrt{2^{-2j}x_2 + \delta_3 x_1^n \tilde{\beta}(\delta_1 x_1)} + x_1^m \omega(\delta_1 x_1),
\end{split}
\end{align}
where $x_1\sim1$, $x_2\sim1$, and without loss of generality we may assume $\tilde{b}_1(x_1,x_2,0,0) \sim 1$ and $\tilde{\beta}(0) \sim 1$;
for details see \cite[Subsection 4.1.1]{IM16}.
There the phase function $\phi(x, \delta, j)$ was obtained by solving the equation
\begin{align*}
2^{2j} \phi(y,\delta) = 2^{2j} \tilde{b}(y_1,y_2,\delta_1,\delta_2) (y_2 - y_1^m \omega(\delta_1 y_1))^2 - 2^{2j} \delta_3 y_1^n \tilde{\beta}(\delta_1 y_1)
\end{align*}
in $y_2$ after substituting $x_1 = y_1$ and $x_2 = 2^{2j} \phi(y,\delta)$.

By using the implicit function theorem one can show that when $\delta \to 0$,
then we have the following $C^\infty$ convergence in the $(x_1,x_2)$ variables:
\begin{align}
\label{LowerThanTwo_TheFirstDecomposition_tilde_b_1_convergence}
  \begin{cases}
    \tilde{b}_1(x_1,x_2,\delta_1,\delta_2) \to
    b(0,0)^{-1/2}, & \text{in case of A type singularity}, \\
    \tilde{b}_1(x_1,x_2,\delta_1,\delta_2) \to
    (b_1(0,0) x_1)^{-1/2}, & \text{in case of D type singularity}.
  \end{cases}
\end{align}
In both the A and D type singularity cases we see that $\tilde{b}_1$ does not depend on $x_2$ in an essential way.

Now we proceed to perform a spectral decomposition of $\tilde{\nu}_{\delta, j}$,
i.e., for $(\lambda_1, \lambda_2, \lambda_3)$ dyadic numbers with $\lambda_i \geq 1$,
$i = 1,2,3$, we define the spectrally localised measures $\nu_j^\lambda$ through
\begin{align}
\begin{split}
\label{LowerThanTwo_TheFirstDecomposition_FourierForm}
\widehat{\nu_j^\lambda}(\xi_1, \xi_2, \xi_3)
  \coloneqq &\chi_1 \Big(\frac{\xi_1}{\lambda_1}\Big) \chi_1 \Big(\frac{\xi_2}{\lambda_2}\Big)
             \chi_1 \Big(\frac{\xi_3}{\lambda_3}\Big) \widehat{\tilde{\nu}}_{\delta,j}(\xi) \\
   = &\chi_1\Big(\frac{\xi_1}{\lambda_1}\Big) \, \chi_1\Big(\frac{\xi_2}{\lambda_2}\Big) \, \chi_1\Big(\frac{\xi_3}{\lambda_3}\Big) \\
     &\times \int e^{-i( \xi_2 \phi(x, \delta, j) + \xi_3 x_2 + \xi_1 x_1 )} \, a(x,\delta,j) \, \chi_1(x_1) \, \chi_1(x_2) \, \mx.
\end{split}
\end{align}
We slightly abuse notation in the following way.
Whenever $\lambda_i = 1$, then the appropriate factor $ \chi_1(\frac{\xi_i}{\lambda_i})$ in the above expression
should be considered as a localisation to $|\xi_i| \lesssim 1$, instead of $|\xi_i| \sim 1$.

If we define the operators
\begin{align*}
\tilde{T}_{\delta, j} f \coloneqq f * \widehat{\tilde{\nu}}_{\delta, j}, \qquad \qquad
T^\lambda_j f \coloneqq f * \widehat{\nu^\lambda_j},
\end{align*}
then we formally have
\begin{align*}
\tilde{T}_{\delta, j} = \sum_{\lambda} T^\lambda_j,
\end{align*}
and according to \eqref{LowerThanTwo_TheFirstDecomposition_Estimate} and by applying the ``$R^*R$'' technique we need to prove
\begin{align}
\label{LowerThanTwo_TheFirstDecomposition_Estimate_rephrased}
\Vert \tilde{T}_{\delta, j} \Vert_{L^{\mathfrak{p}} \to L^{\mathfrak{p}'}} \lesssim \delta_3^{\frac{1}{2}} 2^{j}.
\end{align}
In case when we are able to obtain this estimate by summing absolutely the operator pieces $T^\lambda_j$
we shall proceed as explained in Subsection \ref{Section_h_lin_less_than_2_Basic_estimates}.
In this case in order to obtain the \eqref{LowerThanTwo_ABestimates} estimates
we need an $L^\infty$ bound for $\widehat{\nu_j^\lambda}$, which
we shall get from the expression \eqref{LowerThanTwo_TheFirstDecomposition_FourierForm},
and an $L^\infty$ bound for $\nu_j^\lambda$, which we shall derive next.

Using the equation \eqref{LowerThanTwo_TheFirstDecomposition_FourierForm} we get by Fourier inversion
\begin{align}
\begin{split}
\label{LowerThanTwo_TheFirstDecomposition_SpaceForm}
\nu_j^\lambda (x_1, x_2, x_3) = &\lambda_1 \lambda_2 \lambda_3 \int \widecheck{\chi}_1 (\lambda_1 (x_1-y_1)) \, \widecheck{\chi}_1 (\lambda_2 (x_2-\phi(y, \delta, j)))\\
   &\times \widecheck{\chi}_1(\lambda_3 (x_3-y_2)) \, a(y, \delta, j) \, \chi_1(y_1) \, \chi_1(y_2) \, \my.
\end{split}
\end{align}
Here we immediately obtain that the $L^{\infty}$ bound on $\nu_j^\lambda$ is
up to a multiplicative constant $\lambda_2$ using the first and the third factor within the integral
by substituting $\lambda_1 y_1$ and $\lambda_3 y_2$.
On the other hand, one can easily verify that
$$\partial_{y_2} \phi(y, \delta, j) \sim \delta_3^{-1/2} \, 2^{-2j} \ll 1,$$
and hence
by substituting $z_1 = \lambda_1 y_1, z_2 = \lambda_2 \phi(y, \delta, j)$, and utilising the first two factors within the integral,
we obtain
$$ \Vert \nu_j^\lambda \Vert_{L^\infty} \lesssim \delta_3^{1/2} \, 2^{2j} \lambda_3, $$
and therefore combining these two estimates we get
\begin{equation}
\label{LowerThanTwo_TheFirstDecomposition_SpaceBound}
\Vert \nu_j^\lambda \Vert_{L^\infty} \lesssim \min\{ \lambda_2, \, \delta_3^{1/2} \, 2^{2j} \lambda_3 \}.
\end{equation}

It remains to estimate the Fourier side;
for this we shall need to consider several cases depending on the relation between $\lambda_1$, $\lambda_2$, and $\lambda_3$.
Let us mention that as in \cite{IM16},
here we shall have no problems when absolutely summing the ``diagonal'' pieces
where $\lambda_1 \sim \lambda_2 \sim \delta_3^{1/2} 2^{2j} \lambda_3$.
However, unlike in \cite{IM16}, a case appears which is not absolutely summable.
This will be a recurring theme in this article.
It will also indicate that we should take care even when estimates are obtained by integration by parts.

{\bf{Case 1. $\lambda_1 \ll \lambda_2$ or $\lambda_1 \gg \lambda_2$, and $\lambda_3 \gg \lambda_2$.}}
In this case we can use integration by parts in both $x_1$ and $x_2$ in \eqref{LowerThanTwo_TheFirstDecomposition_FourierForm} to obtain
$$ \Vert \widehat{\nu_j^\lambda} \Vert_{L^\infty} \lesssim
   \Big( \lambda_3 \, \max\{\lambda_1, \lambda_2\} \Big)^{-N}, $$
for any nonnegative integer $N$.
Therefore, after plugging this estimate and the estimate \eqref{LowerThanTwo_TheFirstDecomposition_SpaceBound}
into \eqref{LowerThanTwo_ABestimates} and \eqref{LowerThanTwo_basic_estimate},
we may sum in all three parameters $\lambda_1$, $\lambda_2$, and $\lambda_3$,
after which one obtains an admissible estimate for \eqref{LowerThanTwo_TheFirstDecomposition_Estimate_rephrased}.

{\bf{Case 2. $\lambda_1 \ll \lambda_2$ or $\lambda_1 \gg \lambda_2$, and $\lambda_3 \lesssim \lambda_2$.}}
Here it is sufficient to use integration by parts in $x_1$. Therefore, we have
$$ \Vert \widehat{\nu_j^\lambda} \Vert_{L^\infty}
   \lesssim \Big( \max\{\lambda_1, \lambda_2\} \Big)^{-N}, $$
for any nonnegative integer $N$.
Again, after interpolating summation of operators $T^\lambda_j$ is possible in all three parameters.

{\bf{Case 3. $\lambda_1 \sim \lambda_2$ and $\lambda_3 \ll \delta_3^{-1/2} \, 2^{-2j} \lambda_2$.}}
In this case we see that necessarily $\lambda_1 \gtrsim \delta_3^{\frac{1}{2}} 2^{2j}$.
Also we note that if we fix say $\lambda_1$, then there are only finitely many dyadic numbers $\lambda_2$ such that $\lambda_1 \sim \lambda_2$,
and therefore we essentially need to sum in only two parameters in this case.
By stationary phase (and integration by parts when away from the critical point) in $x_1$ and integration by parts in $x_2$ we get
$$ \Vert \widehat{\nu_j^\lambda} \Vert_{L^\infty} \lesssim \lambda_1^{-\frac{1}{2}} \, (\delta_3^{-\frac{1}{2}} \, 2^{-2j} \, \lambda_1)^{-N}. $$
The better bound in \eqref{LowerThanTwo_TheFirstDecomposition_SpaceBound} is $\delta_3^{1/2} \, 2^{2j} \lambda_3$.
Therefore \eqref{LowerThanTwo_basic_estimate} becomes in our case
\begin{align*}
\Vert T^\lambda_j \Vert_{ L^{\mathfrak{p}_3}_{x_3} (L^{\mathfrak{p}_1}_{(x_1,x_2)}) \to L^{\mathfrak{p}_3'}_{x_3} (L^{\mathfrak{p}_1'}_{(x_1,x_2)}) }
  &\lesssim
  \lambda_1^{(\theta-1)(N+1/2)} \, (\delta_3^{\frac{1}{2}} \, 2^{2j})^{N(1-\theta)} \,
  (\delta_3^{\frac{1}{2}} \, 2^{2j})^{\theta} \, \lambda_3^{\theta} \, \lambda_3^{\frac{1}{2}-\theta}\\
  &\lesssim
  \lambda_1^{(\theta-1)(N+1/2)} \, \lambda_3^{\frac{1}{2}} \, (\delta_3^{\frac{1}{2}} \, 2^{2j})^{N - (N-1)\theta},
\end{align*}
and hence by summation in $\lambda_3$ and taking $N=1$ we get
\begin{align*}
\sum_{\lambda_3 \lesssim \delta_3^{-1/2} \, 2^{-2j} \lambda_1}
  \, \Vert T^\lambda_j \Vert_{ L^{\mathfrak{p}_3}_{x_3} (L^{\mathfrak{p}_1}_{(x_1,x_2)}) \to L^{\mathfrak{p}_3'}_{x_3} (L^{\mathfrak{p}_1'}_{(x_1,x_2)}) }
  &\lesssim
  \lambda_1^{\theta(N+1/2) - N} \, (\delta_3^{\frac{1}{2}} \, 2^{2j})^{N - (N-1)\theta - \frac{1}{2}}\\
  &\lesssim
  \lambda_1^{3\theta/2 - 1} \, (\delta_3^{\frac{1}{2}} \, 2^{2j})^{\frac{1}{2}}\\
  &\lesssim
  \lambda_1^{-\frac{1}{2}} \, (\delta_3^{\frac{1}{2}} \, 2^{2j})^{\frac{1}{2}}.
\end{align*}
Now we obviously get the desired result by summation over $\lambda_1 \gtrsim \delta_3^{\frac{1}{2}} 2^{2j}$.  

{\bf{Case 4. $\lambda_1 \sim \lambda_2$ and $\lambda_3 \sim \delta_3^{-1/2} \, 2^{-2j} \lambda_2$.}}
Here we essentially sum in only one parameter. Let us first determine the estimate in \eqref{LowerThanTwo_basic_estimate}. 

\noindent
{\bf{Subcase a). $1 \leq \lambda_1 \lesssim \delta_3^{\frac{3}{2}} 2^{4j}$.}}
Here we have by stationary phase in $x_1$
$$ \Vert \widehat{\nu_j^\lambda} \Vert_{L^\infty}
   \lesssim \lambda_1^{-1/2}. $$
Therefore by \eqref{LowerThanTwo_basic_estimate} we obtain
\begin{align*}
\Vert T^\lambda_j \Vert_{ L^{\mathfrak{p}_3}_{x_3} (L^{\mathfrak{p}_1}_{(x_1,x_2)}) \to L^{\mathfrak{p}_3'}_{x_3} (L^{\mathfrak{p}_1'}_{(x_1,x_2)}) }
  &\lesssim
  \lambda_1^{\frac{1}{2}(\theta-1)} \, \lambda_1^{\theta} \, \lambda_3^{\frac{1}{2}-\theta}\\
  &=
  \lambda_1^{\frac{3}{2}\theta - \frac{1}{2}} \,
  \Big( \delta_3^{-\frac{1}{2}} 2^{-2j} \lambda_1 \Big)^{\frac{1}{2}-\theta}\\
  &=
  \delta_3^{\frac{1}{2}\theta-\frac{1}{4}} \, 2^{2j\theta -j} \, \lambda_1^{\frac{\theta}{2}}.
\end{align*}
{\bf{Subcase b). $\lambda_1 \gg \delta_3^{\frac{3}{2}} 2^{4j}$.}}
In this case we have by stationary phase in $x_1$ and subsequently
by the van der Corput lemma (Lemma \ref{Oscillatory_auxiliary_van_der_corput}, $(i)$, with $M = 2$) in the second
$$ \Vert \widehat{\nu_j^\lambda} \Vert_{L^\infty}
   \lesssim \delta_3^{3/4} 2^{2j} \lambda_1^{-1}, $$
and hence
\begin{align*}
\Vert T^\lambda_j \Vert_{ L^{\mathfrak{p}_3}_{x_3} (L^{\mathfrak{p}_1}_{(x_1,x_2)}) \to L^{\mathfrak{p}_3'}_{x_3} (L^{\mathfrak{p}_1'}_{(x_1,x_2)}) }
  &\lesssim
  \delta_3^{\frac{3}{4}-\frac{3}{4}\theta} \, 2^{2j-2j\theta} \, \lambda_1^{\theta-1} \, \lambda_1^{\theta} \,
  \delta_3^{\frac{1}{2}\theta-\frac{1}{4}} \, 2^{2j\theta-j} \, \lambda_1^{\frac{1}{2}-\theta}\\
  &= \delta_3^{\frac{1}{2}-\frac{1}{4}\theta} \, 2^j \, \lambda_1^{\theta-\frac{1}{2}}.
\end{align*}

Now we sum in $\lambda_1$ using the estimates obtained in calculations in Subcases a) and b):
\begin{align*}
\sum_{\lambda_1 \geq 1} \Vert T^\lambda_j \Vert_{ L^{\mathfrak{p}_3}_{x_3} (L^{\mathfrak{p}_1}_{(x_1,x_2)}) \to L^{\mathfrak{p}_3'}_{x_3} (L^{\mathfrak{p}_1'}_{(x_1,x_2)}) }
  &\lesssim
  \delta_3^{\frac{1}{2}\theta-\frac{1}{4}} \, 2^{2j\theta -j} \, (\delta_3^{\frac{3}{2}} 2^{4j})^{\frac{\theta}{2}}
  +
  \delta_3^{\frac{1}{2}-\frac{1}{4}\theta} \, 2^j \, (\delta_3^{\frac{3}{2}} 2^{4j})^{\theta-\frac{1}{2}}\\
  &=
  2 \,\cdot\, \delta_3^{\frac{5\theta-1}{4}} \, 2^{4j\theta -j},
\end{align*}
and therefore it remains to see whether this is admissible for \eqref{LowerThanTwo_TheFirstDecomposition_Estimate_rephrased}:
\begin{align*}
  & \quad \delta_3^{\frac{5\theta-1}{4}} \, 2^{4j\theta -j} \lesssim \delta_3^{\frac{1}{2}} \, 2^j\\
\Longleftrightarrow & \quad\quad\quad\, \delta_3^{-\frac{3-5\theta}{4}}\lesssim (2^{2j})^{1-2\theta}.
\end{align*}
But recall that $2^{2j} \delta_3 \gg 1$, i.e., $\delta_3^{-1} \ll 2^{2j}$, and
notice that $0<\theta \leq 1/3$ implies $0< (3-5\theta)/4 \leq 1-2\theta$.
Hence, it is indeed admissible and we are done with this case.

{\bf{Case 5. $\lambda_1 \sim \lambda_2$ and $\lambda_3 \gg \delta_3^{-1/2} \, 2^{-2j} \lambda_2$.}}
Here we have by the stationary phase method in $x_1$ and integration by parts in $x_2$
$$ \Vert \widehat{\nu_j^\lambda} \Vert_{L^\infty} \lesssim \lambda_1^{-\frac{1}{2}} \, (\lambda_3)^{-N}, $$
and the bound in \eqref{LowerThanTwo_TheFirstDecomposition_SpaceBound} is $\lambda_1 \sim \lambda_2$.
Interpolating, we obtain (with a different $N$)
\begin{align}
\label{LowerThanTwo_TheFirstDecomposition_Basic_estimate_Case_5}
\Vert T^\lambda_j \Vert_{ L^{\mathfrak{p}_3}_{x_3} (L^{\mathfrak{p}_1}_{(x_1,x_2)}) \to L^{\mathfrak{p}_3'}_{x_3} (L^{\mathfrak{p}_1'}_{(x_1,x_2)}) }
  &\lesssim \lambda_1^{(3\theta-1)/2} \lambda_3^{-N}.
\end{align}
Now if $\theta < 1/3$, then we can easily sum in both $\lambda_1$ and $\lambda_3$.
Therefore, we assume in the following that $\theta = 1/3$.

{\bf{Subcase a). $\lambda_1 \gtrsim \delta_3^{\frac{1}{2}} 2^{2j}$.}}
Summing here in $\lambda_1$ between $\delta_3^{1/2} \, 2^{2j}$ and $\delta_3^{1/2} \, 2^{2j} \, \lambda_3$,
both up to a multiplicative constant, we get
\begin{align*}
\sum_{\delta_3^{\frac{1}{2}} 2^{2j} \lesssim \lambda_1 \lesssim \delta_3^{\frac{1}{2}} 2^{2j} \lambda_3}
  \Vert T^\lambda_j \Vert_{ L^{\mathfrak{p}_3}_{x_3} (L^{\mathfrak{p}_1}_{(x_1,x_2)}) \to L^{\mathfrak{p}_3'}_{x_3} (L^{\mathfrak{p}_1'}_{(x_1,x_2)}) }
  &\lesssim \lambda_3^{-N} \, \log_2 \Big( \frac{\delta_3^{1/2} \, 2^{2j} \, \lambda_3}{\delta_3^{1/2} \, 2^{2j}} \Big)\\
  &\lesssim \lambda_3^{-N+1}.
\end{align*}
Now we may sum in $\lambda_3$ to get the desired result.

{\bf{Subcase b). $1 \leq \lambda_1 \ll \delta_3^{\frac{1}{2}} 2^{2j}$.}}
Note that here we sum $\lambda_3$ over all the dyadic numbers greater than or equal to $1$.
We can also assume that $\lambda_1 \gg \delta_3^{\frac{1}{2}} 2^{j}$ since
summation in $\lambda_1$ in \eqref{LowerThanTwo_TheFirstDecomposition_Basic_estimate_Case_5} up to $\delta_3^{\frac{1}{2}} 2^{j}$
gives the bound $\lambda_3^{-N+1/2} \, \log_2(\delta_3^{\frac{1}{2}} 2^{j})$ which we can
sum in $\lambda_3$ and then estimate by $\delta_3^{\frac{1}{2}} 2^{j}$.
This is admissible for \eqref{LowerThanTwo_TheFirstDecomposition_Estimate_rephrased}.

In order to obtain the required bound in the remaining range:
\begin{align*}
\delta_3^{\frac{1}{2}} 2^{j} \ll \lambda_1 \ll \delta_3^{\frac{1}{2}} 2^{2j}, \qquad \qquad
1 \leq \lambda_3,
\end{align*}
we need to use the complex interpolation technique developed in \cite{IM16}.
For simplicity we assume that $\lambda_1 = \lambda_2$
(we can do this without losing much on generality since for a fixed $\lambda_1$ there are only finitely many dyadic numbers $\lambda_2$
such that $\lambda_1 \sim \lambda_2$).

We need to consider the following function parametrised by the complex number $\zeta$ and the dyadic number $\lambda_3$:
\begin{align}
\label{LowerThanTwo_TheFirstDecomposition_ComplexInterpolationMeasure}
\mu_\zeta^{\lambda_3} = \gamma(\zeta) \, (\delta_3^{-3/2}\,2^{-3j})^\zeta \, \sum_{\delta_3^{1/2}2^{j} \ll \lambda_1 \ll \delta_3^{1/2}2^{2j}}
    (\lambda_1)^{\frac{1-3\zeta}{2}} \, \nu_j^\lambda,
\end{align}
where
\begin{align*}
\gamma(\zeta) = 2^{-3(\zeta-1)/2} - 1.
\end{align*}
The associated convolution operator (given by convolution against the Fourier transform of the function $\mu_\zeta^{\lambda_3}$)
we denote by $T_\zeta^{\lambda_3}$.

At this point let us mention that whenever we use complex interpolation
we shall generically denote by $\mu_\zeta$ the considered measure parametrised by the complex number $\zeta$,
sometimes with an additional superscript, as is in the current case.
Similarily, the associated operator shall be denoted by $T_\zeta$, up to possible appearing superscripts.

For $\zeta = 1/3$ we see that
\begin{align*}
\delta_3^{1/2}\,2^{j} \, \mu_\zeta^{\lambda_3} = \sum_{\delta_3^{1/2}2^{j} \ll \lambda_1 \ll \delta_3^{1/2}2^{2j}} \nu_j^\lambda,
\end{align*}
which means, by Stein's interpolation theorem, that it is sufficient to prove
\begin{align}
\label{LowerThanTwo_TheFirstDecomposition_ComplexInterpolation_Basic_estimates}
\begin{split}
\Vert T_{it}^{\lambda_3} \Vert_{ L^{2/(2-\tilde{\sigma})}_{x_3} (L^1_{(x_1,x_2)}) \to L^{2/\tilde{\sigma}}_{x_3} (L^\infty_{(x_1,x_2)}) }
   &\lesssim \lambda_3^{-N},\\
\Vert T_{1+it}^{\lambda_3} \Vert_{L^2\to L^2}
   &\lesssim 1,
\end{split}
\end{align}
for some $N > 0$, with constants uniform in $t \in \R$, and where $\tilde{\sigma} = 1/4$ since $m = 2$, i.e., $\theta = 1/3$
(see \eqref{LowerThanTwo_theta_and_sigma}).

The first estimate is trivial in \eqref{LowerThanTwo_TheFirstDecomposition_ComplexInterpolation_Basic_estimates}.
Namely, since $\widehat{\nu_j^\lambda}$ have essentially disjoint supports, it follows
from the formula \eqref{LowerThanTwo_TheFirstDecomposition_ComplexInterpolationMeasure} and
the estimate on the Fourier transform of $\nu_j^\lambda$ that
$$ \Vert \widehat{\mu^{\lambda_3}_{it}} \Vert_{L^\infty} \lesssim \lambda_3^{-N}, $$
for any $N \in \N$, the implicit constant depending of course on $N$.
Now one just uses the results from Subsection \ref{Mixed_norm_auxiliary}.

In order to prove the second estimate in \eqref{LowerThanTwo_TheFirstDecomposition_ComplexInterpolation_Basic_estimates}
we shall need to use the oscillatory sum result Lemma \ref{Oscillatory_auxiliary_one_parameter}.
It turns out that the term $(\delta_3^{-3/2}\,2^{-3j})^\zeta$ in the definition of $\mu_\zeta^{\lambda_3}$ is redundant,
and that we can actually prove the stronger estimate
\begin{align*}
\Bigg\Vert \gamma(1+it) \sum_{\delta_3^{1/2}2^{j} \ll \lambda_1 \ll \delta_3^{1/2}2^{2j}} (\lambda_1)^{-1-\frac{3}{2}it} \, \nu_j^\lambda \Bigg\Vert_{L^\infty}
   \lesssim 1,
\end{align*}
that is
\begin{align}
\label{LowerThanTwo_TheFirstDecomposition_ComplexInterpolationFinal}
\Bigg\Vert \sum_{\delta_3^{1/2}2^{j} \ll \lambda_1 \ll \delta_3^{1/2}2^{2j}} (\lambda_1)^{-1-\frac{3}{2}it} \, \nu_j^\lambda \Bigg\Vert_{L^\infty}
   \lesssim \frac{1}{\Big| 2^{-\frac{3}{2}it} - 1 \Big|},
\end{align}
uniformly in $t$.

We start by substituting $\lambda_1 y_1 \mapsto y_1$ and $\lambda_3 y_2 \mapsto y_2$ in the expression \eqref{LowerThanTwo_TheFirstDecomposition_SpaceForm}
and plugging the obtained expression into the sum on the left hand side of \eqref{LowerThanTwo_TheFirstDecomposition_ComplexInterpolationFinal}:
\begin{align*}
\sum_{\delta_3^{1/2}2^{j} \ll \lambda_1 \ll \delta_3^{1/2}2^{2j}}
   (\lambda_1)^{-\frac{3}{2}it}
   &\iint \widecheck{\chi}_1 (\lambda_1 x_1-y_1) \, \widecheck{\chi}_1 (\lambda_1 x_2- \lambda_1 \phi(y_1/\lambda_1, y_2/\lambda_3, \delta, j))\\
   &\times \widecheck{\chi}_1(\lambda_3 x_3-y_2) \, a(y_1/\lambda_1, y_2/\lambda_3, \delta, j) \, \chi_1(y_1/\lambda_1) \, \chi_1(y_2/\lambda_3) \, \my_1\my_2.
\end{align*}
Recall that here $y_1 \sim \lambda_1$ and $y_2 \sim \lambda_3$ are both positive, and that $|\phi(\lambda^{-1}_1 y_1, \lambda^{-1}_3 y_2, \delta, j)| \sim 1$.
Therefore we can assume $|(x_1,x_2)| \leq C$ for some large constant $C$,
since otherwise we can use the first two factors within the integral to gain a factor $\lambda_1^{-N}$.
As the dominant term in $\phi$ is in the $y_1$ variable and as $\lambda_3$ is fixed,
we shall only concentrate on the $y_1$ integration and consider $y_2/\lambda_3 \sim 1$ as a bounded parameter.
Therefore the inner $y_1$ integration, after substituting $\lambda_1 x_1-y_1 \mapsto y_1$, becomes
\begin{align*}
\sum_{\delta_3^{1/2}2^{j} \ll \lambda_1 \ll \delta_3^{1/2}2^{2j}}
   (\lambda_1)^{-\frac{3}{2}it}
   &\int \widecheck{\chi}_1 (y_1) \, \widecheck{\chi}_1 (\lambda_1 x_2- \lambda_1 \phi(x_1-\lambda^{-1}_1 y_1, \lambda^{-1}_3 y_2, \delta, j))\\
   &\times a(x_1 - \lambda^{-1}_1 y_1, \lambda^{-1}_3 y_2 , \delta, j) \, \chi_1(x_1 - \lambda_1^{-1} y_1) \my_1,
\end{align*}
where now $x_1 - \lambda_1^{-1} y_1 \sim 1$, and therefore $|y_1| \lesssim \lambda_1$.

Next, we can restrict ourselves, by using a smooth cutoff function,
to the discussion of the integration domain where $|y_1| \ll \lambda_1^{\varepsilon}$ for some small $\varepsilon$,
since in the other part by using the first factor in the integral we could gain a factor of $\lambda_1^{-N\varepsilon}$.
Since $\lambda_1 \gg \delta_3^{1/2} 2^j \gg 1$ can be taken arbitrarily large, and hence $\lambda_1^{-1} y_1$ arbitrarily small,
the relation $x_1 - \lambda_1^{-1} y_1 \sim 1$ implies $x_1 \sim 1$.
Therefore by applying a Taylor expansion to the function $\phi(x_1-\lambda^{-1}_1 y_1, \lambda^{-1}_3 y_2, \delta, j)$
in the first variable, we obtain
\begin{align*}
\sum_{\delta_3^{1/2}2^{j} \ll \lambda_1 \ll \delta_3^{1/2}2^{2j}}
   (\lambda_1)^{-\frac{3}{2}it}
   &\int \widecheck{\chi}_1 (y_1) \,
     \widecheck{\chi}_1 (\lambda_1 Q(x_1,x_2, \lambda^{-1}_3 y_2, \delta, j) + y_1 \, r(\lambda^{-1}_1 y_1, x_1, \lambda^{-1}_3 y_2, \delta, j))\\
   &\times a(x_1 - \lambda^{-1}_1 y_1, \lambda^{-1}_3 y_2 , \delta, j) \, \chi_1(x_1 - \lambda_1^{-1} y_1) \, \chi_0(\lambda_1^{-\varepsilon} \, y_1) \my_1,
\end{align*}
where $|\partial_1^N r| \sim 1$ for any $N \geq 0$, and $Q(x_1,x_2, \lambda^{-1}_3 y_2, \delta, j) = x_2 - \phi(x_1, \lambda^{-1}_3 y_2, \delta, j)$.

Now we note that the first two factors in the integral are essentially a convolution, and therefore,
by using this two factors, one easily obtains that the bound on the integral is $|\lambda_1 Q|^{-N}$.
If $|\lambda_1 Q| \gg 1$, $|\lambda_1 Q|^{-N}$ is a geometric series summable in $\lambda_1$, and if $|\lambda_1 Q| \lesssim 1$,
then we are actually within the scope of Lemma \ref{Oscillatory_auxiliary_one_parameter}.
Namely, we define the function $H$ as
\begin{align*}
H(z_1, z_2, z_3; \lambda^{-1}_3 y_2, x_1, x_2 , \delta, 2^{-j}) \coloneqq
   &\int \widecheck{\chi}_1 (y_1) \,
     \widecheck{\chi}_1 (z_1 + y_1 \, r(z_2^{1/\varepsilon} y_1, x_1, \lambda^{-1}_3 y_2, \delta, j))\\
   &\times a(x_1 - z_2^{1/\varepsilon} y_1, \lambda^{-1}_3 y_2 , \delta, j) \, \chi_1(x_1 - z_2^{1/\varepsilon} y_1) \, \chi_0(z_2 \, y_1) \my_1.
\end{align*}
Note that $H$ does not actually depend on $z_3$,
but we need to use it in order to implement the lower bound on $\lambda_1$ in the summation
(this is realised through the characteristic function $\chi_Q$ in the definition of $F(t)$ in Lemma \ref{Oscillatory_auxiliary_one_parameter}).
Tracing back, we note that all the dependencies in $j$ are actually dependencies in $2^{-j}$.
All the parameters $(\lambda^{-1}_3 y_2, x_1, x_2 , \delta, 2^{-j})$ are now restrained to a bounded set
and the $C^1$ norm of $H$ in $(z_1, z_2, z_3)$ is bounded uniformly in all the (bounded) parameters if $(z_1, z_2, z_3)$ are
contained in a bounded set.
Therefore by taking
\begin{align*}
(z_1, z_2, z_3) = (\lambda_1 Q(x_1, x_2, \lambda^{-1}_3 y_2, \delta, j), \lambda_1^{-\varepsilon}, \delta_3^{1/2}2^{j} \lambda_1^{-1})
\end{align*}
and applying Lemma \ref{Oscillatory_auxiliary_one_parameter} with $\alpha = -3/2$, $\lambda_1 = 2^l$,
$M = c \delta_3^{1/2}2^{2j}$ for a small $c > 0$ determined by the implicit constant in the summation condition $\lambda_1 \ll \delta_3^{1/2}2^{2j}$,
and with
\begin{align*}
(\beta^1, \beta^2, \beta^3) &= (1, -\varepsilon, -1),\\
(a_1, a_2, a_3) &= (Q(x_1, x_2, \lambda^{-1}_3 y_2, \delta, j), 1, \delta_3^{1/2}2^{j}),
\end{align*}
we obtain the bound \eqref{LowerThanTwo_TheFirstDecomposition_ComplexInterpolationFinal}.
Note that the lower bound on $\lambda_1$ in the summation in \eqref{LowerThanTwo_TheFirstDecomposition_ComplexInterpolationFinal}
is realised by taking $|z_3| \ll 1$.
We are done with the case $2^{2j} \delta_3 \gg 1$.

\subsection{The setting when $2^{2j} \delta_3 \lesssim 1$}

As explained in Section \cite[Subsection 4.2]{IM16},
in this case we use the change of coordinates $(x_1, x_2) \mapsto (x_1, 2^{-j}( x_2 + x_1^m \omega(\delta_1 x_1)))$
in the expression \eqref{LowerThanTwo_TheFirstDecomposition_original_measure} for $\nu_{\delta, j}$.
After renormalising the measure $\nu_{\delta, j}$
we obtain that the mixed norm Fourier restriction estimate for $\nu_{\delta, j}$ is equivalent to
\begin{equation*}
\int |\widehat{f}|^2 \mathrm{d}\tilde{\nu}_{\delta, j} \leq
  \Const 2^{j(1-4/p_3')} \Vert f \Vert^2_{L^\mathfrak{p}(\R^3)}, \qquad f \in \mathcal{S}(\R^3),
\end{equation*}
that is, since $\mathfrak{p}_3' = 4$,
\begin{equation*}
\int |\widehat{f}|^2 \mathrm{d}\tilde{\nu}_{\delta, j} \leq
  \Const \Vert f \Vert^2_{L^\mathfrak{p}(\R^3)}, \qquad f \in \mathcal{S}(\R^3),
\end{equation*}
where $\tilde{\nu}_{\delta,j}$ is the rescaled measure
\begin{align*}
\langle \tilde{\nu}_{\delta, j}, f \rangle \coloneqq
   \int f(x_1, 2^{-j}x_2 + x_1^m \omega(\delta_1 x_1), \phi^a(x, \delta, j)) a(x, \delta, j) \mx.
\end{align*}
The function $a(x, \delta, j)$ has the form
\begin{align*}
a(x, \delta, j) \coloneqq
   \chi_1(\phi^a(x, \delta, j)) a(x_1, 2^{-j}x_2 + x_1^m \omega(\delta_1 x_1), \delta)
\end{align*}
and the phase function is given by
\begin{align}
\label{LowerThanTwo_TheSecondDecomposition_Phase}
\phi^a(x, \delta, j) \coloneqq
   &\tilde{b}(x_1, 2^{-j}x_2 + x_1^m \omega(\delta_1 x_1), \delta_1, \delta_2) x_2^2 + 2^{2j} \delta_3 x_1^n \beta(\delta_1 x_1),
\end{align}
where $|\tilde{b}(x_1,x_2,0,0)| \sim 1$ and $|\beta(0)| \sim 1$.

Also, we recall that when $\delta \to 0$, then $\tilde{b}(x_1, x_2, \delta_1, \delta_2)$
converges in $C^\infty$ to a nonzero constant if $\phi$ has $A$ type singularity,
and that it converges up to a multiplicative constant to $x_1$ if $\phi$ has $D$ type singularity.
We shall assume without loss of generality that $\tilde{b}(x_1,x_2,\delta_1,\delta_2) > 0$
since one can just reflect the third coordinate of $f$ in the expression for the measure $\tilde{\nu}_{\delta, j}$.

Support assumptions on $a(\cdot, \delta)$ from Subsection \ref{Section_h_lin_less_than_2_First_decompositions}
(namely, that the support is contained in a small neighbourhood of the point $(v_1,v_1^m \omega(0))$ for some $v_1 > 0$)
imply that $a(\cdot, \delta, j)$ is supported in a set where $x_1 \sim 1$ and $|x_2| \lesssim 1$.

We again perform a spectral decomposition of $\tilde{\nu}_{\delta, j}$,
i.e., for $(\lambda_1, \lambda_2, \lambda_3)$ dyadic numbers with $\lambda_i \geq 1$, $i = 1,2,3$,
we consider localised measures $\nu_j^\lambda$ defined through
\begin{align}
\begin{split}
\label{LowerThanTwo_TheSecondDecomposition_FourierForm}
\widehat{\nu_j^\lambda} (\xi) =
   &\chi_1\Big(\frac{\xi_1}{\lambda_1}\Big) \, \chi_1\Big(\frac{\xi_2}{\lambda_2}\Big) \, \chi_1\Big(\frac{\xi_3}{\lambda_3}\Big)\\
   &\times \int e^{-i \Phi(x, \delta, j, \xi)} \, a(x,\delta,j) \, \chi_1(x_1) \, \chi_1(x_2) \, \mx,
\end{split}
\end{align}
with the complete phase function $\Phi$ being
\begin{align*}
\Phi(x, \delta, j, \xi) \coloneqq
\xi_3 \phi^a(x, \delta, j) + 2^{-j} \xi_2 x_2 + \xi_2 x_1^m \omega(\delta_1 x_1) + \xi_1 x_1.
\end{align*}
We also introduce the operators $\tilde{T}_{\delta, j} f \coloneqq f * \widehat{\tilde{\nu}}_{\delta, j}$
and $T^\lambda_j f \coloneqq f * \widehat{\nu^\lambda_j}$.
Then we need to prove:
\begin{align}
\Vert \tilde{T}_{\delta, j} \Vert_{L^{\mathfrak{p}} \to L^{\mathfrak{p}'}} \lesssim 1.
\end{align}
In most of the cases this will be done in a similar manner as in the previous subsection.
In the case when $2^{2j} \delta_3 \sim 1$, $\theta = 1/3$, and $\lambda_1 \sim \lambda_2 \sim \lambda_3$,
with which we shall deal in the next Section,
we shall need to perform a finer analysis.

\subsection{The case $2^{2j} \delta_3 \ll 1$}

Here we have the stronger bounds $x_1 \sim 1$ and $|x_2| \sim 1$ since $\phi^a(x,\delta,j) \sim 1$
by \eqref{LowerThanTwo_TheSecondDecomposition_Phase} and the assumption $2^{2j} \delta_3 \ll 1$.
We also have $|\partial_{x_2} \phi^a(x,\delta,j)| \sim 1$ since
$\phi^a(x, \delta, j)$ is a small pertubation of $b(0,0) x_2^2$ in case of $A$ type singularity,
and a small pertubation of $b_1(0,0) x_1 x_2^2$ in case of $D$ type singularity.

Taking the inverse transform of \eqref{LowerThanTwo_TheSecondDecomposition_FourierForm} we get
\begin{align}
\begin{split}
\label{LowerThanTwo_TheSecondDecomposition_SpaceForm}
\nu_j^\lambda (x) = &\lambda_1 \lambda_2 \lambda_3 \int \widecheck{\chi}_1 (\lambda_1 (x_1-y_1)) \, \widecheck{\chi}_1 (\lambda_2 (x_2-2^{-j}y_2-y_1^m\omega(\delta_1 y_1)))\\
   &\times \widecheck{\chi}_1(\lambda_3 (x_3-\phi^a(y,\delta,j))) \, a(y, \delta, j) \, \chi_1(y_1) \, \chi_1(y_2) \, \my.
\end{split}
\end{align}
Similarily as in the case $2^{2j} \delta_3 \gg 1$, we can consider either the substitution $(z_1, z_2) = (\lambda_1 y_1, \lambda_2 2^{-j} y_2)$,
or the substitution $(z_1, z_2) = (\lambda_1 y_1, \lambda_3 \phi^a(y, \delta, j))$
(in order to carry this out one needs to consider the cases $y_2 \sim 1$ and $y_2 \sim -1$ separately).
Then one can easily obtain
\begin{equation}
\label{LowerThanTwo_TheSecondDecomposition_SpaceBound}
\Vert \nu_j^\lambda \Vert_{L^\infty} \lesssim \min\{2^{j} \lambda_3, \lambda_2 \}.
\end{equation}
Next we calculate the $L^\infty$ bounds on the Fourier transform
by using the expression \eqref{LowerThanTwo_TheSecondDecomposition_FourierForm}.

{\bf{Case 1. $\lambda_1 \ll \lambda_2$ or $\lambda_1 \gg \lambda_2$, and $\lambda_3 \ll \max\{\lambda_1, \lambda_2\}$.}}
By integration by parts in $x_1$ one has
$$ \Vert \widehat{\nu_j^\lambda} \Vert_{L^\infty} \lesssim
   \Big( \max\{\lambda_1, \lambda_2\} \Big)^{-N}. $$
The operators $T^\lambda_j$ are now summable which can be seen
by using the estimate in \eqref{LowerThanTwo_basic_estimate} obtained by interpolation.

{\bf{Case 2. $\lambda_1 \ll \lambda_2$ or $\lambda_1 \gg \lambda_2$, and $\lambda_3 \gtrsim \max\{\lambda_1,\lambda_2\}$.}}
Here we use integration by parts in $x_2$ only and so we have the bound
$$ \Vert \widehat{\nu_j^\lambda} \Vert_{L^\infty} \lesssim
   \lambda_3^{-N}. $$
After interpolating we can again sum operators $T^\lambda_j$ in all three paramteres.

{\bf{Case 3. $\lambda_1 \sim \lambda_2$ and $\lambda_3 \ll 2^{-j}\lambda_2$.}}
Note that necessarily $\lambda_2 \geq 2^j$.
Here we use stationary phase in $x_1$ and integration by parts in $x_2$.
Then one gets the estimate
$$ \Vert \widehat{\nu_j^\lambda} \Vert_{L^\infty} \lesssim
   \lambda_1^{-1/2} (2^{-j} \lambda_2)^{-N}. $$
The better bound in \eqref{LowerThanTwo_TheSecondDecomposition_SpaceBound} is $2^j \lambda_3$.
Therefore \eqref{LowerThanTwo_basic_estimate} becomes
\begin{align*}
\Vert T^\lambda_j \Vert_{ L^{\mathfrak{p}_3}_{x_3} (L^{\mathfrak{p}_1}_{(x_1,x_2)}) \to L^{\mathfrak{p}_3'}_{x_3} (L^{\mathfrak{p}_1'}_{(x_1,x_2)}) }
   &\lesssim (\lambda_1^{-1/2} (2^{-j} \lambda_2)^{-N})^{1-\theta} (2^j \lambda_3)^\theta \lambda_3^{\frac{1}{2}-\theta}.
\end{align*}
If $\theta < 1/3$, then we can rewrite
\begin{align*}
\Vert T^\lambda_j \Vert_{ L^{\mathfrak{p}_3}_{x_3} (L^{\mathfrak{p}_1}_{(x_1,x_2)}) \to L^{\mathfrak{p}_3'}_{x_3} (L^{\mathfrak{p}_1'}_{(x_1,x_2)}) }
   &\lesssim (\lambda_1^{-1/2} \lambda_3^{-N})^{1-\theta} \lambda_1^\theta \lambda_3^{\frac{1}{2}-\theta},
\end{align*}
we note that one can now easily sum in both $\lambda_1$ and $\lambda_3$.
If $\theta = 1/3$, then the first inequality for $T^\lambda_j$ can be rewritten as
\begin{align*}
\Vert T^\lambda_j \Vert_{ L^{\mathfrak{p}_3}_{x_3} (L^{\mathfrak{p}_1}_{(x_1,x_2)}) \to L^{\mathfrak{p}_3'}_{x_3} (L^{\mathfrak{p}_1'}_{(x_1,x_2)}) }
   &\lesssim (2^{-j} \lambda_1)^{-N} \lambda_3^{1/2},
\end{align*}
for some different $N$.
Now we first sum in $\lambda_3$ up to $2^{-j}\lambda_1$, and then we sum in $\lambda_1 \geq 2^j$.

{\bf{Case 4. $\lambda_1 \sim \lambda_2$ and $\lambda_3 \sim 2^{-j}\lambda_2$.}}
Again necessarily $\lambda_2 \gtrsim 2^j$.
One uses in both $x_1$ and $x_2$ the stationary phase method and gets
$$ \Vert \widehat{\nu_j^\lambda} \Vert_{L^\infty} \lesssim
   2^{j/2} \lambda_1^{-1}. $$
The estimate for $\Vert \nu_j^\lambda \Vert_{L^\infty}$ from
\eqref{LowerThanTwo_TheSecondDecomposition_SpaceBound} is $\lesssim \lambda_2$.
Hence, we get the estimate
\begin{align*}
\Vert T^\lambda_j \Vert_{ L^{\mathfrak{p}_3}_{x_3} (L^{\mathfrak{p}_1}_{(x_1,x_2)}) \to L^{\mathfrak{p}_3'}_{x_3} (L^{\mathfrak{p}_1'}_{(x_1,x_2)}) }
   &\lesssim (2^{j/2} \lambda_1^{-1})^{1-\theta} \lambda_1^\theta \lambda_3^{1/2-\theta}\\
   &\lesssim 2^{j\theta/2}  \lambda_1^{\theta-1/2}.
\end{align*}
By summation in $\lambda_1 \gtrsim 2^j$ we obtain the bound
$$
2^{3j\theta/2 - j/2}.
$$
Now since $\theta \leq 1/3$, we get the desired result.

{\bf{Case 5. $\lambda_1 \sim \lambda_2$ and $\lambda_3 \gtrsim \lambda_2$.}}
Here it suffices to use integration by parts in $x_2$ only. One easily gets
\begin{align*}
\Vert T^\lambda_j \Vert_{ L^{\mathfrak{p}_3}_{x_3} (L^{\mathfrak{p}_1}_{(x_1,x_2)}) \to L^{\mathfrak{p}_3'}_{x_3} (L^{\mathfrak{p}_1'}_{(x_1,x_2)}) }
   &\lesssim \lambda_3^{-N},
\end{align*}
and one can now sum in both $\lambda_1$ and $\lambda_3$.

{\bf{Case 6. $\lambda_1 \sim \lambda_2$ and $2^{-j}\lambda_2 \ll \lambda_3 \ll \lambda_2$.}}
By the stationary phase method in $x_1$ and integration by parts in $x_2$
$$ \Vert \widehat{\nu_j^\lambda} \Vert_{L^\infty} \lesssim \lambda_1^{-\frac{1}{2}} \, (\lambda_3)^{-N}, $$
and the better bound in \eqref{LowerThanTwo_TheSecondDecomposition_SpaceBound} is $\lambda_2$.

Similarily as in the case $2^{2j} \delta_3 \gg 1$ one easily sees that, unless $\theta = 1/3$, one can sum in both parameters.
Henceforth we shall assume $\theta = 1/3$ and use complex interpolation in order to deal with this case.
Here we know that $\phi$ has $A$ type singularity and $\tilde{\sigma} = 1/4$.
For simplicity we shall again assume that $\lambda_1 = \lambda_2$.

We consider the following function parametrised by the complex number $\zeta$ and the dyadic number $\lambda_3$:
\begin{align*}
\mu_\zeta^{\lambda_3} = \gamma(\zeta) \, \sum_{\lambda_3 \ll \lambda_1 \ll 2^j \lambda_3}
    (\lambda_1)^{\frac{1-3\zeta}{2}} \, \nu_j^\lambda,
\end{align*}
where
\begin{align*}
\gamma(\zeta) = 2^{-3(\zeta-1)/2} - 1.
\end{align*}
We denote the associated convolution operator by $T_\zeta^{\lambda_3}$. For $\zeta = 1/3$ we see that
\begin{align*}
\mu_\zeta^{\lambda_3} = \sum_{\lambda_3 \ll \lambda_1 \ll 2^j \lambda_3} \nu_j^\lambda.
\end{align*}
Hence, by interpolation it suffices to prove
\begin{align*}
\begin{split}
\Vert T_{it}^{\lambda_3} \Vert_{ L^{2/(2-\tilde{\sigma})}_{x_3} (L^1_{(x_1,x_2)}) \to L^{2/\tilde{\sigma}}_{x_3} (L^\infty_{(x_1,x_2)}) }
   &\lesssim \lambda_3^{-N},\\
\Vert T_{1+it}^{\lambda_3} \Vert_{L^2\to L^2}
   &\lesssim 1,
\end{split}
\end{align*}
for some $N > 0$, with constants uniform in $t \in \R$.

The first estimate follows right away since $\widehat{\nu_j^\lambda}$ have essentially disjoint supports,
and so the $L^\infty$ estimate for $\widehat{\nu_j^\lambda}$ implies
$$ \Vert \widehat{\mu^{\lambda_3}_{it}} \Vert_{L^\infty} \lesssim \lambda_3^{-N}, $$
for any $N \in \N$.

We prove the second estimate using Lemma \ref{Oscillatory_auxiliary_one_parameter}. We need to prove
\begin{align}
\label{LowerThanTwo_TheSecondDecomposition_ComplexInterpolationFinal}
\Bigg\Vert \sum_{\lambda_3 \ll \lambda_1 \ll 2^j \lambda_3} (\lambda_1)^{-1-\frac{3}{2}it} \, \nu_j^\lambda \Bigg\Vert_{L^\infty}
   \lesssim \frac{1}{\Big| 2^{-\frac{3}{2}it} - 1 \Big|},
\end{align}
uniformly in $t$.

We first use the substitution $(z_1,z_2) = (y_1,\phi^a(y_1,y_2,\delta,j))$ in the expression
\eqref{LowerThanTwo_TheSecondDecomposition_SpaceForm}, considering the cases $y_2 \sim 1$ and $y_2 \sim -1$ separately.
In order to solve for $(y_1,y_2)$ in terms of $(z_1,z_2)$,
we introduce for a moment intermediary coordinates $(\tilde{y}_1, \tilde{y}_2) = (y_1,2^{-j}y_2+y_1^m \omega(\delta_1 y_1))$.
In coordinates $(\tilde{y}_1, \tilde{y}_2)$ the expression for $\phi^a = z_2$ becomes
\begin{align*}
2^{2j} \tilde{b}(\tilde{y}_1, \tilde{y}_2, \delta_1, \delta_2) (\tilde{y}_2 - \tilde{y}_1^m \omega(\delta_1 \tilde{y}_1))^2 + 2^{2j} \delta_3 \tilde{y}_1^n \beta(\delta_1 \tilde{y}_1).
\end{align*}
Then one can easily see that by solving for $\tilde{y}_2$ in terms of $(z_1,z_2)$, one gets precisely the expression
\eqref{LowerThanTwo_TheFirstDecomposition_Phase} as in the case $2^{2j} \delta_3 \gg 1$.
Therefore by solving for $y_2$ in terms of $(z_1, z_2)$ one gets
\begin{align*}
y_2 = \pm
   \tilde{b}_1 \Big( z_1, \sqrt{2^{-2j}z_2 - \delta_3 z_1^n \beta(\delta_1 z_1)}, \delta_1, \delta_2\Big)
   \times \sqrt{z_2 - 2^{2j} \delta_3 z_1^n \beta(\delta_1 z_1)},
\end{align*}
where now both $z_1$ and $z_2$ are positive. We shall from now on consider $y_2$ as a function of $(z_1, z_2)$.
On the limit $j \to \infty$ and $\delta \to 0$ the function $y_2 = y_2(z_1,z_2,\delta,j)$ converges to $\pm C \sqrt{z_2}$ for some constant $C \neq 0$
since we are in the $\theta = 1/3$ case (i.e., $A$ type singularity case);
see \eqref{LowerThanTwo_TheFirstDecomposition_tilde_b_1_convergence}.

After applying the just introduced substitution to the expression \eqref{LowerThanTwo_TheSecondDecomposition_SpaceForm} we get
\begin{align*}
\begin{split}
\nu_j^\lambda (x) = &\lambda_1 \lambda_2 \lambda_3 \int \widecheck{\chi}_1 (\lambda_1 (x_1-z_1)) \,
    \widecheck{\chi}_1 (\lambda_2 (x_2-2^{-j}y_2(z_1,z_2,\delta,j)-z_1^m\omega(\delta_1 z_1)))\\
   &\times \widecheck{\chi}_1(\lambda_3 (x_3-z_2)) \, \tilde{a}_1(z, \delta, j) \, \chi_1(z_1) \, \chi_1(y_2(z_1,z_2,\delta,j)) \, \mz,
\end{split}
\end{align*}
where $\tilde{a}_1$ is the function $a$ multiplied by the Jacobian of the change of variables.
Since $|y_2| \sim 1$ is equivalent to $|z_2| \sim 1$, we may rewrite again the above expression as
\begin{align}
\begin{split}
\label{LowerThanTwo_TheSecondDecomposition_SpaceForm_after_sub}
\nu_j^\lambda (x) = &\lambda_1 \lambda_2 \lambda_3 \int \widecheck{\chi}_1 (\lambda_1 (x_1-z_1)) \,
    \widecheck{\chi}_1 (\lambda_2 (x_2-2^{-j}y_2(z_1,z_2,\delta,j)-z_1^m\omega(\delta_1 z_1)))\\
   &\times \widecheck{\chi}_1(\lambda_3 (x_3-z_2)) \, \tilde{a}(z, \delta, j) \, \chi_1(z_1) \, \chi_1(z_2) \, \mz.
\end{split}
\end{align}
Now we substitute $\lambda_1 z_1 \mapsto z_1$ and $\lambda_3 z_2 \mapsto z_2$ in the expression \eqref{LowerThanTwo_TheSecondDecomposition_SpaceForm_after_sub},
plug it into the sum \eqref{LowerThanTwo_TheSecondDecomposition_ComplexInterpolationFinal}, and obtain
\begin{align*}
\sum_{\lambda_3 \ll \lambda_1 \ll 2^j \lambda_3}
   (\lambda_1)^{-\frac{3}{2}it}
   &\int \widecheck{\chi}_1 (\lambda_1 x_1 - z_1) \\
   &\times \widecheck{\chi}_1 ( \lambda_1 x_2 - 2^{-j} \lambda_1 y_2 (\lambda_1^{-1} z_1, \lambda_3^{-1} z_2, \delta, j) - \lambda_1^{-m+1} z_1^m\omega(\delta_1 \lambda_1^{-1} z_1))\\
   &\times \widecheck{\chi}_1 (\lambda_3 x_3 - z_2)
    \times \, \tilde{a}(\lambda_1^{-1} z_1, \lambda_3^{-1} z_2, \delta, 2^{-j}) \, \chi_1(\lambda_1^{-1} z_1) \, \chi_1(\lambda_3^{-1} z_2) \, \mz.
\end{align*}
Now we have $z_1 \sim \lambda_1$, $z_2 \sim \lambda_3$, and $|y_2(\lambda^{-1}_1 z_1, \lambda^{-1}_3 z_2, \delta, j)| \sim 1$.

We can assume $|(x_1, x_2)| \leq C$ for some large constant $C$, since otherwise we can use the first two factors within the integral
and gain a factor of $\lambda_1^{-N}$.
Similarily as in the case $2^{2j} \delta_3 \gg 1$ we shall consider integration in $z_1$ only
(and $\lambda_3^{-1} z_2$ shall be a bounded parameter), and one can also use the substitution $z_1 \mapsto \lambda_1 x_1 - z_1$
to reduce the problem to when $|z_1| \ll \lambda_1^{\varepsilon}$ and $x_1 \sim 1$.
We also introduce $\psi_\delta(x_1) = x_1^m \omega(\delta_1 x_1)$.
Then it remains to estimate
\begin{align*}
\sum_{\lambda_3 \ll \lambda_1 \ll 2^j \lambda_3}
   (\lambda_1)^{-\frac{3}{2}it}
   &\int \widecheck{\chi}_1 (z_1)
    \widecheck{\chi}_1 ( \lambda_1 (x_2-\psi_\delta(x_1-\lambda_1^{-1}z_1) - 2^{-j} \lambda_1 y_2 (x_1-\lambda_1^{-1} z_1, \lambda_3^{-1} z_2, \delta, j)))\\
   &\times \, \tilde{a}(x_1-\lambda_1^{-1} z_1, \lambda_3^{-1} z_2, \delta, 2^{-j}) \, \chi_1(x_1-\lambda_1^{-1} z_1) \, \chi_0(z_1 \lambda_1^{-\varepsilon}) \, \mathrm{d}z_1.
\end{align*}
Within the second factor in the integral we can use a Taylor approximation at $x_1$ and obtain
\begin{align*}
\sum_{\lambda_3 \ll \lambda_1 \ll 2^j \lambda_3}
   (\lambda_1)^{-\frac{3}{2}it}
   &\int \widecheck{\chi}_1 (z_1)
    \widecheck{\chi}_1 ( \lambda_1 Q(x_1,x_2,\lambda_3^{-1}z_2,\delta,2^{-j}) + z_1 r(\lambda_1^{-1}z_1,x_1,\lambda_3^{-1}z_2,\delta,2^{-j}) )\\
   &\times \, \tilde{a}(x_1-\lambda_1^{-1} z_1, \lambda_3^{-1} z_2, \delta, 2^{-j}) \, \chi_1(x_1-\lambda_1^{-1} z_1) \, \chi_0(z_1 \lambda_1^{-\varepsilon}) \, \mathrm{d}z_1,
\end{align*}
where $|\partial_1^N r| \sim 1$ for $N \geq 0$ since the term $\psi_\delta$ is dominant, and $Q$ is a smooth function with uniform bounds.
Now we notice that this form is the same as in the case $2^{2j} \delta_3 \gg 1$ in the part where we used complex interpolation, and hence
the same proof using the oscillatory sum lemma can be applied, up to obvious changes such as changing the summation bounds.


\subsection{The case $2^{2j} \delta_3 \sim 1$}

As in \cite{IM16} we denote 
$$\sigma \coloneqq 2^{2j} \delta_3, \qquad \qquad b^{\#}(x,\delta,j) \coloneqq \tilde{b}(x_1,2^{-j}x_2+x_1^m\omega(\delta_1 x_1),\delta),$$
and so $\sigma \sim 1$ and $|b^{\#}(x,\delta,j)| \sim 1$.
Therefore the complete phase can be rewritten as
\begin{align}
\label{LowerThanTwo_TheSecondDecomposition_CompletePhase_rewritten}
\begin{split}
\Phi(x, \delta, j, \xi) \coloneqq
  & \xi_1 x_1 + \xi_2 x_1^m \omega(\delta_1 x_1) + \xi_3 \sigma x_1^n \beta(\delta_1 x_1) \\
  & + 2^{-j} \xi_2 x_2 + \xi_3 b^{\#}(x,\delta,j) x_2^2. \\
\end{split}
\end{align}
Recall also that in this case we have the weaker conditions $x_1 \sim 1$ and $|x_2| \lesssim 1$
for the domain of integration in the integral in \eqref{LowerThanTwo_TheSecondDecomposition_FourierForm}.

We furthermore slightly modify the notation in this case, as it was done in \cite{IM16}.
Namely, $\delta$ shall denote in this subsection $(\delta_1, \delta_2)$ since $\delta_3$ appears only in $\sigma$.
We also note that in this case there is no $A_\infty$ nor $D_\infty$ type singularity.

Let us introduce the notation
\begin{align*}
\psi_\omega(y_1) = y_1^m \omega(\delta_1 y_1), \qquad \qquad
\psi_\beta(y_1) = \sigma y_1^n \beta(\delta_1 y_1).
\end{align*}
Then, after applying the inverse Fourier transform to \eqref{LowerThanTwo_TheSecondDecomposition_FourierForm}, we may write
\begin{align}
\begin{split}
\label{LowerThanTwo_TheSecondDecomposition_SpaceForm_rewritten}
\nu_j^\lambda (x) = \lambda_1 \lambda_2 \lambda_3 &\int \widecheck{\chi}_1 (\lambda_1 (x_1-y_1)) \, \widecheck{\chi}_1 (\lambda_2 (x_2-2^{-j}y_2-\psi_\omega(y_1)))\\
   &\times \widecheck{\chi}_1(\lambda_3 (x_3 -  b^{\#}(y,\delta,j) y_2^2 - \psi_\beta(y_1))) \\
   &\times a(y, \delta, j) \, \chi_1(y_1) \, \chi_0(y_2) \, \my.
\end{split}
\end{align}
As was noted in \cite[Subsection 4.2.2.]{IM16}, here we have the bounds
\begin{equation}
\label{LowerThanTwo_TheSecondDecomposition_SpaceBound_last_case}
\Vert \nu_j^\lambda \Vert_{L^\infty} \lesssim \lambda_3^{1/2} \min\{2^{j} \lambda^{1/2}_3, \lambda_2 \}.
\end{equation}
Namely, in the first factor within the integral in \eqref{LowerThanTwo_TheSecondDecomposition_SpaceForm_rewritten}
we can substitute $\lambda_1 y_1 \mapsto y_1$, and afterwards either substitute $\lambda_2 2^{-j} y_2 \mapsto y_2$ in the second factor, or
use the van der Corput lemma (i.e., Lemma \ref{Oscillatory_auxiliary_van_der_corput}, $(i)$) in the third factor
with respect to the $y_2$ variable.

As can easily be seen from \eqref{LowerThanTwo_TheSecondDecomposition_CompletePhase_rewritten} by using integration by parts in $x_1$,
if one of $\lambda_1, \lambda_2$ is considerably larger than any other $\lambda_i, i =1,2,3$,
then we can easily gain a sufficiently strong estimate with which
one can sum absolutely in all three parameters $\lambda_i, i =1,2,3$, the operators $T_j^\lambda$.

If $\lambda_3$ is significantly larger than both $\lambda_1$ and $\lambda_2$ and $\phi$ is of type $A$,
we can also use integration by parts in $x_1$ in order to get a sufficiently strong estimate.
In the case when $\lambda_3$ is the largest and $\phi$ is of type $D$, then $b^{\#}(x,\delta,j)$ is approximately $x_1$ in the $C^\infty$ sense,
and so in this case and when $|x_2| \sim 1$, we use integration by parts in $x_2$, and when $|x_2| \ll 1$ integration by parts in $x_1$.
In both parts we get the bound $\lambda_3^{-N}$ with which we can obtain a summable estimate for $T_j^\lambda$ in all three parameters.

As it turns out, in almost all the other possible relations between $\lambda_i$, $i=1,2,3$, we shall need complex interpolation if $\theta = 1/3$,
or if $\theta = 1/4$ and it is the ``diagonal'' case, i.e., all the $\lambda_i$, $i=1,2,3$, are of approximately the same size.
If $\theta = 1/3$ and $\lambda_i$, $i=1,2,3$, are of approximately the same size we shall actually need a finer analysis
where estimates on Airy integrals are needed. This will be done in the next section.

{\bf{Case 1.1. $\lambda_1 \sim \lambda_3$, $\lambda_2 \ll \lambda_1$, and $\lambda_2 \leq 2^j \lambda_1^{1/2}$.}}
On the part where $|x_2| \sim 1$ we can use integration by parts in $x_2$ and
obtain much stroger estimates sufficient for absolute summation.
When $|x_2| \ll 1$ we use stationary phase in both variables, and so
\begin{align*}
\Vert \widehat{\nu_j^\lambda} \Vert_{L^\infty} \lesssim \lambda_1^{-1},
\qquad \qquad
\Vert \nu_j^\lambda \Vert_{L^\infty} \lesssim \lambda_1^{1/2} \lambda_2,
\end{align*}
from which one can calculate that
\begin{align*}
\Vert T^\lambda_j \Vert_{ L^{\mathfrak{p}_3}_{x_3} (L^{\mathfrak{p}_1}_{(x_1,x_2)}) \to L^{\mathfrak{p}_3'}_{x_3} (L^{\mathfrak{p}_1'}_{(x_1,x_2)}) }
   &\lesssim \lambda_1^{(\theta-1)/2} \lambda_2^\theta.
\end{align*}
Let us denote by $T_{\delta,j}^I$ the sum of the operator pieces $T_j^\lambda$ in this case.
We need to separate the sum in $\lambda_1$ into two subcases $\lambda_1 \leq 2^{2j}$ and $\lambda_1 > 2^{2j}$:
\begin{align*}
\Vert T_{\delta,j}^I \Vert_{ L^{\mathfrak{p}_3}_{x_3} (L^{\mathfrak{p}_1}_{(x_1,x_2)}) \to L^{\mathfrak{p}_3'}_{x_3} (L^{\mathfrak{p}_1'}_{(x_1,x_2)}) }
   &\lesssim
      \sum_{\lambda_1=1}^{2^{2j}} \sum_{\lambda_2=1}^{\lambda_1} \lambda_1^{(\theta-1)/2} \lambda_2^\theta +
      \sum_{\lambda_1=2^{2j+1}}^{\infty} \sum_{\lambda_2=1}^{2^j\lambda_1^{1/2}} \lambda_1^{(\theta-1)/2} \lambda_2^\theta \\
   &\lesssim
      \sum_{\lambda_1=1}^{2^{2j}} \lambda_1^{(3\theta-1)/2} +
      \sum_{\lambda_1=2^{2j+1}}^{\infty} 2^{j\theta} \lambda_1^{(2\theta-1)/2} \\
   &\lesssim
      \sum_{\lambda_1=1}^{2^{2j}} \lambda_1^{(3\theta-1)/2} + 2^{j(3\theta-1)}.
\end{align*}
Therefore if $\theta < 1/3$, then we obtain the desired result, and if $\theta = 1/3$,
we need to use complex interpolation for the first sum where $\lambda_1 \leq 2^{2j}$.
For $\theta = 1/3$, we have
\begin{align*}
\Vert T^\lambda_j \Vert_{ L^{\mathfrak{p}_3}_{x_3} (L^{\mathfrak{p}_1}_{(x_1,x_2)}) \to L^{\mathfrak{p}_3'}_{x_3} (L^{\mathfrak{p}_1'}_{(x_1,x_2)}) }
   &\lesssim (\lambda_1 \lambda_2^{-1})^{-1/3},
\end{align*}
and one is easily convinced that we may restrict ourselves to the case 
\begin{align*}
1 \ll \lambda_1 \ll 2^{2j}, \qquad \qquad 1 \ll \lambda_2 \ll \lambda_1.
\end{align*}
The bound on the operator norm motivates us to define $k$ through $2^k \coloneqq \lambda_1 \lambda_2^{-1} = 2^{k_1-k_2}$,
where $2^{k_1} = \lambda_1$ and $2^{k_2} = \lambda_2$.
Our goal is to prove that for each $k$ within the range $1 \ll 2^k \ll 2^{2j}$ we have
\begin{align*}
\Bigg\Vert \sum_{\lambda_1 \lambda_2^{-1} = 2^k}
   T^\lambda_j \Bigg\Vert_{ L^{\mathfrak{p}_3}_{x_3} (L^{\mathfrak{p}_1}_{(x_1,x_2)}) \to L^{\mathfrak{p}_3'}_{x_3} (L^{\mathfrak{p}_1'}_{(x_1,x_2)}) }
   &\lesssim 2^{-k/3},
\end{align*}
since then we obtain the desired estimate by summation in $k$.

We shall slightly simplify the proof by assuming that $\lambda_1 = \lambda_3$.
Let us consider the following function parametrised by the complex number $\zeta$ and the integer $k$:
\begin{align*}
\mu_\zeta^{k} = 2^{k\frac{3\zeta-1}{2}} \gamma(\zeta) \, \sum_{\lambda_1 \lambda_2^{-1} = 2^k}
    (\lambda_1)^{\frac{3-9\zeta}{4}} \, \nu_j^\lambda,
\end{align*}
where
\begin{align*}
\gamma(\zeta) = \frac{2^{-9(\zeta-1)/4} - 1}{2^{\frac{3}{2}}-1}.
\end{align*}
The associated convolution operator (convolution again the Fourier transform of $\mu_\zeta^{k}$)
we denote by $T_\zeta^{k}$. For $\zeta = 1/3$ we see that
\begin{align*}
\mu_\zeta^{k} = \sum_{\lambda_1 \lambda_2^{-1} = 2^k} \nu_j^\lambda.
\end{align*}
Therefore, it is sufficient to prove
\begin{align*}
\begin{split}
\Vert T_{it}^{k} \Vert_{ L^{2/(2-\tilde{\sigma})}_{x_3} (L^1_{(x_1,x_2)}) \to L^{2/\tilde{\sigma}}_{x_3} (L^\infty_{(x_1,x_2)}) }
   &\lesssim 2^{-k/2},\\
\Vert T_{1+it}^{k} \Vert_{L^2\to L^2}
   &\lesssim 1,
\end{split}
\end{align*}
with constants uniform in $t \in \R$.
Recall that $\tilde{\sigma} = 1/4$ since $m = 2$ and $\theta =  1/3$.

The first estimate follows right away.
Namely, since $\widehat{\nu_j^\lambda}$ have supports located at $\lambda$,
then by the estimate for the $L^\infty$ norm of the function $\widehat{\nu_j^\lambda}$ we have
$$ | \widehat{\mu^{k}_{it}}(\xi)| \lesssim \frac{2^{-k/2}}{(1+|\xi_3|)^{1/4}}, $$
and now one needs to recall Lemma \ref{Mixed_norm_auxiliary_lemma}.

We prove the second estimate by using Lemma \ref{Oscillatory_auxiliary_one_parameter}. We need to prove
\begin{align}
\label{LowerThanTwo_CASE_1_1_ComplexInterpolationFinal}
\Bigg\Vert \sum_{\lambda_1 \lambda_2^{-1} = 2^k} 2^k (\lambda_1)^{-\frac{3}{2}-\frac{9}{4}it} \, \nu_j^\lambda \Bigg\Vert_{L^\infty}
   =
\Bigg\Vert \sum_{2^k \ll \lambda_1 \ll 2^{2j}}
\lambda_1^{-1/2} \lambda_2^{-1} {\lambda_1}^{-\frac{9}{4}it} \, \nu_j^{(\lambda_1,\lambda_1 2^{-k},\lambda_1)} \Bigg\Vert_{L^\infty}
   \lesssim \frac{1}{\Big| 2^{-\frac{9}{4}it} - 1 \Big|},
\end{align}
uniformly in $t$.

After substituting $\lambda_1 y_1 \mapsto y_1$ and $\lambda_1^{1/2} y_2 \mapsto y_2$
in the expression \eqref{LowerThanTwo_TheSecondDecomposition_SpaceForm_rewritten},
we get that the sum on the left hand side of \eqref{LowerThanTwo_CASE_1_1_ComplexInterpolationFinal} is
\begin{align*}
\sum_{2^k \ll \lambda_1 \ll 2^{2j}} \lambda_1^{-\frac{9}{4}it} &\int
    \widecheck{\chi}_1 (\lambda_1 x_1-y_1) \,
    \widecheck{\chi}_1 (2^{-k}\lambda_1 x_2- 2^{-j-k} \lambda_1^{1/2} y_2- 2^{-k}\lambda_1 \psi_\omega(\lambda_1^{-1}y_1))\\
   &\times \widecheck{\chi}_1(\lambda_1 x_3 -  b^{\#}(\lambda_1^{-1}y_1,\lambda_1^{-1/2}y_2,\delta,j) y_2^2 - \lambda_1 \psi_\beta(\lambda_1^{-1}y_1)) \\
   &\times a(\lambda_1^{-1}y_1,\lambda_1^{-1/2}y_2, \delta, j) \, \chi_1(\lambda_1^{-1}y_1) \, \chi_0(\lambda_1^{-1/2}y_2) \, \my.
\end{align*}
Using the first three factors we can reduce the problem to the case $|x| \leq C$ for some large constant $C$.
Now, as we have done in previous instances of complex interpolation, we use the substitution $\lambda_1 x_1 - y_1 \mapsto y_1$,
conclude that it is sufficient to consider the part of the integration domain where $|y_1| \leq \lambda_1^\varepsilon$.
In particular then $x_1 \sim 1$ and we can use Taylor approximation for $\psi_\omega$ and $\psi_\beta$ at $x_1$. Then one gets
\begin{align*}
\sum_{2^k \ll \lambda_1 \leq 2^{2j}} \lambda_1^{-\frac{9}{4}it} &\int
    \widecheck{\chi}_1 (y_1) \,
    \widecheck{\chi}_1 (2^{-k} \lambda_1 Q_\omega(x_1,x_2,\delta_1) - 2^{-k} y_1 r_\omega(\lambda_1^{-1}y_1,x_1,\delta_1)-2^{-j-k}\lambda_1^{1/2} y_2)\\
   &\times \widecheck{\chi}_1(\lambda_1 Q_\beta(x_1,x_3,\delta_1) - y_1 r_\beta(\lambda_1^{-1}y_1,x_1,\delta_1) -  b^{\#}(x_1-\lambda_1^{-1}y_1,\lambda_1^{-1/2}y_2,\delta,j) y_2^2) \\
   &\times a(x_1-\lambda_1^{-1}y_1,\lambda_1^{-1/2}y_2, \delta, j) \, \chi_1(x_1-\lambda_1^{-1}y_1) \, \chi_0(\lambda_1^{-1/2}y_2) \, \chi_0(\lambda_1^{-\varepsilon} y_1) \, \my,
\end{align*}
where $|\partial_1^N r_\omega| \sim 1$ and $|\partial_1^N r_\beta| \sim 1$ for any $N \geq 0$.
Also note that $2^{-j}\lambda_1^{1/2} \ll 1$.

We may now conclude that it is sufficient to consider the cases when either $|A| \gg 1$ or $|B| \gg 1$, where
\begin{align*}
A \coloneqq 2^{-k} \lambda_1 Q_\omega(x_1,x_2,\delta_1), \qquad \qquad
B \coloneqq \lambda_1 Q_\beta(x_1,x_3,\delta_1),
\end{align*}
since otherwise, when both $|A|$ and $|B|$ are bounded, we could apply Lemma \ref{Oscillatory_auxiliary_one_parameter},
similarily as in the case $2^{2j} \delta_3 \gg 1$, to the function
\begin{align*}
H(z_1, z_2, z_3, z_4, z_5; x, \delta, \sigma) \coloneqq &\int
    \widecheck{\chi}_1 (y_1) \,
    \widecheck{\chi}_1 (z_1 - 2^{-k} y_1 r_\omega(z_4^{1/\varepsilon} y_1,x_1,\delta_1)-2^{-k}z_3 y_2)\\
   &\times \widecheck{\chi}_1(z_2 - y_1 r_\beta(z_4^{1/\varepsilon} y_1,x_1,\delta_1) -  b^{\#}(x_1-z_4^{1/\varepsilon}y_1,z_4^{1/(2\varepsilon)}y_2,\delta,j) y_2^2) \\
   &\times a(x_1-z_4^{1/\varepsilon}y_1,z_4^{1/(2\varepsilon)}y_2, \delta, j) \, \chi_1(x_1-z_4^{1/\varepsilon} y_1) \, \chi_0(z_4^{1/(2\varepsilon)} y_2) \, \chi_0(z_4 y_1) \, \my,
\end{align*}
where we would plug in
\begin{align*}
(z_1,z_2,z_3,z_4,z_5) = (2^{-k} \lambda_1 Q_\omega(x_1,x_2,\delta_1),\lambda_1 Q_\beta(x_1,x_3,\delta_1),2^{-j}\lambda_1^{1/2},\lambda_1^{-\varepsilon}, 2^k \lambda_1^{-1}).
\end{align*}
Note that the upper bounds on $z_4$ and $z_5$ are given by the summation bounds for the parameter $\lambda_1$,
and that the function $H$ does not depend on $z_5$.
Furthermore, the $C^1$ norm of $H$ in $(z_1,z_2,z_3,z_4,z_5)$ is bounded since derivatives of Schwartz functions are Schwartz
and only factors of polynomial growth in $y_1$ and $y_2$ appear when taking the derivatives.
The polynomial growth in $y_1$ can be dealt with by using the first factor.
For the polynomial growth in $y_2$ one has to consider the cases $|y_2| \lesssim |y_1|^N$ and $|y_2| \gg |y_1|^N$ separately.
In the first case we can obviously again use the first factor, and in the second case we use the third factor
inside which the term $b^{\#} y_2^2$ is now dominant.

Let us now first assume $|B| \gg 1$.
The first three factors within the integral are behaving essentially like
$$
\widecheck{\chi}_1 (y_1) \widecheck{\chi}_1 (A - 2^{-k}y_1 - 2^{-j-k}\lambda_1^{1/2} y_2) \widecheck{\chi}_1 (B - y_1 - y_2^2).
$$
We may reduce ourselves to the discussion of the part of the integration domain where $|y_1| \ll |B|^{\varepsilon_B}$
since otherwise, when $|y_1| \gtrsim |B|^{\varepsilon_B}$, we could use the first factor, obtain the estimate $|B|^{-N\varepsilon_B}$ for the integral,
and then sum this geometric series in $\lambda_1$.
Then $|B-y_1 r_\beta| \sim |B|$, and the integral we need to estimate is bounded by
$$
\int |\widecheck{\chi}_1(B-y_1 r_\beta - y_2^2 b^{\#})| \my_2 \lesssim \int |\widecheck{\chi}_1(B-y_1 r_\beta -t)| \, |t|^{-1/2} \mathrm{d}t \leq C |B|^{-1/2},
$$
for some constant $C$. Now one can again sum in $\lambda_1$.

Let us now assume $|B| \leq C_B$ for some large, but fixed constant $C_B$, and let $|A| \gg C_B$.
Again, we can reduce ourselves to the part where $|y_1| \ll |A|^{\varepsilon_A}$, and so $|A-2^{-k}y_1 r_\omega| \sim |A|$. Therefore if $|y_2| \leq |A|^{1/2}$,
then using the second factor we get that the integral is bounded (up to a constant) by $|A|^{-N}$. If $|y_2| > |A|^{1/2}$, then
$|B-y_1 r_\beta - y_2^2 b^{\#}| \gtrsim |A|$ and so we can use the third factor, and sum in $\lambda_1$.

{\bf{Case 1.2. $\lambda_1 \sim \lambda_3$, $\lambda_2 \ll \lambda_1$, and $\lambda_2 > 2^j \lambda_1^{1/2}$.}}
In this case we have the same bound for the Fourier transform. Hence
\begin{align*}
\Vert \widehat{\nu_j^\lambda} \Vert_{L^\infty} \lesssim \lambda_1^{-1},
\qquad \qquad
\Vert \nu_j^\lambda \Vert_{L^\infty} \lesssim 2^j \lambda_1,
\end{align*}
from which one can calculate that
\begin{align*}
\Vert T^\lambda_j \Vert_{ L^{\mathfrak{p}_3}_{x_3} (L^{\mathfrak{p}_1}_{(x_1,x_2)}) \to L^{\mathfrak{p}_3'}_{x_3} (L^{\mathfrak{p}_1'}_{(x_1,x_2)}) }
   &\lesssim \lambda_1^{\theta-1/2} 2^{j \theta}.
\end{align*}
If we denote by $T_{\delta,j}^{II}$ the sum of the operator pieces in this case, then we have:
\begin{align*}
\Vert T_{\delta,j}^{II} \Vert_{ L^{\mathfrak{p}_3}_{x_3} (L^{\mathfrak{p}_1}_{(x_1,x_2)}) \to L^{\mathfrak{p}_3'}_{x_3} (L^{\mathfrak{p}_1'}_{(x_1,x_2)}) }
   &\lesssim
      2^{j\theta} \sum_{\lambda_1=2^{2j}}^{\infty} \sum_{\lambda_2=2^j\lambda_1^{1/2}}^{\lambda_1} \lambda_1^{\theta-1/2}\\
   &\lesssim
      2^{j\theta} \sum_{\lambda_1=2^{2j}}^{\infty} (\log_2 \lambda_1 - 2j) \lambda_1^{\theta-1/2}\\
   &\lesssim
      2^{j(3\theta-1)} \lesssim 1.
\end{align*}

{\bf{Case 2.1. $\lambda_2 \sim \lambda_3$, $\lambda_1 \ll \lambda_2$, and $\lambda_2 \leq 2^{2j}$.}}
Here again we may use stationary phase in both variables (and when $|x_2| \sim 1$ even integration by parts in $x_2$).
The estimates are
\begin{align*}
\Vert \widehat{\nu_j^\lambda} \Vert_{L^\infty} \lesssim \lambda_2^{-1},
\qquad \qquad
\Vert \nu_j^\lambda \Vert_{L^\infty} \lesssim \lambda_2^{3/2},
\end{align*}
and therefore independent of $\lambda_1$. As in \cite{IM16} we define
\begin{align*}
\sigma_j^{\lambda_2,\lambda_3} = \sum_{\lambda_1 \ll \lambda_2} \nu_j^\lambda,
\end{align*}
and note that then we can write
\begin{align*}
\widehat{\sigma_j^{\lambda_2,\lambda_3}} =
  \chi_0\Big(\frac{\xi_1}{\lambda_2}\Big) \chi_1 \Big(\frac{\xi_2}{\lambda_2}\Big) \chi_1 \Big(\frac{\xi_3}{\lambda_3}\Big) \widehat{\tilde{\nu}}_{\delta,j},
\end{align*}
where $\chi_0$ is a smooth cutoff function supported in a sufficiently small neighbourhood of $0$.
Therefore, one easily sees that using the same argumentation as for $\nu_j^\lambda$ we have
\begin{align*}
\Vert \widehat{\sigma_j^{\lambda_2,\lambda_3}} \Vert_{L^\infty} \lesssim \lambda_2^{-1},
\qquad \qquad
\Vert \sigma_j^{\lambda_2,\lambda_3} \Vert_{L^\infty} \lesssim \lambda_2^{3/2}.
\end{align*}
The operator norm bound is
\begin{align*}
\Vert T^{\lambda_2,\lambda_3}_j \Vert_{ L^{\mathfrak{p}_3}_{x_3} (L^{\mathfrak{p}_1}_{(x_1,x_2)}) \to L^{\mathfrak{p}_3'}_{x_3} (L^{\mathfrak{p}_1'}_{(x_1,x_2)}) }
   &\lesssim \lambda_2^{(3\theta-1)/2}.
\end{align*}
Hence, if $\theta < 1/3$, then we obtain the desired result by summing the geometric series,
and if $\theta = 1/3$, we need to use complex interpolation.

As usual, we consider only the case $\lambda_2 = \lambda_3$.
Also note that we may reduce ourselves to the summation over $\lambda_2 \ll 2^{2j}$ instead of $\lambda_2 \leq 2^{2j}$.
We define the following function parametrised by the complex number $\zeta$:
\begin{align*}
\mu_\zeta = \gamma(\zeta) \, \sum_{1 \ll \lambda_2 \ll 2^{2j}}
    (\lambda_2)^{\frac{3-9\zeta}{4}} \, \sigma_j^{\lambda_2,\lambda_2},
\end{align*}
where
\begin{align*}
\gamma(\zeta) = \frac{2^{-9(\zeta-1)/4} - 1}{2^{\frac{3}{2}}-1}.
\end{align*}
The associated convolution operator we denote by $T_\zeta$. For $\zeta = 1/3$ we see that
\begin{align*}
\mu_\zeta = \sum_{1 \ll \lambda_2 \ll 2^{2j}} \sigma_j^{\lambda_2,\lambda_2},
\end{align*}
and so it is sufficient to prove
\begin{align*}
\begin{split}
\Vert T_{it} \Vert_{ L^{2/(2-\tilde{\sigma})}_{x_3} (L^1_{(x_1,x_2)}) \to L^{2/\tilde{\sigma}}_{x_3} (L^\infty_{(x_1,x_2)}) }
   &\lesssim 1,\\
\Vert T_{1+it} \Vert_{L^2\to L^2}
   &\lesssim 1,
\end{split}
\end{align*}
with constants uniform in $t \in \R$.

The first estimate follows right away since $\sigma_j^{\lambda_2,\lambda_2}$ have $\xi_3$-supports located around $\lambda_2$,
which implies by the $L^\infty$ estimate for the Fourier transform of $\nu_j^\lambda$ that
$$ |\widehat{\mu_{it}}(\xi)| \lesssim \frac{1}{(1+|\xi_3|)^{1/4}}. $$
Now we can apply Lemma \ref{Mixed_norm_auxiliary_lemma}.

We prove the second estimate by using the oscillatory sum lemma (Lemma \ref{Oscillatory_auxiliary_one_parameter}).
We need to prove
\begin{align}
\label{LowerThanTwo_CASE_2_1__ComplexInterpolationFinal}
\Bigg\Vert \sum_{1 \ll \lambda_2 \ll 2^{2j}} (\lambda_1)^{-\frac{3}{2}-\frac{9}{4}it} \, \sigma_j^{\lambda_2,\lambda_2} \Bigg\Vert_{L^\infty}
   \lesssim \frac{1}{\Big| 2^{-\frac{9}{4}it} - 1 \Big|},
\end{align}
uniformly in $t$.

First note that since we obtain the function $\sigma_j^{\lambda_2,\lambda_2}$ by summation in $\lambda_1$,
the expression \eqref{LowerThanTwo_TheSecondDecomposition_SpaceForm_rewritten} has to be replaced by
\begin{align}
\begin{split}
\label{LowerThanTwo_CASE_2_1_SpaceForm_rewritten}
\sigma_j^{\lambda_2,\lambda_2} =
   \lambda_2^3 &\int \widecheck{\chi}_0 (\lambda_2 (x_1-y_1)) \,
   \widecheck{\chi}_1 (\lambda_2 (x_2-2^{-j}y_2- \psi_\omega(y_1)))\\
   &\times \widecheck{\chi}_1(\lambda_2 (x_3 -  b^{\#}(y,\delta,j) y_2^2 - \psi_\beta(y_1))) \\
   &\times a(y, \delta, j) \, \chi_1(y_1) \, \chi_0(y_2) \, \my.
\end{split}
\end{align}
Recall that the function $\chi_0$ of the first factor within the integral has support contained in $[-\epsilon,\epsilon]$
where the small constant $\epsilon$ depends on the implicit constant in the relation $\lambda_1 \ll \lambda_2$.

After substituting $\lambda_2 y_1 \mapsto y_1$ and $\lambda_2^{1/2} y_2 \mapsto y_2$ in the expression \eqref{LowerThanTwo_CASE_2_1_SpaceForm_rewritten},
we get that the sum on the left hand side of \eqref{LowerThanTwo_CASE_2_1__ComplexInterpolationFinal} is
\begin{align*}
\sum_{1 \ll \lambda_2 \ll 2^{2j}} \lambda_2^{-\frac{9}{4}it} &\int
    \widecheck{\chi}_1 (\lambda_2 x_1-y_1) \,
    \widecheck{\chi}_1 (\lambda_2 x_2-2^{-j}\lambda_2^{1/2} y_2- \lambda_2 \psi_\omega(\lambda_2^{-1}y_1))\\
   &\times \widecheck{\chi}_1(\lambda_2 x_3 -  b^{\#}(\lambda_2^{-1}y_1,\lambda_2^{-1/2}y_2,\delta,j) y_2^2 - \lambda_2 \psi_\beta(\lambda_2^{-1}y_1)) \\
   &\times a(\lambda_2^{-1}y_1,\lambda_2^{-1/2}y_2, \delta, j) \, \chi_1(\lambda_2^{-1}y_1) \, \chi_0(\lambda_2^{-1/2}y_2) \, \my.
\end{align*}
Since otherwise we could use the first three factors within the integral to gain a factor of $\lambda_2^{-N}$,
we may assume that $|x| \leq C$ for some large constant $C$.

Now again we use the substitution $\lambda_2 x_1 - y_1 \mapsto y_1$, conclude that it is sufficient to consider
the part of the integration domain where $|y_1| \leq \lambda_2^\varepsilon$,
which implies $x_1 \sim 1$, and so we may use Taylor approximation for $\psi_\omega$ and $\psi_\beta$ at $x_1$.
Then one gets
\begin{align*}
\sum_{1 \ll \lambda_2 \ll 2^{2j}} \lambda_2^{-\frac{9}{4}it} &\int
    \widecheck{\chi}_1 (y_1) \,
    \widecheck{\chi}_1 (\lambda_2 Q_\omega(x_1,x_2,\delta_1) - y_1 r_\omega(\lambda_2^{-1}y_1,x_1,\delta_1)-2^{-j}\lambda_2^{1/2} y_2)\\
   &\times \widecheck{\chi}_1(\lambda_2 Q_\beta(x_1,x_3,\delta_1) - y_1 r_\beta(\lambda_2^{-1}y_1,x_1,\delta_1) -  b^{\#}(x_1-\lambda_2^{-1}y_1,\lambda_2^{-1/2}y_2,\delta,j) y_2^2) \\
   &\times a(x_1-\lambda_2^{-1}y_1,\lambda_2^{-1/2}y_2, \delta, j) \, \chi_1(x_1-\lambda_2^{-1}y_1) \, \chi_0(\lambda_2^{-1/2}y_2) \, \chi_0(\lambda_2^{-\varepsilon} y_1) \, \my,
\end{align*}
where $|\partial_1^N r_\omega| \sim 1$ and $|\partial_1^N r_\beta| \sim 1$ for any $N \geq 0$.
Note that $2^{-j}\lambda_2^{1/2} \ll 1$.

Now we may restrict ourselves to cases when either $|A| \gg 1$ or $|B| \gg 1$, where
\begin{align*}
A \coloneqq \lambda_2 Q_\omega(x_1,x_2,\delta_1), \qquad \qquad B \coloneqq \lambda_2 Q_\beta(x_1,x_3,\delta_1),
\end{align*}
since otherwise we could apply the oscillatory sum lemma similarily as in Case 1.1.

The first three factors within the integral are behaving essentially like
$$
\widecheck{\chi}_1 (y_1) \widecheck{\chi}_1 (A - y_1 - 2^{-j}\lambda_2^{1/2} y_2) \widecheck{\chi}_1 (B - y_1 - y_2^2).
$$
Let us first consider $|B| \gg 1$, as in Case 1.1.
As usual, we may restrict ourselves to the part of the integration domain where $|y_1| \ll |B|^{\varepsilon_B}$.
Therefore there we have $|B-y_1| \sim |B|$, and the integral is bounded by
$$
\int |\widecheck{\chi}_1(B-y_1 r_\beta - y_2^2 b^{\#})| \my_2 \lesssim \int |\widecheck{\chi}_1(B-y_1 r_\beta   -t)| \, |t|^{-1/2} \mathrm{d}t \leq C |B|^{-1/2},
$$
for some constant $C$. Now one can sum in $\lambda_2$.

Let us now assume $|B| \leq C_B$ for some large, but fixed constant, and $|A| \gg C_B$.
Again, we may consider only the part of the integration domain where $|y_1| \ll |A|^{\varepsilon_A}$, and so here we have $|A-y_1| \sim |A|$.
Therefore, if $|y_2| \leq |A|^{1/2}$, then using the second factor
we get that the integral is bounded (up to a constant) by $|A|^{-N}$.
If $|y_2| > |A|^{1/2}$, then $|B-y_1 r_\beta - y_2^2 b^{\#}| \gtrsim |A|$ and so
we can use the third factor to gain $|A|^{-N}$, and sum in $\lambda_2$.

{\bf{Case 2.2. $\lambda_2 \sim \lambda_3$, $\lambda_1 \ll \lambda_2$, and $\lambda_2 > 2^{2j}$.}}
As in the previous case we use
\begin{align*}
\sigma_j^{\lambda_2,\lambda_3} = \sum_{\lambda_1 \ll \lambda_2} \nu_j^\lambda,
\end{align*}
and note that in this case the bounds are
\begin{align*}
\Vert \widehat{\sigma_j^{\lambda_2,\lambda_3}} \Vert_{L^\infty} \lesssim \lambda_2^{-1},
\qquad \qquad
\Vert \sigma_j^{\lambda_2,\lambda_3} \Vert_{L^\infty} \lesssim 2^j \lambda_2.
\end{align*}
The operator norm bound is
\begin{align*}
\Vert T^{\lambda_2,\lambda_3}_j \Vert_{ L^{\mathfrak{p}_3}_{x_3} (L^{\mathfrak{p}_1}_{(x_1,x_2)}) \to L^{\mathfrak{p}_3'}_{x_3} (L^{\mathfrak{p}_1'}_{(x_1,x_2)}) }
   &\lesssim 2^{j\theta} \lambda_2^{\theta-1/2}.
\end{align*}
This is summable over $\lambda_2 > 2^{2j}$ for all $\theta \leq 1/3$.

{\bf{Case 3.1. $\lambda_1 \sim \lambda_2$, $\lambda_3 \ll \lambda_1$, and $\lambda_3^{1/2} \gtrsim 2^{-j} \lambda_1$.}}
In this case, by stationary phase in both variables, the estimates are
\begin{align}
\label{LowerThanTwo_CASE_3_1_basic_estiamtes}
\begin{split}
\Vert \widehat{\nu_j^\lambda} \Vert_{L^\infty} \lesssim \lambda_1^{-1/2} \lambda_3^{-1/2},
\qquad \qquad
\Vert \nu_j^\lambda \Vert_{L^\infty} \lesssim \lambda_1 \lambda_3^{1/2},
\end{split}
\end{align}
from which one can calculate that
\begin{align*}
\Vert T^\lambda_j \Vert_{ L^{\mathfrak{p}_3}_{x_3} (L^{\mathfrak{p}_1}_{(x_1,x_2)}) \to L^{\mathfrak{p}_3'}_{x_3} (L^{\mathfrak{p}_1'}_{(x_1,x_2)}) }
   &\lesssim \lambda_1^{(3\theta-1)/2}.
\end{align*}
The sum of the operator pieces in this case we denote by $T_{\delta,j}^{V}$. Then
\begin{align*}
\Vert T_{\delta,j}^{V} \Vert_{ L^{\mathfrak{p}_3}_{x_3} (L^{\mathfrak{p}_1}_{(x_1,x_2)}) \to L^{\mathfrak{p}_3'}_{x_3} (L^{\mathfrak{p}_1'}_{(x_1,x_2)}) }
   &\lesssim
      \sum_{\lambda_1=1}^{2^{j}} \sum_{\lambda_3=1}^{\lambda_1} \lambda_1^{(3\theta-1)/2}
      + \sum_{\lambda_1=2^j}^{2^{2j}} \sum_{\lambda_3=(2^{-j}\lambda_1)^2}^{\lambda_1} \lambda_1^{(3\theta-1)/2}\\
   &\lesssim
      \sum_{\lambda_1=1}^{2^{2j}} \sum_{\lambda_3=1}^{\lambda_1} \lambda_1^{(3\theta-1)/2}\\
   &\lesssim
      \sum_{\lambda_1=1}^{2^{2j}} \lambda_1^{(3\theta-1)/2} \log_2(\lambda_1).
\end{align*}
This is summable if and only if $\theta < 1/3$. For $\theta = 1/3$ we see
\begin{align*}
\Vert T^\lambda_j \Vert_{ L^{\mathfrak{p}_3}_{x_3} (L^{\mathfrak{p}_1}_{(x_1,x_2)}) \to L^{\mathfrak{p}_3'}_{x_3} (L^{\mathfrak{p}_1'}_{(x_1,x_2)}) }
   &\lesssim 1.
\end{align*}
Therefore, in this case we shall need the oscillatory sum lemma with two parameters (Lemma \ref{Oscillatory_auxiliary_two_parameter}) when applying
complex interpolation.

As usual we assume $\lambda_1 = \lambda_2$.
We consider the following function parametrised by the complex number $\zeta$:
\begin{align*}
\mu_\zeta = \gamma(\zeta) \, \sum_{\lambda_1, \lambda_3}
        \lambda_1^{\frac{1-3\zeta}{2}} \lambda_3^{\frac{1-3\zeta}{4}} \, \nu_j^\lambda,
\end{align*}
where $\gamma(\zeta)$ is to be defined later as appropriate.
The summation is over all $\lambda_1$ and $\lambda_3$ satisfying the conditions of this case (Case 3.1).
Notice that we necessarily have $\lambda_1 \gg 1$.

We denote by $T_\zeta$ the associated convolution operator against the Fourier transform of $\mu_\zeta$.
For $\zeta = 1/3$ we require that
\begin{align*}
\mu_\zeta = \sum_{\lambda_1, \lambda_3} \nu_j^\lambda,
\end{align*}
i.e., $\gamma(1/3) = 1$.
Then by interpolation it suffices to prove
\begin{align*}
\begin{split}
\Vert T_{it} \Vert_{ L^{2/(2-\tilde{\sigma})}_{x_3} (L^1_{(x_1,x_2)}) \to L^{2/\tilde{\sigma}}_{x_3} (L^\infty_{(x_1,x_2)}) }
   &\lesssim 1,\\
\Vert T_{1+it} \Vert_{L^2\to L^2}
   &\lesssim 1,
\end{split}
\end{align*}
with constants uniform in $t \in \R$.

In order to prove the first estimate, we need the decay bound \eqref{Auxiliary_mu_decay}, i.e.,
$$ |\widehat{\mu_{it}}(\xi)| \lesssim \frac{1}{(1+|\xi_3|)^{1/4}}.$$
But this follows automatically by \eqref{LowerThanTwo_CASE_3_1_basic_estiamtes}, the definition of $\mu_\zeta$, and
the fact that each $\widehat{\nu_j^\lambda}$ has its support located at $\lambda$.

It remains to prove the $L^2 \to L^2$ estimate by showing
\begin{align}
\label{LowerThanTwo_CASE_3_1_ComplexInterpolationFinal}
\Bigg\Vert \sum_{\lambda_1, \lambda_3} (\lambda_1)^{-1-\frac{3}{2}it} (\lambda_3)^{-\frac{1}{2}-\frac{3}{4}it} \, \nu_j^\lambda \Bigg\Vert_{L^\infty}
   \lesssim \frac{1}{|\gamma(1+it)|},
\end{align}
uniformly in $t$.

After substituting $\lambda_1 y_1 \mapsto y_1$ and $\lambda_3^{1/2} y_2 \mapsto y_2$ in the expression \eqref{LowerThanTwo_TheSecondDecomposition_SpaceForm_rewritten},
we get that the sum on the left hand side of \eqref{LowerThanTwo_CASE_3_1_ComplexInterpolationFinal} is
\begin{align*}
\sum_{\lambda_1, \lambda_3} (\lambda_1)^{-\frac{3}{2}it} (\lambda_3)^{-\frac{3}{4}it}  &\int
    \widecheck{\chi}_1 (\lambda_1 x_1-y_1) \,
    \widecheck{\chi}_1 (\lambda_1 x_2-2^{-j} \lambda_1 \lambda_3^{-1/2} y_2- \lambda_1 \psi_\omega(\lambda_1^{-1}y_1))\\
   &\times \widecheck{\chi}_1(\lambda_3 x_3 -  b^{\#}(\lambda_1^{-1}y_1,\lambda_3^{-1/2}y_2,\delta,j) y_2^2 - \lambda_3 \psi_\beta(\lambda_1^{-1}y_1)) \\
   &\times a(\lambda_1^{-1}y_1,\lambda_3^{-1/2}y_2, \delta, j) \, \chi_1(\lambda_1^{-1}y_1) \, \chi_0(\lambda_3^{-1/2}y_2) \, \my.
\end{align*}
Using the first two factors we can restrict ourselves to the case when $|(x_1,x_2)| \leq C$ for some large constant $C$.

Next, we use the substitution $\lambda_1 x_1 - y_1 \mapsto y_1$, conclude that
it is sufficient to consider integration over $|y_1| \leq \lambda_1^\varepsilon$ and that we have $x_1 \sim 1$.
Then, after using the Taylor approximation for $\psi_\omega$ and $\psi_\beta$ at $x_1$, one gets
\begin{align}
\label{LowerThanTwo_CASE_3_1_ComplexInterpolationFinal_main_integral}
\sum_{\lambda_1, \lambda_3} (\lambda_1)^{-\frac{3}{2}it} (\lambda_3)^{-\frac{3}{4}it} &\int
    \widecheck{\chi}_1 (y_1) \,
    \widecheck{\chi}_1 (\lambda_1 Q_\omega(x_1,x_2,\delta_1) - y_1 r_\omega(\lambda_1^{-1}y_1,x_1,\delta_1)- 2^{-j} \lambda_1 \lambda_3^{-1/2} y_2) \nonumber \\
   &\times \widecheck{\chi}_1(\lambda_3 Q_\beta(x_1,x_3,\delta_1) - \lambda_3 \lambda_1^{-1} y_1 r_\beta(\lambda_1^{-1}y_1,x_1,\delta_1) \\
   &\qquad \qquad - b^{\#}(x_1-\lambda_1^{-1}y_1,\lambda_3^{-1/2}y_2,\delta,j) y_2^2) \nonumber \\
   &\times a(x_1-\lambda_1^{-1}y_1,\lambda_3^{-1/2}y_2, \delta, j) \, \chi_1(x_1-\lambda_1^{-1}y_1) \, \chi_0(\lambda_3^{-1/2}y_2) \,
    \chi_0(\lambda_1^{-\varepsilon} y_1) \, \my, \nonumber 
\end{align}
where $|\partial_1^N r_\omega| \sim 1$ and $|\partial_1^N r_\beta| \sim 1$ for any $N \geq 0$. Recall that $2^{-j}\lambda_1\lambda_3^{-1/2} \lesssim 1$
and $\lambda_3 \lambda_1^{-1} \ll 1$.

If we define
$$
A \coloneqq \lambda_1 Q_\omega(x_1,x_2,\delta_1), \qquad \qquad B \coloneqq \lambda_3 Q_\beta(x_1,x_3,\delta_1),
$$
then we need to see what happens when either $|A| \gg 1$ or $|B| \gg 1$.
Let us assume that $C_B$ is a sufficiently large positive constant.

{\bf{Subcase $|B| > C_B$ and $|A| \lesssim 1$.}}
In this case we shall use the H\"older variant of the one parameter oscillatory sum lemma (Lemma \ref{Oscillatory_auxiliary_one_parameter_hoelder})
for each fixed $\lambda_3$.
We define
\begin{align}
\label{LowerThanTwo_CASE_3_1_ComplexInterpolationFinal_H_tilde}
\tilde{H}(z_1, z_2, z_3, z_4; &\lambda_3, x_1, x_3, \delta, 2^{-j}) \\
\coloneqq
&\int
    \widecheck{\chi}_1 (y_1) \,
    \widecheck{\chi}_1 (z_1 - y_1 r_\omega(z_3^{1/\varepsilon} y_1,x_1,\delta_1)- z_2 y_2) \nonumber \\
   &\times \widecheck{\chi}_1(\lambda_3 Q_\beta(x_1,x_3,\delta_1) - z_4 y_1 r_\beta(z_3^{1/\varepsilon} y_1,x_1,\delta_1)
    - b^{\#}(x_1-z_3^{1/\varepsilon} y_1,\lambda_3^{-1/2} y_2,\delta,j) y_2^2) \nonumber \\
   &\times a(x_1-z_3^{1/\varepsilon} y_1,\lambda_3^{-1/2} y_2, \delta, j) \, \chi_1(x_1-z_3^{1/\varepsilon} y_1) \, \chi_0(\lambda_3^{-1/2} y_2) \, \chi_0(z_3 y_1) \, \my,\nonumber
\end{align}
where we shall plug in
\begin{align*}
z_1 &= \lambda_1 Q_\omega(x_1,x_2,\delta_1), & z_2 &= 2^{-j} \lambda_1 \lambda_3^{-1/2}, \\
z_3 &= \lambda_1^{-\varepsilon}, & z_4 &= \lambda_3 \lambda_1^{-1}.
\end{align*}
Note that the parameters $\lambda_3$ and $x_3$ are not bounded.

Applying Lemma \ref{Oscillatory_auxiliary_one_parameter_hoelder} we get
\begin{align*}
\Bigg\Vert (\lambda_3)^{-\frac{1}{2}-\frac{3}{4}it} \sum_{\lambda_1} (\lambda_1)^{-1-\frac{3}{2}it} \, \nu_j^\lambda \Bigg\Vert_{L^\infty}
   \lesssim \frac{|\tilde{H}(0)| + \sum_{k=1}^4 C_k}{|2^{-\frac{3}{2}it}-1|}
   \lesssim \frac{\Vert\tilde{H}\Vert_{L^\infty} + \sum_{k=1}^4 C_k}{|\gamma(1+it)|},
\end{align*}
if we add an appropriate factor to $\gamma$
(i.e., our $\gamma$ needs to contain a factor equal to the expression \eqref{Oscillatory_auxiliary_one_parameter_gamma}).
It remains to prove that one can estimate $\Vert\tilde{H}\Vert_{L^\infty}$ and the constants $C_k$, $k =1,2,3,4$,
by $|B|^{-\varepsilon_B}$ since then we can sum in $\lambda_3$.

First let us consider the expression for $\tilde{H}(z)$.
The first three factors within the integral are behaving essentially like
$$
\widecheck{\chi}_1 (y_1) \widecheck{\chi}_1 (z_1 - y_1 - z_2 y_2) \widecheck{\chi}_1 (B - z_4 y_1 - y_2^2).
$$
Since we could otherwise use the first factor and estimate by $|B|^{-\varepsilon_B}$,
we may restrict our discussion to the part of the integration domain where $|y_1| \ll |B|^{\varepsilon_B}$.
Then we have $|B-z_4 y_1 r_\beta| \sim |B|$, and therefore
$$
\int |\widecheck{\chi}_1(B-z_4 y_1 r_\beta - y_2^2 b^{\#})| \my_2 \leq 2\int |\widecheck{\chi}_1(B-z_4 y_1 r_\beta -t)| \, |t|^{-1/2} \mathrm{d}t \leq C |B|^{-1/2},
$$
for a constant $C$.
Hence, we have the required bound for $\Vert\tilde{H}\Vert_{L^\infty}$.

Next, we see that taking derivatives in $z_1$ and $z_4$, doesn't change in an essential way the actual form of $\tilde{H}$
since we only obtain polynomial growth in $y_1$ which can be absorbed by $\widehat{\chi}_1(y_1)$,
and since derivatives of Schwartz functions are again Schwartz.
Therefore, we may estimate $C_k$, $k=1,4$, in the same way as we estimated the original integral.

Permuting the order of the variables $z_k$, $k=1,2,3,4$ appropriately, we see from the expressions for $C_k$ in Lemma \ref{Oscillatory_auxiliary_one_parameter_hoelder}
that we may now assume $z_1=z_4=0$. Taking the derivative in $z_3$ we obtain several terms.
We deal with the terms where a $y_1$ factor appears in the same way as we have dealt with in the previous cases.
It remains to deal with the term where $y_2^2$ factor appears, that is
\begin{align*}
&-z_3^{-1+1/\varepsilon} \, (\partial_1 b^{\#})(x_1-z_3^{1/\varepsilon} y_1,\lambda_3^{-1/2} y_2,\delta,j) \\
&\times \int
    \widecheck{\chi}_1 (y_1) \,
    \widecheck{\chi}_1 (z_1 - y_1 r_\omega(z_3^{1/\varepsilon} y_1,x_1,\delta_1)- z_2 y_2)  \\
   &\qquad \times (\widecheck{\chi}_1)'(\lambda_3 Q_\beta(x_1,x_3,\delta_1) - z_4 y_1 r_\beta(z_3^{1/\varepsilon} y_1,x_1,\delta_1)
    - b^{\#}(x_1-z_3^{1/\varepsilon} y_1,\lambda_3^{-1/2} y_2,\delta,j) y_2^2)  \\
   &\qquad \times a(x_1-z_3^{1/\varepsilon} y_1,\lambda_3^{-1/2} y_2, \delta, j) \, \chi_1(x_1-z_3^{1/\varepsilon} y_1) \, \chi_0(\lambda_3^{-1/2} y_2) \, \chi_0(z_3 y_1) \, y_1 y_2^2 \, \my.
\end{align*}
This integral can be estimated by
\begin{align}
\label{LowerThanTwo_CASE_3_1_ComplexInterpolationFinal_integral}
\int |\chi_0(\lambda_3^{-1/2} y_2) \, (\widecheck{\chi}_1)'(B - y_2^2 b^{\#})| \, |z_3|^{-1+1/\varepsilon} \, y_2^2 \my_2.
\end{align}
The key is now to notice that if we fix $\lambda_3$, then $\lambda_1$ goes over the set where $\lambda_1 \gg \lambda_3$.
In particular, since we shall plug in $z_3 = \lambda_1^{-\varepsilon}$, we have $|z_3|^{-1+1/\varepsilon} \lesssim \lambda_3^{-1+\varepsilon}$.
Therefore using the first factor in \eqref{LowerThanTwo_CASE_3_1_ComplexInterpolationFinal_integral}
we obtain the bound for \eqref{LowerThanTwo_CASE_3_1_ComplexInterpolationFinal_integral} to be
\begin{align*}
\int |(\widecheck{\chi}_1)'(B - y_2^2 b^{\#})| \, |y_2|^\varepsilon \my_2,
\end{align*}
for some different $\varepsilon$. Now one subsitutes $t = y_2^2 b^{\#}$ and easily obtains an admissible bound of the form $|B|^{-\varepsilon_B}$.

For the last constant $C_2$ we shall need to consider the H\"older norm.
Here we may assume $z_1=z_3=z_4=0$.
The derivative in $z_2$ can be estimated by the integral
\begin{align*}
\int \Big| \widecheck{\chi}_1 (y_1) \,
     (\widecheck{\chi}_1)' (- y_1 r_\omega(0,x_1,\delta_1)- z_2 y_2)
      \widecheck{\chi}_1(B - b^{\#}(x_1,\lambda_3^{-1/2} y_2,\delta,j) y_2^2) \, y_2 \Big| \mathrm{d}y.
\end{align*}
We shall now consider only the part where $y_2 \geq 0$ and $z_2 \geq 0$, as other cases can be treated in the same way.
Then substituting $t = y_2^2$ one gets that the estimate for $\partial_{z_2} \tilde{H}$ is
\begin{align*}
\iint \Big|  \widecheck{\chi}_1 (y_1) \,
     (\widecheck{\chi}_1)' (- y_1 r_\omega - z_2 t^{1/2})
     \widecheck{\chi}_1(B - t b^{\#}) \Big| \mathrm{d}y_1 \mathrm{d}t.
\end{align*}
From this form it is obvious that we may now restrict ourselves
to the part of the integration domain where $|y_1| \ll |B|^{\varepsilon_B}$
and $|t| \sim |B|$ by using the first and the third factor respectively.
If we denote this integration domain by $U_B$,
then the bound for the $C_2$ constant in Lemma \ref{Oscillatory_auxiliary_one_parameter_hoelder} reduces to estimating
\begin{align*}
&|z_2|^{1-\vartheta} \int_0^1 \iint_{U_B} \Big|  \widecheck{\chi}_1 (y_1) \, (\widecheck{\chi}_1)' (- y_1 r_\omega - s z_2 t^{1/2})
     \widecheck{\chi}_1(B - t b^{\#}) \Big| \mathrm{d}y_1 \mathrm{d}t \mathrm{d}s \\
     =
&|z_2|^{-\vartheta} \int_0^{z_2} \iint_{U_B} \Big|  \widecheck{\chi}_1 (y_1) \, (\widecheck{\chi}_1)' (- y_1 r_\omega - \tilde{s} t^{1/2})
     \widecheck{\chi}_1(B - t b^{\#}) \Big| \mathrm{d}y_1 \mathrm{d}t \mathrm{d}\tilde{s},
\end{align*}
where $\vartheta$ represents the H\"{o}lder exponent.
If $|z_2| \leq |B|^{-1/4}$, then we obviously have the required estimate.
Therefore, let us assume $|z_2| > |B|^{-1/4}$.
Then $|z_2|^{-\vartheta} < |B|^{\vartheta/4}$ and so integration on the domain $|\tilde{s}| \leq |B|^{-1/4}$ is not a problem.
On the other hand, if $|\tilde{s}| > |B|^{-1/4}$, then $|\tilde{s} t^{1/2}| \gtrsim |B|^{1/4}$ by our assumption on the size of $t$.
Thus we may use the Schwartz property of the second factor in the integral and obtain the required estimate.
This finishes the proof of the case where $|B| \gg 1$ and $|A| \lesssim 1$.

{\bf{Subcase $|B| > C_B$ and $|A| \gg 1$.}}
The preceding argumentation for the estimate of $\Vert \tilde{H} \Vert_{L^\infty}$ is also valid in this case since we have not used the second factor,
and so we see that we can always estimate the integral appearing in \eqref{LowerThanTwo_CASE_3_1_ComplexInterpolationFinal_main_integral}
by $|B|^{-1/2}$. It remains to gain a decay in $|A|$.

If we furthermore assume $|A| \leq |B|$, then $|B|^{-1/2} \leq |B|^{-1/4} |A|^{-1/4}$,
and so we can sum in both $\lambda_1$ and $\lambda_3$.
Therefore we may consider $|A| > |B|$ next, and
reduce our problem using the first factor in the integral in \eqref{LowerThanTwo_CASE_3_1_ComplexInterpolationFinal_main_integral}
to the part where $|y_1| \ll |A|^{\varepsilon_A}$.
Then $|z_1-y_1 r_\omega|=|A-y_1 r_\omega| \sim |A|$, and so we can gain an $|A|^{-\varepsilon_A}$ using the second factor in the integral,
unless $|z_2 y_2| \sim |A|$.
But since $|z_2| \lesssim 1$, we see that $|z_2 y_2| \sim |A|$ implies $|y_2| \gtrsim |A|$,
and so we can use finally the third factor where then the $y_2^2$ term is dominant.

{\bf{Subcase $|B| \leq C_B$ and $|A| > C_B^2$.}}
We can reduce ourselves to the integration over $|y_1| \ll |A|^{\varepsilon_A}$, and so $|A-y_1 r_\omega| \sim |A|$.
Therefore, if $|y_2| \leq |A|^{1/2}$, then using the second factor we get that the integral is bounded (up to a constant) by $|A|^{-1}$.
If $|y_2| > |A|^{1/2}$, then $|B - z_4 y_1 r_\beta - y_2^2 b^{\#}| \gtrsim |A|$,
and so we can use the third factor, and sum in both $\lambda_1$ and $\lambda_3$ (since $|B|<|A|$).

{\bf{Subcase $|B| \leq C_B$ and $|A| \leq C_B^2$.}}
Finally, if both $|A| \leq C_B^2$ and $|B| \leq C_B$ are bounded,
we use the two parameter oscillatory sum lemma. We define the function
\begin{align*}
H(z_1, z_2, z_3, z_4, z_5, z_6; x_1, \delta, 2^{-j}) \coloneqq\\
&\int
    \widecheck{\chi}_1 (y_1) \,
    \widecheck{\chi}_1 (z_1 - y_1 r_\omega(z_3^{1/\varepsilon} y_1,x_1,\delta_1)- z_5 y_2)\\
   &\times \widecheck{\chi}_1(z_2 - z_6 y_1 r_\beta(z_3^{1/\varepsilon} y_1,x_1,\delta_1) \\
   &\qquad \qquad - b^{\#}(x_1-z_3^{1/\varepsilon} y_1,z_4 y_2,\delta,j) y_2^2) \\
   &\times a(x_1-z_3^{1/\varepsilon} y_1,z_4 y_2, \delta, j) \, \chi_1(x_1-z_3^{1/\varepsilon} y_1) \, \chi_0(z_4 y_2) \, \chi_0(z_3 y_1) \, \my,
\end{align*}
where we shall plug in
\begin{align*}
z_1 &= \lambda_1 Q_\omega(x_1,x_2,\delta_1), & z_2 &= \lambda_3 Q_\beta(x_1,x_3,\delta_1), \\
z_3 &= \lambda_1^{-\varepsilon}, & z_4 &= \lambda_3^{-1/2}, \\
z_5 &= 2^{-j} \lambda_1 \lambda_3^{-1/2}, & z_6 &= \lambda_3 \lambda_1^{-1}. \\
\end{align*}
The associated exponents are $(\alpha_1,\alpha_2) = (-3/2,-3/4)$ and
\begin{align*}
(\beta_1^1,\beta_2^1) &= (1,0), & (\beta_1^2,\beta_2^2) &= (0,1), \\
(\beta_1^3,\beta_2^3) &= (-\varepsilon,0), & (\beta_1^4,\beta_2^4) &= (0,-1/2), \\
(\beta_1^5,\beta_2^5) &= (1,-1/2), & (\beta_1^6,\beta_2^6) &= (-1,1), \\
\end{align*}
and so for each $k$ the pairs $(\alpha_1,\alpha_2)$ and $(\beta_1^k,\beta_2^k)$ are linearly independent.
The $C^2$ norm of $H$ is uniformly bounded in the bounded paramteres $(x_1, \delta, 2^{-j})$ by arguing in the same manner as in Case 1.1.
Therefore we may apply Lemma \ref{Oscillatory_auxiliary_two_parameter}
if we take $\gamma$ to contain a factor equal to the expression \eqref{Oscillatory_auxiliary_two_parameter_gamma} (with $\theta = 1/3$).
Recall that $\gamma$ needs to contain also the factor from the case where we applied the one parameter lemma
(i.e., where we had $|B| > C_B$ and $|A| \lesssim 1$).

{\bf{Case 3.2. $\lambda_1 \sim \lambda_2$, $\lambda_3 \ll \lambda_1$, and $\lambda_3^{1/2} \ll 2^{-j} \lambda_1$.}}
Here we have the same bound for the Fourier transform as in the previous case.
Therefore
\begin{align*}
\Vert \widehat{\nu_j^\lambda} \Vert_{L^\infty} \lesssim \lambda_1^{-1/2} \lambda_3^{-1/2},
\qquad \qquad
\Vert \nu_j^\lambda \Vert_{L^\infty} \lesssim 2^j \lambda_3,
\end{align*}
from which one can get by interpolation
\begin{align*}
\Vert T^\lambda_j \Vert_{ L^{\mathfrak{p}_3}_{x_3} (L^{\mathfrak{p}_1}_{(x_1,x_2)}) \to L^{\mathfrak{p}_3'}_{x_3} (L^{\mathfrak{p}_1'}_{(x_1,x_2)}) }
   &\lesssim 2^{j\theta} \lambda_1^{(\theta-1)/2} \lambda_3^{\theta/2}.
\end{align*}
We first consider the case $\lambda_1 > 2^{2j}$ and denote its sum of the operator pieces by $T_{\delta,j}^{VI, 1}$. Then
\begin{align*}
\Vert T_{\delta,j}^{VI, 1} \Vert_{ L^{\mathfrak{p}_3}_{x_3} (L^{\mathfrak{p}_1}_{(x_1,x_2)}) \to L^{\mathfrak{p}_3'}_{x_3} (L^{\mathfrak{p}_1'}_{(x_1,x_2)}) }
   &\lesssim
      2^{j\theta} \sum_{\lambda_1=2^{2j}}^{\infty} \sum_{\lambda_3=1}^{\lambda_1} \lambda_1^{(\theta-1)/2} \lambda_3^{\theta/2}\\
   &\lesssim
      2^{j\theta} \sum_{\lambda_1=2^{2j}}^{\infty} \lambda_1^{\theta-1/2} \\
   &\lesssim
      2^{j(3\theta-1)} \lesssim 1.
\end{align*}
The other case is when $2^j \ll \lambda_1 \leq 2^{2j}$ and we denote the sum of these operator pieces by $T_{\delta,j}^{VI, 2}$. Then
\begin{align*}
\Vert T_{\delta,j}^{VI, 2} \Vert_{ L^{\mathfrak{p}_3}_{x_3} (L^{\mathfrak{p}_1}_{(x_1,x_2)}) \to L^{\mathfrak{p}_3'}_{x_3} (L^{\mathfrak{p}_1'}_{(x_1,x_2)}) }
   &\lesssim
      2^{j\theta} \sum_{\lambda_1=2^j}^{2^{2j}} \sum_{\lambda_3=1}^{(2^{-j}\lambda_1)^2} \lambda_1^{(\theta-1)/2} \lambda_3^{\theta/2}\\
   &\lesssim
      \sum_{\lambda_1=2^{j}}^{2^{2j}} \lambda_1^{(\theta-1)/2} \lambda_1^\theta \\
   &\lesssim
      \sum_{\lambda_1=2^{j}}^{2^{2j}} \lambda_1^{(3\theta-1)/2}.
\end{align*}
Again, this is summable if and only if $\theta < 1/3$. For $\theta = 1/3$, we have
\begin{align*}
\Vert T^\lambda_j \Vert_{ L^{\mathfrak{p}_3}_{x_3} (L^{\mathfrak{p}_1}_{(x_1,x_2)}) \to L^{\mathfrak{p}_3'}_{x_3} (L^{\mathfrak{p}_1'}_{(x_1,x_2)}) }
   &\lesssim 2^{j/3} (\lambda_1^2 \lambda_3^{-1})^{-1/6}.
\end{align*}
This operator norm estimate motivates us to define $k$ through $2^k \coloneqq \lambda_1^2 \lambda_3^{-1} = 2^{2k_1-k_3}$,
where $2^{k_1} = \lambda_1$ and $2^{k_3} = \lambda_3$.
Our goal is to prove for each $k$ that
\begin{align*}
\Bigg\Vert \sum_{\lambda_1^2 \lambda_3^{-1} = 2^k}
   T^\lambda_j \Bigg\Vert_{ L^{\mathfrak{p}_3}_{x_3} (L^{\mathfrak{p}_1}_{(x_1,x_2)}) \to L^{\mathfrak{p}_3'}_{x_3} (L^{\mathfrak{p}_1'}_{(x_1,x_2)}) }
   &\lesssim 2^{(2\varepsilon-1) (k-2j)/3}
\end{align*}
for some $0 \leq \varepsilon < 1/2$.
Since $k \geq 2j$, we then obtain the desired result by summation in $k$.

We shall slightly simplify the proof by assuming that $\lambda_1 = \lambda_2$.
Let us consider the following function parametrised by the complex number $\zeta$ and $k$:
\begin{align*}
\mu_\zeta^{k} = \gamma(\zeta) \, \sum_{\lambda_1^2 \lambda_3^{-1} = 2^k}
    (\lambda_3)^{\frac{1-3\zeta}{2}} \, \nu_j^\lambda,
\end{align*}
where
\begin{align*}
\gamma(\zeta) = 2^{-3(\zeta-1)/2} - 1.
\end{align*}
Let $T_\zeta^{k}$ denote the associated convolution operator. For $\zeta = 1/3$ we have
\begin{align*}
\mu_\zeta^{k} = \sum_{\lambda_1^2 \lambda_3^{-1} = 2^k} \nu_j^\lambda,
\end{align*}
and so, by interpolation, we need to prove
\begin{align}
\label{LowerThanTwo_CASE_3_2_ComplexInterpolation_endpoints}
\begin{split}
\Vert T_{it}^{k} \Vert_{ L^{2/(2-\tilde{\sigma})}_{x_3} (L^1_{(x_1,x_2)}) \to L^{2/\tilde{\sigma}}_{x_3} (L^\infty_{(x_1,x_2)}) }
   &\lesssim 2^{-k/4},\\
\Vert T_{1+it}^{k} \Vert_{L^2\to L^2}
   &\lesssim 2^j \, 2^{\varepsilon (k-2j)},
\end{split}
\end{align}
for some $0 \leq \varepsilon < 1/2$, and with constants uniform in $t \in \R$.
The first estimate follows right away since $\widehat{\nu_j^\lambda}$ have supports located at $\lambda$,
and therefore by the $L^\infty$ estimate for the Fourier transform of $\nu_j^\lambda$ we have
$$ | \widehat{\mu^{k}_{it}}(\xi)| \lesssim \frac{2^{-k/4}}{(1+|\xi_3|)^{1/4}}. $$

We prove the second estimate by using the oscillatory sum lemma. We need to prove
\begin{align}
\label{LowerThanTwo_CASE_3_2_ComplexInterpolationFinal}
\Bigg\Vert \sum_{\lambda_1^2 \lambda_3^{-1} = 2^k} \lambda_3^{-1-\frac{3}{2}it} \, \nu_j^\lambda \Bigg\Vert_{L^\infty}
   =
\Bigg\Vert \sum_{\lambda_1^2 \lambda_3^{-1} = 2^k}
2^{k} \lambda_1^{-2-3it} \, \nu_j^{( \lambda_1,  \lambda_1, 2^{-k} \lambda_1^2)} \Bigg\Vert_{L^\infty}
   \lesssim \frac{2^j \, 2^{\varepsilon (k-2j)}}{\Big| 2^{-\frac{3}{2}it} - 1 \Big|},
\end{align}
uniformly in $t$.

Let us discuss first the index ranges for $\lambda_1$, $\lambda_3$, and $2^k = \lambda_1^2 \lambda_3^{-1}$.
Recall that we are in the case where $2^j \ll \lambda_1 \leq 2^{2j}$ and $1 \leq \lambda_3 \ll \lambda_1^2 2^{-2j}$,
which implies $\lambda_3 \ll \lambda_1$ and $2^{2j} \ll 2^k \leq 2^{4j}$.
Let us now fix any $k$ satisfying $2^{2j} \ll 2^k \leq 2^{4j}$,
and let us consider all $(\lambda_1,\lambda_3)$ such that $2^k = \lambda_1^2 \lambda_3^{-1}$.
We shall use the oscillatory sum lemma by summing in $\lambda_1$ and consider $\lambda_3 = \lambda_1^2 2^{-k}$ as a function of $\lambda_1$ and $k$.
The conditions for $\lambda_1$ are then
\begin{align*}
2^j &\ll \lambda_1 \leq 2^{2j},\\
1 &\leq \lambda_1^2 2^{-k} \ll 2^{2j},
\end{align*}
which determine an interval of integers $I_{j,k}$ for $k_1$ (recall $\lambda_1 = 2^{k_1}$).

After substituting $\lambda_1 y_1 \mapsto y_1$ and $2^{-j} \lambda_1 y_2 \mapsto y_2$ in the expression \eqref{LowerThanTwo_TheSecondDecomposition_SpaceForm_rewritten},
we get that the sum on the left hand side of \eqref{LowerThanTwo_CASE_3_2_ComplexInterpolationFinal} is
\begin{align*}
2^j \sum_{k_1 \in I_{j,k}} \lambda_1^{-3it} &\int
    \widecheck{\chi}_1 (\lambda_1 x_1-y_1) \,
    \widecheck{\chi}_1 (\lambda_1 x_2- y_2- \lambda_1 \psi_\omega(\lambda_1^{-1}y_1))\\
   &\times \widecheck{\chi}_1(\lambda_3 x_3 -  2^{2j-k} b^{\#}(\lambda_1^{-1}y_1,2^j \lambda_1^{-1}y_2,\delta,j) y_2^2 - \lambda_3 \psi_\beta(\lambda_1^{-1}y_1)) \\
   &\times a(\lambda_1^{-1}y_1,2^j \lambda_1^{-1}y_2, \delta, j) \, \chi_1(\lambda_1^{-1}y_1) \, \chi_0(2^j \lambda_1^{-1}y_2) \, \my.
\end{align*}
Since using the first two factors we can get a decay in $\lambda_1$, we can restrict ourselves to the case $|(x_1, x_2)| \lesssim 1$.
When $|x_3| \gg 1$, then by using the third factor we can gain a factor
$\lambda_3^{-1} = (\lambda_1^{2} 2^{-k})^{-1}$, which sums up to a number of size $\sim 1$, by definition of $I_{j,k}$.
Therefore we may and shall assume $|x| \lesssim 1$.

Next, we use the substitution $\lambda_1 x_1 - y_1 \mapsto y_1$,
conclude that it is sufficient to consider the part of the integration domain where $|y_1| \leq \lambda_1^\varepsilon$,
and that we may assume $x_1 \sim 1$. If we use Taylor approximation for $\psi_\omega$ and $\psi_\beta$ at $x_1$, then one gets
\begin{align*}
2^j \sum_{k_1 \in I_{j,k}} \lambda_1^{-3it} &\int
    \widecheck{\chi}_1 (y_1) \,
    \widecheck{\chi}_1 (\lambda_1 Q_\omega(x_1,x_2,\delta_1) - y_1 r_\omega(\lambda_1^{-1}y_1,x_1,\delta_1)-y_2)\\
   &\times \widecheck{\chi}_1(\lambda_3 Q_\beta(x_1,x_3,\delta_1) - \lambda_3 \lambda_1^{-1} y_1 r_\beta(\lambda_1^{-1}y_1,x_1,\delta_1) \\
   & \qquad \qquad - 2^{2j-k} b^{\#}(x_1-\lambda_1^{-1}y_1,2^j\lambda_1^{-1}y_2,\delta,j) y_2^2) \\
   &\times a(x_1-\lambda_1^{-1}y_1,2^j\lambda_1^{-1}y_2, \delta, j) \, \chi_1(x_1-\lambda_1^{-1}y_1) \, \chi_0(2^j\lambda_1^{-1}y_2) \, \chi_0(\lambda_1^{-\varepsilon}y_1) \, \my,
\end{align*}
where $|\partial_1^N r_\omega| \sim 1$ and $|\partial_1^N r_\beta| \sim 1$ for any $N \geq 0$.
Note that $2^{2j-k} \ll 1$ and $\lambda_3 \lambda_1^{-1} \ll 1$,
and therefore it is sufficient to consider the cases when either $|A| \gg 1$ or $|B| \gg 1$, where
$$
A \coloneqq \lambda_1 Q_\omega(x_1,x_2,\delta_1), \qquad \qquad B \coloneqq \lambda_3 Q_\beta(x_1,x_3,\delta_1),
$$
since otherwise we may use the oscillatory sum lemma.
We remind that $\lambda_3 = \lambda_1^{2} 2^{-k}$ is considered to be a function of $\lambda_1$.

We concentrate on the first three factors within the integral:
$$
\widecheck{\chi}_1 (y_1) \widecheck{\chi}_1 (A - r_\omega y_1 - y_2)
\widecheck{\chi}_1 (B - \lambda_3 \lambda_1^{-1} r_\beta y_1 - 2^{2j-k} b^{\#} y_2^2),
$$
where $r_\omega$, $r_\beta$, and $b^{\#}$ are all converging in $C^\infty$ to constant functions of magnitude
$\sim 1$ when $\lambda_1 \to \infty$, $\delta \to 0$, and $j \to \infty$.

Let us denote by $M$ a large enough positive number.

{\bf Subcase $|B| > M^3$ and $|A| \leq M$.}
Then because of the first factor we may restrict our discussion to the integration domain where $|y_1| < |B|^{1/3}$.
There $|A-r_\omega y_1| \leq C|B|^{1/3}$ for some $C$.
We may then furthermore assume $|y_2| \leq 2C |B|^{1/3}$, since otherwise we could use the second factor.
Now, if we take $M$ sufficiently large, we have
$$
|\lambda_3 \lambda_1^{-1} r_\beta y_1 - 2^{2j-k} b^{\#} y_2^2| \ll B,
$$
and so we can now use the third factor's Schwartz property to obtain a factor $|B|^{-1}$, which gives summability in $\lambda_1$.

{\bf Subcase $|A| > M$.}
Here we shall need a slightly finer analysis. Note that using the first factor within the integral
we can actually reduce ourselves to the integration within the slightly narrower range $|y_1| < |A|^\varepsilon 2^{10\varepsilon(2j-k)}$ for some small $\varepsilon$
(see \eqref{LowerThanTwo_CASE_3_2_ComplexInterpolation_endpoints}), and therefore we can also assume using the second factor that
\begin{align*}
y_2 \in [A - C |A|^\varepsilon 2^{10\varepsilon(2j-k)}, A + C |A|^\varepsilon 2^{10\varepsilon(2j-k)}],
\end{align*}
for some $C$.

Now if $|A|^\varepsilon 2^{10\varepsilon(2j-k)} \leq 1$, we obtain that the bound on the integral is
$|A|^{2\varepsilon} 2^{20\varepsilon(2j-k)}$ (the area of the surface over which we integrate), and this is summable
in $\lambda_1$ over the set $|A|^\varepsilon 2^{10\varepsilon(2j-k)} \leq 1$.

Therefore, we assume $|A|^\varepsilon 2^{10\varepsilon(2j-k)} > 1$, that is $|A|^{1/10} > 2^{k-2j}$.
Now, if $M$ is sufficiently large, we then have by the restraint on $y_2$ that $|A|^2/2 < y_2^2 < 2 |A|^2$, and hence
$$
C_1 |A|^{2-1/10} < |2^{2j-k} b^{\#} y_2^2| < C_2 |A|^2.
$$
Therefore if either $|B| \ll C_1 |A|^{2-1/10}$ or $|B| \gg C_2 |A|^2$, we can simply use the Schwartz property of the third factor within
the integral. Let us now assume that $B$ is within the range $|B| \in [C_1 |A|^{2-1/10}, C_2 |A|^2]$.
We denote $\delta_A \coloneqq |A|^\varepsilon 2^{10\varepsilon(2j-k)}$ and recall $\delta_A > 1$ and $|y_1| < \delta_A \leq |A|^\varepsilon$.
Using the third factor within the integral we can reduce our problem to when
$$
|B - \lambda_3 \lambda_1^{-1} r_\beta y_1 - 2^{2j-k} b^{\#} y_2^2| \leq \delta_A.
$$
The implicit function theorem implies that
\begin{align*}
y_2^2 &\in [2^{k-2j}|B| - C' 2^{k-2j} \delta_A, 2^{k-2j}|B| + C' 2^{k-2j} \delta_A] \\
\Longleftrightarrow \frac{y_2^2}{2^{k-2j}|B|} &\in \Big[1 - \frac{C'\delta_A}{|B|}, 1 + \frac{C\delta_A}{|B|}\Big],
\end{align*}
for some $C'$. Since $\delta_A \leq |A|^{1/10}$ and $|B| > |A|^{3/2}$, we can conclude
$$
|y_2| \in [(2^{k-2j}|B|)^{1/2} - |A|^{-1/2}, (2^{k-2j}|B|)^{1/2} + |A|^{-1/2}],
$$
that is, $y_2$ goes over a set with length at most $C' |A|^{-1/2}$.
This implies that our integral is bounded by $C' |A|^{-1/2+\varepsilon}$, which is summable in $\lambda_1$.

{\bf{Case 4.1. $\lambda_1 \sim \lambda_2 \sim \lambda_3$ and $\lambda_1 > 2^{2j}$.}}
Here one first applies stationary phase in $x_2$.
Afterwards, as easily seen and explained in a bit more detail at the end of \cite[Chapter 4]{IM16} (and also in the next section of this article),
one gets a phase function in $x_1$ which has a singularity of Airy-type.
By using Lemma \ref{Oscillatory_auxiliary_van_der_corput}, with condition $(ii)$ and $M = 3$,
one gets that the Fourier transform estimate is
\begin{align*}
\Vert \widehat{\nu_j^\lambda} \Vert_{L^\infty} \lesssim \lambda_1^{-1/2} \, \lambda_1^{-1/3} = \lambda_1^{-5/6}.
\end{align*}
From \eqref{LowerThanTwo_TheSecondDecomposition_SpaceBound_last_case} the space-side estimate is
\begin{align*}
\Vert \nu_j^\lambda \Vert_{L^\infty} \lesssim 2^j \lambda_1,
\end{align*}
from which one gets by interpolation
\begin{align*}
\Vert T^\lambda_j \Vert_{ L^{\mathfrak{p}_3}_{x_3} (L^{\mathfrak{p}_1}_{(x_1,x_2)}) \to L^{\mathfrak{p}_3'}_{x_3} (L^{\mathfrak{p}_1'}_{(x_1,x_2)}) }
   &\lesssim 2^{j\theta} \lambda_1^{(5\theta-2)/6}.
\end{align*}
The bound on the operator norm is
\begin{align*}
\Vert \tilde{T}_{\delta,j}^{VII} \Vert_{ L^{\mathfrak{p}_3}_{x_3} (L^{\mathfrak{p}_1}_{(x_1,x_2)}) \to L^{\mathfrak{p}_3'}_{x_3} (L^{\mathfrak{p}_1'}_{(x_1,x_2)}) }
   &\lesssim
      2^{j\theta} \sum_{\lambda_1=2^{2j}}^{\infty} \lambda_1^{(5\theta-2)/6}\\
   &\lesssim
      2^{j(8\theta-2)/3},
\end{align*}
where $\tilde{T}_{\delta,j}^{VII}$ denotes the sum of the associated operator pieces.
This is uniformly bounded if and only if $\theta \leq 1/4$. For $\theta = 1/3$,
we can only sum in the range $\lambda_1 > 2^{6j}$ and so it remains to see what happens when
$2^{2j} < \lambda_1 \leq 2^{6j}$.
We denote the sum of the associated operator pieces for this remaining range by $T_{\delta,j}^{VII}$.
We shall deal with this case in the following section.

{\bf{Case 4.2. $\lambda_1 \sim \lambda_2 \sim \lambda_3$ and $\lambda_1 \leq 2^{2j}$.}}
Here only the space-side estimate changes and we have
\begin{align}
\label{LowerThanTwo_CASE_4_2_basic_estiamtes}
\begin{split}
\Vert \widehat{\nu_j^\lambda} \Vert_{L^\infty} \lesssim \lambda_1^{-5/6},
\qquad \qquad
\Vert \nu_j^\lambda \Vert_{L^\infty} \lesssim \lambda_1^{3/2}.
\end{split}
\end{align}
By interpolation one can obtain
\begin{align*}
\Vert T^\lambda_j \Vert_{ L^{\mathfrak{p}_3}_{x_3} (L^{\mathfrak{p}_1}_{(x_1,x_2)}) \to L^{\mathfrak{p}_3'}_{x_3} (L^{\mathfrak{p}_1'}_{(x_1,x_2)}) }
   &\lesssim \lambda_1^{(4\theta-1)/3}.
\end{align*}
We denote the sum of the associated operator pieces by $T_{\delta,j}^{VIII}$.
The above estimate is obviously summable if and only if $\theta < 1/4$.
For $\theta = 1/4$ we shall now use complex interpolation, and we deal with $\theta = 1/3$ in the next section.
We obviously may assume in this case $\lambda_i \gg 1$ for all $i=1,2,3$.

For simplicity, we assume that $\lambda_1 = \lambda_2 = \lambda_3$.
We consider the following function parametrised by the complex number $\zeta$:
\begin{align}
\label{LowerThanTwo_CASE_4_2_ComplexInterpolationMeasure}
\mu_\zeta = \gamma(\zeta) \, \sum_{1 \ll \lambda_1 \leq 2^{2j}}
    (\lambda_1)^{\frac{1-4\zeta}{2}} \, \nu_j^\lambda,
\end{align}
where
\begin{align*}
\gamma(\zeta) = \frac{2^{-2(\zeta-1)} - 1}{2^{\frac{3}{2}}-1}.
\end{align*}
The associated operator is denoted by $T_\zeta$.
For $\zeta = 1/4$ it holds
\begin{align*}
\mu_\zeta = \sum_{1 \ll \lambda_1 \leq 2^{2j}} \nu_j^\lambda,
\end{align*}
and so by Stein's interpolation theorem it suffices to prove
\begin{align*}
\begin{split}
\Vert T_{it} \Vert_{ L^{2/(2-\tilde{\sigma})}_{x_3} (L^1_{(x_1,x_2)}) \to L^{2/\tilde{\sigma}}_{x_3} (L^\infty_{(x_1,x_2)}) }
   &\lesssim 1,\\
\Vert T_{1+it} \Vert_{L^2\to L^2}
   &\lesssim 1,
\end{split}
\end{align*}
with constants uniform in $t \in \R$. Here $\tilde{\sigma} = 1/3$ since $\theta = 1/4$.

In order to prove the first estimate, we need the decay bound \eqref{Auxiliary_mu_decay}, i.e.,
$$ |\widehat{\mu_{it}}(\xi)| \lesssim \frac{1}{(1+|\xi_3|)^{1/3}}.$$
But this follows automatically by \eqref{LowerThanTwo_CASE_4_2_basic_estiamtes}, the definition of $\mu_\zeta$, and
the fact that each $\widehat{\nu_j^\lambda}$ has its support located around $\lambda$.

We prove the second $L^2 \to L^2$ estimate by using the oscillatory sum lemma \cite[Lemma 2.7]{IM16}. We need to prove
\begin{align}
\label{LowerThanTwo_CASE_4_2_ComplexInterpolationFinal}
\Bigg\Vert \sum_{1 \ll \lambda_1 \leq 2^{2j}} (\lambda_1)^{-\frac{3}{2}-2it} \, \nu_j^\lambda \Bigg\Vert_{L^\infty}
   \lesssim \frac{1}{\Big| 2^{-2it} - 1 \Big|},
\end{align}
uniformly in $t$.

After substituting $\lambda_1 y_1 \mapsto y_1$ and $\lambda_1^{1/2} y_2 \mapsto y_2$
in the expression \eqref{LowerThanTwo_TheSecondDecomposition_SpaceForm_rewritten},
we get that the sum on the left hand side of \eqref{LowerThanTwo_CASE_4_2_ComplexInterpolationFinal} is
\begin{align*}
\sum_{1 \ll \lambda_1 \leq 2^{2j}} \lambda_1^{-2it} &\int
    \widecheck{\chi}_1 (\lambda_1 x_1-y_1) \,
    \widecheck{\chi}_1 (\lambda_1 x_2-2^{-j}\lambda_1^{1/2} y_2- \lambda_1 \psi_\omega(\lambda_1^{-1}y_1))\\
   &\times \widecheck{\chi}_1(\lambda_1 x_3 -  b^{\#}(\lambda_1^{-1}y_1,\lambda_1^{-1/2}y_2,\delta,j) y_2^2 - \lambda_1 \psi_\beta(\lambda_1^{-1}y_1)) \\
   &\times a(\lambda_1^{-1}y_1,\lambda_1^{-1/2}y_2, \delta, j) \, \chi_1(\lambda_1^{-1}y_1) \, \chi_0(\lambda_1^{-1/2}y_2) \, \my.
\end{align*}
We may assume that $|x| \leq C$ for some large constant $C$,
since otherwise we could use the first three factors to gain a decay in $\lambda_1$.

Now as usual, we use the substitution $\lambda_1 x_1 - y_1 \mapsto y_1$,
conclude that it is sufficient to consider $|y_1| \leq \lambda_1^\varepsilon$
and $x_1 \sim 1$, and use Taylor approximation for $\psi_\omega$ and $\psi_\beta$ at $x_1$. Then one gets
\begin{align*}
\sum_{1 \ll \lambda_1 \leq 2^{2j}} \lambda_1^{-2it} &\int
    \widecheck{\chi}_1 (y_1) \,
    \widecheck{\chi}_1 (\lambda_1 Q_\omega(x_1,x_2,\delta_1) - y_1 r_\omega(\lambda_1^{-1}y_1,x_1,\delta_1)-2^{-j}\lambda_1^{1/2} y_2)\\
   &\times \widecheck{\chi}_1(\lambda_1 Q_\beta(x_1,x_3,\delta_1) - y_1 r_\beta(\lambda_1^{-1}y_1,x_1,\delta_1) -  b^{\#}(x_1-\lambda_1^{-1}y_1,\lambda_1^{-1/2}y_2,\delta,j) y_2^2) \\
   &\times a(x_1-\lambda_1^{-1}y_1,\lambda_1^{-1/2}y_2, \delta, j) \, \chi_1(x_1-\lambda_1^{-1}y_1) \, \chi_0(\lambda_1^{-1/2}y_2) \, \chi_0(\lambda_1^{-\varepsilon} y_1) \, \my,
\end{align*}
where $|\partial_1^N r_\omega| \sim 1$ and $|\partial_1^N r_\beta| \sim 1$ for any $N \geq 0$. Note that $2^{-j}\lambda_1^{1/2} \leq 1$,
and therefore it is sufficient to consider the cases when either $|A| \gg 1$ or $|B| \gg 1$, where
$$
A \coloneqq \lambda_1 Q_\omega(x_1,x_2,\delta_1), \qquad \qquad
B \coloneqq \lambda_1 Q_\beta(x_1,x_3,\delta_1),
$$
since otherwise we can use the oscillatory sum lemma.

The first three factors within the integral are behaving essentially like
$$
\widecheck{\chi}_1 (y_1) \widecheck{\chi}_1 (A - y_1 - 2^{-j}\lambda_1^{1/2} y_2) \widecheck{\chi}_1 (B - y_1 - y_2^2).
$$

If $|B| \gg 1$, then since we could otherwise use the first factor, we can assume $|y_1| \ll |B|^{\varepsilon_B}$.
Then $|B-y_1 r_\beta| \sim |B|$, and we can estimate the integral by
$$
\int |\widecheck{\chi}_1(B-y_1 r_\beta -y_2^2 b^{\#})| \my_2 \lesssim \int |\widecheck{\chi}_1(B-y_1 r_\beta -t)| \, |t|^{-1/2} \mathrm{d}t \lesssim |B|^{-1/2}.
$$
Now one can sum in $\lambda_1$.

Let us now assume $|B| \leq C_B$ for some large, but fixed constant $C_B$, and $|A| \gg C_B$.
Again, we can assume $|y_1| \ll |A|^{\varepsilon_A}$, and so $|A-y_1 r_\omega| \sim |A|$.
Therefore if $|y_2| \leq |A|^{1/2}$, then using the second factor we get that the integral is bounded (up to a constant) by $|A|^{-N}$.
If $|y_2| > |A|^{1/2}$, then $|B-y_1 r_\beta -y_2^2 b^{\#}| \gtrsim |A|$ and so we can use the third factor, and sum in $\lambda_1$.



\section{Airy-type analysis in the case $h_{\text{lin}}(\phi)$ < 2}
\label{Section_Airy}

In this section we begin with the proof of the estimates for $T^{VII}_{\delta, j}$ and $T^{VIII}_{\delta, j}$
when $\theta = 1/3$, i.e., when $\phi$ is of type $A_{n-1}$ with $m=2$ and finite $n\geq5$. In this case
$\tilde{\sigma} = 1/4$. We shall first recall some of the notation from \cite[Chapter 5]{IM16}.
From now on $\delta$ shall be a triple $(\delta_0, \delta_1, \delta_2)$ with $\delta_0 = 2^{-j}$,
we use $\lambda$ to denote the common value $\lambda_1 = \lambda_2 = \lambda_3$, and define
\begin{align*}
s_1 \coloneqq \frac{\xi_1}{\xi_3}, \quad s_2 \coloneqq \frac{\xi_2}{\xi_3}, \quad s_3 \coloneqq \frac{\xi_3}{\lambda}, \\
s \coloneqq (s_1,s_2,s_3), \quad s' \coloneqq (s_1,s_2).
\end{align*}
Then $|s_i| \sim 1$ for $i=1,2,3$, and we have
\begin{align*}
\xi &= \lambda s_3 (s_1,s_2,1),\\
\Phi(x,\delta,j,\xi) &= \lambda s_3 \tilde{\Phi}(x, \delta, \sigma, s_1, s_2),
\end{align*}
where $\Phi$ is the total phase from \eqref{LowerThanTwo_TheSecondDecomposition_CompletePhase_rewritten} and
\begin{align*}
\tilde{\Phi}(x,\delta,j,s_1,s_2) &= s_1 x_1 + s_2 x_1^2 \omega(\delta_1 x_1) + \sigma x_1^n \beta(\delta_1 x_1)\\
&\quad +\delta_0 s_2 x_2 + x_2^2 b_0(x,\delta).
\end{align*}
Recall that according to Case 4.2 from the last subsection of the previous section we have
\begin{align*}
\Vert T^\lambda_j \Vert_{ L^{\mathfrak{p}_3}_{x_3} (L^{\mathfrak{p}_1}_{(x_1,x_2)}) \to L^{\mathfrak{p}_3'}_{x_3} (L^{\mathfrak{p}_1'}_{(x_1,x_2)}) }
   &\lesssim \lambda^{1/9},
\end{align*}
and we can assume $\lambda \gg 1$.
Furthermore, recall that $\sigma \sim 1$, and that
\begin{align*}
b^{\#}(y,\delta_1,\delta_2,j) = b_0(y,\delta) \coloneqq b^a(\delta_1 y_1, \delta_0 \delta_2 y_2),
\end{align*}
where $b^a$ is the same function as in Subsection \ref{Knapp_h_lin_less_than_two}.
It is the function $b$ from Subsection \ref{Knapp_h_lin_less_than_two} expressed in adapted coordinates.
Recall that $\beta(0) \neq 0$, $\omega(0) \neq 0$, and $b_0(y,0) = b^a(0,0) = b(0,0) \neq 0$ for all $y$.

In terms of $s$ the expression for the Fourier transform of $\nu_\delta^\lambda \coloneqq \nu_j^\lambda$ becomes
\begin{align*}
\chi_1(s_1 s_3) \chi_1(s_2 s_3) \chi_1 (s_3) \int e^{-i\lambda s_3 \tilde{\Phi}(y,\delta,\sigma,s_1,s_2)} \tilde{a}(y,\delta) \mathrm{d}y,
\end{align*}
where the amplitude $\tilde{a}(y,\delta) \coloneqq a(y,\delta) \chi_1(y_1) \chi_0(y_2)$ is a smooth function supported
in the sets where $x_1 \sim 1$ and $|x_2| \lesssim 1$ and whose derivatives are uniformly bounded with respect to $\delta$.
If we denote
\begin{align*}
T_\delta^\lambda f \coloneqq f * \widehat{\nu_\delta^\lambda},
\end{align*}
then the estimate we need to prove is
\begin{align*}
\Big\Vert \sum_{1 \ll \lambda \leq \delta_0^{-6}} T_\delta^\lambda \Big\Vert_{ L^{\mathfrak{p}_3}_{x_3} (L^{\mathfrak{p}_1}_{(x_1,x_2)}) \to L^{\mathfrak{p}_3'}_{x_3} (L^{\mathfrak{p}_1'}_{(x_1,x_2)}) }
   \lesssim 1
\end{align*}
for
\begin{align*}
\Big( \frac{1}{\mathfrak{p}_1'}, \frac{1}{\mathfrak{p}_3'} \Big) &= \Big( \frac{1}{6}, \frac{1}{4} \Big).
\end{align*}
This estimate corresponds to the estimate of the sum $T_{\delta,j}^{VII} + T_{\delta,j}^{VIII}$
considered in the last subsection of the previous section (Case $4.1$ and Case $4.2$).

\subsection{First steps and estimates}

Our first step is to use the stationary phase in the $y_2$ variable,
ignoring the part away from the critical point where we can obtain absolutely summable estimates.
Then, as explained in \cite[Section 5.1]{IM16}, one obtains by using the implicit function theorem
that the critical point $x_2^c$ can be written as
\begin{align*}
x_2^c = \delta_0 s_2 Y_2(\delta_1 x_1, \delta_2, \delta_0 s_2),
\end{align*}
where $Y_2$ is smooth, $Y_2(0,0,0)=-1/(2b(0,0))$, and $|Y_2| \sim 1$.
Now one defines
\begin{align*}
\Psi(x_1,\delta,\sigma,s') \coloneqq \tilde{\Phi}(x_1, x_2^c, \sigma, s'),
\end{align*}
so we can write
\begin{align*}
\widehat{\nu_\delta^\lambda}(\xi) = \lambda^{-1/2} \chi_1(s_1 s_3) \chi_1(s_2 s_3) \chi_1 (s_3)
   \int e^{-i\lambda s_3 \Psi(y_1,\delta,\sigma,s_1,s_2)} a_0(y_1,s,\delta; \lambda) \mathrm{d}y_1,
\end{align*}
where $a_0$ is smooth and uniformly a classical symbol of order $0$ with respect to $\lambda$, and where
\begin{align}
\label{LowerThanTwo_Airy_Phase_first_form}
\Psi(y_1, \delta, \sigma, s_1, s_2) = s_1 y_1 + s_2 y_1^2 \omega(\delta_1 y_1) + \sigma y_1^n \beta(\delta_1 y_1) + (\delta_0 s_2)^2 Y_3(\delta_1 y_1, \delta_2, \delta_0 s_2)
\end{align}
for a smooth $Y_3$ with $Y_3(0,0,0) = -1/4b(0,0) \neq 0$.

Recall that as $a_0$ is a classical symbol we can express it as
$$a_0(y_1,s,\delta; \lambda) = a_0^0(y_1,s,\delta) + \lambda^{-1} a_0^1(y_1,s,\delta; \lambda),$$
where $a_0^0$ does not depend on $\lambda$ and $a_0^1$ has the same properties as $a_0$.
This induces the decomposition
$$
\nu_{\delta}^\lambda = \nu_{\delta,a_0^0}^\lambda + \nu_{\delta,a_0^1}^\lambda.
$$
The function $\nu_{\delta,a_0^1}^\lambda$ associated to the amplitude $a_0^1$ has Fourier transform bounded by $\lambda^{-3/2}$
and the $L^\infty$ norm on the space side is bounded by $\lambda^{3/2}$
(by the same reasoning as used to obtain \eqref{LowerThanTwo_CASE_4_2_basic_estiamtes}).
From these two bounds we can easily get the required estimate
for the operator associated to $\nu_{\delta,a_0^1}^\lambda$.
Therefore from now on, by an abuse of notation, we may and shall assume that $\nu_\delta^\lambda$
has an amplitude which does not depend on $\lambda$, i.e.,
\begin{align*}
\widehat{\nu_\delta^\lambda}(\xi) = \lambda^{-1/2} \chi_1(s_1 s_3) \chi_1(s_2 s_3) \chi_1 (s_3)
   \int e^{-i\lambda s_3 \Psi(y_1,\delta,\sigma,s_1,s_2)} a_0(y_1,s,\delta) \mathrm{d}y_1.
\end{align*}

The next step is to localise the integration in the above integral
to a small neighbourhood of the point where the second derivative vanishes.
For $\delta = 0$ this point is
$$
x_1^c = x_1^c(0,\sigma,s_2) \coloneqq \Bigg( - \frac{2 \omega(0)}{n(n-1) \sigma \beta(0)} s_2 \Bigg)^{1/(n-2)}.
$$
Away from this point the estimate for the integral is at worst $\lambda^{-1}$, by stationary phase or integration by parts.

We now briefly explain how to deal with the part away from $x_1^c$.
Recall from Case 4 in the last subsection of the previous section that the space bound on
$\nu_\delta^\lambda$ is $2^j \lambda = \delta_0^{-1} \lambda$ if $\lambda > 2^{2j} = \delta_0^{-2}$.
Now using the results from Subsection \ref{Mixed_norm_auxiliary} one can easily see that we can sum absolutely in $\lambda > \delta_0^{-2}$.
The case when $\lambda \leq \delta_0^{-2}$ has to be dealt with complex interpolation as in the Case 4.2. from the last subsection of the previous section.
In fact, the proof is completely the same, except that one needs to appropriately change $\gamma$ and the exponent over $\lambda_1$
in the expression for $\mu_\zeta$ in \eqref{LowerThanTwo_CASE_4_2_ComplexInterpolationMeasure} since $\theta = 1/3$ in this case,
and there it was $\theta =1/4$.
One also has a different amplitude $a$ localising near $x_2^c$ in $y_2$ integration and away from $x_1^c$ in $y_1$ integration.

Hence we may now consider only the part near the critical point $x_1^c$.
Abusing the notation again, we shall denote the part near the critical point $x_1^c$ by $\nu_\delta^\lambda$ too.
Following \cite{IM16} we shall furthermore assume without loss of generality
\begin{align}
\label{LowerThanTwo_Airy_constant_normalisation}
- \frac{2 \omega(0)}{n(n-1)\sigma \beta(0)} = 1, \quad \quad s_2 \sim 1,
\end{align}
and that in \eqref{LowerThanTwo_Airy_Phase_first_form} we are integrating over an arbitrarily small neighbourhood of $x_1^c$.
Therefore, we now have $x_1^c(0, \sigma, s_2) = s_2^{1/(n-2)}$, $|\Psi'''(x_1^c(\delta, \sigma, s_2),\delta,\sigma,s_1,s_2)| \sim 1$,
(by implicit function theorem) $x_1^c = x_1^c(\delta, \sigma, s_2)$ depends smoothly in all of its variables, and
\begin{align*}
\Psi''(x_1^c(\delta, \sigma, s_2),\delta,\sigma,s_1,s_2) = 0.
\end{align*}
We restate \cite[Lemma 5.2.]{IM16} how to locally develop $\Psi$ at the critical point of $\Psi'$, i.e., the point $x_1^c$.
Its proof is straightforward.

\begin{lemma}
\label{LowerThanTwo_Airy_lemma_phase_taylor}
The phase $\Psi$ given by \eqref{LowerThanTwo_Airy_Phase_first_form} can be developed locally around $x_1^c$ in the form
$$
\Psi(x_1^c(\delta,\sigma,s_2)+y_1, \delta, \sigma, s_1, s_2) = B_0(s',\delta,\sigma) - B_1(s', \delta, \sigma) y_1 + B_3(s_2,\delta,\sigma,y_1) y_1^3,
$$
where $B_0$, $B_1$, and $B_3$ are smooth functions, and where $|B_3(s_2, \delta, \sigma, y_1)| \sim 1$.
In fact, we can write (after taking \eqref{LowerThanTwo_Airy_constant_normalisation} into account)
\begin{align*}
\begin{split}
x_1^c(\delta,\sigma,s_2) &= s_2^{1/(n-2)} G_1(s_2, \delta, \sigma), \\
B_0(s',\delta,\sigma) &= s_1 s_2^{1/(n-2)} G_1(s_2, \delta, \sigma) - s_2^{n/(n-2)} G_2(s_2,\delta,\sigma), \\
B_1(s',\delta,\sigma) &= -s_1 + s_2^{(n-1)/(n-2)} G_3(s_2,\delta,\sigma), \\
B_3(s',\delta,\sigma,0) &= s_2^{(n-3)/(n-2)} G_4(s_2,\delta,\sigma),
\end{split}
\end{align*}
where $G_k$, $k=1,2,3,4$, are all smooth and of the following forms at $\delta = 0$:
\begin{align}
\label{LowerThanTwo_Airy_G_function_forms}
\begin{split}
G_1(s_2,0,\sigma) &= 1, \\
G_2(s_2,0,\sigma) &= \frac{n^2-n-2}{2} \sigma \beta(0), \\
G_3(s_2,0,\sigma) &= n(n-2) \sigma \beta(0), \\
G_4(s_2,0,\sigma) &= \frac{n(n-1)(n-2)}{6} \sigma \beta(0).
\end{split}
\end{align}
We shall also need $G_5 \coloneqq G_1 G_3 - G_2$. One can easily check that $G_k \neq 0$ for each $k = 1,2,3,4$,
since $n \geq 5$, and that
\begin{align*}
G_5(s_2,0,\sigma) &= \frac{(n-1)(n-2)}{2} \sigma \beta(0) \neq 0.
\end{align*}
\end{lemma}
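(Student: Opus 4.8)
The plan is to prove the lemma by a straightforward third-order Taylor expansion of $\Psi$ about the critical point $x_1^c$ of $\Psi'$, with the dependence on $(s_2,\delta,\sigma)$ tracked by the implicit function theorem. The one structural observation that makes the shapes of $B_0$, $B_1$, $B_3$ transparent is that, by \eqref{LowerThanTwo_Airy_Phase_first_form}, the variable $s_1$ enters $\Psi$ only through the affine term $s_1 y_1$; hence $\Psi''$ and $\Psi'''$ (and therefore $x_1^c$ and $B_3$) are independent of $s_1$, while $s_1$ contributes precisely $+\,s_1 x_1^c$ to $B_0=\Psi(x_1^c)$ and $-s_1$ to $B_1=-\Psi'(x_1^c)$, exactly as claimed.

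First I would pin down $x_1^c=x_1^c(\delta,\sigma,s_2)$. Since $s_2\sim 1$, the map $s_2\mapsto s_2^{1/(n-2)}$ is smooth and nonvanishing on the relevant range, so I would substitute the ansatz $y_1=s_2^{1/(n-2)}u$ into $\Psi''(y_1,\delta,\sigma,s_2)=0$; the resulting function of $(u,s_2,\delta,\sigma)$ is smooth, and at $\delta=0$ the equation reduces, after invoking the normalisation \eqref{LowerThanTwo_Airy_constant_normalisation}, to $u^{n-2}=1$ with solution $u=1$, where the $u$-derivative equals $n(n-1)(n-2)\sigma\beta(0)\,s_2\neq 0$. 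The implicit function theorem then produces a smooth $u=G_1(s_2,\delta,\sigma)$ with $G_1(s_2,0,\sigma)=1$, i.e. $x_1^c=s_2^{1/(n-2)}G_1$, and by continuity $|\Psi'''(x_1^c,\delta,\sigma,s')|\sim 1$ for $\delta$ small. Next I would Taylor-expand with integral remainder, $\Psi(x_1^c+y_1,\cdot)=\Psi(x_1^c)+\Psi'(x_1^c)y_1+\tfrac12\Psi''(x_1^c)y_1^2+\tfrac{y_1^3}{2}\int_0^1(1-t)^2\Psi'''(x_1^c+ty_1,\cdot)\,\mt$; the quadratic term vanishes since $\Psi''(x_1^c)=0$ by definition of $x_1^c$, so one sets $B_0:=\Psi(x_1^c)$, $B_1:=-\Psi'(x_1^c)$, and $B_3(s_2,\delta,\sigma,y_1):=\tfrac12\int_0^1(1-t)^2\Psi'''(x_1^c+ty_1,\cdot)\,\mt$. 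In particular $B_3(\cdot,0)=\tfrac16\Psi'''(x_1^c)$, so $|B_3|\sim 1$ on a sufficiently small neighbourhood of $y_1=0$.

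It then remains to substitute $x_1^c=s_2^{1/(n-2)}G_1$ into $\Psi,\Psi',\Psi'''$ and collect powers of $s_2$. In $\Psi(x_1^c)$ the three non-$s_1$ contributions (from $y_1^2$, $y_1^n$, and $(\delta_0 s_2)^2 Y_3$) combine into $s_2^{n/(n-2)}$ times a smooth function; in $\Psi'(x_1^c)-s_1$ they combine into $s_2^{(n-1)/(n-2)}$ times a smooth function; and $\Psi'''(x_1^c)$ is $s_2^{(n-3)/(n-2)}$ times a smooth function. This defines $G_2,G_3,G_4$ with the asserted shapes, and setting $\delta_0=\delta_1=0$ (so $G_1=1$) gives $G_2(\cdot,0)=-(\omega(0)+\sigma\beta(0))$, $-G_3(\cdot,0)=2\omega(0)+n\sigma\beta(0)$, $6G_4(\cdot,0)=n(n-1)(n-2)\sigma\beta(0)$; eliminating $\omega(0)$ via the identity $2\omega(0)=-n(n-1)\sigma\beta(0)$ contained in \eqref{LowerThanTwo_Airy_constant_normalisation} yields the closed forms in \eqref{LowerThanTwo_Airy_G_function_forms}. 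Finally $G_5=G_1G_3-G_2$ evaluates at $\delta=0$ to $\tfrac{(n-1)(n-2)}{2}\sigma\beta(0)$, and since $G_1,\dots,G_5$ are all nonzero at $\delta=0$ (this is where $n\ge 5$ is invoked) they remain nonzero for $\delta$ small by continuity. I do not expect a genuine obstacle here: the content is routine bookkeeping of fractional powers of $s_2\sim 1$; the only point needing a moment's care is that the perturbation $(\delta_0 s_2)^2 Y_3$ does not spoil the pure-power factorisations of $B_0,B_1,B_3$, which is ensured by the residual exponents $(n-4)/(n-2)$, $(n-3)/(n-2)$, $(n-1)/(n-2)$ being admissible under the standing hypothesis $n\ge 5$ (equivalently $n\ge 2m+1$ with $m=2$).
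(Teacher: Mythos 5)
Your proposal is correct and follows the same route the paper intends — Taylor expansion of $\Psi$ about the unique zero $x_1^c$ of $\Psi''$, locating $x_1^c$ via the implicit function theorem in the rescaled variable, and then reading off $G_1,\dots,G_4$ by direct evaluation at $\delta=0$ together with the normalisation $2\omega(0)=-n(n-1)\sigma\beta(0)$. The paper itself omits the argument, calling it ``straightforward'' and deferring to [IM16, Lemma 5.2]; your computations of $G_2,G_3,G_4,G_5$ check out, and your observations that $s_1$ enters only affinely (so $x_1^c$ and $B_3$ are $s_1$-independent) and that the $(\delta_0 s_2)^2 Y_3$ term factors harmlessly because $s_2\sim 1$ are exactly the right points to make explicit.
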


By applying the lemma we may now write
\begin{align}
\label{LowerThanTwo_Airy_Fourier_measure_form}
\begin{split}
\widehat{\nu_\delta^\lambda}(\xi) = &\lambda^{-1/2} \chi_1(s_1 s_3) \chi_1(s_2 s_3) \chi_1 (s_3) e^{-i\lambda s_3 B_0(s',\delta,\sigma)}\\
   &\int e^{-i\lambda s_3 (B_3(s_2,\delta,\sigma,y_1) y_1^3 - B_1(s', \delta, \sigma) y_1)} a_0(y_1,s,\delta) \chi_0(y_1) \mathrm{d}y_1,
\end{split}
\end{align}
where $\chi_0$ is supported here in a sufficiently small neighbourhood of the origin and
$a_0$ denotes a slightly different function than before, but with the same relevant properties.
We now decompose $\widehat{\nu_\delta^\lambda}$ further, motivated by Lemma \ref{Oscillatory_auxiliary_airy}, into parts where
$\lambda^{2/3} |B_1(s',\delta,\sigma)| \lesssim 1$ near the Airy cone, and $(2^{-l} \lambda)^{2/3} |B_1(s, \delta, \sigma)| \sim 1$ away from
the Airy cone, for $M_0 \leq 2^l \leq \lambda/M_1$, where $M_0, M_1$ are sufficiently large. The Airy cone itself is given by the equation $B_1 = 0$.

In order to obtain such a decomposition
we take smooth cutoff functions $\chi_0$ and $\chi_1$ such that $\chi_0$ is supported in a sufficiently large neighbourhood of the origin
and $\chi_1(t)$ is supported in a neighbourhood of the points $-1$ and $1$ and away from the origin.
We furthermore assume that
$$
\sum_{l\in \Z} \chi_1(2^{-2l/3}t) = 1
$$
on $\R \setminus \{0\}$.
Then we can define
\begin{align}
\label{LowerThanTwo_Airy_decomposition}
\begin{split}
\widehat{\nu^\lambda_{\delta,Ai}}(\xi) &\coloneqq \chi_0(\lambda^{2/3} B_1(s',\delta,\sigma)) \widehat{\nu^\lambda_\delta}(\xi), \\
\widehat{\nu^\lambda_{\delta,l}}(\xi) &\coloneqq \chi_1((2^{-l}\lambda)^{2/3} B_1(s',\delta,\sigma)) \widehat{\nu^\lambda_\delta}(\xi),
\end{split}
\end{align}
where $M_0 \leq 2^l \leq \lambda/M_1$, so that
\begin{align*}
\nu^\lambda_{\delta}(\xi) = \nu^\lambda_{\delta,Ai} + \sum_{M_0 \leq 2^l \leq \lambda/M_1} \nu^\lambda_{\delta,l}.
\end{align*}
We denote the associated convolution operators,
convolving against the Fourier transform of $\nu^\lambda_{\delta,Ai}$ and $\nu^\lambda_{\delta,l}$,
by $T^\lambda_{\delta,Ai}$ and $T^\lambda_{\delta,l}$.
Note that the size of the number $M_0$ is related to how large of a neighbourhood of $0$ the cutoff function $\chi_0$ covers
in the first equation of \eqref{LowerThanTwo_Airy_decomposition},
and the size of the number $M_1$ is related to how small of a neighbourhood of $0$ we take
in \eqref{LowerThanTwo_Airy_Fourier_measure_form} for the $y_1$ variable.

\subsection{Estimates near the Airy cone}

From Lemma \ref{Oscillatory_auxiliary_airy}, (a), we get that the bound on the Fourier transform of $\nu^\lambda_{\delta, Ai}$ is $\lambda^{-5/6}$.
Unlike in \cite{IM16} we shall need to use complex interpolation to be able to estimate the part $T^\lambda_{\delta,Ai}$.
The proof here is actually similar to certain cases when $h_{\text{lin}}(\phi) \geq 2$ in \cite[Subsection 8.7.1]{IM16}.

We consider the following function parametrised by $\zeta \in \C$:
\begin{align*}
\mu_\zeta = \gamma(\zeta) \, \sum_{1 \ll \lambda \leq \delta_0^{-6}}
    \lambda^{\frac{7-21\zeta}{12}} \, \nu^\lambda_{\delta, Ai},
\end{align*}
where
\begin{align*}
\gamma(\zeta) = \frac{2^{-\frac{7(\zeta-1)}{4}} - 1}{2^{\frac{7}{6}}-1}.
\end{align*}
The associated operator acting by convolution against the Fourier transform of $\mu_\zeta$
is denoted by $T_\zeta$. For $\zeta = 1/3$ we see that
\begin{align*}
\mu_\zeta = \sum_{1 \ll \lambda  \leq \delta_0^{-6}} \nu^\lambda_{\delta, Ai},
\end{align*}
which means, by interpolation, that it is sufficient to prove
\begin{align*}
\begin{split}
\Vert T_{it} \Vert_{ L^{2/(2-\tilde{\sigma})}_{x_3} (L^1_{(x_1,x_2)}) \to L^{2/\tilde{\sigma}}_{x_3} (L^\infty_{(x_1,x_2)}) }
   &\lesssim 1,\\
\Vert T_{1+it} \Vert_{L^2\to L^2}
   &\lesssim 1,
\end{split}
\end{align*}
with constants uniform in $t \in \R$.

In order to prove the first estimate,
we need the decay bound \eqref{Auxiliary_mu_decay}, i.e.,
$$ |\widehat{\mu_{it}}(\xi)| \lesssim \frac{1}{(1+|\xi_3|)^{1/4}}.$$
This follows right away by using the estimate on the Fourier transform of $\nu^\lambda_{\delta, Ai}$, the definition of $\mu_\zeta$, and
the fact that each $\widehat{\nu^\lambda_{\delta, Ai}}$ has its support located at $(\lambda,\lambda,\lambda)$.

We prove the second $L^2 \to L^2$ estimate by using Lemma \ref{Oscillatory_auxiliary_one_parameter}.
We need to prove
\begin{align}
\label{LowerThanTwo_Airy_near_ComplexInterpolationFinal}
\Bigg\Vert \sum_{1 \ll \lambda \leq \delta_0^{-6}} \lambda^{-\frac{7}{6}- \frac{7}{4}it} \, \nu^\lambda_{\delta, Ai} \Bigg\Vert_{L^\infty}
   \lesssim \frac{1}{\Big| 2^{-\frac{7}{4}it} - 1 \Big|},
\end{align}
uniformly in $t$.

As in \cite[Subsection 5.1.1]{IM16} we now apply Fourier inversion using the formulas \eqref{LowerThanTwo_Airy_Fourier_measure_form},
\eqref{LowerThanTwo_Airy_decomposition}, and the form of the integral from Lemma \ref{Oscillatory_auxiliary_airy}, (a).
Then after changing coordinates in the integration from $(\xi_1,\xi_2,\xi_3)$ to $(s_1, s_2, s_3)$ one gets
\begin{align*}
\nu^{\lambda}_{\delta, Ai}(x) =
   &\lambda^{13/6} \int e^{-i\lambda s_3(B_0(s',\delta,\sigma)-s_1 x_1 -s_2 x_2 -x_3)} \chi_0(\lambda^{2/3} B_1(s',\delta,\sigma)) \\
   &\quad \quad \times g(\lambda^{2/3} B_1(s',\delta,\sigma),\lambda^{-1/3},\delta,\sigma,s) \tilde{\chi}_1(s) \mathrm{d}s_1 \mathrm{d}s_2 \mathrm{d}s_3,
\end{align*}
where $g$ is the smooth function from Lemma \ref{Oscillatory_auxiliary_airy}, (a), whose derivatives of any order are uniformly bounded,
and where
\begin{align*}
\tilde{\chi}_1(s) \coloneqq \chi_1(s_1 s_3) \chi_1(s_2 s_3) \chi_1(s_3) s_3^2.
\end{align*}
We may now also restrict ourselves to the situation where $|x| \lesssim 1$, since otherwise we can get a factor $\lambda^{-N}$ by integrating by parts.

Finally, we change coordinates from $s' = (s_1,s_2)$ to $(z,s_2)$, where $z \coloneqq \lambda^{2/3} B_1(s',\delta,\sigma)$,
and so by Lemma \ref{LowerThanTwo_Airy_lemma_phase_taylor} we have
\begin{align*}
z   &= \lambda^{2/3}(-s_1+s_2^{(n-1)/(n-2)}G_3(s_2,\delta,\sigma)),
\end{align*}
that is
\begin{align*}
s_1 &= s_2^{(n-1)/(n-2)} G_3(s_2,\delta,\sigma)-\lambda^{-2/3}z.
\end{align*}
Thus we obtain
\begin{align}
\label{LowerThanTwo_Airy_near_ComplexInterpolation_space_form_book}
\begin{split}
\nu^\lambda_{\delta, Ai} (x) = &\lambda^{3/2} \int e^{-i \lambda s_3 \Phi(z,s_2,x,\delta,\sigma)} \\
   &\times g \Big(z, \lambda^{-1/3}, \delta, \sigma, s_2^{(n-1)/(n-2)} G_3(s_2,\delta,\sigma)-\lambda^{-2/3}z,s_2,s_3\Big) \\
   &\times \tilde{\chi}_1 \Big(s_2^{(n-1)/(n-2)} G_3(s_2,\delta,\sigma)-\lambda^{-2/3}z,s_2 \Big) \chi_0(z) \mathrm{d}z \mathrm{d}s_2 \mathrm{d}s_3,
\end{split}
\end{align}
where by using the expressions for $B_0(s',\delta,\sigma)$ and $G_5(s_2,\delta,\sigma)$ from
Lemma \ref{LowerThanTwo_Airy_lemma_phase_taylor} one gets
\begin{align}
\label{LowerThanTwo_Airy_near_ComplexInterpolation_space_form_phase}
\begin{split}
\Phi(z,s_2,x,\delta,\sigma) \coloneqq &s_2^{n/(n-2)} G_5(s_2,\delta,\sigma) - s_2^{(n-1)/(n-2)} G_3(s_2,\delta,\sigma) x_1 - s_2 x_2 - x_3\\
    &+\lambda^{-2/3} z (x_1 - s_2^{1/(n-2)} G_1(s_2, \delta, \sigma)).
\end{split}
\end{align}
We may shorten the expression in \eqref{LowerThanTwo_Airy_near_ComplexInterpolation_space_form_book} to
\begin{align}
\label{LowerThanTwo_Airy_near_ComplexInterpolation_space_form}
\begin{split}
\nu^\lambda_{\delta, Ai} (x) = \lambda^{3/2} \int &e^{-i \lambda s_3 \Phi(z,s_2,x,\delta,\sigma)} \\
   &\times \tilde{g} (z, s_2^{1/(n-2)}, s_3, \lambda^{-1/3}, \delta, \sigma) \mathrm{d}z \mathrm{d}s_2 \mathrm{d}s_3,
\end{split}
\end{align}
where $\tilde{g}$ is smooth with uniformly bounded derivatives and localising the integration domain to $|z| \lesssim 1$, $s_2 \sim |s_3| \sim 1$.

Next, we notice that $\Phi(z,s_2^{n-2},x,0,\sigma)$ is a polynomial in $s_2$ by \eqref{LowerThanTwo_Airy_G_function_forms}.
We therefore substitute $s_0 = s_2^{1/(n-2)}$ and denote
$$
\tilde{\Phi}(z,s_0,x,\delta,\sigma) = \Phi(z,s_0^{1/(n-2)},x,\delta,\sigma).
$$
We are interested in localising the integration in \eqref{LowerThanTwo_Airy_near_ComplexInterpolation_space_form}
to the place where $\partial_{s_0}^2 \tilde{\Phi} = 0$ and $\partial_{s_0}^3 \tilde{\Phi} \neq 0$.
In order to carry out this reduction we need another simple lemma. It will be applied to the first three terms of
\begin{align*}
\tilde{\Phi}(z,s_0,x,0,\sigma) = &s_0^{n} G_5(s_0^{n-2},0,\sigma) - s_0^{n-1} G_3(s_0^{n-2},0,\sigma) x_1 - s_0^{n-2} x_2 - x_3\\
    &+ \lambda^{-2/3} z (x_1 - s_0 G_1(s_0^{n-2}, 0, \sigma)),
\end{align*}
which constitute a polynomial in $s_0$ whose derivatives have at most two zeros not located at the origin.
Note that the last term in the above expression is arbitrarily small.

\begin{lemma}
\label{LowerThanTwo_Airy_near_ComplexInterpolation_polynomial_lemma}
Assume $n \geq 5$ and consider a number $x_0 \sim 1$. Let us define a polynomial of the form
\begin{align*}
P(x) \coloneqq x^{n-2} (x^2 + bx + c) = x^n + b x^{n-1} + c x^{n-2}
\end{align*}
whose second derivative can be written as
\begin{align*}
P''(x) \coloneqq n(n-1) x^{n-4}(x-x_0+\varepsilon)(x-x_0-\varepsilon).
\end{align*}
If $|\varepsilon| \leq c_1$ for a sufficiently small constant $c_1$,
then $|P'(x)| \sim 1$ on a neighbourhood of $x_0$, which depends on $c_1$, but not on $\varepsilon$.
On the other hand, if $|\varepsilon| > c_2$ for some $c_2 > 0$ and $x_0-\varepsilon \sim 1$ (resp. $x_0+\varepsilon \sim 1$),
then $|P'''(x_0-\varepsilon)| \sim_{c_2} 1$ (resp. $|P'''(x_0+\varepsilon)| \sim_{c_2} 1$).
\end{lemma}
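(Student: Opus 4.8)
The plan is to prove this by direct examination of $P'$ and $P'''$, exploiting the explicit factored form of $P''$. First I would integrate the factored expression for $P''$ to recover $P'$. Writing $P''(x) = n(n-1) x^{n-4}\bigl((x-x_0)^2 - \varepsilon^2\bigr)$ and expanding, we have
\begin{align*}
P''(x) = n(n-1)\Bigl( x^{n-2} - 2x_0 x^{n-3} + (x_0^2-\varepsilon^2) x^{n-4} \Bigr).
\end{align*}
Antidifferentiating and matching with $P'(x) = n x^{n-1} + (n-1) b x^{n-2} + (n-2) c x^{n-3}$ gives $b$ and $c$ as explicit affine functions of $x_0$ and $\varepsilon^2$ (with coefficients depending only on $n$). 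In particular one finds $b = -\tfrac{2(n-1)}{n-2}\,x_0$ and $c = \tfrac{(n-1)}{n-3}\,(x_0^2-\varepsilon^2)\cdot\tfrac{1}{?}$ up to the combinatorial constants — the precise constants are irrelevant; what matters is that $b$ and $c$ are smooth in $(x_0,\varepsilon)$ and that at $\varepsilon = 0$, $x = x_0$ the first derivative $P'(x_0)$ is a nonzero constant (this is where $n\ge 5$ enters, so that the combinatorial constant does not accidentally vanish — one checks $P'(x_0)\big|_{\varepsilon=0}$ is a fixed nonzero multiple of $x_0^{n-1}\sim 1$).

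Next, for the first claim I would argue by continuity/compactness. For $\varepsilon = 0$ we have shown $|P'(x_0)| \sim 1$. Since $P'$ depends continuously (indeed smoothly) on $\varepsilon$ and on the evaluation point, and $x_0 \sim 1$ is confined to a fixed compact range, there is a fixed neighbourhood $U$ of $x_0$ (of size controlled by $c_1$ only) and a threshold $c_1 > 0$ such that $|\varepsilon| \le c_1$ forces $|P'(x)| \sim 1$ for all $x \in U$, with constants independent of $\varepsilon$. The only subtlety is to make the neighbourhood and the constants uniform in $x_0$; this follows because all quantities are polynomial in $x_0$ with $n$-dependent coefficients and $x_0$ ranges over a compact set bounded away from $0$.

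For the second claim I would compute $P'''$ directly from the factored $P''$:
\begin{align*}
P'''(x) = n(n-1)\Bigl( (n-4) x^{n-5}\bigl((x-x_0)^2-\varepsilon^2\bigr) + 2 x^{n-4}(x-x_0) \Bigr).
\end{align*}
Evaluating at $x = x_0 - \varepsilon$, the first summand vanishes (since $(x-x_0)^2 = \varepsilon^2$ there) and one is left with $P'''(x_0-\varepsilon) = -2n(n-1)\,\varepsilon\,(x_0-\varepsilon)^{n-4}$. Hence if $|\varepsilon| > c_2$ and $x_0 - \varepsilon \sim 1$, then $|P'''(x_0-\varepsilon)| \sim_{c_2} 1$; the case $x_0+\varepsilon$ is identical by symmetry (replace $\varepsilon$ by $-\varepsilon$). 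The main obstacle — which is really quite mild — is bookkeeping the $n$-dependent combinatorial constants carefully enough to confirm that $P'(x_0)|_{\varepsilon=0}\neq 0$ precisely when $n\ge 5$, and to ensure all the $\sim$ constants in the first part are genuinely independent of $\varepsilon$ (not just of $x_0$); both are handled by the explicit formulas above together with compactness in $x_0$.
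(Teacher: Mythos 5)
Your proof is correct and follows exactly the one-line approach the paper suggests: express $b$ and $c$ in terms of $x_0$ and $\varepsilon$ and then verify the two claims by direct computation together with a compactness argument for uniformity in $(x_0,\varepsilon)$. There is a small arithmetic slip in the constant for $b$ (matching the $x^{n-3}$ coefficients gives $b=-\tfrac{2n}{n-2}x_0$ rather than $-\tfrac{2(n-1)}{n-2}x_0$, and one finds $P'(x_0)\big|_{\varepsilon=0}=\tfrac{2n}{(n-2)(n-3)}\,x_0^{n-1}$), but as you correctly observe the precise constants are immaterial once one knows they are nonzero for $n\ge 5$.
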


\begin{proof}
One needs to express $b$ and $c$ in terms of $x_0$ and $\varepsilon$,
after which it is easy to prove the lemma by a straightforward calculation.
\end{proof}

From the first conclusion of Lemma \ref{LowerThanTwo_Airy_near_ComplexInterpolation_polynomial_lemma} we see that
if the zeros of $\partial_{s_0}^2 \tilde{\Phi}$ which are away from the origin are too close to each other,
then we may use stationary phase or integration by parts to obtain a factor of $\lambda^{-1/2}$
(or better) and so the left hand side of \eqref{LowerThanTwo_Airy_near_ComplexInterpolationFinal} is absolutely summable.
Therefore we may assume that there is at least some distance between the zeros of $\partial_{s_0}^2 \tilde{\Phi}$.
From the second conclusion of Lemma \ref{LowerThanTwo_Airy_near_ComplexInterpolation_polynomial_lemma}
we obtain $|\partial_{s_0}^3 \tilde{\Phi}| \sim 1$ in a neighbourhood of those zeros within the integration domain
(i.e., for those located at $\sim 1$).

Therefore, we may now use the implicit function theorem and obtain a parametrisation
of a zero of the first three terms of $\partial_{s_0}^2 \tilde{\Phi}$:
\begin{align*}
\partial_{s_0}^2( s_0^{n} G_5(s_0^{n-2},\delta,\sigma) - s_0^{n-1} G_3(s_0^{n-2},\delta,\sigma) x_1 - s_0^{n-2} x_2),
\end{align*}
which we shall denote by $s_0^c(x,\delta,\sigma)$, and assume it is located away from the origin.
All such zeros can be treated the same way.

We may assume we integrate arbitrarily near the zero $s_0^c(x,\delta,\sigma)$ since again
we could otherwise use stationary phase or integration by parts. We may then use a Taylor approximation for
the first three terms in $\tilde{\Phi}$ at $s_0^c(x,\delta,\sigma)$ and obtain after translating $s_0 \mapsto s_0 + s_0^c$
that the phase has the form
\begin{align*}
\tilde{\Phi}_1(z,s_0,x,\delta,\sigma) =
    &\tilde{B}_0(x,\delta,\sigma) - \tilde{B}_1(x,\delta,\sigma) s_0 + \tilde{B}_3(s_0, x,\delta,\sigma) s_0^3 \\
    &+ \lambda^{-2/3} z \tilde{G}_1(s_0, x,\delta,\sigma) - \lambda^{-2/3} z \tilde{G}_2(s_0, x, \delta, \sigma) s_0
\end{align*}
with functions $\tilde{B}_{i}$, $i = 0,1,3$, being smooth and $|\tilde{B}_3| \sim 1$.
The functions $\tilde{G}_{i}$ are also smooth and have
the property that they do not depend on $s_0$ when $\delta = 0$.
Note also $\tilde{G}_2(s_0, x, 0, \sigma) = 1$.

Hence, we have obtained an Airy type integral with an error term of size at most $\lambda^{-2/3}$.
We denote this newly obtained function by $\tilde{\nu}^\lambda_{\delta, Ai}$:
\begin{align*}
\begin{split}
\tilde{\nu}^\lambda_{\delta, Ai} (x) = \lambda^{3/2} \int &e^{-i \lambda s_3 \tilde{\Phi}_1(z,s_0,x,\delta,\sigma)} \\
   &\times \tilde{g}_1 (z, s_0, s_3, \lambda^{-1/3}, \delta, \sigma) \mathrm{d}z \mathrm{d}s_0 \mathrm{d}s_3,
\end{split}
\end{align*}
where $\tilde{g}_1$ has the same properties as $\tilde{g}$, except that now the integration is over the domain
where $|z| \lesssim 1$, $|s_3| \sim 1$, and $|s_0| \ll 1$.

We now prove \eqref{LowerThanTwo_Airy_near_ComplexInterpolationFinal} for the remaining piece $\tilde{\nu}^\lambda_{\delta, Ai}$.
Let us begin with the case when
$$
A \coloneqq \lambda^{2/3} \tilde{B}_1(x,\delta,\sigma)
$$
satisfies $|A| \gg 1$.
We claim that in this case we can estimate the function $\tilde{\nu}^\lambda_{\delta, Ai}$ by $\lambda^{7/6} |A|^{-1/4}$, which
is absolutely summable in $\lambda$ in the expression \eqref{LowerThanTwo_Airy_near_ComplexInterpolationFinal} for $\mu_{1+it}$.
We need a modification of Lemma \ref{Oscillatory_auxiliary_airy}, (b).

\begin{lemma}
\label{LowerThanTwo_Airy_near_ComplexInterpolation_modification_Airy_b}
Consider the integral
\begin{align*}
   \int e^{ i\lambda(
       - b_1 s_0 + b_3(s_0) s_0^3 + \lambda^{-2/3} g(s_0)
   )} a_0(s_0,\lambda^{-1/3}) \chi_0(s_0) \mathrm{d}s_0,
\end{align*}
where all the appearing functions are smooth with uniformly bounded derivatives, and $|b_3(s_0)| \sim 1$.
This integral can be estimated up to a constant by $\lambda^{-1/3} |\lambda^{2/3} b_1|^{-1/4}$
if $|\lambda^{2/3} b_1| \gg 1$, $\lambda \gg 1$, and $\chi_0$ is supported in a sufficiently small neighbourhood of the origin.
\end{lemma}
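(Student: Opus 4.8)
The plan is to follow the proof of Lemma~\ref{Oscillatory_auxiliary_airy}~(b), the only genuinely new feature being the perturbation term $\lambda^{-2/3}g(s_0)$ in the phase; the whole point is that this term is \emph{lower order} relative to the main phase precisely in the regime $|\lambda^{2/3}b_1|\gg1$. First I would remove the constant part of the perturbation: writing $g(s_0)=g(0)+s_0\,h(s_0)$ with $h$ smooth, the factor $e^{i\lambda^{1/3}g(0)}$ has modulus one and pulls out of the integral, so we may and shall assume $g(0)=0$, i.e. $g(s_0)=s_0\,h(s_0)$.

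Next I would rescale $s_0=|b_1|^{1/2}r$. Setting $\mu:=\lambda|b_1|^{3/2}$, $\varepsilon_0:=\sgn(b_1)\in\{\pm1\}$ and $\eta:=(\lambda^{2/3}|b_1|)^{-1}$, the integral becomes
$$
|b_1|^{1/2}\int e^{\,i\mu\bigl(\,b_3(|b_1|^{1/2}r)\,r^3-\varepsilon_0 r+\eta\, r\,h(|b_1|^{1/2}r)\bigr)}\,a_0(|b_1|^{1/2}r,\lambda^{-1/3})\,\chi_0(|b_1|^{1/2}r)\,\mathrm{d}r .
$$
By hypothesis $\mu\gg1$ and $\eta\ll1$. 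Since $|b_1|\lesssim1$, the coefficient $r\mapsto b_3(|b_1|^{1/2}r)$ still has modulus $\sim1$ with all derivatives bounded uniformly in $b_1$, and the extra summand $\eta\,r\,h(|b_1|^{1/2}r)$ is small in $C^\infty$ on the support of $\chi_0(|b_1|^{1/2}\cdot)$ (every $r$-derivative is $O(\eta)$, since differentiating $h(|b_1|^{1/2}r)$ only produces further powers of $|b_1|^{1/2}\ll1$). Thus $\Psi(r):=b_3(|b_1|^{1/2}r)r^3-\varepsilon_0 r+\eta\,r\,h(|b_1|^{1/2}r)$ is a $C^\infty$-small perturbation of the Airy-type phase $b_3 r^3-\varepsilon_0 r$ of Lemma~\ref{Oscillatory_auxiliary_airy}~(b).

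Then the standard dichotomy applies, uniformly in all parameters. On the region where $|\Psi'(r)|$ is bounded below, in particular for $|r|$ large where $|\Psi'(r)|\sim|r|^2$, repeated integration by parts (exactly as in the $|t|>C$ part of the proof of Lemma~\ref{Oscillatory_auxiliary_airy}~(a), where the growth in $r$ is controlled by the cubic term) yields $O_N(\mu^{-N})$. Near the at most two critical points of $\Psi$, which sit at $|r|\sim1$ with $|\Psi''|\sim1$, the stationary phase method (or Lemma~\ref{Oscillatory_auxiliary_van_der_corput}~(i) with $M=2$) gives $O(\mu^{-1/2})$. Summing the two contributions and restoring the Jacobian factor $|b_1|^{1/2}$, the integral is
$$
\lesssim |b_1|^{1/2}\,\mu^{-1/2}=|b_1|^{1/2}\bigl(\lambda|b_1|^{3/2}\bigr)^{-1/2}=\lambda^{-1/2}|b_1|^{-1/4}=\lambda^{-1/3}\,\bigl|\lambda^{2/3}b_1\bigr|^{-1/4},
$$
which is the asserted bound. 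If the small support of $\chi_0$ forces the critical points outside the domain of integration, only the integration-by-parts bound survives, which is even better.

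I expect the only real work to be bookkeeping: verifying that the rescaled amplitude $a_0(|b_1|^{1/2}r,\lambda^{-1/3})\chi_0(|b_1|^{1/2}r)$ and the perturbed phase $\Psi$ carry all the relevant derivative bounds uniformly in $b_1$ and $\lambda$, which is a genuine point because the domain $|r|\lesssim|b_1|^{-1/2}$ of the rescaled variable expands as $|b_1|$ approaches the threshold scale $\lambda^{-2/3}$; this is handled as in \cite{IM16} by always dominating powers of $r$ by $b_3 r^3$. An alternative to the rescaling is to run stationary phase directly in $s_0$: the critical points lie at $s_0\sim|b_1|^{1/2}$ with second derivative of the full phase $\sim\lambda|b_1|^{1/2}$, while $\lambda^{-2/3}g$ contributes $\lambda^{1/3}g'$ and $\lambda^{1/3}g''$ to the first and second phase derivatives, negligible against $\lambda|b_1|$ and $\lambda|b_1|^{1/2}$ respectively exactly because $|b_1|\gg\lambda^{-2/3}$, and again producing the bound $\lambda^{-1/2}|b_1|^{-1/4}$.
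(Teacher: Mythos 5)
Your proposal is correct and takes essentially the same approach as the paper's proof: rescale $s_0=|b_1|^{1/2}r$, observe that the hypothesis $|\lambda^{2/3}b_1|\gg1$ renders the $g$-term a lower-order perturbation of the Airy-type phase derivative, and then combine stationary phase near the (at most two) critical points at $|r|\sim1$ with integration by parts elsewhere to obtain $|b_1|^{1/2}\big(\lambda|b_1|^{3/2}\big)^{-1/2}=\lambda^{-1/3}\big|\lambda^{2/3}b_1\big|^{-1/4}$. The only difference is cosmetic: the paper disposes of the regimes $b_1<0$ and $|b_1|\gtrsim1$ (no critical points, bound $|\lambda b_1|^{-1}$) by direct integration by parts in the original variable before rescaling, whereas you rescale uniformly and separate off $g(0)$ first.
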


\begin{proof}
Without loss of generality we may assume $b_3 > 0$.
We proceed similarily as in the proof of Lemma \ref{Oscillatory_auxiliary_airy}, (b).
The main point is that since we may assume $|b_1| \gg |\lambda^{-2/3} g^{(k)}|$ for finitely many $k \geq 0$,
the term $\lambda^{-2/3} g(s_0)$ will not have any significant influence.
The first derivative of the phase is
\begin{align*}
i \lambda(-b_1  + 3b_3(s_0) s_0^2 + b_3'(s_0) s_0^3 + \lambda^{-2/3} g'(s_0)),
\end{align*}
and hence if $b_1 < 0$ or $|b_1| \gtrsim 1$, then the phase has no critical points
since the first two terms are dominant, and its derivative is of size $\gtrsim |\lambda b_1|$.
Using integration by parts we get the estimate $|\lambda b_1|^{-1}$.

Therefore we may assume $0 < b_1 \ll 1$ and substitute $b_1^{1/2} s_0$ to obtain
\begin{align*}
   b_1^{1/2} \int &e^{ i \lambda b_1^{3/2}(-s_0 + b_3(b_1^{1/2} s_0) s_0^3 + b_1^{-3/2} \lambda^{-2/3} g(b_1^{1/2} s_0))} \\
       &\times a_0(b_1^{1/2} s_0,\lambda^{-1/3}) \chi_0(b_1^{1/2} s_0) \mathrm{d}s_0.
\end{align*}
One can now easily check that the function
$$
s_0 \mapsto -s_0 + b_3(b_1^{1/2} s_0) s_0^3 + b_1^{-3/2} \lambda^{-2/3} g(b_1^{1/2} s_0)
$$
has precisely two critical points near $\pm 1$.
Near these critical points the second derivative is of size $\sim 1$ and so by stationary phase one gets the bound
$b_1^{1/2} |\lambda b_1^{3/2}|^{-1/2} = \lambda^{-1/3} |\lambda^{2/3} b_1|^{-1/4}$.
Away from the critical points the size of the derivative of the phase is $\sim \lambda b_1^{3/2} \max\{s_0^2, 1\}$,
and so integrating by parts one gets the estimate $|\lambda b_1|^{-1}$.
\end{proof}

Therefore after one applies the above lemma, our problem is reduced to the case $|A| \lesssim 1$.
Our next step is to substitute $s_0 \mapsto \lambda^{-1/3} s_0$.
Then one gets
\begin{align*}
\begin{split}
\tilde{\nu}^\lambda_{\delta, Ai} (x) = \lambda^{7/6} \int &e^{-i \lambda s_3 \tilde{\Phi}_1(z,\lambda^{-1/3} s_0,x,\delta,\sigma)} \\
   &\times \tilde{g}_1 (z, \lambda^{-1/3} s_0, s_3, \lambda^{-1/3}, \delta, \sigma) \mathrm{d}z \mathrm{d}s_0 \mathrm{d}s_3,
\end{split}
\end{align*}
where
\begin{align*}
\lambda \tilde{\Phi}_1(z,\lambda^{-1/3} s_0,x,\delta,\sigma) =
    &\lambda \tilde{B}_0(x,\delta,\sigma) - A s_0 + \tilde{B}_3(\lambda^{-1/3} s_0, x,\delta,\sigma) s_0^3 \\
    &+ \lambda^{1/3} z \tilde{G}_1(\lambda^{-1/3} s_0, x,\delta,\sigma) - z \tilde{G}_2(\lambda^{-1/3} s_0, x, \delta, \sigma) s_0,
\end{align*}
and the new integration domain is $|z| \lesssim 1$, $|s_3| \sim 1$, and $|s_0| \ll \lambda^{1/3}$.

Using a Taylor approximation we can rewrite the $\tilde{G}_1$ term as
\begin{align*}
\tilde{G}_1(\lambda^{-1/3} s_0, x,\delta,\sigma) = \tilde{G}_1(0, x,\delta,\sigma) + \lambda^{-1/3} s_0 r(\lambda^{-1/3} s_0, x,\delta,\sigma).
\end{align*}
where $|\partial^N_t r(t, x,\delta,\sigma)| \ll_N 1$ for any $N \geq 0$ since $\tilde{G}_1$ is constant when $\delta = 0$.
Therefore, if we denote $\tilde{G}_3 = \tilde{G}_2 - r$, then $\tilde{G}_3$ has the same properties as $\tilde{G}_2$
(in particular $\tilde{G}_3 \sim 1$), and we can write
\begin{align*}
\lambda \tilde{\Phi}_1(z,\lambda^{-1/3} s_0,x,\delta,\sigma) =
    &\lambda \tilde{B}_0(x,\delta,\sigma) - A s_0 + \tilde{B}_3(\lambda^{-1/3} s_0, x,\delta,\sigma) s_0^3 \\
    &+ \lambda^{1/3} z \tilde{G}_1(0, x,\delta,\sigma) - z \tilde{G}_3(\lambda^{-1/3} s_0, x, \delta, \sigma) s_0.
\end{align*}

From this expression one sees that we can get an integrable factor of size $(1+|s_0|^2)^{-N/2}$
in the amplitude of $\tilde{\nu}^\lambda_{\delta, Ai}$ by using integration by parts in $s_0$, i.e., we can assume
\begin{align*}
\Big| \partial^{\alpha_1}_z \partial^{\alpha_2}_{s_0} \partial^{\alpha_3}_{s_3} \Big( \tilde{g}_1 (z, \lambda^{-1/3} s_0, s_3, \lambda^{-1/3}, \delta, \sigma) \Big) \Big|
\lesssim C_{\alpha_1, \alpha_2, \alpha_3} (1+|s_0|^2)^{-N/2},
\end{align*}
as the unbounded terms in the expression for the $s_0$ derivative of $\lambda \tilde{\Phi}_1(z,\lambda^{-1/3} s_0,x,\delta,\sigma)$ vanish.

Let us denote by
$$
E \coloneqq \lambda \tilde{B}_0(x,\delta,\sigma), \qquad \qquad F \coloneqq \lambda^{1/3} \tilde{G}_1(0, x,\delta,\sigma),
$$
the unbounded terms of the phase.
We need to reduce our problem to the case when $|E| \lesssim 1$ and $|F| \lesssim 1$
since then we can simply apply the oscillatory sum lemma.

We begin with the case $|F| \gg 1$.
Let us consider the $z$ integration. The factor tied with $z$ in the phase is
$$
F - \tilde{G}_3(\lambda^{-1/3} s_0, x, \delta, \sigma) s_0 = F - \tilde{G}_3 s_0,
$$
where $\tilde{G}_3(\lambda^{-1/3} s_0, x, \delta, \sigma) \sim 1$.
We may therefore assume we are integrating over the area in $s_0$ where
$$
|F - \tilde{G}_3 s_0| \lesssim |F|^{\varepsilon},
$$
since otherwise we can use integration by parts in $z$ and gain a factor $|F|^{-\varepsilon}$.
In particular, in this case we have $|s_0| \sim |F|$.
But then the integrable factor $(1+|s_0|^2)^{-N/2}$ is of size $|F|^{-N}$ and so we obtain the required bound.

It remains to consider the case $|F| \lesssim 1$ and $|E| \gg 1$.
The idea in this case is to use integration by parts in $s_3$, which enables us to localise the integration
to the set where $|\lambda \tilde{\Phi}_1| \lesssim |E|^\varepsilon$.
If we now take $|E|$ sufficiently large compared to both $|A|$ and $|F|$,
then we see that $|\lambda \tilde{\Phi}_1| \lesssim |E|^\varepsilon$ forces $|s_0| \sim |E|^{1/3}$.
But this implies that the integrable factor $(1+|s_0|^2)^{-N/2}$ is of size $|E|^{-N/3}$, which is what we wanted.
We are done with the part near the Airy cone.



\subsection{Estimates away from the Airy cone -- first considerations}

Recall from \eqref{LowerThanTwo_Airy_Fourier_measure_form} and \eqref{LowerThanTwo_Airy_decomposition} that we may write
\begin{align*}
\widehat{\nu^\lambda_{\delta,l}}(\xi) =
   &\lambda^{-1/2} \chi_1((2^{-l}\lambda)^{2/3} B_1(s',\delta,\sigma)) \\
   &\chi_1(s_1 s_3) \chi_1(s_2 s_3) \chi_1 (s_3) e^{-i\lambda s_3 B_0(s',\delta,\sigma)}\\
   &\int e^{-i\lambda s_3 (B_3(s_2,\delta,\sigma,y_1) y_1^3 - B_1(s', \delta, \sigma) y_1)} a_0(y_1,s,\delta) \chi_0(y_1) \mathrm{d}y_1,
\end{align*}
where $1 \ll 2^l \ll \lambda$. Applying Lemma \ref{Oscillatory_auxiliary_airy}, (b), we obtain
\begin{align*}
\begin{split}
\widehat{\nu^\lambda_{\delta,l}}(\xi) =
   &\lambda^{-1/2} \chi_1((2^{-l}\lambda)^{2/3} B_1(s',\delta,\sigma)) \\
   &\chi_1(s_1 s_3) \chi_1(s_2 s_3) \chi_1 (s_3) e^{-i\lambda s_3 B_0(s',\delta,\sigma)} \\
   &\Big( s_3^{-1/2} \lambda^{-1/2} |B_1(s', \delta, \sigma)|^{-1/4} \\
   &\times a(|B_1(s', \delta, \sigma)|^{1/2}, s; s_3\lambda |B_1(s', \delta, \sigma)|^{3/2}) \,
    e^{is_3 \lambda |B_1(s', \delta, \sigma)|^{3/2} q(|B_1(s', \delta, \sigma)|^{1/2},s_2)} \\
   &+ (s_3 \lambda |B_1(s', \delta, \sigma)|)^{-1} \, E(s_3\lambda |B_1(s', \delta, \sigma)|^{3/2}, |B_1(s', \delta, \sigma)|^{1/2}, s) \Big),
\end{split}
\end{align*}
where we have slightly simplified the situation by ignoring the sign of the function $q$ since
both $q_{+}$ and $q_{-}$ appearing in Lemma \ref{Oscillatory_auxiliary_airy}, (b), can be treated in the same way.
Note that $q$ depends in the second variable
only in $s_2$ and not $s$ since the same is true for $B_3$,
as can be readily seen from the proof of Lemma \ref{Oscillatory_auxiliary_airy}, (b).
Recall that $a$, $q$, and $E$ are smooth, and $|q| \sim 1$.
$E$ and all its derivatives have Schwartz decay in the first variable, and
$a$ is a classical symbol of order $0$ in the $s_3\lambda |B_1(s', \delta, \sigma)|^{3/2}$ variable.

We denote
$$
z = (2^{-l}\lambda)^{2/3} B_1(s',\delta,\sigma),
$$
and slightly change $a$ and $E$ in order to absorb the $s_3$ factors.
Then we can rewrite the previous expression for $\widehat{\nu^\lambda_{\delta,l}}$ as
\begin{align*}
\begin{split}
\widehat{\nu^\lambda_{\delta,l}}(\xi) =
   &\lambda^{-1/2} \chi_1(z) \chi_1(s_1 s_3) \chi_1(s_2 s_3) \chi_1 (s_3) e^{-i\lambda s_3 B_0(s',\delta,\sigma)} \\
   &\Big( \lambda^{-1/2} (2^{-l} \lambda)^{1/6} |z|^{-1/4} \\
   &\times a((2^l \lambda^{-1})^{1/3} |z|^{1/2}, s; 2^l |z|^{3/2}) \,
    e^{-is_3 2^l |z|^{3/2} q((2^l \lambda^{-1})^{1/3} |z|^{1/2}, s_2)} \\
   &+ \lambda^{-1} (2^{-l} \lambda)^{2/3} |z|^{-1} \, E(2^l |z|^{3/2}, (2^l \lambda^{-1})^{1/3} |z|^{1/2}, s) \Big).
\end{split}
\end{align*}
From this we easily see that
\begin{align*}
\Vert \widehat{\nu^\lambda_{\delta,l}} \Vert_{L^\infty} \lesssim \lambda^{-5/6} 2^{-l/6}.
\end{align*}

We plan to use complex interpolation and the two parameter oscillatory sum lemma (Lemma \ref{Oscillatory_auxiliary_two_parameter}).
We consider the following function parametrised by $\zeta \in \C$:
\begin{align*}
\mu_\zeta = \gamma(\zeta) \, \sum_{\substack{ 1 \ll \lambda \leq \delta_0^{-6} \\ M_0 \leq 2^l \leq \lambda/M_1}}
    \lambda^{\frac{7-21\zeta}{12}} \, 2^{\frac{1-3\zeta}{6}l} \, \nu^\lambda_{\delta,l},
\end{align*}
for an appropriate $\gamma(\zeta)$ to be chosen later as in \eqref{Oscillatory_auxiliary_two_parameter_gamma}.
We shall also use the one parameter oscillatory sum lemma for certain subcases,
and therefore we shall need to add appropriate factors to $\gamma$ of the form \ref{Oscillatory_auxiliary_one_parameter_gamma}.
The operator associated to $\mu_\zeta$ we denote by $T_\zeta$.

For $\zeta = 1/3$ we see that
\begin{align*}
\mu_\zeta = \sum_{\substack{ 1 \ll \lambda \leq \delta_0^{-6} \\ M_0 \leq 2^l \leq \lambda/M_1}} \nu^\lambda_{\delta,l},
\end{align*}
which means, by Stein's interpolation theorem, that it is sufficient to prove
\begin{align*}
\begin{split}
\Vert T_{it} \Vert_{ L^{2/(2-\tilde{\sigma})}_{x_3} (L^1_{(x_1,x_2)}) \to L^{2/\tilde{\sigma}}_{x_3} (L^\infty_{(x_1,x_2)}) }
   &\lesssim 1,\\
\Vert T_{1+it} \Vert_{L^2\to L^2}
   &\lesssim 1,
\end{split}
\end{align*}
with constants uniform in $t \in \R$.

In order to prove the first estimate we need the decay bound \eqref{Auxiliary_mu_decay}, i.e.,
$$ |\widehat{\mu_{it}}(\xi)| \lesssim \frac{1}{(1+|\xi_3|)^{1/4}}.$$
This bound follows easily by the $L^\infty$ bound on the Fourier transform of $\nu^\lambda_{\delta,l}$, the definition of $\mu_\zeta$, and
the fact that each $\widehat{\nu^\lambda_{\delta,l}}$ has its support located at $(\lambda,\lambda,\lambda)$.

It remains to prove the $L^2 \to L^2$ estimate
\begin{align}
\label{LowerThanTwo_Airy_away_ComplexInterpolationFinal}
\Bigg\Vert \sum_{\substack{ 1 \ll \lambda \leq \delta_0^{-6} \\ M_0 \leq 2^l \leq \lambda/M_1}}
   \lambda^{-\frac{7}{6}- \frac{7}{4}it} \, 2^{-\frac{1}{3}l-\frac{1}{2}ilt} \, \nu^\lambda_{\delta,l} \Bigg\Vert_{L^\infty}
   \lesssim \frac{1}{\Big| \gamma(1+it) \Big|},
\end{align}
uniformly in $t$.

We split the function $\nu^\lambda_{\delta,l}$ as
$$
\nu^\lambda_{\delta,l} = \nu^E_{\lambda, l} + \nu^a_{\lambda, l},
$$
where
\begin{align*}
\widehat{\nu^E_{\lambda, l}}(\xi) =
   &\lambda^{-5/6} 2^{-Nl} \tilde{\chi}_1(s,z) e^{-i\lambda s_3 B_0(s',\delta,\sigma)} \\
   &\times E(2^l |z|^{3/2}, (2^l \lambda^{-1})^{1/3} |z|^{1/2}, s)
\end{align*}
and
\begin{align*}
\widehat{\nu^a_{\lambda, l}}(\xi) =
   &\lambda^{-5/6} 2^{-l/6} \tilde{\chi}_1(s,z) e^{-i\lambda s_3 B_0(s',\delta,\sigma)} \\
   &\times a((2^l \lambda^{-1})^{1/3} |z|^{1/2}, s; 2^l |z|^{3/2}) \, e^{-is_3 2^l |z|^{3/2} q((2^l \lambda^{-1})^{1/3} |z|^{1/2}, s_2)},
\end{align*}
with appropriate (and in each of the above expressions possibly different) $\tilde{\chi}_1$ smooth cutoff functions
localising to the area where $|s_1| \sim s_2 \sim |s_3| \sim |z| \sim 1$.
In the expression for $\nu^E_{\lambda, l}$ we obtain the factor $2^{-Nl}$ by using the Schwartz property in the first variable of $E$,
and so the function $E$ is slightly different than before, but with the same properties.

\subsection{Estimates away from the Airy cone -- the estimate for $\nu^E_{\lambda, l}$}

The function $\nu^E_{\lambda, l}$ can be treated similarily as the function $\nu^\lambda_{\delta, Ai}$ in the case near the Airy cone.
We first apply the inverse of the Fourier transform to $\widehat{\nu^E_{\lambda, l}}$,
and then substitute $s=(s_1,s_2,s_3)$ for $\xi=(\xi_1, \xi_2, \xi_3)$.
Recall that $z = (2^{-l}\lambda)^{2/3} B_1(s',\delta,\sigma)$ and so by Lemma \ref{LowerThanTwo_Airy_lemma_phase_taylor} one has
\begin{align*}
s_1 = s_2^{(n-1)/(n-2)} G_3(s_2,\delta,\sigma)-(2^{l} \lambda^{-1})^{2/3}z.
\end{align*}
We plug in this expression for $s_1$ and also substitute $s_0$ for $s_2^{1/(n-2)}$.
In the end one gets
\begin{align*}
\nu^E_{\lambda, l} (x) = &\lambda^{3/2} 2^{-Nl} \int e^{-i \lambda s_3 \Phi_2(z,s_0,x,\delta,\sigma)} \\
   &\times g_2 \Big(2^l, (2^l \lambda^{-1})^{1/3}, z, s_0, s_3, \delta, \sigma\Big) \mathrm{d}z \mathrm{d}s_0 \mathrm{d}s_3,
\end{align*}
where $g_2$ is smooth and has all of its derivatives Schwartz in the first variable, and where
\begin{align*}
\Phi_2(z,s_0,x,\delta,\sigma) \coloneqq &s_0^{n} G_5(s_0^{n-2},\delta,\sigma) - s_0^{n-1} G_3(s_0^{n-2},\delta,\sigma) x_1 - s_0^{n-2} x_2 - x_3 \\
    &+(2^l \lambda^{-1})^{2/3} z (x_1 - s_0 G_1(s_0^{n-2}, \delta, \sigma)).
\end{align*}
The only difference compared to the phase in \eqref{LowerThanTwo_Airy_near_ComplexInterpolation_space_form_phase} is that
there $|z| \lesssim 1$, while here $|z| \sim 1$, and instead of the $\lambda^{-2/3}$ factor in front of $z$ in the phase
in \eqref{LowerThanTwo_Airy_near_ComplexInterpolation_space_form_phase},
here we have the much larger factor $(2^l \lambda^{-1})^{2/3}$.

We can now reduce to the situation where $|x| \lesssim 1$.
Namely, if $|x_1| \gg 1$ then we integrate by parts in $z$ to gain a factor $(\lambda (2^l \lambda^{-1})^{2/3})^{-N}$.
Otherwise if $|x_1| \lesssim 1$ and $|x_2| \gg 1$, then we integrate by parts in $s_0$ to obtain a factor $\lambda^{-N}$,
and if $|(x_1,x_2)| \lesssim 1$ and $|x_3| \gg 1$, we integrate by parts in $s_3$ to again gain a factor of $\lambda^{-N}$.

Next, recall that $(2^l \lambda^{-1})^{2/3} \ll 1$. Therefore, we may use again Lemma \ref{LowerThanTwo_Airy_near_ComplexInterpolation_polynomial_lemma}
and argue similarily as we did in the case near the Airy cone to reduce ourselves to a small neighbourhood of a point where the second derivative
in $s_0$ of the first three terms of $\Phi_2$ vanishes and $|\partial_{s_0}^3 \Phi_2| \sim 1$.
By the implicit function theorem we may parametrise this point as $s^c = s^c(x, \delta, \sigma)$:
$$
\left. \partial^2_{s_0} \right|_{s_0 = s^c}
(s_0^{n} G_5(s_0^{n-2},\delta,\sigma) - s_0^{n-1} G_3(s_0^{n-2},\delta,\sigma) x_1 - s_0^{n-2} x_2 - x_3) = 0.
$$
The point $s^c$ depends smoothly on $(x,\delta,\sigma)$.

Translating to the point $s^c$ and localising to a small neighbourhood we obtain a new function $\tilde{\nu}^E_{\lambda, l}$ of the form
\begin{align*}
\tilde{\nu}^E_{\lambda, l} (x) = &\lambda^{3/2} 2^{-Nl} \int e^{-i \lambda s_3 \Phi_2(z,s_0,x,\delta,\sigma)} \\
   &\times \tilde{g}_2 \Big(2^l, (2^l \lambda^{-1})^{1/3}, z, s_0, s_3, \delta, \sigma\Big) \mathrm{d}z \mathrm{d}s_0 \mathrm{d}s_3,
\end{align*}
where $\tilde{g}_2$ has the same properties as $g_2$, except that now $|s_0| \ll 1$.
The new phase is
\begin{align*}
\tilde{\Phi}_2(z,s_0,x,\delta,\sigma) =
    &\tilde{B}_0(x,\delta,\sigma) - \tilde{B}_1(x,\delta,\sigma) s_0 + \tilde{B}_3(s_0, x,\delta,\sigma) s_0^3 \\
    &+ (2^l \lambda^{-1})^{2/3} z H_0(s_0, x,\delta,\sigma) - (2^l \lambda^{-1})^{2/3} z H_1(s_0, x, \delta, \sigma) s_0,
\end{align*}
where $|\tilde{B}_3| \sim 1$ and $H_1 \sim 1$.
Additionally, one can see that $H_0$ and $H_1$ do not depend on $s_0$ when $\delta = 0$.

The next step is to develop the whole phase $\tilde{\Phi}_2$ at the point where $\partial_{s_0}^2 \tilde{\Phi}_2 = 0$.
The reason for this is that the factor $(2^l \lambda^{-1})^{2/3}$ is too large, and we cannot apply something similar to
Lemma \ref{LowerThanTwo_Airy_near_ComplexInterpolation_modification_Airy_b}.
Let us denote the critical point of $\partial_{s_0} \tilde{\Phi}_2$ by $s_0^c = s_0^c(x,\delta,\sigma, (2^l \lambda^{-1})^{2/3} z)$.
Note that $s_0^c$ is identically $0$ when either $\delta = 0$ or the variable refering to $(2^l \lambda^{-1})^{2/3} z$ is $0$.
Therefore, we can actually write
$$
s_0^c = (2^l \lambda^{-1})^{2/3} z \, \tilde{s}_0^c(x,\delta,\sigma, (2^l \lambda^{-1})^{2/3} z),
$$
where $\tilde{s}_0^c$ is smooth and identically $0$ when $\delta = 0$.

If we shorten $\rho = (2^l \lambda^{-1})^{2/3} z$,
then the expression for the first derivative of $\tilde{\Phi}_2$ at the point $s_0^c$ has the form
\begin{align*}
\partial_{s_0} \tilde{\Phi}_2 (z,s_0^c,x,\delta,\sigma) &=
    (s_0^c)^2 \, b(s_0^c, x,\delta,\sigma) - \rho h(s^c_0, x, \delta, \sigma) - \tilde{B}_1(x,\delta,\sigma) \\
    &= \rho^2 (\tilde{s}_0^c)^2 \, b(s_0^c, x,\delta,\sigma) - \rho h(s^c_0, x, \delta, \sigma) - \tilde{B}_1(x,\delta,\sigma),
\end{align*}
where $h(s^c_0, x, \delta, \sigma) \sim 1$ and $|b(s_0^c, x,\delta,\sigma)| \sim 1$ for some smooth functions $h$ and $b$.

One can easily check that $|\partial^3_{s_0} \tilde{\Phi}_2 (z,s_0,x,\delta,\sigma)| \sim 1$.
Therefore, developing the phase $\tilde{\Phi}_2$ at the point $s^c_0$, we may write
\begin{align}
\label{LowerThanTwo_Airy_away_Phase_tilde_3}
\tilde{\Phi}_3(z,s_0,x,\delta,\sigma) =
    &b_0(\rho) - \Big[ b_1 +  \rho \tilde{b}_1(\rho) \Big] s_0 + b_3(s_0, \rho) s_0^3,
\end{align}
where we suppressed the dependence of $b_0, b_1, \tilde{b}_1$, and $b_3$ on the bounded parameters $(x,\delta,\sigma)$.
Here we know that $\tilde{b}_1 \sim 1$ and $|b_3| \sim 1$.
We may again assume $|s_0| \ll 1$ as on the other part where $|s_0| \gtrsim 1$ we could use integration by parts or stationary phase
and obtain an expression which when plugged into \eqref{LowerThanTwo_Airy_away_ComplexInterpolationFinal}
would be absolutely summable in both $\lambda$ and $2^l$.

Finally, we develop the term $b_0$ at $0$ and substitute $s_0 \mapsto \lambda^{-1/3}s_0$.
Then
\begin{align*}
\lambda \tilde{\Phi}_3(z,s_0,x,\delta,\sigma)
    &= \lambda \Big(b_0^0 + \rho b_0^1 + \rho^2 \tilde{b}_0(\rho) - \lambda^{-1/3} \Big[ b_1 +  \rho \tilde{b}_1(\rho) \Big] s_0 + \lambda^{-1} b_3(\lambda^{-1/3} s_0, \rho) s_0^3 \Big) \\
    &= \lambda b_0^0 + \lambda^{1/3} 2^{2l/3} b_0^1 z + \lambda^{-1/3} 2^{4l/3} \tilde{b}_0(\rho) z^2 \\
    & \quad- \Big[ \lambda^{2/3} b_1 +  2^{2l/3} \tilde{b}_1(\rho) \, z\Big] s_0 + b_3(\lambda^{-1/3} s_0, \rho) s_0^3,
\end{align*}
and the remaining part of the function $\tilde{\nu}^E_{\lambda, l}$ is of the form
\begin{align}
\label{LowerThanTwo_Airy_away_nu_tilde_tilde}
\tilde{\tilde{\nu}}^E_{\lambda, l} (x) = &\lambda^{7/6} 2^{-Nl} \int e^{-i \lambda s_3 \tilde{\Phi}_3(z, \lambda^{-1/3} s_0,x,\delta,\sigma)} \\
   &\times g_3 \Big(2^l, (2^l \lambda^{-1})^{1/3}, z, \lambda^{-1/3} s_0, s_3, \delta, \sigma\Big) \mathrm{d}z \mathrm{d}s_0 \mathrm{d}s_3, \nonumber
\end{align}
where again $g_3$ has the same properties as $\tilde{g}_2$ and
in the area of integration we have $|s_0| \ll \lambda^{1/3}$.

Now, we first note that we can assume $\lambda^{-1/3} 2^{4l/3} \ll 1$ since otherwise
we can easily sum in both $\lambda$ and $l$ using the factor $2^{-Nl}$ for a sufficiently large $N$.
Next, we introduce
$$
A \coloneqq \lambda b_0^0, \qquad B \coloneqq \lambda^{1/3} 2^{2l/3} b_0^1, \qquad D \coloneqq \lambda^{2/3} b_1.
$$
We need to reduce our problem to the situation when $A, B$, and $D$ are bounded
since then we can simply apply the (one parameter) oscillatory sum lemma.
When this is the case, the size of the integration domain in \eqref{LowerThanTwo_Airy_away_nu_tilde_tilde}
is not a problem since, if we split the integration domain to the areas where $|s_0| \lesssim 2^{l/3}$ and $|s_0| \gg 2^{l/3}$,
the first part has domain size $2^{l/3}$, which is admissible,
and in the second part the amplitude is integrable in $s_0$ after using integration by parts.

{\bf{Case $|D| \gg 1$.}}
We consider two subcases.
The first subcase is when
\begin{align*}
|\lambda^{2/3} b_1 +  2^{2l/3} \tilde{b}_1(\rho) \, z| = |D +  2^{2l/3} \tilde{b}_1(\rho) \, z| > 1.
\end{align*}
Here we can actually use the Airy integral lemma (Lemma \ref{Oscillatory_auxiliary_airy}, (b))
applied to $s_0$ integration before substituting $s_0 \mapsto \lambda^{-1/3}s_0$, i.e., using the phase form \eqref{LowerThanTwo_Airy_away_Phase_tilde_3},
and obtain the bound
\begin{align*}
\Vert \tilde{\tilde{\nu}}^E_{\lambda, l} \Vert_{L^\infty} \lesssim \lambda^{7/6} 2^{-lN} \int \chi_1(z) |D +  2^{2l/3} \tilde{b}_1(\rho) \, z|^{-\varepsilon} \mathrm{d}z,
\end{align*}
for some constant $\varepsilon > 0$.
After plugging into \eqref{LowerThanTwo_Airy_away_ComplexInterpolationFinal} this is absolutely summable in $\lambda$.
Namely, in the cases $|D| \ll 2^{2l/3}$ and $|D| \gg 2^{2l/3}$ we get the estimate $|D|^{-\varepsilon}$, which is summable,
and the case $|D| \sim 2^{2l/3}$ happens for only $\mathcal{O}(1)$ $\lambda$'s, which depend on $l$.

The second subcase is when
\begin{align*}
|D +  2^{2l/3} \tilde{b}_1(\rho) \, z| \leq 1.
\end{align*}
Then necessarily again $|D| \sim |2^{2l/3}|$, and this can happen only for $\mathcal{O}(1)$ $\lambda$'s.
By \eqref{LowerThanTwo_Airy_away_nu_tilde_tilde} we have
\begin{align*}
\Vert \tilde{\tilde{\nu}}^E_{\lambda, l} \Vert_{L^\infty} \lesssim \lambda^{7/6} 2^{-lN},
\end{align*}
for maybe some different $N$.
The factor $\lambda^{7/6}$ is retained since in this case we can get an integrable factor in $s_0$ by using integration by parts.
After plugging into \eqref{LowerThanTwo_Airy_away_ComplexInterpolationFinal}
we may sum over the $\mathcal{O}(1)$ $\lambda$'s and then in $l$.

{\bf{Case $|D| \lesssim 1$, and $|A| \gg 1$ or $|B| \gg 1$.}}
The case $|A| \sim |B|$ can again happen only for $\mathcal{O}(1)$
number of $\lambda$'s and so we can assume that either $|A| \gg |B|$ or $|B| \gg |A|$. Both cases can be treated equally
and so we can assume without loss of generality that $|A| \gg |B|$. Then we can rewrite the phase in the form
\begin{align*}
\lambda \tilde{\Phi}_3(z,s_0,x,\delta,\sigma)
    &= B_0(\lambda, 2^l, z) - B_1(\lambda, 2^l, z) s_0 + b_3(\lambda^{-1/3} s_0, \rho) s_0^3,
\end{align*}
where we know that for $l$ sufficiently large $|B_0| \sim |A|$, $|B_1| \sim 2^{2l/3}$, and $|b_3| \sim 1$.

In order to simplify the situation a bit, we develop the amplitude function $g_3$ into a sum of tensor products,
separating the $s_3$ variable from the others.
It is sufficient to consider each of these tensor product terms separately, and so we can assume without loss of generality that
\begin{align*}
g_3 \Big(2^l, (2^l \lambda^{-1})^{1/3}, z, \lambda^{-1/3} s_0, s_3, \delta, \sigma\Big) =
  \tilde{g}_3 \Big(2^l, (2^l \lambda^{-1})^{1/3}, z, \lambda^{-1/3} s_0, \delta, \sigma\Big) \, \chi_1(s_3),
\end{align*}
where $\tilde{g}_3$ has the same properties as $g_3$, except it does not depend on $s_3$.

Then, after using the Fourier transform in $s_3$,
the integral in $s_0$ for the function $\tilde{\tilde{\nu}}^E_{\lambda, l}$ is of the form
\begin{align}
\label{LowerThanTwo_Airy_away_measure_E_part_final_integral}
    \int \widecheck{\chi}_1 \Big(B_0 - B_1 s_0 + b_3(\lambda^{-1/3} s_0, \rho) s_0^3\Big)
    \,\, \tilde{g}_3 \Big(2^l, (2^l \lambda^{-1})^{1/3}, z, \lambda^{-1/3} s_0, \delta, \sigma\Big) \mathrm{d}s_0,
\end{align}
where we have suppressed the variables of $B_0$ and $B_1$. One can easily check that this integral is bounded by $2^{l/3}$
by considering the situations where $|s_0| \lesssim 2^{l/3}$ and $|s_0| \gg 2^{l/3}$ separately. This is in fact true
if we use any $L^1 \cap L^\infty$ function instead of $\widecheck{\chi}_1$.

If now $|B_0 - B_1 s_0 + b_3(\lambda^{-1/3} s_0, \rho) s_0^3| \gtrsim |A|^\varepsilon$,
by using the Schwartz property we obtain the bound
\begin{align*}
\Vert \tilde{\tilde{\nu}}^E_{\lambda, l} \Vert_{L^\infty} \lesssim \lambda^{7/6} 2^{-lN} |A|^{-\varepsilon},
\end{align*}
with a different $N$, which after plugging into \eqref{LowerThanTwo_Airy_away_ComplexInterpolationFinal} is summable.

Next, if $|B_0 - B_1 s_0 + b_3(\lambda^{-1/3} s_0, \rho) s_0^3| \ll |A|^\varepsilon$, then
\begin{align*}
B_1 s_0 - b_3(\lambda^{-1/3} s_0, \rho) s_0^3 \in [B_0 - c |A|^\varepsilon, B_0 + c |A|^\varepsilon],
\end{align*}
for some small $c > 0$.
In particular, the fact $|B_0| \sim A$ gives us
\begin{align*}
|B_1 s_0 - b_3(\lambda^{-1/3} s_0, \rho) s_0^3| \sim |A|.
\end{align*}
First we consider integration over the domain $|s_0| \lesssim 2^{l/3}$.
In this case we get
$$
|B_1 s_0 - b_3(\lambda^{-1/3} s_0, \rho) s_0^3| \lesssim 2^l,
$$
which in turn implies that $|A| \lesssim 2^l$.
But this means we can trade a $2^{-l}$ factor for a $|A|^{-1}$ and so we are done.
The second part of the integral is where $|s_0| \gg 2^{l/3}$, which implies $|B_1 s_0 - b_3(\lambda^{-1/3} s_0, \rho) s_0^3| \sim |s_0|^3$,
i.e., $|s_0| \sim |A|^{1/3}$. But as the derivative of
$$
B_1 s_0 - b_3(\lambda^{-1/3} s_0, \rho) s_0^3
$$
is of size $|s_0|^2 \sim |A|^{2/3}$, then if we substitute $t = B_1 s_0 - b_3(\lambda^{-1/3} s_0, \rho) s_0^3$ in the integral
\eqref{LowerThanTwo_Airy_away_measure_E_part_final_integral}, the Jacobian is of size $|A|^{-2/3}$ and so the same $|A|^{-2/3}$ bound
holds for the integral. We are done with the estimate for the function $\nu^E_{\lambda, l}$.

\subsection{Estimates away from the Airy cone -- the estimate for $\nu^a_{\lambda, l}$}

Again substituting first $s$ for $\xi$, then $s_1$ for $z$, and then $s_0$ for $s_2^{1/(n-2)}$, we obtain the expression
\begin{align*}
\nu^a_{\lambda, l} (x) = &\lambda^{3/2} 2^{l/2} \int e^{-i \lambda s_3 \Phi_4(z,s_0,x,\delta,\sigma)} \\
   &\times g_4 \Big((2^l \lambda^{-1})^{1/3}, z, s_0, s_3, \delta, \sigma; 2^l\Big) \mathrm{d}z \mathrm{d}s_0 \mathrm{d}s_3,
\end{align*}
where $g_4$ is smooth in all of its variables and a classical symbol of order $0$ in the last $2^l$ variable, and where
\begin{align*}
\Phi_4(z,s_0,x,\delta,\sigma) \coloneqq &s_0^{n} G_5(s_0^{n-2},\delta,\sigma) - s_0^{n-1} G_3(s_0^{n-2},\delta,\sigma) x_1 - s_0^{n-2} x_2 - x_3\\
    &+(2^l \lambda^{-1})^{2/3} z (x_1 - s_0 G_1(s_0^{n-2}, \delta, \sigma))\\
    &+(2^l \lambda^{-1}) z^{3/2} q_0((2^l \lambda^{-1})^{1/3} z^{1/2}, s_0).
\end{align*}
We assume $z \sim 1$ since the case $z \sim -1$ can be treated in the same way.

We can restrict ourselves to the case $|x| \lesssim 1$ arguing in the same way as in the previous case.
In fact, we can restrict ourselves to the case $|x_1 - s_0 G_1(s_0^{n-2}, \delta, \sigma)| \ll 1$,
since otherwise we can use integration by parts in $z$.
From this it follows $|x_1| \sim 1$.
Since $G_1(s_0^{n-2},0,\sigma) = 1$, we can also localise the integration in $s_0$ to an arbitrarily small interval containing $x_1$.

\begin{lemma}
\label{LowerThanTwo_Airy_away_ComplexInterpolation_polynomial_x_1}
Define the polynomial
\begin{align*}
P(s_0; x_1, x_2, \sigma) &\coloneqq \frac{(n-1)(n-2)}{2} \sigma \beta(0) s_0^{n} - n(n-2) \sigma \beta(0) x_1 s_0^{n-1} - x_2 s_0^{n-2}.
\end{align*}
If $|x_1| \sim \sigma \sim |\beta(0)| \sim 1$, $n \geq 5$, and $|x_2| \lesssim 1$,
then
$$(n-3) P'(x_1; x_1, x_2, \sigma) = x_1 P''(x_1; x_1, x_2, \sigma),$$
and this expression is a polynomial in $(x_1,x_2)$.
\end{lemma}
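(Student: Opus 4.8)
The plan is to prove the identity by a direct differentiation and substitution, since it is a purely algebraic fact about the polynomial $P$. First I would abbreviate the two leading coefficients, writing $a \coloneqq \frac{(n-1)(n-2)}{2}\,\sigma\beta(0)$ and $b \coloneqq n(n-2)\,\sigma\beta(0)$, so that $P(s_0) = a\, s_0^n - b\, x_1 s_0^{n-1} - x_2 s_0^{n-2}$. Differentiating twice in $s_0$ gives
$$
P'(s_0) = n a\, s_0^{n-1} - (n-1) b\, x_1 s_0^{n-2} - (n-2) x_2\, s_0^{n-3}, \qquad
P''(s_0) = n(n-1) a\, s_0^{n-2} - (n-1)(n-2) b\, x_1 s_0^{n-3} - (n-2)(n-3) x_2\, s_0^{n-4}.
$$

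Next I would substitute $s_0 = x_1$ and collect powers of $x_1$, obtaining
$$
P'(x_1) = \bigl(n a - (n-1) b\bigr) x_1^{n-1} - (n-2) x_2\, x_1^{n-3}, \qquad
x_1 P''(x_1) = \bigl(n(n-1) a - (n-1)(n-2) b\bigr) x_1^{n-1} - (n-2)(n-3) x_2\, x_1^{n-3}.
$$
The $x_2$-terms of $x_1 P''(x_1)$ and of $(n-3) P'(x_1)$ agree automatically, so the asserted identity reduces to the single scalar relation $(n-3)\bigl(n a - (n-1) b\bigr) = n(n-1) a - (n-1)(n-2) b$, i.e.\ to $2 n a = (n-1) b$. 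I would then verify this by plugging in the explicit values of $a$ and $b$: $2 n a = n(n-1)(n-2)\,\sigma\beta(0) = (n-1) b$, which is exactly the relation encoded in the values $G_3(s_2,0,\sigma) = n(n-2)\sigma\beta(0)$ and $G_5(s_2,0,\sigma) = \frac{(n-1)(n-2)}{2}\sigma\beta(0)$ from Lemma \ref{LowerThanTwo_Airy_lemma_phase_taylor}. The final assertion is then immediate: the common value $(n-3)P'(x_1;x_1,x_2,\sigma) = x_1 P''(x_1;x_1,x_2,\sigma)$ is, by the displayed formula, an affine function of $x_2$ with coefficients that are monomials in $x_1$ (multiples of $x_1^{n-1}$ and $x_1^{n-3}$), hence a polynomial in $(x_1,x_2)$.

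I do not expect a genuine obstacle here: the size hypotheses $|x_1|\sim\sigma\sim|\beta(0)|\sim 1$ and $|x_2|\lesssim 1$ are not used in the algebraic identity at all, and are recorded only because this is the regime in which the lemma is later applied (together with the identity they ensure that the point $x_1$ sits close to the relevant critical point of $P'$ and that the behaviour of $P$ there is of the expected degenerate type). The only care required is bookkeeping the binomial factors $n,\,n-1,\,n-2,\,n-3$ correctly when differentiating and seeing that the cancellation is precisely $2na=(n-1)b$.
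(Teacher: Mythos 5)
Your proof is correct and takes essentially the same route as the paper: direct differentiation of $P$, substitution of $s_0=x_1$, and verification that the two displays agree coefficient-by-coefficient, which reduces to the scalar identity $2na=(n-1)b$. The paper only differs cosmetically by first factoring out $(n-2)\sigma\beta(0)/2$ so that $a$ and $b$ become the integers $n-1$ and $2n$; your bookkeeping with $a,b$ left in place is equivalent and equally valid.
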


\begin{proof}
Factoring out $(n-2) \sigma \beta(0)/2$ we can assume without loss of generality
\begin{align*}
P(s_0; x_1, x_2, \sigma) &\coloneqq (n-1) s_0^{n} - 2n x_1 s_0^{n-1} - \tilde{x}_2 s_0^{n-2},
\end{align*}
where $\tilde{x}_2 = (2 x_2)/[(n-2) \sigma \beta(0)]$. The first two derivatives of this polynomial are
\begin{align*}
P'(s_0; x_1, x_2, \sigma) &= n(n-1) s_0^{n-1} - 2n(n-1) x_1 s_0^{n-2} - (n-2) \tilde{x}_2 s_0^{n-3},\\
P''(s_0; x_1, x_2, \sigma) &= n(n-1)^2 s_0^{n-2} - 2n(n-1)(n-2) x_1 s_0^{n-3} - (n-2)(n-3) \tilde{x}_2 s_0^{n-4}.
\end{align*}
Plugging in $x_1$ we get
\begin{align*}
P'(x_1; x_1, x_2, \sigma) &= -n(n-1) x_1^{n-1} - (n-2) \tilde{x}_2 x_1^{n-3},\\
P''(x_1; x_1, x_2, \sigma) &= -n(n-1)(n-3) x_1^{n-2} - (n-2)(n-3) \tilde{x}_2 x_1^{n-4},
\end{align*}
and the claim follows.
\end{proof}

The coefficients of the polynomial in the above lemma come
from the first three terms of $\Phi_4(z,s_0,x,0,\sigma)$ and from Lemma \ref{LowerThanTwo_Airy_lemma_phase_taylor}.
Hence, the above lemma relates the first and the second $s_0$ derivative of $\Phi_4$ at $x_1$.

We develop the phase $\Phi_4$ in the variable $u \coloneqq x_1 - s_0 G_1(s_0^{n-2}, \delta, \sigma)$,
which is just a translation of $s_0$ to $x_1$ when $\delta = 0$.
Then we can write
\begin{align*}
\Phi_4(z,s_0,x,\delta,\sigma) =
    &b_0(x,\delta,\sigma) + b_1(x,\delta,\sigma) u + b_2(x,\delta,\sigma) u^2 + b_3(x,\delta,\sigma,u) u^3 \\
    &+(2^l \lambda^{-1})^{2/3} z u\\
    &+(2^l \lambda^{-1}) z^{3/2} q_1((2^l \lambda^{-1})^{1/3} z^{1/2}, u),
\end{align*}
where $|q_1| \sim 1$.
From Lemma \ref{LowerThanTwo_Airy_away_ComplexInterpolation_polynomial_x_1} one easily sees that
we can conclude that either $|b_1| \sim |b_2| \sim 1$ or $|b_1|, |b_2| \ll 1$.
Since $|u| \ll 1$, the case $|b_1| \sim |b_2| \sim 1$ would imply that
we can integrate by parts in $u$ and obtain a factor $\lambda^{-N}$.
Therefore, we may and shall assume that both $|b_1|$ and $|b_2|$ are very small,
and so we can apply Lemma \ref{LowerThanTwo_Airy_near_ComplexInterpolation_polynomial_lemma} to obtain $|b_3| \sim 1$
(this reduction one could have also gotten by checking the third derivative in Lemma \ref{LowerThanTwo_Airy_away_ComplexInterpolation_polynomial_x_1}).

Now note that if $|u|$ is not of size $(2^l \lambda^{-1})^{1/3}$,
then we can apply integration by parts in $z$ to gain a factor $2^{-lN}$.
In fact, after we substitute $u = (2^l \lambda^{-1})^{1/3} v$,
we can get a factor of size $2^{-lN} (1+|v|^2)^{-N/2}$ by integrating by parts in $z$.
Thus, we may restrict ourselves to the discussion of
\begin{align*}
\nu^a_{I} (x) = &\lambda^{7/6} 2^{-lN} \int e^{-i \lambda s_3 \Phi_5(z,v,x,\delta,\sigma)} \, (1+|v|^2)^{-N/2} \\
   &\times \tilde{g}_5 \Big((2^l \lambda^{-1})^{1/3}, z, (2^l \lambda^{-1})^{1/3} v, s_3, \delta, \sigma; 2^l\Big)
    (1-\chi_1(v)) \chi_0((2^l \lambda^{-1})^{1/3} v) \mathrm{d}z \mathrm{d}v \mathrm{d}s_3, \\
\nu^a_{II} (x) = &\lambda^{7/6} 2^{5l/6} \int e^{-i \lambda s_3 \Phi_5(z,v,x,\delta,\sigma)} \\
   &\times g_5 \Big((2^l \lambda^{-1})^{1/3}, z, (2^l \lambda^{-1})^{1/3} v, s_3, \delta, \sigma; 2^l\Big) \chi_1(v) \mathrm{d}z \mathrm{d}v \mathrm{d}s_3,
\end{align*}
where both $g_5$ and $\tilde{g}_5$ have the same properties as $g_4$.
In the expression for $\nu^a_{I}$ the $\chi_0((2^l \lambda^{-1})^{1/3} v)$ factor localises so that $|u| = |(2^l \lambda^{-1})^{1/3} v| \ll 1$.
Suppressing dependence on $(x,\delta,\sigma)$, the phase is of the form
\begin{align}
\label{LowerThanTwo_Airy_away_Phase_5}
\begin{split}
\lambda \Phi_5(z,v,x,\delta,\sigma) =
    &\lambda b_0 + \lambda^{2/3} 2^{l/3} b_1 v + \lambda^{1/3} 2^{2l/3} b_2 v^2 \\
    &+ 2^l \Big( b_3((2^l \lambda^{-1})^{1/3}v) v^3 + zv + z^{3/2} q_1((2^l \lambda^{-1})^{1/3} z^{1/2}, (2^l \lambda^{-1})^{1/3}v) \Big).
\end{split}
\end{align}

\medskip
\noindent
{\bf Estimates for $\nu^a_{I}$.}
In this case we plan to use the oscillatory sum lemma in $\lambda$ only and consider $2^l$ as a parameter.
Let us denote
$$
A \coloneqq \lambda b_0, \qquad  B \coloneqq  \lambda^{2/3} 2^{l/3} b_1, \qquad D \coloneqq \lambda^{1/3} 2^{2l/3} b_2.
$$
We need to reduce our problem to the case when $A$, $B$, and $D$ are bounded.
As here the integral itself is bounded by $\lesssim 1$,
we can assume that it is not the case that $|A| \sim |B|$, nor $|B| \sim |C|$, nor $|A| \sim |C|$,
since otherwise $\lambda$'s would go over a finite set, and we could sum in $l$.
Furthermore, as soon as $|A|$ (resp. $|B|$, or $|C|$) is greater than $1$, then we can automatically assume that
$|A| \gg 2^{4l}$ (resp. $|B| \gg 2^{4l}$, or $|C| \gg 2^{4l}$), since otherwise we could trade some factors $2^{-lN}$
to obtain a factor $|A|^{-\varepsilon}$ (resp. $|B|^{-\varepsilon}$, or $|D|^{-\varepsilon}$)
giving summability in $\lambda$ in the expression \eqref{LowerThanTwo_Airy_away_ComplexInterpolationFinal}.

If at least one of $|A|$, $|B|$, or $|C|$ are greater than $1$, we define
\begin{align*}
f(v,z,2^l \lambda^{-1}) \coloneqq b_3((2^l \lambda^{-1})^{1/3}v) v^3 + zv + z^{3/2} q_1((2^l \lambda^{-1})^{1/3} z^{1/2}, (2^l \lambda^{-1})^{1/3}v),
\end{align*}
and develop the function $\tilde{g}_5$ into a series of tensor products with variable $s_3$ separated, i.e.,
into a sum with terms of the form
\begin{align*}
h ((2^l \lambda^{-1})^{1/3}, z, (2^l \lambda^{-1})^{1/3} v, \delta, \sigma; 2^l ) \, \chi_1(s_3),
\end{align*}
where $h$ has the same properties as $\tilde{g}_5$, except it does not depend on $s_3$.
Then after taking the Fourier transform in $s_3$, we are reduced to estimating the integral
\begin{align}
\label{LowerThanTwo_Airy_away_nu_I_integral}
\lambda^{7/6} 2^{-lN} &\int (1+|v|^2)^{-N/2}  \, \widecheck{\chi}_1 (\lambda b_0 + \lambda^{2/3} 2^{l/3} b_1 v + \lambda^{1/3} 2^{2l/3} b_2 v^2 + 2^l f(v,z,2^l \lambda^{-1})) \nonumber \\
   &\quad \times \chi_1(z) (1-\chi_1(v)) \chi_0((2^l \lambda^{-1})^{1/3} v) \\
   &\quad \times h ((2^l \lambda^{-1})^{1/3}, z, (2^l \lambda^{-1})^{1/3} v, \delta, \sigma; 2^l ) \mathrm{d}z \mathrm{d}v. \nonumber
\end{align}

{\bf Case $|v| \ll 1$.}
The bound $|v| \ll 1$ gives
\begin{align*}
|f(v,z,2^l \lambda^{-1})| &\sim 1,\\
|\partial_v f(v,z,2^l \lambda^{-1})| &\sim 1,\\
|\partial^2_v f(v,z,2^l \lambda^{-1})| &\sim |v|.
\end{align*}
If $|A| \gg \max\{2^{4l}, |B|, |D|\}$, then we can easily gain a factor $|A|^{-1}$ using the Schwartz property of $\widecheck{\chi}_1$.
If $|B| \gg \max\{2^{4l}, |A|, |D|\}$, then the size of the derivative in $v$ of the function within $\widecheck{\chi}_1$ is $B$
and so we get the bound $|B|^{-1}$ by substitution.
Finally, if $|D| \gg \max\{2^{4l}, |A|, |B|\}$, we use the van der Corput lemma and obtain the bound $|D|^{-1/2}$.

{\bf Case $1 \ll |v| \ll (2^l \lambda^{-1})^{-1/3}$.}
In this case we can rewrite
\begin{align*}
f(v,z,2^l \lambda^{-1}) = v^3 \tilde{f}(v,z, (2^l \lambda^{-1})^{-1/3}),
\end{align*}
where $\tilde{f}$ is a smooth function with $|\tilde{f}| \sim 1$ and $|\partial^{k}_v \tilde{f}| \ll |v|^{-k}$ for all $k \geq 1$.
This means that $f$ is behaving essentially like $v^3$, and in particular
\begin{align*}
|f(v,z,2^l \lambda^{-1})| &\sim |v|^3,\\
|\partial_v f(v,z,2^l \lambda^{-1})| &\sim |v|^2.
\end{align*}

\noindent
{\bf Subcase $\max\{|B|,|D|\} \geq 1$.}
As mentioned before, this actually implies that we can assume $\max\{|B|,|D|\} \geq 2^{4l}$.
If now $|D| \gg |B|$, then since we could otherwise use the factor $(1+|v|^2)^{-N/2}$ in \eqref{LowerThanTwo_Airy_away_nu_I_integral},
we can restrain the integration to the domain $|v| \ll |D|^\varepsilon$.
Here the derivative in $v$ of the expression
\begin{align}
\label{LowerThanTwo_Airy_away_ComplexInterpolation_nondeg_derivative}
A+Bv+Dv^2+2^l \tilde{f}(v,z,2^l \lambda^{-1}) v^3
\end{align}
inside the Schwartz function $\widecheck{\chi}_1$ in \eqref{LowerThanTwo_Airy_away_nu_I_integral}
is of size $|B+c Dv|$ for some $|c| = |c(v)| \sim 1$.
But recall that $|v| \gg 1$ and so $|B+c Dv| \sim |Dv| \gg |D|$.
This means that substituting the above expression would give a Jacobian of size at most $|D|^{-1}$.

Next let us consider the case $|D| \lesssim |B|$.
If have the slightly stronger estimate $|D| \lesssim |B|^{1-\varepsilon}$,
and if we assume $|v| \ll |B|^\varepsilon$ (which we can because of the factor $(1+|v|^2)^{-N/2}$),
then in this case the derivative of \eqref{LowerThanTwo_Airy_away_ComplexInterpolation_nondeg_derivative} is of size $|B|$,
which means substituting this expression yields an admissible bound.

Therefore, we may now consider the case $|B|^{1-\varepsilon} \ll |D| \lesssim |B|$ and $|v| \ll |D|^\varepsilon$,
which implies, in case when $\varepsilon$ is sufficiently small, that $|D| \geq 2^{3l}$.
In particular, the derivative of \eqref{LowerThanTwo_Airy_away_ComplexInterpolation_nondeg_derivative}
can be again written as $|B+c Dv|$ with $|c| \sim 1$,
and we can reduce our problem to the part where $|B+c Dv| \ll |D|^\varepsilon$,
since otherwise substituting would give a Jacobian of size at most $|D|^{-\varepsilon}$.
But now $|B+c Dv| \ll |D|^\varepsilon$ implies $|v| \sim |B| |D|^{-1}$.
Hence, it suffices to estimate the integral
\begin{align*}
\int \Big| \widecheck{\chi}_1 (A+Bv+Dv^2+2^l \tilde{f}(v,z,2^l \lambda^{-1}) v^3) \chi_1(|B|^{-1} |D| v) \Big| \mathrm{d}v.
\end{align*}
We substitute $w = |B|^{-1}\,|D| v$ and write
\begin{align*}
|B| |D|^{-1} \int
\Big|
\widecheck{\chi}_1 (A+ (B |B| |D|^{-1}) w+ (D|B|^2 |D|^{-2}) w^2 + 2^l |B|^3 |D|^{-3} w^3 r(w)) \chi_1(w)
\Big| \mathrm{d}w.
\end{align*}
Applying the van der Corput lemma we obtain the estimate
$$
(|B| |D|^{-1}) \, (|B|^2 |D|^{-1})^{-1/2} = |D|^{-1/2},
$$
and so we are done with the case $\max\{|B|,|D|\} \geq 1$.

\noindent
{\bf Subcase $\max\{|B|,|D|\} \leq 1$ and $|A| \gg 1$.}
Again, we may actually assume $|A| \gg 2^{4l}$.
We may also then reduce ourselves to the discussion of the case $|v| \ll |A|^\varepsilon$,
since in the other part of the integration domain we can gain a factor $|A|^{-\varepsilon}$.
But then the expression \eqref{LowerThanTwo_Airy_away_ComplexInterpolation_nondeg_derivative}
is of size $\sim |A|$ and we can get a factor $|A|^{-1}$,
and hence we are also done with the function $\nu_I^a$.

\medskip
\noindent
{\bf Estimates for $\nu^a_{II}$.}
Here we have a non-degenerate critical point in $z$ which would give us a factor $2^{-l/2}$.
We shall not apply directly the stationary phase method here
since in this case some crucial information has been lost while we were deriving the form of the phase in this and the previous subsections.
It seems that one cannot prove the required bound for complex interpolation using the information from the form of the phase \eqref{LowerThanTwo_Airy_away_Phase_5}.
One needs to go back to the phase form in the original coordinates (the one before taking the inverse Fourier transform is \eqref{LowerThanTwo_Airy_Phase_first_form})
and find the critical point in the variables $(y_1,s_1)$.
This was carried out in \cite{IM16} (see the discussion before \cite[Lemma 5.6.]{IM16}).
Here we only sketch the steps.

The phase in \eqref{LowerThanTwo_Airy_Phase_first_form} is
\begin{align*}
\Psi(y_1, \delta, \sigma, s_1, s_2)
   = s_1 y_1 + s_2 y_1^2 \omega(\delta_1 y_1) + \sigma y_1^n \beta(\delta_1 y_1)
   + (\delta_0 s_2)^2 Y_3(\delta_1 y_1, \delta_2, \delta_0 s_2),
\end{align*}
and one integrates in the $y_1$ variable.
The phase function after one applies the Fourier transform is
\begin{align}
\label{LowerThanTwo_Airy_away_Phase_0}
\Phi_0(y_1, s_1, s_2, x, \delta, \sigma)
   = \Psi(y_1, \delta, \sigma, s_1, s_2) - s_1 x_1 - s_2 x_2 - x_3,
\end{align}
and one now integrates in the $s$ and $y_1$ variables, after substituting $s$ for $\xi$.
Recall that $s_0 = s_2^{1/(n-2)}$ and
\begin{align*}
s_1 &= s_0^{n-1} G_3(s_0^{n-2},\delta,\sigma)-\lambda^{-2/3}z, \\
v   &= (2^{l} \lambda^{-1})^{-1/3} (x_1 - s_0 G_1(s_0^{n-2},\delta,\sigma)).
\end{align*}
Therefore fixing $(s_2,s_3)$ is equivalent to fixing $(v,s_3)$, and in this case, finding the
critical point in $(y_1, s_1)$ is equivalent to finding the critical point in the $(y_1, z)$ coordinates.
Recall that the phase form in \eqref{LowerThanTwo_Airy_away_Phase_5} was derived by using the stationary phase method in $y_1$
(implicitly done as a part of Lemma \ref{Oscillatory_auxiliary_airy}) and changing variables from $s = (s_1, s_2, s_3)$ to $(z,v,s_3)$.

The key is now to notice that since the critical point is invariant with respect to coordinate changes, and so,
after applying the stationary phase in $z$ to the phase function \eqref{LowerThanTwo_Airy_away_Phase_5},
we get
\begin{align*}
\Phi_5(z^c,v,x,\delta,\sigma),
\end{align*}
which is the equal to the
phase function in \eqref{LowerThanTwo_Airy_away_Phase_0} after we apply the stationary phase in $(y_1,s_1)$:
\begin{align*}
\Phi_0(y_1^c, s_1^c, s_2, x, \delta, \sigma),
\end{align*}
and then change the coordinates from $s_2$ to $v$.
This was carried out in \cite{IM16} by explicitly calculating the critical point
in $(y_1,s_1)$ in \eqref{LowerThanTwo_Airy_away_Phase_0} (see \cite[Lemma 5.6]{IM16}).
One obtains that we can rewrite the function $\nu_{II}^a$ as
\begin{align*}
\nu^a_{II} (x) = \lambda^{7/6} 2^{l/3} \int e^{-i \lambda s_3 \Phi_6(\tilde{v},x,\delta,\sigma)}
   g_6 \Big((2^l \lambda^{-1})^{1/3}, \tilde{v}, s_3, \delta, \sigma; 2^l\Big) \chi_1(\tilde{v}) \mathrm{d}\tilde{v} \mathrm{d}s_3,
\end{align*}
where
\begin{align*}
\lambda \Phi_6(\tilde{v},x,\delta,\sigma) = &\lambda \tilde{b}_0(x,\delta,\sigma) + \lambda^{2/3} 2^{l/3} \tilde{b}_1 (x,\delta,\sigma) \tilde{v}\\
   &+ \delta_0^2 2^{2l/3} \lambda^{1/3} \tilde{b}_2(x,\delta_0 (2^l \lambda^{-1})^{1/3} \tilde{v},\delta,\sigma) \tilde{v}^2,
\end{align*}
with $\tilde{b}_0$, $\tilde{b}_1$, $\tilde{b}_2$ smooth, and $|\tilde{b}_2| \sim 1$.
The amplitude $g_6$ is a classical symbol of order $0$ in $2^l$, but we shall ignore this dependence since
the lower order terms can be treated similarily,
and even simpler since we can gain summability in $l$ and use the one parameter oscillatory sum lemma for $\lambda$.

We remark that the variable $\tilde{v}$ is only slightly different from the variable $v$ defined above
after the statement of Lemma \ref{LowerThanTwo_Airy_away_ComplexInterpolation_polynomial_x_1}.
Here $\tilde{v}$ corresponds to the $v$ variable of \cite[Subsection 5.2.3]{IM16}.
We explain briefly the relation between $v$ and $\tilde{v}$.
At the beginning of this subsection we obtained $\nu^a_{II}$ by localising to the part where
$$
|(2^l \lambda^{-1})^{1/3} v| = |x_1 - s_0 G_1(s_0^{n-2},\delta,\sigma)| = |x_1 - s_2^{n-2} G_1(s_2,\delta,\sigma)| \sim (2^l \lambda^{-1})^{1/3},
$$
i.e., $|v| \sim 1$.
Since $G_1(s_2,0,\sigma) = 1$, one can easily see by using the implicit function theorem that solving the equation
\begin{align*}
x_1 - s_2^{n-2} G_1(s_2,\delta,\sigma) = (2^l \lambda^{-1})^{1/3} v
\end{align*}
in $s_2$ one can write
\begin{align*}
s_2 = \tilde{G}_1(x_1, \delta, \sigma) + (2^l \lambda^{-1})^{1/3} v \, \tilde{G}((2^l \lambda^{-1})^{1/3} v, x_1, \delta, \sigma),
\end{align*}
where $|\tilde{G}| \sim \tilde{G}_1 \sim 1$.
Therefore if the $\tilde{v}$ variable is defined by
\begin{align*}
\tilde{v} = (2^l \lambda^{-1})^{-1/3} (s_2 - \tilde{G}_1(x_1,\delta,\sigma)),
\end{align*}
as is $v$ of \cite{IM16}, then
\begin{align*}
\tilde{v} = v \tilde{G}((2^l \lambda^{-1})^{1/3} v, x_1, \delta, \sigma).
\end{align*}
In particular, there is no significant difference between $v$ and $\tilde{v}$.

We define
$$
A \coloneqq \lambda \tilde{b}_0(x,\delta,\sigma), \qquad
B \coloneqq \lambda^{2/3} 2^{l/3} \tilde{b}_1 (x,\delta,\sigma), \qquad
D \coloneqq \delta_0^2 2^{2l/3} \lambda^{1/3},
$$
suppress the variables of $\tilde{b}_2$, and shorten $\rho = \delta_0 (2^l \lambda^{-1})^{1/3}$.
Then
\begin{align*}
\lambda \Phi_6(\tilde{v},x,\delta,\sigma) = A + B \tilde{v} + D \tilde{b}_2(\rho \tilde{v}) \tilde{v}^2,
\end{align*}
and in order to use the oscillatory sum lemma for two parameters we need to reduce the problem to the situation where
$|A|$, $|B|$, and $|D|$ are of size $\lesssim 1$. In the following we define $k$ through $\lambda = 2^{k}$.

First we treat the case when at least two of $|A|$, $|B|$, and $|D|$ are comparable. When this is the case, $\lambda$ can go over
only a finite set of indices (the index sets depending on $l$ and other constants), and it remains to sum only in $l$.
This is done in the following way.
If $|D| \gtrsim 1$, then we can use van der Corput lemma and obtain a factor $|D|^{-1/2}$, which is summable in $l$.
If $|D| \ll 1$, then the only case remaining is $|A| \sim |B|$, and here we can use integration by parts in $\tilde{v}$ and obtain
a factor $|B|^{-1}$ which we use to sum in $l$.

Next, we assume that we have a ``strict order'' between $|A|$, $|B|$, and $|D|$.
First we shall consider the cases when at least two of $|A|$, $|B|$, and $|D|$ are greater than $1$.
If $|A| \gg \max\{|B|, |D|\} \gtrsim 1$, we use integration by parts in $s_3$ and obtain
$$
|A|^{-1} \ll |A|^{-1/2} |\max\{|B|, |D|\}|^{-1/2},
$$
which is summable.
Similarly, if $|B| \gg \max\{|A|, |D|\} \gtrsim 1$, we can integrate by parts in $\tilde{v}$ and obtain the estimate
$$
|B|^{-1} \ll |B|^{-1/2} |\max\{|A|, |D|\}|^{-1/2},
$$
which is summable.
And if now $|D| \gg \max\{|A|, |B|\} \gtrsim 1$, we use the van der Corput lemma and obtain
$$
|D|^{-1/2} \ll |D|^{-1/4} |\max\{|A|, |B|\}|^{-1/4},
$$ which is again summable.
We are thus reduced to the case where one of $|A|$, $|B|$, or $|D|$ are greater than $1$, and the other two much smaller.

{\bf Case $|A| \geq 1$ and $\max\{|B|, |D|\} \ll 1$.}
In this case by using integration by parts in $s_3$ we can get a factor $|A|^{-1}$.
We use the one dimensional oscillatory sum lemma in $l$,
and afterwards, we can sum in $\lambda$ using the factor $|A|^{-1}$
which can be obtained as the bound on the $C^1$ norm of the function to which we applied the oscillatory sum lemma.

{\bf Case $|B| \geq 1$ and $\max\{|A|, |D|\} \ll 1$.}
Here we change the summation variables
\begin{align*}
2^{k_1} &\coloneqq \lambda^{2} 2^l,\\
2^{k_2} &\coloneqq \lambda,
\end{align*}
so that we now sum over $(k_1,k_2)$.
This change of variables corresponds to the system
\begin{align*}
k_1 &= 2k + l,\\
k_2 &= k,
\end{align*}
which has determinant equal to $1$,
and so the associated linear mapping is a bijection on $\Z^2$.

Since the summation bounds (without the constraints set by $A$, $B$, or $D$) are $1 \ll \lambda \leq \delta_0^{-6}$ and $1 \ll 2^l \ll \lambda$,
for each fixed $k_1$ the summation in $k_2$ is now within the range $2^{k_1/3} \ll 2^{k_2} \ll 2^{k_1/2}$,
and the summation in $k_1$ is for $1 \ll 2^{k_1} \ll \delta_0^{-18}$.

The quantities $B$ and $D$ can be rewritten as
\begin{align*}
B &= 2^{k_1/3} \tilde{b}_1 (x,\delta,\sigma),\\
D &= \delta_0^2 \, 2^{2k_1/3-k_2}.
\end{align*}
Now for a fixed $k_1$ we can apply the one-dimensional oscillatory sum lemma to sum in $2^{k_2}$
since all the terms coupled with $2^{k_2}$ are now within a bounded range.
In order to sum in $k_1$, one needs to estimate the $C^1$ norm of the function to which we have applied the oscillatory sum lemma.
One can easily see that integrating by parts in $s_0$ we obtain a factor $|B|^{-1}$
which in the new indices depends only on $2^{k_1}$. 

{\bf Case $|D| \geq 1$ and $\max\{|A|, |B|\} \ll 1$.}
In this case we also change the summation variables
\begin{align*}
2^{k_1} &\coloneqq \lambda 2^{2l},\\
2^{k_2} &\coloneqq \lambda,
\end{align*}
so that we now sum over $(k_1,k_2)$.
We have
\begin{align*}
\begin{split}
k_1 &= k+2l,\\
k_2 &= k,
\end{split}
\qquad\qquad\qquad
\Longleftrightarrow
\begin{split}
k &= k_2,\\
l &= (k_1-k_2)/2.
\end{split}
\end{align*}
Therefore when we fix $k_1$, the summation in $k_2$ goes over an interval of even or uneven integers, depending on the parity of $k_1$.
Since the summation bounds (without the constraints set by $A$, $B$, or $D$) are $1 \ll \lambda \leq \delta_0^{-6}$ and $1 \ll 2^l \ll \lambda$,
for each $k_1$ the summation in $k_2$ is now within the range $2^{k_1/3} \ll 2^{k_2} \ll 2^{k_1}$,
and the summation in $k_1$ is for $1 \ll 2^{k_1} \ll \delta_0^{-18}$.

The quantities $B$ and $D$ can be rewritten as
\begin{align*}
B &= 2^{k_1/2+3k_2/2} \tilde{b}_1 (x,\delta,\sigma),\\
D &= \delta_0^2 \, 2^{k_1/3}.
\end{align*}
For a fixed $k_1$ we want to apply the oscillatory sum lemma to the summation in $k_2$.
We remark that formally one should write $k_2$ as either $2r+1$ or $2r$ (depending on the parity of $k_1$),
and then apply the oscillatory sum lemma to the summation in $r$ instead of $k_2$.

Here we give a bit more details compared to the previous case
since the term $\rho$, which contains $(2^l \lambda^{-1})^{1/3}$, is coupled with $D$.
We need to estimate the $C^1$ norm of the function
\begin{align*}
H(z_1,z_2,z_3; x, \delta, \sigma) \coloneqq \int e^{-i s_3 (z_1 + z_2 \tilde{v} + D \tilde{b}_2(x,z_3 \delta_0 \tilde{v},\delta,\sigma) \tilde{v}^2)}
   g_6 (z_3, \tilde{v}, s_3, \delta, \sigma) \chi_1(\tilde{v}) \mathrm{d}\tilde{v} \mathrm{d}s_3.
\end{align*}
Formally, one should also add further dummy $z_i$'s for controlling the range of the summation indices.
Since we are in the case where $|D| \geq 1$, $|z_1| \ll 1$, and $|z_2| \ll 1$,
integrating by parts in $s_3$ we get that the $L^\infty$ estimate is $|D|^{-1}$.
Taking derivatives in $z_1$ and $z_2$ does not change the form of the integral in an essential way,
and so we can also estimate the $L^\infty$ norm of the these derivatives by $|D|^{-1}$.
Taking the derivative in $z_3$ a factor of size at most $|D|$ appears, but now we just apply integration by parts in $s_3$ two times
and get that we can estimate the $C^1$ norm of $H$ by $|D|^{-1}$.

{\bf Case $|A| \lesssim 1$, $|B| \lesssim 1$, and $|D| \lesssim 1$.}
Here we apply the two-parameter oscillatory sum lemma.
We only need to check the additional linear independence condition appearing in the assumptions of Lemma \ref{Oscillatory_auxiliary_two_parameter}.
The terms where $\lambda = 2^k$ and $2^l$ appear are
\begin{align*}
A &= 2^{\beta_1^1 k} \tilde{b}_0(x,\delta,\sigma), &
B &= 2^{\beta_1^2 k + \beta_2^2 l} 2^{l/3} \tilde{b}_1 (x,\delta,\sigma),\\
D &= \delta_0^2 \, 2^{\beta_1^3 k + \beta_2^3 l}, &
(2^l \lambda^{-1})^{1/3} &= 2^{\beta_1^4 k + \beta_2^4 l},
\end{align*}
where
\begin{align*}
(\beta_1^1,\beta_2^1) &= (1,0), &
(\beta_1^2,\beta_2^2) &= (2/3,1/3),\\
(\beta_1^3,\beta_2^3) &= (1/3,2/3), &
(\beta_1^4,\beta_2^4) &= (-1/3,1/3),
\end{align*}
and recall from \eqref{LowerThanTwo_Airy_away_ComplexInterpolationFinal} that
\begin{align*}
(\alpha_1,\alpha_2) &= (-7/4,-1/2).
\end{align*}
Formally, we also have to consider additionally
\begin{align*}
(\beta_1^5,\beta_2^5) &= (-1,0), &
(\beta_1^6,\beta_2^6) &= (0,-1),
\end{align*}
for implementing the lower summation bounds for $\lambda$ and $2^l$ as in \eqref{LowerThanTwo_Airy_away_ComplexInterpolationFinal}.
We see that the condition $\alpha_1 \beta_2^r \neq \alpha_2 \beta_1^r$ is satisfied for each $r = 1,\ldots,6$.
Therefore, we may now apply the lemma and obtain the inequality \eqref{LowerThanTwo_Airy_away_ComplexInterpolationFinal}.
This finishes the proof of Theorem \ref{Section_h_lin_less_than_2_main_theorem}.




\pagebreak

\end{document}